\documentclass[twoside, reqno]{amsart}

\usepackage[left=2.5cm, right=2.5cm, top=3cm, bottom=2.5cm]{geometry}

\usepackage[colorlinks=true, pdfstartview=FitV, linkcolor=blue,citecolor=blue, urlcolor=blue]{hyperref}

\usepackage[symbol]{footmisc}

\usepackage[usenames]{xcolor}
\definecolor{labelkey}{rgb}{0,0,1}
\definecolor{Red}{rgb}{0.7,0,0.1}
\definecolor{Green}{rgb}{0,0.7,0}

\usepackage{amsfonts, amssymb, amsmath, amsthm, mathrsfs, bbm, cjhebrew, gensymb, textcomp, mathtools, dsfont,tikz}

\usepackage[normalem]{ulem}

\usepackage{commath, setspace, subcaption, parcolumns, multirow, multicol, accents, comment, marginnote, verbatim, empheq, enumerate, stackrel, enumitem}

\usepackage{lineno}

\makeatletter
\def\namedlabel#1#2{\begingroup
    #2%
    \def\@currentlabel{#2}%
    \phantomsection\label{#1}\endgroup
}
\makeatother

\usepackage[capitalize,nameinlink,noabbrev]{cleveref}
\usepackage{graphicx}
\usepackage{epsfig}
\usepackage{psfrag}
\usepackage{float}
\usepackage[active]{srcltx}

\newtheorem{Thm}{Theorem}[section]
\newtheorem{Cor}[Thm]{Corollary}
\newtheorem{Prop}[Thm]{Proposition}
\newtheorem{Lem}[Thm]{Lemma}
\newtheorem{Rmk}{Remark}[section]

\numberwithin{equation}{section}

\newcommand{\al}{\alpha}
\newcommand{\be}{\beta}
\newcommand{\de}{\delta} 
\newcommand{\De}{\Delta}
\newcommand{\eps}{\epsilon}

\newcommand{\ph}{\varphi}
\newcommand{\gam}{\gamma}
\newcommand{\Gam}{\Gamma}

\newcommand{\lam}{\lambda}
\newcommand{\Lam}{\Lambda}
\newcommand{\s}{\sigma}
\newcommand{\xe}{\xi-\eta}

\newcommand{\tht}{\theta}
\newcommand{\Tht}{\Theta}
\newcommand{\om}{\omega}

\newcommand{\ze}{\zeta}
\newcommand{\vs}{\varsigma}
\newcommand{\vr}{\varrho}

\newcommand{\lpj}{\triangle_j}
\newcommand{\lpk}{\triangle_k}
\newcommand{\tlpj}{\triangle_j^e}
\newcommand{\tlpk}{\triangle_k^e}

\newcommand{\ZZ}{\mathbb{Z}}
\newcommand{\RR}{\mathbb{R}}
\newcommand{\NN}{\mathbb{N}}

\newcommand{\lb}{\big\langle}
\newcommand{\rb}{\big\rangle}

\newcommand{\goesto}{\rightarrow}

\newcommand{\Sob}[2]{\lVert#1\rVert_{#2}}
\newcommand{\nrm}[1]{\lVert#1\rVert}

\newcommand{\bdy}{\partial}

\newcommand{\til}[1]{\widetilde{#1}}

\newcommand{\Ae}{\abs{\eta}}

\newcommand{\Hdot}{\dot{H}}
\newcommand{\Bdot}{\dot{B}}

\newcommand{\Acal}{\mathcal{A}}
\newcommand{\Bcal}{\mathcal{B}}

\newcommand{\Ft}{\mathcal{F}}

\DeclareMathOperator{\supp}{supp}

\DeclareMathOperator*{\Div}{div}

\usepackage{tikz}

\title[On well posedness of mildly dissipative active scalars in critical spaces]{On well-posedness of a mildly dissipative family of active scalar equations in borderline Sobolev spaces}
\author{Anuj Kumar, Vincent R. Martinez$^\dagger$}
\date{December 7, 2025\\
\indent\indent $^\dagger$Corresponding author}

\begin{document}

\begin{abstract} 
This paper considers a family of active scalar equations which modify the
generalized surface quasi-geostrophic (gSQG) equations through its constitutive
law and a dissipative perturbation. These modifications are characteristically
mild in the sense that they are logarithmic. The problem of well posedness,
in the sense of Hadamard, is then studied in a borderline setting of regularity
in analogy to the scaling-critical spaces of the gSQG equations. A novelty
of the system considered is the nuanced form of smoothing provided by the
proposed mild form of dissipation, which is able to support global well-posedness
at the Euler endpoint, but in a setting where the inviscid counterpart
is known to be ill-posed. A novelty of the analysis lies in the simultaneous
treatment of modifications in the constitutive law, dissipative mechanism,
and functional setting, which the existing literature has typically treated
separately. A putatively sharp relation is identified between each of the
distinct system-modifiers that is consistent with previous studies that
considered these modifications in isolation. This unified perspective is
afforded by the introduction of a linear model equation, referred to as
the \textit{protean system}, that successfully incorporates the more delicate
commutator structure collectively possessed by the gSQG family and upon
which each facet of well-posedness can effectively be reduced to its study.
\end{abstract}

\maketitle

{\noindent \small {\it {\bf Keywords: generalized surface quasi-geostrophic (SQG) equation, borderline regularity, well-posedness, logarithmic dissipation, instantaneous smoothing, protean system}
  } \\
  {\it {\bf MSC 2010 Classifications:} 76B03, 35Q35, 35Q86, 35B45
     } }

\setcounter{tocdepth}{1}
\tableofcontents

\section{Introduction}
This article is concerned with the well-posedness of the initial value
problem for a family of dissipative
active scalar equations over the whole plane $\mathbb{R}^{2}$ in a borderline regularity setting:
\begin{align}
\label{eq:dgsqg}
\begin{cases}
\partial _{t}\theta + m(D)\theta +u \cdot \nabla \theta =0, \quad
\theta (0,x)=\theta _{0}(x),\quad x\in \mathbb{R}^{2},\quad t>0,
\\
u=\nabla ^{\perp}{\psi},\quad \Delta {\psi}=\Lambda ^{\beta}p(D)
\theta ,
\end{cases}
\end{align}
where $\beta \in [0,2]$ and $\Lambda =(-\Delta )^{1/2}$; the operators $m(D)$ and $p(D)$ denote Fourier multiplier operators, which
are assumed to be radial and of logarithmic type. Roughly speaking,
$p(D)$ will belong to a class of multipliers that may decay at most logarithmically
up to some order, while $m(D)$ will essentially be assumed to have non-negative Fourier transform, and thus serve as a mechanism to dissipate energy from the system.

When $p(D)=I$, {\eqref{eq:dgsqg}} corresponds to a dissipative perturbation
of the inviscid generalized surface quasi-geostrophic (gSQG) equation,
which is given by
\begin{align}
\label{eq:gsqg}
\partial _{t}\theta +u \cdot \nabla \theta =0,\quad u=\nabla ^{\perp}{
\psi},\quad \Delta {\psi}=\Lambda ^{\beta}\theta ,\quad 0\le \beta
\le 2.
\end{align}
For $\beta \in [0,1]$, {\eqref{eq:gsqg}} interpolates between
the 2D incompressible Euler equation in vorticity form ($\beta =0$) and
the SQG equation ($\beta =1$). The regime $\beta \in (1,2)$ represents
a family of active scalar equations with constitutive laws more singular
than SQG, while the endpoint $\beta =2$ constitutes a trivial case where
the streamfunction can effectively be identified with the advected scalar, resulting in
the reduction of {\eqref{eq:gsqg}} to the stationary equation
$\partial _{t}\theta =0$. This endpoint can be made nontrivial by modifying
the streamfunction equation with a positive power of a logarithmic multiplier
as in {\eqref{eq:dgsqg}}; the endpoint case modified in this way is often referred to as the Ohkitani model (see
\cite{ChaeConstantinCordobaGancedoWu2012,Okhitani2011,Okhitani2012}). The issue of whether singularities can
develop in finite-time from smooth initial data remains an outstanding
open problem for {\eqref{eq:gsqg}} when $\beta \in (0,2]$ (with the $\beta =2$ endpoint modified accordingly). Nevertheless, much progress has been made in the understanding of well-posedness or ill-posedness of the initial value problem associated to {\eqref{eq:gsqg}}; a detailed discussion of this progress in relation to {\eqref{eq:dgsqg}} is provided below.

The case where $p(D)$ provides a regularizing effect in the constitutive law, in the sense that its Fourier transform decays sufficiently fast at infinity, was originally studied in
\cite{ChaeWu2012} for the purpose of locating a minimal degree of regularization
to support a local existence theory for the 
inviscid system {\eqref{eq:gsqg}} in a borderline regularity setting;
the main example of interest is the 2D Euler equation (in vorticity
form) in the scaling-critical Sobolev space $\dot{H}^{1}(\mathbb{R}^{2})$; this Sobolev space preserves the natural scaling symmetry of the 2D Euler equation and indicates a regularity threshold for which the velocity fails, barely, to
be Lipschitz. In a similar spirit to \cite{ChaeWu2012}, we propose an alternative
mechanism for regularization that is \textit{dissipative}, as captured by
the multiplier, $m(D)$, and wish to locate the smallest possible degree
of such dissipation that supports a standard local solution theory (in the sense
of Hadamard). Such a form of regularization is categorically different
from an inviscid regularization mechanism since the dissipativity of
$m(D)$ may instantaneously confer additional regularity to the
solution. Ultimately, we show that local well-posedness
in the borderline Sobolev regularity setting holds by precisely quantifying, and subsequently exploiting, this instantaneous
smoothing effect, the main novelty here being that this smoothing effect can be very weak. Generally speaking, the overarching
goal of this work is to identify a putatively minimal relation between
$m(D)$ and $p(D)$ that guarantees a local existence theory in a regularity
setting that is scaling-critical for the corresponding inviscid
equation and is either known or expected to be ill-posed, while simultaneously quantifying the more subtle gain of regularity from the linear dissipative component.

The choice of dissipative perturbation will take on a logarithmic
form. We consider such perturbations as being
\textit{mildly dissipative}. This terminology is intended to distinguish from
\textit{weakly dissipative} perturbations, which refer to dissipative mechanisms
that are non-regularizing, such as damping effects due to friction, and
\textit{strongly dissipative} perturbations, which we interpret as typically
referring to dissipative operators like the fractional Laplacian,
${\Lambda }^{\gamma }$, that instantaneously
regularize the solution to become smooth in space. Thus, \textit{mild dissipation} indicates an intermediate form
of dissipation that lies between \textit{weak} and \textit{strong}. As we
will see below, \textit{mild dissipation} instantaneously confers additional
regularity to the solution, albeit at a categorically weaker level than
\textit{strong dissipation}. Note that as with the inviscid regularization considered in \cite{ChaeWu2012}, the dissipatively modified equation {\eqref{eq:dgsqg}} considered here does not possess a scaling symmetry. We thus
adopt the terminology of \textit{borderline regularity} throughout the manuscript in lieu of \textit{scaling-critical}
regularity to describe our functional setting.

The (homogeneous) Sobolev spaces,
$\dot{H}^{1+\beta }(\mathbb{R}^{2})$, where $\beta \in [1,2]$, is a scaling-critical
space for {\eqref{eq:gsqg}}. As in the case of the 2D Euler equation in vorticity
form, \textit{criticality} refers to the threshold of Sobolev
regularity where the velocity field barely fails to be Lipschitz; it is moreover characterized by the regularity level for which the norm remains invariant with respect to the natural scaling symmetry of the system.
From this point of view, the scaling-critical spaces identify a threshold for regularity
above which local well-posedness of the corresponding initial value problem
is expected to hold (the \textit{subcritical regime}), and below which some
form of ill-posedness can be expected to emerge (the \textit{supercritical regime)}. Classically,
local well-posedness above the critical regularity threshold at the endpoint case $\beta =0$ has been known at least since
\cite{Kato1967}. In fact, in this subcritical setting, local well-posedness holds 
in the spaces $H^{s}(\mathbb{R}^{d})$, $s>1+d/2$, (stated for the velocity) for all dimensions
$d\geq 2$ (see
\cite{BourguignonBrezis1974,EbinMarsden1969,Kato1967,Kato1972,Temam1974}).
The analogous result for the family {\eqref{eq:gsqg}} was established in
the works
\cite{ChaeConstantinCordobaGancedoWu2012,HuKukavicaZiane2015}, where local
well-posedness was established in
$H^{s}(\mathbb{R}^{2})$, $s>1+\beta$, for all $\beta\in(0,2)$. In the
presence of strong dissipation, global regularity of solutions in the case
$\beta \in (0,1]$ with $m(D)=\Lambda ^{\gamma }$, where
$\gamma =\beta $, was collectively established by the mathematical community
in the works
\cite{Resnick1995,ConstantinWu1999,KiselevNazarovVolberg2007,CaffarelliVasseur2010,KiselevNazarov2009,ConstantinVicol2012,ConstantinTarfuleaVicol2015,ConstantinIyerWu2008,MiaoXue2011}.

The issue of well-posedness for the Euler endpoint, $\beta =0$, at the critical
regularity threshold remained an outstanding open problem until the seminal work
of J. Bourgain and D. Li \cite{BourgainLi2015}, where a mechanism for norm
inflation was identified to establish strong ill-posedness in
$H^{1}(\mathbb{R}^{2})$. An alternative approach to establishing ill-posedness
was subsequently developed by T. Elgindi and M. Masmoudi in
\cite{ElgindiMasmoudi2020}. In a series of recent works, strong ill-posedness
was also established in the range $\beta \in (0,2]$ by D. C\'ordoba and
L. Mart\'inez-Zoroa
\cite{CordobaZoroa-Martinez2021,CordobaZoroa-Martinez2022} and I.-J. Jeong
and J. Kim \cite{JeongKim2021}. The Ohkitani model represented by the modified endpoint case $\beta =2$ in {\eqref{eq:gsqg}} was recently shown to be ill-posed in $H^{s}$, for $s>3$
\cite{ChaeJeongOh2023b}, by D. Chae, I.-J. Jeong, and S.-J. Oh, but globally well-posed when the model possessed
logarithmic-order dissipation \cite{ChaeJeongNaOh2023a} by the same authors with J. Na; similar ill-posedness
results for a class of models generalizing the Okhitani case, as well as
some dissipatively perturbed counterparts, were also obtained  \cite{ChaeJeongOh2023b}.
We emphasize that the framework developed in the present article is complementary
to that studied in \cite{ChaeJeongNaOh2023a,ChaeJeongOh2023b}; the reader
is referred to {\cref{rmk:okhitani}} for a more detailed discussion.
\par
Complementary to the ill-posedness results mentioned above are well-posedness the results
of D. Chae and J. Wu \cite{ChaeWu2012} and M.S. Jolly with the present
authors \cite{JollyKumarMartinez2020b}, where a mild inviscid regularization
of {\eqref{eq:gsqg}} is studied in order to recover well-posedness in the
borderline spaces. This regularization modifies the constitutive law for
the velocity with a logarithmic multiplier operator as
\begin{align}
\label{def:mod:gsqg}
u=-\nabla ^{\perp}{\Lambda }^{\beta -2}\left (\ln \left (e-\Delta
\right )\right )^{\widetilde{\mu }}\theta , \quad \widetilde{\mu }<0.
\end{align}
In \cite{ChaeWu2012}, local existence and uniqueness was shown for
$\beta \in [0,1]$ provided that $\widetilde{\mu }<-1/2$, while
\cite{JollyKumarMartinez2020b} established local well-posedness for the
more singular range $\beta \in (1,2)$ provided, again, that
$\widetilde{\mu }<-1/2$. The key difference between the regimes
$\beta \in [0,1]$ and $\beta \in (1,2)$ are in the identification of a
suitable linear system that allows one to accommodate the more nuanced
commutator structure of {\eqref{eq:gsqg}} when establishing stability-type
estimates \textit{at the critical regularity level}. In particular, the
need for such a system is crucial when establishing
\textit{continuity of the solution map} since the classical estimates for
the transport equation require control of
$\lVert \nabla u\rVert _{H^{\beta }}$; one must thus appeal to additional
cancellation in the form of commutators. In an extension of the seminal
work of J. Bourgain and D. Li, it was shown by H. Kwon in
\cite{Kwon2020} that the threshold $\widetilde{\mu }=-1/2$ is in fact sharp
in the endpoint case of the 2D Euler equation by demonstrating strong ill-posedness
in $H^{1}(\mathbb{R}^{2})$ of the corresponding initial value problem for
all $-1/2\leq \widetilde{\mu }<0$. These considerations were
subsequently extended to the inviscid gSQG family by I.-J. Jeong and J.
Kim in \cite{JeongKim2021} and D. C\'ordoba and L. Mart\'inez-Zoroa in
\cite{CordobaZoroa-Martinez2022}. On the one hand, since the ill-posedness
phenomenon arises instantaneously in time, one cannot expect a simple damping
mechanism, for instance, in the form of adding $-\gamma \theta $, where
$\gamma >0$ to the right-hand side of {\eqref{eq:gsqg}}, to preclude the
behavior leading to ill-posedness. On the other hand, as previously mentioned,
if one adds dissipation in the form of $-\gamma \Lambda ^{\kappa }$, where
$\gamma ,\kappa >0$, then the initial value problem becomes locally well-posed
in the corresponding scaling-critical spaces and {globally} well-posed for
sufficiently small data. A natural question to ask, therefore, is what
is the \textit{weakest} form of dissipation that one could add to support
a well-posedness theory in a borderline regularity setting?

In this paper, we develop a general framework of mild dissipation
to comprehensively address this issue for the full range of {\eqref{eq:gsqg}}, $\beta \in [0,2]$, in a borderline regularity setting
that encompasses the classical Sobolev spaces $H^{1+\beta }(\mathbb{R}^{2})$ and frequency-weighted Sobolev spaces $H^{1+\beta}_{\omega}(\mathbb{R}^2)$. The addition of mild dissipation allows us to also consider
constitutive laws that balance the dissipation with logarithmically more
singular velocities. The incorporation of frequency-weights, $\omega$, provide us with further flexiblity to balance the effects from both the modifications of the dissipative term and constitutive law 
through the functional setting. With all
such modifications in place, we develop a unified analysis
of local well-posedness in a general borderline regularity setting
to {\eqref{eq:dgsqg}}, for all
$\beta \in [0,2]$, where $\omega $ belongs an appropriate set of weights.
Allowing for a set of weights allows one to navigate the lack
of a scaling-symmetry in the presence of either mild dissipation or logarithmically-modified
velocity and ultimately provides a sharper family of borderline spaces
in this context. From this point of view, our results touch upon those
of M. Vishik \cite{Vishik1999}, where Yudovich's classical uniqueness theorem
was extended to a borderline Besov space setting where growth in the norm
is allowed, but in a controlled way that is characterized by frequency weights; the role of
the weights $\omega $ {plays a} similar role for us, albeit in an
$L^{2}$ setting.

As in \cite{ChaeWu2012,JollyKumarMartinez2020b}, we identify a putatively
minimal condition for local well-posedness that jointly involves the dissipation,
modification of the constitutive law, as well as the weights
$\omega $ that modify the functional setting. The class of multipliers
considered here are sufficiently broad to accommodate powers of logarithms
or iterated logarithms. One particular special case that is covered is
$m(D)=(\ln (I-\Delta ))^{\mu}$ and
$p(D)=(\ln (I-\Delta ))^{\widetilde{\mu }}$, where we are able to establish
local well-posedness in $H^{1+\beta }(\mathbb{R}^{2})$, for all
$\beta \in [0,2]$, provided that $\mu >\widetilde{\mu }+1/2$, where
$\widetilde{\mu }>-1/2$. Our result therefore interpolates between the
well-posedness results in \cite{ChaeWu2012,JollyKumarMartinez2020b} up
to the sharp thresholds where the equation is known to experience ill-posedness
\cite{JeongKim2021,Kwon2020}, at least in the range
$\beta \in [0,1]$.

We additionally establish a
\textit{mild instantaneous smoothing effect} for {\eqref{eq:dgsqg}} that derives
from the regularizing mechanism of the corresponding linear equation. Indeed,
let $\nu (D)$ denote a multiplier operator that belongs to the class
$\mathscr{M}_{S}(m)$ defined in {\eqref{def:M:m}}. Then we establish the
following result for {\eqref{eq:dgsqg}}:
\begin{equation*}
e^{\lambda t \nu (D)}\theta (t,\cdotp )\in H^{\sigma },\quad
\text{whenever}\quad \theta _{0}\in H^{\sigma },
\end{equation*}
for all $\lambda \in (0,1)$ sufficiently small. In particular, with
$m(D)=(\ln (I-\Delta ))^{\mu}$ where $\mu \ge 1$, this implies
\begin{equation*}
\theta (t,\cdotp )\in H^{\sigma +\lambda t},\quad \text{whenever}
\quad \theta _{0}\in H^{\sigma }.
\end{equation*}
This observation along with a suitable $L^{\infty}$ maximum principle for
the dissipative term allows us to obtain global-in-time existence in the
case of $\beta =0$ (Euler endpoint). This complements the result in H.
Dong and D. Li \cite{DongLi2015}, where global well-posedness in the borderline
space $\dot{H}^{1}\cap \dot{H}^{-1}$ was also established for the corresponding
logarithmic inviscid regularization for a sufficiently large power of the
logarithm, as well as M. Vishik \cite{Vishik1998}, where global existence
for the Euler endpoint was established, but by varying the functional setting,
specifically considering an $L^{p}$--based borderline Besov space. In addition
to providing an alternative mechanism for global existence, our framework
explores the simultaneous effect of regularization or ``singularization''
in the constitutive law, functional setting, and dissipation.

When {\eqref{eq:gsqg}} is dissipatively perturbed by the strong dissipation,
${\Lambda }^{\kappa }$, where $\kappa \in (0,2)$, the smoothing effect
conferred on its solutions is much stronger than the one conferred by mild
dissipation. In a series of works
\cite{DongLi2008,DongLi2010,Biswas2014,BiswasMartinezSilva2015,Li2021,JollyKumarMartinez2020a},
it was shown that the unique solution of the strongly dissipative gSQG
equations emanating from initial data belonging to the scaling-critical
Sobolev space $H^{1+\beta -\kappa }(\mathbb{R}^{2})$, instantaneously enters
a class of smooth functions, known as the Gevrey class, which characterizes
a scale of regularity between the $C^{\infty}$ class of smooth functions
and the $C^{\omega }$ class of real-analytic functions. Such results strengthened
the existing well-posedness results in critical regularity settings
\cite{ChaeLee2003,Miura2006,ChenMiaoZhang2007}. In the setting of euclidean
space, the Gevrey class enforces exponential decay of the frequency spectrum
of its members at some rate. For solutions of the strongly dissipative
gSQG equations, the rate of exponential decay is shown to grow in time.
The series of works mentioned above ultimately culminated in the recent
work \cite{JollyKumarMartinez2020a}, where this
\textit{strong smoothing} phenomenon was established for the range
$\beta \in (1,2]$, with the endpoint $\beta =2$ accordingly modified by
a logarithm (see discussion following {\eqref{eq:gsqg}}), thus completing
this line of investigation for the strongly dissipative gSQG equation in
the setting of the scaling-critical Sobolev spaces. In contrast, the smoothing
effect observed for the mildly dissipative equation given by {\eqref{eq:dgsqg}} is categorically weaker than the one observed for its
strongly dissipative counterpart. In establishing this form of smoothing,
we expand upon the celebrated Gevrey-norm technique of C. Foias and R.
Temam \cite{FoiasTemam1989} to accommodate logarithmic-type multipler operators.

It should be emphasized that the setting of {\eqref{eq:gsqg}} when
$\beta \in (1,2]$ and $\beta >\kappa +1$ exhibits a
\textit{strongly quasilinear} structure due to the fact that the velocity,
treated as a coefficient of the gradient, is of higher order than the linear
dissipative term; from this point of view, the case of mild dissipation
is significantly supercritical. Although the quasilinearity in {\eqref{eq:dgsqg}} is much stronger than the fractionally dissipative counterpart
of {\eqref{eq:gsqg}}, we point out the lack of a bona fide scaling-critical
space effectively places our setting in a subcritical regime, albeit barely
so. The quasilinearity is therefore a source of difficulty that forces
us to exploit the nuanced commutator structure of {\eqref{eq:dgsqg}}, but
is ultimately overcome by jointly exploiting the
\textit{barely subcritical} functional setting of the problem. A notable
related work that may be viewed as somewhat ``dual'' to the considerations
here is that of O. Lazar and L. Xue \cite{LazarXue2019}, where global regularity
of solutions is established for a strongly dissipative counterpart of {\eqref{eq:gsqg}} with constitutive laws that are logarithmically modified
(via $p(D)$ as in {\eqref{eq:dgsqg}}) in a similar fashion to framework developed
in the present article, but for the purpose of considering a
\textit{slightly supercritical} scenario, where the dissipation is chosen
in a particular relation to the constitutive law; in this scenario, global
regularity is known without the logarithmic modification (\cite{ConstantinIyerWu2008,MiaoXue2011})
of the constitutive law. The present article, in contrast, identifies a
general condition between $m(D)$ and $p(D)$ that guarantees local well-posedness
at borderline regularity and the smoothing effects naturally induced by
$m(D)$. Moreover, this general condition specifically accommodates logarithmic-type
{multipliers} that characterize a class of dissipative operators that are
strictly weaker than the strongly dissipative operators previously considered
in the literature. The reader is referred to {\cref{rmk:lazar}} for further
discussion.

Lastly, central to our framework is a model equation that we refer to as
the \textit{protean system}. The protean system is linear system upon which
the proof of well-posedness is found to turn around entirely. This system
takes its inspiration from the role played by the linear transport equation
in the study of the 2D Euler and SQG equations. Indeed, from our perspective,
the linear transport equation can be realized as the protean system of
the 2D Euler equation. However, when considering the gSQG family beyond
the SQG point, i.e., $1<\beta \leq 2$, the paradigm of the transport equation
breaks down. This phenomenon was observed in the most recent work of the
authors with M.S. Jolly \cite{JollyKumarMartinez2020b}, where continuity
of the solution operator required additional modifications to the transport
equation. This modification was also required for establishing local well-posedness
for large data in the critical Sobolev space setting for the supercritically
fractional dissipative gSQG equation by the same authors in
\cite{JollyKumarMartinez2020a}. We ultimately realize these key modifications
in the form of the protean system proposed here (see
\cref{sect:mod:flux}). The present article therefore represents a conclusion
to a trilogy works starting from
\cite{JollyKumarMartinez2020a,JollyKumarMartinez2020b} that forms a culmination
in the understanding gained therein about the gSQG family in borderline
regularity settings.

\subsection{The Protean System}\label{sect:mod:flux}
Our main apriori analysis will be centered around a model equation from which all the estimates relating to the well-posedness and instantaneous smoothing property for the original system \eqref{eq:dgsqg} will effectively follow as a special case; for this reason, we refer to the model equation as the \textit{protean system}. For \eqref{eq:dgsqg}, the protean system is given by a \textit{linear conservation law}, which becomes nonlinear upon appropriately substituting for the solution in its flux. The need for this structure arises from the more singular nature of the constitutive law of the equations. Indeed, in contrast with the regime $\be\in(1,2]$, the protean system reduces simply to a \textit{linear transport equation} (see \eqref{eq:mod:claw} below). We first introduce the protean system here and detail the ways in which it is used to demonstrate well-posedness and smoothing for \eqref{eq:dgsqg}. We develop the apriori analysis for the protean system and the well-posedness of its initial value problem in \cref{sect:apriori}.

Suppose that $q=q(t,x)$, $G=G(t,x)$ are sufficiently smooth functions. For $\be\in[0,2]$, let $m(D)$ and $p(D)$ be Fourier multiplier operators, i.e.,
    \[
        (\mathcal{F}m(D)\ph)(\xi)=m(\xi)(\mathcal{F}\ph)(\xi),\qquad(\mathcal{F}p(D)\ph)(\xi)=p(\xi)(\mathcal{F}\ph)(\xi),\quad m(\xi), p(\xi)\geq0.
    \]
Then define a Fourier multiplier $a(D)$ by
    \begin{align}\label{def:aD}
        a(D):={\Lam}^{\be-2}p(D).
    \end{align}
The precise restrictions on $m(D)$ and $p(D)$ will be specified later. Given $\tht_0=\tht_0(x)$, we will consider the following initial value problem for a linear conservation law:
    \begin{align}\label{eq:mod:claw}
        {\partial_{t}\tht+  \Div F_q(\tht)=-m(D)\tht}+G,\quad\tht(0,x)=\tht_0(x),
    \end{align}
where the flux is given by
    \begin{align}\label{def:mod:flux}
        F_q(\tht)=
            \begin{cases}
                (\nabla^{\perp}a(D)q) \tht,\quad &\text{if}\quad \be \in [0,1],\\
                (\nabla^{\perp}a(D)q) \tht+a(D)(({\nabla}^\perp\tht)q), \quad &\text{if}\quad \be \in (1,2].
            \end{cases}
    \end{align}
Observe that the flux satisfies the following identity:
    \begin{align}\label{eq:cancel}
        \Div F_{-\tht}(\tht)=-({\nabla}^\perp a(D)\tht)\cdotp{\nabla}\tht=v\cdotp{\nabla}\tht.
    \end{align}
where
    \begin{align}\label{def:v}
    v=v(q):=-\nabla^{\perp}a(D)q.
    \end{align}
Observe also that ${\nabla}\cdotp v=0$, for all $q$ sufficiently smooth, which yields the identity
    \begin{align}\label{eq:v:cancel}
        \lb v\cdotp\nabla h,h\rb=0,
    \end{align}
for any sufficiently smooth functions $h$. Hence, one recovers \eqref{eq:dgsqg} from \eqref{eq:mod:claw} when  $q=-\tht$, and $G\equiv0$. In particular, one may obtain apriori estimates for \eqref{eq:dgsqg} by obtaining them for \eqref{eq:mod:claw} and simply specializing to the case $q=-\tht$.

For uniqueness, observe that if  $\tht^{(1)}, \tht^{(2)}$ are distinct solutions, then the difference $\Tht:=\tht^{(1)}-\tht^{(2)}$ is governed by
    \begin{align}\label{eq:difference}
        \partial_{t}\Tht+m(D) \Tht+\Div F_{-\tht^{(1)}}(\Tht)=\Div F_{\Tht}(\tht^{(2)}),\quad\Tht(0,x)=\Tht_0(x)
    \end{align}
with $q=-\tht^{(1)}$ and $G=\Div F_{\Tht}(\tht^{(2)})$. The related issue of continuity with respect to initial data is more delicate. Indeed, assessing stability of \eqref{eq:mod:claw} in the borderline space $H^{1+\be}$ is not possible through a direct analysis of \eqref{eq:difference:intro} due to the loss of derivatives experienced through the flux term in the regime $\be\in(1,2]$. A direct analysis is ultimately limited to establishing continuity in the \textit{weaker topology} of $H^\be$. Bootstrapping from the weaker topology to the topology of the phase space can effectively be carried out by making use of a splitting technique of Kato that analyzes the \textit{gradient} of difference in the weaker topology. However, while this scheme is well-adapted to the transport equation, it is more delicate to accommodate the case of \eqref{eq:mod:claw} when $\be\in(1,2]$.  

To see this, we consider a decomposition $\nabla\tht=\vs+\ze$ and aim to show that for a sequence $\tht_0^n$ converging to $\tht_0$, that the corresponding solution $\nabla\tht^n=\vs^n+\ze^n$ converges to $\nabla\tht$ in $H^{1+\be}$ by showing that $\vs^n, \ze^n$ converge to $\vs,\ze$ in the weaker topology $H^\be$. We specifically assume that $\vs^n,\ze^n,\vs,\ze$ satisfy
    \begin{align}\label{eq:vs:protean}
            \partial_{t}\vs^{n}+m(D)\vs^{n}+ \Div F_{-\tht^{n}}(\vs^{n})&=\Div F_{\nabla\tht}(\tht),\quad\vs^{n}(0,x)=\nabla\tht_0(x),\\
            \partial_{t}\ze^{n}+m(D)\zeta^{n}+ \Div F_{-\tht^n}(\ze^{n})&=\Div F_{\nabla\tht^n}(\tht^{n})-\Div F_{\nabla\tht}(\tht),\quad \ze^{n}(0,x)=\nabla\tht^{n}_0(x)-\nabla\tht_{0}(x),\label{eq:ze:protean}
         \end{align}
where we identify $\tht_0,\vs,\ze$ with $\tht_0^\infty, \vs^\infty, \ze^\infty$. Now observe that \eqref{eq:vs:protean} has the structure of \eqref{eq:mod:claw}  upon making the replacements $q\mapsto -\tht^n$ and $G\mapsto \Div F_{\nabla\tht}(\tht)$. Similarly, \eqref{eq:ze:protean} has the structure of \eqref{eq:mod:claw} upon making the replacement $q\mapsto -\tht^n$ and $G\mapsto \Div F_{\nabla\tht^n}(\tht^{n})-\Div F_{\nabla\tht}(\tht)$. In particular, the systems corresponding to the differences $\vs^n-\vs$ and $\ze^n-\ze$ satisfy systems analogous to \eqref{eq:difference:intro}, which \textit{once again} possess the structure of the protean system and ultimately allow estimates to close in the desired manner.

In order to leverage the estimates obtained for \eqref{eq:mod:claw} and  establish existence and smoothing of solutions to \eqref{eq:dgsqg}, one must identify a suitable approximation of \eqref{eq:dgsqg} that also satisfies the desired estimates, but uniformly in the approximation. Under the assumptions we make for $m(D)$ and $p(D)$, an artificial viscosity approximation scheme will be sufficient; this will be self-evident from the apriori analysis we perform below in \cref{sect:apriori}. Of course, once existence has been established, the proofs of uniqueness and continuity with respect to initial conditions can be performed on the equations rigorously.

\section{Mathematical Preliminaries}\label{section:notation:preliminaries}

Let $d\geq1$ and denote by $\mathscr{S}(\RR^d)$ the space of Schwartz class functions on $\RR^d$ and by $\mathscr{S}'(\RR^d)$ the space of tempered distributions. We will denote by $\hat{f}$ or $\mathcal{F}(f)$, the Fourier transform of a tempered distribution $f$, defined as
	\[
	    \hat{f}(\xi):=\int_{\RR^d}e^{-2\pi i x\cdot \xi}f(x)dx.
    \]
Recall that $\mathcal{F}$ is an isometry on $L^2$, i.e.,
    \begin{align*}
        \lb \hat{f},\hat{g}\rb= \lb f, g\rb:=\int_{\RR^d}f(x)\overline{g(x)}dx.
    \end{align*}
The fractional laplacian operators, denoted ${\Lam}^\s$, $\s\in\RR$, are defined in terms of the Fourier transform by
    \begin{align}\label{def:Lam}
        \mathcal{F}({\Lam}^\s f)(\xi)=|\xi|^\s\mathcal{F}(f).
    \end{align} 
For $d\geq1$ and $\s\in\RR$, the  homogeneous and the inhomogeneous Sobolev spaces are defined as
    \begin{align}
        &\Hdot^\s(\RR^d):=\left\{f\in \mathscr{S}'(\RR^d):\hat{f}\in L^1_{loc}(\RR^d),\quad \Sob{f}{\Hdot^\s}:=\Sob{{\Lam}^\s f}{L^2}<\infty\right\},\label{def:hom:Sob:norm}\\
        &H^\s(\RR^d):=\left\{f\in \mathscr{S}'(\RR^d):\hat{f}\in L^1_{loc}(\RR^d),\quad \Sob{f}{H^\s}:=\Sob{(I-\De)^{\s/2} f}{L^2}<\infty\right\}.\label{def:inhom:Sob:norm}
    \end{align}

Recall that $H^\s(\RR^d)$ is a Hilbert space for all $\s\in\RR$, whereas  $\Hdot^\s(\RR^d)$ is a Hilbert space if and only if $\s<d/2$. The inhomogeneous spaces are nested $H^{\s'}(\RR^d)\subset H^\s(\RR^d)$, whenever $\s'>\s$, and moreover the embedding is compact over compact sets. The homogeneous spaces on the other hand are in general only related by the following interpolation inequality: Given $\s_1\leq\s\leq \s_2$, we have
	\begin{align}\label{est:interpolation}
	    \nrm{f}_{\Hdot^{\s}(\RR^d)}\leq \Sob{f}{\Hdot^{\s_1}(\RR^d)}^{\frac{\s_2-\s}{\s_2-\s_1}}\Sob{f}{\Hdot^{\s_2}(\RR^d)}^{\frac{\s-\s_1}{\s_2-\s_1}}.
	\end{align}
Lastly, observe that for each $\s\geq0$, there exists $C>0$ (depending also on $d$) such that
    \begin{align}\label{eq:Sob:equiv}
        C^{-1}\Sob{f}{H^\s(\RR^d)}\leq \Sob{f}{\Hdot^\s(\RR^d)\cap L^2(\RR^d)}\leq C\Sob{f}{H^\s(\RR^d)},
    \end{align}
where we have adopted the convention
    \[
        \Sob{f}{X\cap Y}^2:=\Sob{f}{X}^2+\Sob{f}{Y}^2,
    \]
where $X, Y$ are (semi)normed vector spaces.
    
\subsubsection{Frequency-weighted Sobolev spaces} Due to the presence of a logarithmic order dissipation and a logarithmic modification in the constitutive law in \eqref{eq:dgsqg}, it will be natural to consider function spaces that accommodate logarithmic decay at infinity in frequency; they have been referred to in the literature as the \textit{log-Sobolev} spaces. Spaces such as these and their natural generalizations have been an object of study in recent years. We refer the reader to \cite{CobosDominguez2015, ColombiniDelSantoFanelliMetivier2015} and the references therein for a rigorous development of these spaces. For our purposes, it will be convenient to introduce these spaces in a greater generality, then identify precise conditions on the weights afterwards.

Let $\om: [0,\infty)\mapsto(0,\infty)$. We denote its associated multiplier operator by $\om(D)$; note that $\om(D)=\om(|D|)$, i.e., the symbol of $\om(D)$ is a radial function. Recall that $\om(D)$ is defined by the relation  $\mathcal{F}(\om(D)f)(\xi)=\om(|\xi|)(\mathcal{F}f)(\xi)$. Then for $\s \in \mathbb{R}$, we define the $\om$-weighted Sobolev spaces on $\RR^{d}$ by
    \begin{align}
        \Hdot^{\s}_{\om}(\RR^d)&:=\left\{f\in \mathscr{S}'(\RR^d):\hat{f}\in L^1_{loc}(\RR^d),\quad \Sob{f}{\Hdot^{\s}_{\om}}:=\Sob{\om(D)f}{\Hdot^\s}<\infty\right\},\label{def:hom:Sob:norm:log}\\
        H^{\s}_{\om}(\RR^d)&:=\left\{f\in \mathscr{S}'(\RR^d):\hat{f}\in L^1_{loc}(\RR^d),\quad \Sob{f}{H^{\s}_{\om}}:=\Sob{\om(D)f}{H^\s}<\infty\right\},\label{def:inhom:Sob:norm:log}
    \end{align}
In this setting, an interpolation inequality analogous to \eqref{est:interpolation} also holds: for all $\gam_1\leq \gam\leq \gam_2$, we have
    \begin{align}\label{est:interpolation:weighted}
        \Sob{\om(D)^{\gam}f}{L^2(\RR^d)}\leq \Sob{\om(D)^{\gam_1}f}{L^2(\RR^d)}^{\frac{\gam_2-\gam}{\gam_2-\gam_1}}\Sob{\om(D)^{\gam_2}f}{L^2(\RR^d)}^{\frac{\gam-\gam_1}{\gam_2-\gam_1}}.
    \end{align}
The proof is omitted since the same argument for proving \eqref{est:interpolation} can be applied to prove \eqref{est:interpolation:weighted}.

\begin{Rmk}
We point out that for our main results (see \cref{sect:main:results}), we will restrict the class of weights, $\om$, that we consider by imposing certain restrictions on them that will be useful for the analysis (see \eqref{def:MM} in \cref{sect:op:classes}). These restrictions will imply the following growth condition: there exist constants $C, N>0$ such that
  \begin{align}\notag
      \om(r)\le C(\ln(e+r))^{N},
    \end{align}
for all $r\geq0$. For example, we may consider $\om(r)=(\ln(e+r^2))^{\rho_1}$, $\rho_1\in \mathbb{R}$. In this particular case, we will denote the corresponding inhomogeneous and homogeneous log-Sobolev spaces by $H^{\s}_\om=H^{\s, \rho_1}$ and $\Hdot^{\s}_\om=\Hdot^{\s,\rho_1}$. Similarly, for $\om(r)=(\ln(e+r^2))^{\rho_1}(\ln(e+\ln(1+r^2))^{\rho_2}$, $\rho_1, \rho_2\in \mathbb{R}$, we will denote the corresponding inhomogeneous and homogeneous log-Sobolev spaces by $H^{\s, \rho_1, \rho_2}$ and $\Hdot^{\s,\rho_1, \rho_2}$, and so on.
\end{Rmk}

\subsection{Littlewood-Paley Decomposition}\label{sect:lp:decomposition}
In this section, we provide a brief review of the Littlewood-Paley decomposition and refer the reader to \cite{BahouriCheminDanchinBook2011, CheminBook1995} for additional details. First, we introduce the space 
    \begin{align*}
        \mathscr{Q}(\RR^d):=\left\{f\in \mathscr{S}(\RR^d): \int_{\RR^d}f(x)x^{\tau}\, dx=0, \quad \abs{\tau}=0,1,2,\cdots \right\}.
    \end{align*}
The topological dual space, $\mathscr{Q}'(\RR^d)$, of $\mathscr{Q}(\RR^d)$, can be identified with the space of tempered distributions modulo polynomials, that is, as
    \begin{align*}
        \mathscr{Q}'(\RR^d)\cong\mathscr{S}'(\RR^d)/\mathscr{P}(\RR^d).
    \end{align*}
{where $\mathscr{P}(\RR^d)$ denotes the vector space of polynomials}.
	
Given $d\geq1$, fixed, let  ${\Bcal}(r)$ denote the open ball in $\RR^d$ of radius $r$ centered at the origin and ${\Acal}(r_{1},r_{2})$ denote the open annulus in $\RR^d$ with inner and outer radii $r_{1}$ and $r_{2}$, and centered at the origin. One can construct two non-negative radial functions $\chi,\phi\in\mathscr{S}(\RR^d)$ with $\supp\chi\subset{\Bcal}(1)$ and $\supp\phi\subset{\Acal}(2^{-1},2)$ such that the following properties are satisfied. For $\chi_j(\xi):=\chi(2^{-j}\xi)$ and $\phi_j(\xi):=\phi(2^{-j}\xi)$,
    \begin{align}\label{eq:partition}
	    \begin{cases}
	     &\sum_{j\in\ZZ}\phi_j(\xi)=1,\quad \forall\,\xi\in\RR^d\setminus\{\mathbf{0}\},\\
	     &\chi+\sum_{j\geq0}\phi_j\equiv 1,\\
	     &\supp\phi_i\cap\supp\phi_j=\varnothing,\quad |i-j|\geq2,\\
	     &\supp\phi_i\cap\supp\chi =\varnothing,\quad i\geq1.
	    \end{cases}
	\end{align} 
It will be convenient to define the following notation:
    \begin{align}\label{def:ann:ball}
        {\Acal}_{j}= {\Acal}(2^{j-1},2^{j+1}),\quad{\Acal}_{\ell,k}= {\Acal}(2^{\ell},2^{k}),\quad  {\Bcal}_j={\Bcal}(2^j),
    \end{align}
so that, in particular, ${\Acal}_j={\Acal}_{j-1,j+1}$. With this notation, observe that
    \begin{align}\label{eq:rewrite:supp}
        \supp\phi_j\subset{\Acal}_j,\quad \supp\chi_j\subset{\Bcal}_j.
    \end{align}
We denote the homogeneous Littlewood-Paley dyadic blocks by ${\lpj}$ and $S_{j}$. These are both defined in terms of its Fourier transform by
    \begin{align}\notag
	    \mathcal{F}({\lpj}f)=\phi_{j}\mathcal{F}(f), \quad \mathcal{F}(S_{j}f)=\chi_{j}\mathcal{F}(f). 
	\end{align}

One then has the following identity 
    \begin{align*}
        f&=S_if+\sum_{j\geq i}\lpj f,\quad i\in\ZZ,\quad f\in\mathscr{S}'(\RR^d).
    \end{align*}
In fact, when restricted to $\mathscr{Q}'$, one has
    \begin{align*}
        f&=\sum_{j\in\ZZ}\lpj f,\quad f\in\mathscr{Q}'(\RR^d).
    \end{align*}
Then a useful characterization of Sobolev norms is as follows: given $\s\in\RR$, there exists $C>0$ such that
    \begin{align}\label{eq:Sob:Bes}
        C^{-1}\Sob{f}{\Hdot^{\s}(\RR^d)}\leq \left(\sum_{j\in \mathbb{Z}}2^{2\s j}\Sob{\triangle_{j}f}{L^2(\RR^d)}^{2}\right)^{1/2}\le C\Sob{f}{\Hdot^{\s}(\RR^d)},
    \end{align}
Note that this equivalence also holds when the support of $\lpj$ is rescaled by any fixed number (see \cite{BahouriCheminDanchinBook2011}). We refer to the intermediate quantity as the homogeneous Besov norm $\Sob{f}{\dot{B}^{\s}_{2,2}(\RR^d)}$; whenever $\s\geq0$, we define the corresponding \textit{inhomogeneous} Besov norm by 
    \[
        \Sob{f}{B^{\s}_{2,2}(\RR^d)}^2:=\Sob{f}{\dot{B}^{\s}_{2,2}(\RR^d)\cap L^2(\RR^d)}^2=\Sob{f}{\dot{B}^{\s}_{2,2}(\RR^d)}^2+\Sob{f}{L^2(\RR^d)}^2.
    \]
With this notation, we shall also make use of the following notation for the Besov-space analog of the frequency-weighted Sobolev spaces:
    \begin{align}\label{def:Bes:om}
        \Sob{f}{\dot{B}^{\s}_{\om}(\RR^d)}=\Sob{\om(D)f}{\dot{B}^{\s}_{2,2}(\RR^d)}, \quad \Sob{f}{{B^\s_\om}(\RR^d)}=\Sob{\om(D)f}{\dot{B}^{\s}_{2,2}(\RR^d)\cap L^2(\RR^d)},
    \end{align}
where the latter quantity denotes the corresponding inhomogeneous Besov norm whenever $\s\geq0$.

The relation between the Littlewood-Paley blocks and the fractional laplacian is captured by the following Bernstein-type inequalities.	
\begin{Lem}[Bernstein inequalities]\label{lem:Bernstein}
Let $\s\in\RR$ and $1\le p \le q\le \infty$. Then there exists $C>0$ such that
	\begin{align*}
	    C^{-1}2^{\s j}\nrm{{\lpj}f}_{L^q(\RR^d)}\le \nrm{{\Lam}^{\s}{\lpj}f}_{L^q(\RR^d)}\le C 2^{\s j+dj(\frac{1}{p}-\frac{1}{q})}\nrm{{\lpj}f}_{L^p(\RR^d)},
	\end{align*}
for all $j\in\ZZ$ and $f\in\mathscr{S}'(\RR^d)$.
\end{Lem}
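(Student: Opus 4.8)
The plan is to derive both inequalities from the single structural fact that $\lpj f$ has Fourier support in the annulus $\Acal_j=\Acal(2^{j-1},2^{j+1})$, which lets me realize $\Lam^\s\lpj f$ and $\lpj f$ as convolutions against rescaled Schwartz kernels and then invoke Young's convolution inequality. First I would fix an auxiliary radial cutoff $\til\ph\in\mathscr{S}(\RR^d)$ with $\supp\til\ph\subset\Acal(2^{-2},2^{2})$ (an annulus bounded away from the origin) and $\til\ph\equiv1$ on $\supp\ph\subset\Acal(2^{-1},2)$. Setting $\til\ph_j(\xi):=\til\ph(2^{-j}\xi)$, the identity $\til\ph_j\ph_j=\ph_j$ ensures that $\til\ph_j$ acts as the identity on the Fourier support of $\lpj f$.

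For the upper bound, I would write $\Ft(\Lam^\s\lpj f)=|\xi|^\s\ph_j\hat{f}=(|\xi|^\s\til\ph_j)\,\Ft(\lpj f)$, so that $\Lam^\s\lpj f=h_j*(\lpj f)$ with $\hat{h}_j=|\xi|^\s\til\ph_j$. A change of variables $\xi=2^j\zeta$ gives the scaling relation $h_j(x)=2^{j\s}2^{jd}h_0(2^jx)$, where $h_0:=\Ft^{-1}(|\cdot|^\s\til\ph)$; hence $\nrm{h_j}_{L^r}=2^{j\s}2^{jd(1-1/r)}\nrm{h_0}_{L^r}$. Young's inequality with $1/r=1+1/q-1/p$ (which lies in $[0,1]$ precisely because $p\le q$) then yields
\[
\nrm{\Lam^\s\lpj f}_{L^q}\le\nrm{h_j}_{L^r}\nrm{\lpj f}_{L^p}=C\,2^{j\s+dj(1/p-1/q)}\nrm{\lpj f}_{L^p},
\]
upon noting $1-1/r=1/p-1/q$. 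For the lower bound (where both sides carry the same exponent $q$), I would instead write $\Ft(\lpj f)=|\xi|^{-\s}\til\ph_j\,\Ft(\Lam^\s\lpj f)$, so $\lpj f=\til h_j*(\Lam^\s\lpj f)$ with $\til h_j(x)=2^{-j\s}2^{jd}\til h_0(2^jx)$ and $\til h_0:=\Ft^{-1}(|\cdot|^{-\s}\til\ph)$; Young's inequality with $r=1$ gives $\nrm{\lpj f}_{L^q}\le\nrm{\til h_j}_{L^1}\nrm{\Lam^\s\lpj f}_{L^q}=2^{-j\s}\nrm{\til h_0}_{L^1}\nrm{\Lam^\s\lpj f}_{L^q}$, which rearranges to the claimed lower bound.

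The crux of the argument — and the only point requiring genuine care — is that the kernels $h_0$ and $\til h_0$ are truly Schwartz, so that $\nrm{h_0}_{L^r},\nrm{\til h_0}_{L^1}<\infty$ for every $\s\in\RR$, including negative and non-integer exponents. This is exactly where the annular localization is essential: since $\supp\til\ph$ is bounded away from the origin, the symbols $|\xi|^\s\til\ph(\xi)$ and $|\xi|^{-\s}\til\ph(\xi)$ are smooth, compactly supported functions, whence their inverse Fourier transforms decay rapidly. Were $\lpj$ replaced by the low-frequency projection $S_j$, whose symbol meets the origin, $|\xi|^\s$ would fail to be smooth there for $\s<0$ or non-integer $\s$, and the estimate would break down. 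A final bookkeeping remark is that $\nrm{h_0}_{L^r}$ and $\nrm{\til h_0}_{L^1}$ depend only on $\s$, $d$, $p$, $q$ and the fixed cutoff $\til\ph$, but not on $j$ or $f$, so a single constant $C$ works uniformly in $j\in\ZZ$ and $f\in\mathscr{S}'(\RR^d)$.
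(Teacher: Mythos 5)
The paper states this lemma without proof, citing \cite{BahouriCheminDanchinBook2011, CheminBook1995}, and your argument is precisely the standard proof found in those references: realize $\Lam^{\s}\lpj$ as convolution against a rescaled Schwartz kernel whose symbol is $|\xi|^{\s}$ times a fattened annular cutoff, compute the kernel's $L^r$ norm by scaling, and conclude with Young's inequality (with $1/r=1+1/q-1/p$ for the upper bound and $r=1$ for the lower bound). Your proof is correct, including the key observation that the annular support keeps $|\xi|^{\pm\s}\til\ph(\xi)$ smooth for every real $\s$, which is exactly what makes the kernels Schwartz and the constants independent of $j$.
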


Let us recall the following classical product estimate in homogeneous Sobolev spaces (see \cite{BahouriCheminDanchinBook2011,RunstSickel1996}).
\begin{Lem}\label{lem:Sobolev}
Suppose that $\s_1,\s_2 \in (-d/2,d/2)$ and $\s_1+\s_2>0$. Let $f\in \Hdot^{\s_1}(\RR^d)$ and $g\in \Hdot^{\s_2}(\RR^d)$. Then there exists $C>0$ such that
\begin{align*}
    \Sob{fg}{\Hdot^{\s_1+\s_2-\frac{d}{2}}(\RR^d)}\le C\Sob{f}{\Hdot^{\s_1}(\RR^d)}\Sob{g}{\Hdot^{\s_2}(\RR^d)}.
\end{align*}
\end{Lem}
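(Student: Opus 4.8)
The plan is to use Bony's paraproduct decomposition together with the Bernstein inequalities (\cref{lem:Bernstein}) and the Littlewood--Paley characterization \eqref{eq:Sob:Bes} of the homogeneous Sobolev norm. Set $s:=\s_1+\s_2-\frac d2$ and decompose $fg=T_fg+T_gf+R(f,g)$, where $T_fg=\sum_{j}S_{j-1}f\,\triangle_jg$, $T_gf=\sum_{j}S_{j-1}g\,\triangle_jf$, and $R(f,g)=\sum_{|j-k|\le1}\triangle_jf\,\triangle_kg$. Throughout I would work with the $\ell^2$ sequences $c_j:=2^{j\s_1}\nrm{\triangle_jf}_{L^2}$ and $d_j:=2^{j\s_2}\nrm{\triangle_jg}_{L^2}$, which satisfy $\nrm{c}_{\ell^2}=\Sob{f}{\Hdot^{\s_1}}$ and $\nrm{d}_{\ell^2}=\Sob{g}{\Hdot^{\s_2}}$ by \eqref{eq:Sob:Bes}, and reduce the whole estimate to discrete convolution inequalities in $j$.

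For the paraproduct $T_fg$, each summand $S_{j-1}f\,\triangle_jg$ is frequency-localized in an annulus of radius $\sim2^j$, so only finitely many indices contribute to a given block $\triangle_q(T_fg)$ and almost-orthogonality gives $\Sob{T_fg}{\Hdot^{s}}^2\lesssim\sum_j2^{2js}\nrm{S_{j-1}f\,\triangle_jg}_{L^2}^2$. I would bound $\nrm{S_{j-1}f\,\triangle_jg}_{L^2}\le\nrm{S_{j-1}f}_{L^\infty}\nrm{\triangle_jg}_{L^2}$ and, via \cref{lem:Bernstein} with $p=2,q=\infty$, $\nrm{S_{j-1}f}_{L^\infty}\lesssim\sum_{k\le j-2}2^{kd/2}\nrm{\triangle_kf}_{L^2}=\sum_{k\le j-2}2^{k(d/2-\s_1)}c_k$. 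Here the hypothesis $\s_1<\tfrac d2$ makes the weight $2^{k(d/2-\s_1)}$ summable as $k\to-\infty$, so after factoring out the appropriate powers of $2^j$ the quantity $2^{js}\nrm{S_{j-1}f\,\triangle_jg}_{L^2}$ is controlled by $\big(\sum_k a_{j-k}c_k\big)d_j$ with $a\in\ell^1$. Young's inequality for the convolution and the embedding $\ell^2\hookrightarrow\ell^\infty$ applied to one factor then yield $\Sob{T_fg}{\Hdot^{s}}\lesssim\Sob{f}{\Hdot^{\s_1}}\Sob{g}{\Hdot^{\s_2}}$. The term $T_gf$ is handled identically after exchanging the roles of $(\s_1,f)$ and $(\s_2,g)$, now invoking $\s_2<\tfrac d2$.

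The resonance term $R(f,g)$ is the main obstacle, and it is here that $\s_1+\s_2>0$ is indispensable. In contrast to the paraproducts, each summand $\triangle_jf\,\triangle_kg$ with $|j-k|\le1$ has Fourier support in a \emph{ball} of radius $\sim2^{j}$ rather than an annulus, so $\triangle_qR(f,g)$ collects all contributions with $j\gtrsim q$ and almost-orthogonality is lost at high frequency. I would instead estimate $\nrm{\triangle_q(\triangle_jf\,\triangle_kg)}_{L^2}\lesssim2^{qd/2}\nrm{\triangle_jf\,\triangle_kg}_{L^1}\lesssim2^{qd/2}\nrm{\triangle_jf}_{L^2}\nrm{\triangle_kg}_{L^2}$ using the $L^1\!\to\!L^2$ Bernstein inequality and Cauchy--Schwarz. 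Since $s+\tfrac d2=\s_1+\s_2$, the weighted sum reorganizes as $2^{qs}\nrm{\triangle_qR(f,g)}_{L^2}\lesssim\sum_{j\ge q-C}2^{(q-j)(\s_1+\s_2)}c_jd_j$, a discrete convolution whose kernel $2^{-m(\s_1+\s_2)}\indFn{m\ge -C}$ lies in $\ell^1$ precisely because $\s_1+\s_2>0$. Young's inequality followed once more by $\ell^2\hookrightarrow\ell^\infty$ on the product $c_jd_j$ then gives $\Sob{R(f,g)}{\Hdot^{s}}\lesssim\Sob{f}{\Hdot^{\s_1}}\Sob{g}{\Hdot^{\s_2}}$. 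Finally, the lower bounds $\s_1,\s_2>-\tfrac d2$ ensure that the target exponent $s$ lies in $(-\tfrac d2,\tfrac d2)$, so that every space appearing is a genuine Hilbert space and the three paraproduct/resonance series converge in $\mathscr{S}'$; summing the three contributions yields the asserted estimate.
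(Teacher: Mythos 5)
Your argument is correct, and it is essentially the route the paper itself takes: although Lemma \ref{lem:Sobolev} is only cited to \cite{BahouriCheminDanchinBook2011,RunstSickel1996} without proof, the paper's weighted generalization (\cref{thm:prod}, proved in \cref{app:prod} via \cref{lem:prod:log:A} and \cref{lem:prod:log:B}) follows exactly your Bony decomposition scheme --- the conditions $\s_1,\s_2\le d/2$ enter through the two paraproducts and $\s_1+\s_2>0$ through the remainder, just as in your sketch. The only substantive difference is cosmetic: for the low--high term the paper bounds $\Sob{S_{k-3}f}{L^\infty}$ by $\Sob{\chi_{k-3}\hat f}{L^1}$ and applies Cauchy--Schwarz directly in the frequency integral, whereas you sum Bernstein over the low blocks and run a discrete convolution; the two computations are equivalent. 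One small convention point: with the paper's choice of $\chi,\phi$ (supports in $\Bcal(1)$ and $\Acal(2^{-1},2)$), the product $S_{j-1}f\,\lpj g$ is \emph{not} spectrally supported in an annulus --- the supports just touch at $|\xi|=0$ --- so you should take $S_{j-2}$ or $S_{j-3}$ (as the paper does in \eqref{def:para:factors}) in the paraproduct to justify the almost-orthogonality claim; this changes nothing else in the argument. Also, the inclusion $s>-d/2$ for the target exponent follows from $\s_1+\s_2>0$ rather than from the individual lower bounds on $\s_1,\s_2$, which are in fact implied by the other hypotheses.
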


\begin{Rmk}\label{rmk:convention} For the remainder of the manuscript, we adopt the convention that whenever $d=2$, we will denote $H^\s(\RR^2), {H}^{\s,\rho}(\RR^2)$ simply as $H^\s, {H}^{\s,\rho}$, and similarly for their homogeneous counterparts and related spaces such as $\mathscr{S}(\RR^2)$, $\mathscr{S}'(\RR^2)$, etc. Whenever results hold for $d\geq1$, we will explicitly write $H^\s(\RR^d)$, $\Hdot^{\s,\rho}(\RR^d)$, $\mathscr{S}(\RR^d)$, etc., in their statements. However, we will always suppress the domain when performing estimates. Lastly, whenever the parameter, $d$, appears, it is understood that $d\geq1$ unless stated otherwise.
\end{Rmk}

\subsection{Multiplier Classes
}\label{sect:op:classes}In this section, we {identify a minimal set of assumptions that} define general classes of Fourier multiplier operators which characterize the regularity and growth properties of the multiplier operators $m(D)$, $p(D)$, $\om(D)$ and $\nu(D)$ that feature in the model \eqref{eq:dgsqg} and our main results. A central preoccupation of this article is in identifying the precise interrelation between these operators in establishing local well-posedness (in the sense of Hadamard). Indeed, we deconstruct the various effects arising from the dissipative operator, $m(D)$, and the operator $p(D)$ which multiplicatively modifies the constitutive relation between the advecting velocity field and the transported scalar in such a way that provides either regularization or de-regularization. To carry out this ``deconstructive analysis," we introduce the operators $\om(D)$ and $\nu(D)$. On an intuitive level, the role of these operators can be described as follows:
    \begin{align}
        \om(D)&\sim\ \text{logarithmically adjusts the regularity of the \textit{phase space}}\notag\\
        p(D)&\sim\ \text{represents the logarithmic modification of the \textit{constitutive law}}\notag\\
        m(D)&\sim\ \text{represents the \textit{dissipation mechanism}}\notag\\
        \nu(D)&\sim\ \text{captures the \textit{smoothing mechanism} associated to $m(D)$}\notag.
    \end{align}
{Ultimately, the multiplier $\om(D)$ enables an additional degree of flexibility for the local existence theory. In particular, it allows us to accommodate additional logarithmic losses of derivatives in the initial data without leaving the setting of borderline regularity; one of the main observations represented by our results is that this effect can be balanced by appropriately adjusting the regularizing or singularizing effects of the constitutive law or dissipation.} On the other hand, the multiplier $\nu(D)$ enables us to quantify the regularizing mechanism of the dissipation operator. The precise inter-relation between these operators that admit well-posedness of the corresponding active scalar transport system are stated in our main theorems in \cref{sect:main:results}. 

\subsubsection{Frequency weights associated to the regularity of the phase space} Let us first introduce the following properties:
    \begin{description}
        \item[\namedlabel{item:O1}{(O1)}]
            $\om \in C^1([0,\infty))$ is positive,  and satisfies  $\om'\geq0$.
        \item[\namedlabel{item:O2}{(O2)}] 
            There exists $C>0$ such that ${r\om'(r)}\le {C{\om(r)}}$, for all $r\ge 0$.
        \item[\namedlabel{item:O3}{(O3)}] There exists $C>0$ such that 
    \begin{align}\notag
       \om(r_1r_2)\leq C(\om(r_1)+\om(r_2)),\quad\text{for all}\ r_1,r_2\geq0.
    \end{align}
    \end{description}
From \ref{item:O1} and \ref{item:O2}, we deduce the following property: for any integer $k_1,k_2>0$, there exists $C>0$ such that for all $j\in\ZZ$
    \begin{align}\label{est:omega}
        \begin{split}
         C^{-1}\om(2^j)&\le \om(r)\le C\om(2^j).
         \end{split}
    \end{align}
for all $r\in[2^{j-k_1},2^{j+k_2}]$.

Now, let us see how \ref{item:O3} limits the growth at infinity provided that \ref{item:O1}, \ref{item:O2} are also satisfied. Indeed, observe that \ref{item:O2} implies
    \begin{align}\label{eq:om:upper}
        \om(r)\le \om(1)r^{C},\quad \text{for all}\ r\ge 1,
    \end{align}
where $C$ is the same constant from \ref{item:O2}. In particular, \ref{item:O2} limits growth at infinity to be at most algebraic. However, upon applying \ref{item:O3} with $r_1=r_2=\sqrt{r}$ and iterating, we obtain
    \begin{align}\notag
        \om(r)\leq 2C\om(r^{1/2})\leq(2C)^2\om(r^{1/4})\leq\dots\leq (2C)^n\om(r^{1/2^n})\leq {(2C)^n\om(1)r^{C/{2^n}}},
    \end{align}
for all $n>0$, where we have applied \eqref{eq:om:upper} in obtaining the final inequality. Hence, it follows that for any $\eps>0$, there exist $c_{\eps}>0$ such that
    \begin{align}\label{eq:omega:eps}
        \om(r)\le c_{\eps}r^{\eps}\quad\text{for all}\ r\geq1.
    \end{align}
Moreover
    \begin{align}\label{eq:omega:eps:bdd}
        \om(r)\leq \left(\sup_{0\leq r\leq 1}\om(r)\right)+c_\eps  r^{\eps}\leq C_\eps(1+r)^\eps,
    \end{align}
for all $r\geq0$, for some $C_\eps>0$.
    
Finally, we observe that if $\om_1,\om_2$ satisfies  \ref{item:O1}, \ref{item:O2}, and \ref{item:O3}, then the pointwise product $\om_1\om_2$ also satisfies \ref{item:O1}, \ref{item:O2}, and \ref{item:O3}.

Let us then define the class $\mathscr{M}_W$ by
    \begin{align}\label{def:MM}
        \mathscr{M}_W:=\left\{\frac{\om_a(D)}{\om_b(D)}:\om_a,\om_b\in C^1([0,\infty))\ \text{satisfy \ref{item:O1}, \ref{item:O2},\ \ref{item:O3}}\right\}.
    \end{align}
It follows by an application of Plancherel's theorem and \eqref{est:omega} that whenever $\om\in\mathscr{M}_W$, there exists $c,C>0$ such that
    \begin{align}\label{eq:bernstein:om}
	    c\om(2^j)\nrm{{\lpj}f}_{L^2(\RR^d)}\le \nrm{\om(D){\lpj}f}_{L^2(\RR^d)}\le C \om(2^j)\nrm{{\lpj}f}_{L^2(\RR^d)}.
	\end{align}
We will make use of \eqref{eq:bernstein:om} in a crucial way in the product and commutator estimates. { Moreover, observe that by \ref{item:O1}, we have $\om_b(r)\ge c$ for some positive constant $c$, hence \eqref{eq:omega:eps} holds for all $\om\in\mathscr{M}_W$.} Note that a prototypical example of $\om\in\mathscr{M}_W$ is given by $\om(r)=\left(\ln(e+r^2)\right)^p$, where $p$ is a nonzero real number.

\subsubsection{Multipliers associated to the constitutive law}
{For the multipliers, $p(D)$, associated to the constitutive law we will first introduce a slightly generalized class  $\tilde{\mathscr{M}}_W$, which we will then supplement with a property that limits the rate of decay at infinity. We recall that this property is what ultimately prevents the velocity from automatically being Lipschitz}. Let
    \begin{align}\label{def:MMt}
        \tilde{\mathscr{M}}_W:=\left\{\frac{p_a(D)}{p_b(D)}:p_a,p_b\in C^1([0,\infty))\ \text{satisfy \ref{item:O1}, \ref{item:O2}}\right\}.
    \end{align}
We point out that \eqref{est:omega} and \eqref{eq:bernstein:om} still hold for any $p\in\tilde{\mathscr{M}}_W$ since they only rely on the properties \ref{item:O1}, \ref{item:O2}.
{It follows, upon solving the differential inequality in \ref{item:O2}, that for $0<r_1\le r_2$ we have
    \[
    \ln \left(\frac{p_a(r_1+r_2)}{p_a(r_2)}\right)\le C\ln \left(1+\frac{r_1}{r_2}\right)\le C\ln2.
    \]
Hence
    \[
        p_a(r_1+r_2)\le 2^{C} p_a(r_2),\quad 0<r_1\leq r_2.
    \]
Similarly for $0<r_2\le r_1$, we have
    \[
        p_a(r_1+r_2)\le 2^C p_a(r_1).
    \] 
Upon combining the relations from both cases, we deduce that there exists $C>0$ such that 
    \begin{align}\label{eq:p:triangle:M_1}
       p_a(r_1+r_2)\leq C \max\{p_a(r_1), p_a(r_2)\}\le C(p_a(r_1)+p_a(r_2)),\quad \text{for all}\ r_1,r_2 \ge  0.
    \end{align}
}
Moreover, from \eqref{eq:p:triangle:M_1}, we may make additional use of \ref{item:O1} to show that
    \begin{align}\label{def:gen:tri:prod}
        p(|\xi|+|\eta|)\leq C\left(p_a(|\xi|)+p_a(|\eta|)\right),
    \end{align}
for some constant $C>0$, which depends on the value of $p_b(0)$.

Finally, we incorporate the decay restriction and define
  \begin{align}\label{def:MC}
        \mathscr{M}_C:=\left\{p(D)\in {\tilde{\mathscr{M}}_W} : \int_{1}^{\infty}\frac{p^{2}(r)}{r}dr=\infty\right\},
    \end{align}
We see that \eqref{def:MC} limits the symbol of the multiplier to decay at most logarithmically to a certain degree at infinity. A prototypical example is given by $p(r)=\left(\ln(e+r^2)\right)^{-1/2}$.

\subsubsection{Multipliers associated to the dissipation and its smoothing effect} The next class of multipliers we define are introduced to capture the smoothing effects of the dissipation represented by the multiplier operator $m(D)$. These effects are typically captured by an operator of the form
    \begin{align}\label{def:E}
    E_\nu^{\lam}f=e^{\lam \nu(D)}f,
    \end{align}
where $\nu(D)$ is a radial multiplier operator, i.e., $\nu(D)=\nu(|D|)$. We will refer to \eqref{def:E} as the \textit{smoothing operator induced by} $\nu(D)$.

Define a class of dissipation operators, denoted by $\mathscr{M}_D$, as
\begin{align}\label{def:MD}
    \mathscr{M}_D:=\left\{m(D): I+m(D)\in{\mathscr{M}}_W \right\}.
\end{align}
Let us assume that the scalar function, $\nu(r)$, associated to the multiplier operator $\nu(D)$ satisfies:
\begin{description}
    \item[\namedlabel{item:S1}{(S1)}]$\nu\in C^1([0,\infty))$ and $\nu(r), \nu'(r)\geq0$, for all $r\geq0$.
      \item[\namedlabel{item:S2}{(S2)}] There exists a constant $C>0$ such that $r\nu'(r)\le C$, whenever $r\ge 0$.
    \end{description}
Now given a multiplier $m(D)\in\mathscr{M}_D$, we associate its corresponding smoothing effect by introducing the multiplier class
\begin{align}\label{def:M:m}
    \mathscr{M}_{S}(m):=\left\{\nu(D) :\nu\, \text{satisfies \ref{item:S1}, \ref{item:S2} and}\, \nu(r)\le C(1+m(r)),\ \text{for some}\ C>0,\ \text{for all}\ r\ge 0  \right\}.
\end{align}

Finally, we define a class of functions which can be thought of as a logarithmic analog of the Gevrey classes. Given $\lam>0$, define
\begin{align}\label{def:chern:foias}
    \dot{E}^{\lam,\s}_{\nu,\om}:=\left\{f\in L^2 : \Sob{f}{\dot{E}^{\lam,\s}_{\nu,\om}}:= \Sob{E_\nu^{\lam}f}{\Hdot^{\s}_{\om}}<\infty \right\}.
\end{align}

\section{Statements of Main Results}\label{sect:main:results}
Our main local well-posedness results for the family \eqref{eq:dgsqg} are captured by the following two theorems, the first of which considers the case $\be \in (0,2]$ and the second which considers the endpoint case, $\be=0$, representing the 2D (mildly dissipative) Euler equation. To state these results, we recall that the class of multipliers that modify the constitutive law is denoted by $\mathscr{M}_C$ and is defined in \eqref{def:MC}.  The class of multipliers, $ \mathscr{M}_W$, {adjusts the regularity of the phase space and is defined in \eqref{def:MM}.} The class of multipliers characterizing the dissipation operator is denoted by $\mathscr{M}_D$ and defined in \eqref{def:MD}. Lastly, the class of multipliers, $\nu(D)$, that captures the smoothing effect associated to the dissipation operator, $m(D)$, is denoted by $\mathscr{M}_{S}(m)$, and is defined in \eqref{def:M:m}. { Henceforth, it will be convenient to introduce the notation
    \begin{align}\label{def:Md}
        m_1(D):=I+m(D).
    \end{align}
}
\begin{Thm}\label{thm:main:dgsqg}
Let $\be \in (0,2]$. Let $p(D)\in\mathscr{M}_C$, $m(D) \in \mathscr{M}_D$, and $\om(D)\in \mathscr{M}_W$, where $p=p_ap_b^{-1}$. Suppose there exists $\gamma<1$ such that
    \begin{align}\label{cond:main}
       \sup_{y>0}\left\{\frac{1}{m^{\gamma}_1(y)} \left(\int_{0}^{y}\frac{r(p^2(y)+p^2(r))}{(1+r^2){\om^2(r)}}dr\right)^{\frac{1}{2}},\quad \frac{p_a(y)\om_b(y)}{m^{\gamma}_1(y)}\right\}<\infty.
    \end{align}
Then for each $\tht_{0}\in H^{1+\be}_{\om}$, there exists a positive $T=T(\Sob{\tht_{0}}{H^{1+\be}_\om})$ and a unique function, $\tht(\cdotp;\tht_0)$ satisfying \eqref{eq:dgsqg}, such that
    \[
        \tht \in C([0,T];H^{1+\be}_{\om})\cap L^{2}(0,T; H^{1+\be}_{\om {m}^{1/2}}).
    \]
Moreover, the data-to-solution map, $\Phi$, defined by
	{\par\nobreak\noindent}\begin{align}\label{def:flowmap:dsqg}
	    \Phi: H^{1+\be}_{\om}\goesto  \bigcup_{T>0}C([0,T]; H^{1+\be}_\om),\quad \tht_{0}\mapsto\tht(\cdotp;\tht_0),
	\end{align}
is continuous. Lastly, if $\nu(D)\in \mathscr{M}_{S}(m)$, then there exists $\lam>0$ such that $\tht$ satisfies
        \begin{align}\label{est:dsqg:smoothing}
        \sup_{0\le t\le T}\Sob{E^{\lam t}_{\nu}\tht(t)}{\Hdot^{ 1+\be}_{\om}}\le C(T,\Sob{\tht_0}{H^{1+\be}_{\om}}),
        \end{align}
for a positive constant $C$.
\end{Thm}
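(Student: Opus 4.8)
The plan is to reduce every assertion of the theorem to a single family of a priori estimates for the protean system \eqref{eq:mod:claw}, specialized by the choices of $q$ and $G$ already indicated: existence and uniqueness follow from $q=-\tht$ (respectively the difference equation \eqref{eq:difference}), while continuity follows from the Kato-type splitting \eqref{eq:vs:protean}--\eqref{eq:ze:protean}, each component of which again has the structure of \eqref{eq:mod:claw}. The engine for all of these is the energy estimate for \eqref{eq:mod:claw} in the phase space $H^{1+\be}_\om$.

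First I would establish this energy estimate. Applying the dyadic block $\lpj$ to \eqref{eq:mod:claw}, testing against $\lpj\tht$, and summing against the weights $2^{2(1+\be)j}\om(2^j)^2$, the dissipative term contributes, via $I+m(D)\in\mathscr{M}_W$ and \eqref{eq:bernstein:om}, a coercive quantity bounded below by $c\Sob{\tht}{H^{1+\be}_{\om m^{1/2}}}^2$; this is the source of the $L^{2}(0,T;H^{1+\be}_{\om m^{1/2}})$ regularity. The flux term $\Div F_q(\tht)$ is handled by product (\cref{lem:Sobolev}) and commutator estimates in the frequency-weighted setting: when $q=-\tht$ the divergence-free cancellation \eqref{eq:v:cancel} eliminates the top-order transport contribution, and for $\be\in(1,2]$ the additional flux term $a(D)((\nabla^\perp\tht)q)$ is treated by commuting $a(D)=\Lam^{\be-2}p(D)$ past the derivatives, which is precisely where the more delicate cancellation structure is exploited. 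The two symbols appearing in \eqref{cond:main}, namely the integral of $r(p^2(y)+p^2(r))/((1+r^2)\om^2(r))$ and the product $p_a(y)\om_b(y)$, emerge as the exact multipliers governing these estimates; the hypothesis that both are dominated by $m_1^\gamma(y)$ with $\gamma<1$ is what allows the resulting loss to be absorbed into the coercive dissipative gain by Young's inequality. This closes a differential inequality of the form $\tfrac{d}{dt}\Sob{\tht}{H^{1+\be}_\om}^2+c\Sob{\tht}{H^{1+\be}_{\om m^{1/2}}}^2\le P(\Sob{\tht}{H^{1+\be}_\om})$ and yields a local existence time depending only on $\Sob{\tht_0}{H^{1+\be}_\om}$.

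For existence I would regularize \eqref{eq:dgsqg} by an artificial viscosity $\eps\De$, solve the regularized problem globally, verify the preceding estimates uniformly in $\eps$, and pass to the limit using the uniform $L^{2}(0,T;H^{1+\be}_{\om m^{1/2}})$ bound together with an Aubin--Lions compactness argument; the continuity $\tht\in C([0,T];H^{1+\be}_\om)$ is then recovered in the standard way. Uniqueness follows by estimating the difference equation \eqref{eq:difference} in the weaker topology $H^\be$, where the flux loses at most one derivative and Gr\"onwall closes. Continuity of the solution map $\Phi$ is the most delicate step: using the splitting $\nabla\tht=\vs+\ze$ governed by \eqref{eq:vs:protean}--\eqref{eq:ze:protean}, I would first prove $\vs^n\to\vs$ and $\ze^n\to\ze$ in $H^\be$---each difference solving a system of protean type to which the stability estimate applies---and then bootstrap to convergence of $\nabla\tht^n$ in $H^{1+\be}_\om$.

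Finally, for the smoothing estimate \eqref{est:dsqg:smoothing} I would adapt the Foias--Temam Gevrey-norm technique. Setting $\Tht_\lam:=E^{\lam t}_\nu\tht$ with $E^{\lam t}_\nu$ as in \eqref{def:E} and differentiating in time, the derivative of the operator produces the term $\lam\nu(D)\Tht_\lam$, whose $H^{1+\be}_\om$ contribution is controlled by the dissipative term using $\nu(r)\le C(1+m(r))$ from \eqref{def:M:m}, so that for $\lam$ small enough the dissipation dominates; the commutator of $E^{\lam t}_\nu$ with the flux is estimated using property \ref{item:S2}, which guarantees that $e^{\lam t\nu(D)}$ acts like a bounded-order operator on each dyadic block. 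The main obstacle throughout is the a priori flux estimate in the regime $\be\in(1,2]$, where the velocity is of higher order than the dissipation and the system is strongly quasilinear: the extra flux term $a(D)((\nabla^\perp\tht)q)$ carries the commutator cancellation that must be extracted to avoid a net loss of derivatives, and showing that \eqref{cond:main} is exactly the condition permitting this loss to be absorbed into the mild dissipative gain is the crux of the entire argument.
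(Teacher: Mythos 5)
Your proposal follows essentially the same route as the paper: reduction of every assertion to the a priori and stability estimates for the protean system \eqref{eq:mod:claw} (with $q=-\tht$, $G\equiv0$ for existence, the difference equation for uniqueness, and the splitting \eqref{eq:vs:protean}--\eqref{eq:ze:protean} for continuity of $\Phi$), frequency-localized energy estimates whose commutator losses are quantified exactly by \eqref{cond:main} and absorbed into the dissipative gain via $\gamma<1$, an artificial-viscosity construction, and the Foias--Temam-type argument with $E^{\lam t}_\nu$ for the smoothing estimate. The only cosmetic difference is that the paper works with the combined operator $\om(D)\Lam^{\s}E^{\lam t}_\nu\lpj$ from the outset and uses the weighted product estimate of \cref{thm:prod} rather than \cref{lem:Sobolev}, so the Sobolev and Gevrey estimates are obtained simultaneously; your outline is otherwise faithful to the paper's argument.
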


For the case $\be=0$, we have the following.

\begin{Thm}\label{thm:main:deuler} 
Suppose that $p(D)\in\mathscr{M}_C$, $m(D)\in \mathscr{M}_D$ and $\om(D)\in \mathscr{M}_W$ satisfy \eqref{cond:main}, where $p=p_ap_b^{-1}$. For each $\tht_{0}\in H^{1}_{\om}\cap \Hdot^{-1}_\om$, there exists a positive $T=T(\Sob{\tht_{0}}{H^{1}_\om\cap\Hdot^{-1}_\om})$ and a unique solution, $\tht(\cdotp;\tht_0)$, of \eqref{eq:dgsqg} when $\be=0$, such that
	\[
	    \tht \in C([0,T];H^{1}_{\om}\cap \Hdot^{-1}_\om)\cap L^{2}(0,T; H^{1}_{\om {m}^{1/2}}).
    \]
Moreover, if $\nu(D)\in \mathscr{M}_{S}(m)$, then there exists $\lam>0$ such that $\tht$ satisfies
        \begin{align}\label{est:deuler:smoothing}
        \sup_{0\le t\le T}\Sob{E^{\lam t}_{\nu}\tht(t)}{\Hdot^{1}_{\om}}\le C(T,\Sob{\tht_0}{H^{1}_{\om}}),
        \end{align}
for a positive constant $C$. Lastly, the data-to-solution map, $\Phi$, defined by
	\begin{align}\label{def:flowmap:deuler}
	    \Phi: H^{1}_{\om}\cap \Hdot^{-1}_\om\goesto  \bigcup_{T>0}C([0,T]; H^{1}_\om \cap\Hdot^{-1}_\om),\quad \tht_{0}\mapsto\tht(t;\tht_0),
	\end{align}
is continuous. 
\end{Thm}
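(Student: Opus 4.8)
The plan is to deduce \cref{thm:main:deuler} from the a priori analysis of the protean system \eqref{eq:mod:claw} carried out in \cref{sect:apriori}, specialized to the transport structure available at the Euler endpoint. Indeed, when $\be=0$ the flux \eqref{def:mod:flux} reduces to $F_q(\tht)=(\nabla^\perp a(D)q)\tht$ with $a(D)=\Lam^{-2}p(D)$, so that \eqref{eq:dgsqg} is recovered from \eqref{eq:mod:claw} upon setting $q=-\tht$ and $G\equiv0$; this yields the mildly dissipative \emph{linear} transport equation $\partial_t\tht+v\cdot\nabla\tht+m(D)\tht=G$, where $v=-\nabla^\perp a(D)q$ is divergence-free. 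The feature distinguishing the endpoint from \cref{thm:main:dgsqg} is that the velocity now carries a genuine $\Lam^{-2}$ in its Biot--Savart law, so its low frequencies are not controlled by $\Hdot^1_\om$ alone; this is precisely the role of the additional $\Hdot^{-1}_\om$ norm, since $\Sob{\om(D)v}{L^2}=\Sob{\om(D)\Lam^{-1}p(D)\tht}{L^2}$ is controlled by the high frequencies via $H^1_\om$ (where $\Lam^{-1}$ gains regularity) and by the low frequencies via $\Hdot^{-1}_\om$ (using that $p$ is bounded near the origin). Thus the velocity, and hence the whole advection, is governed by the full phase-space norm.

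First I would establish the a priori bound in $H^1_\om\cap\Hdot^{-1}_\om$. For the $\Hdot^1_\om$ piece, apply $\om(D)\Lam$ to the equation and pair with $\om(D)\Lam\tht$; the divergence-free cancellation $\lb v\cdot\nabla h,h\rb=0$ of \eqref{eq:v:cancel} reduces the advection contribution to the commutator $[\om(D)\Lam,v\cdot\nabla]\tht$, which, using the weighted Bernstein equivalence \eqref{eq:bernstein:om}, is estimated against $\Sob{\tht}{\Hdot^1_{\om m^{1/2}}}$ exactly when \eqref{cond:main} holds with $\gamma<1$. Since $\gamma<1$, Young's inequality lets this be absorbed into the dissipative gain $\lb\om(D)\Lam m(D)\tht,\om(D)\Lam\tht\rb\gtrsim\Sob{\tht}{\Hdot^1_{\om m^{1/2}}}^2$. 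For the $\Hdot^{-1}_\om$ piece, apply $\om(D)\Lam^{-1}$ and again use \eqref{eq:v:cancel} to cancel the leading transport term, leaving a negative-order commutator that closes against $\Sob{\tht}{\Hdot^{-1}_\om}$ through the velocity bound above. Combining yields a closed differential inequality of the form $\frac{d}{dt}\Sob{\tht}{H^1_\om\cap\Hdot^{-1}_\om}^2+c\Sob{\tht}{\Hdot^1_{\om m^{1/2}}}^2\le P(\Sob{\tht}{H^1_\om\cap\Hdot^{-1}_\om})$, furnishing the local time $T$ and the regularity class $C([0,T];H^1_\om\cap\Hdot^{-1}_\om)\cap L^2(0,T;H^1_{\om m^{1/2}})$.

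To promote these estimates to a solution I would run an artificial viscosity scheme $\partial_t\tht^\eps+v^\eps\cdot\nabla\tht^\eps+m(D)\tht^\eps=\eps\Delta\tht^\eps$, whose solvability is standard and whose solutions satisfy the above bounds uniformly in $\eps$; an Aubin--Lions compactness argument then extracts a limit solving \eqref{eq:dgsqg} with $\be=0$. Uniqueness follows by writing the difference $\Tht=\tht^{(1)}-\tht^{(2)}$ as in \eqref{eq:difference}, estimating in the weaker topology $L^2\cap\Hdot^{-1}_\om$ where the transport flux loses no derivatives, and applying Gr\"onwall's inequality. For continuity of the data-to-solution map I would invoke the Kato splitting $\nabla\tht=\vs+\ze$ governed by \eqref{eq:vs:protean}--\eqref{eq:ze:protean}; at the endpoint the flux is purely of transport type, so each difference $\vs^n-\vs$, $\ze^n-\ze$ again solves a protean system, permitting convergence in $L^2$ to be bootstrapped to convergence in the phase space with the $\Hdot^{-1}_\om$ component tracked as in the a priori stage. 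Finally, for the smoothing estimate \eqref{est:deuler:smoothing} I would set $\phi=E^{\lam t}_\nu\tht$ with $E^{\lam t}_\nu=e^{\lam t\nu(D)}$ as in \eqref{def:E} and repeat the $\Hdot^1_\om$ energy estimate for $\phi$; differentiating $E^{\lam t}_\nu$ produces a growth term of size $\lam\Sob{\phi}{\Hdot^1_{\om\nu^{1/2}}}^2$ which, because $\nu\in\mathscr{M}_S(m)$ enforces $\nu\le C(1+m)$, is dominated by the dissipative gain for $\lam$ sufficiently small, together with a commutator between $E^{\lam t}_\nu$ and $v\cdot\nabla$ handled by the adapted Foias--Temam technique.

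The main obstacle I anticipate is not the quasilinear commutator structure---absent here, since the endpoint protean system is a genuine linear transport equation---but rather the simultaneous closure of the borderline $\Hdot^1_\om$ estimate, which sits exactly at the Hilbert-space threshold $\s=d/2=1$ where $\Hdot^1(\RR^2)$ fails to be a genuine Hilbert space, together with the $\Hdot^{-1}_\om$ estimate needed to tame the $\Lam^{-2}$ in the velocity. The delicate point is that the advecting velocity is controlled by the \emph{same} low-regularity norm $\Hdot^{-1}_\om$ being propagated, so one must verify that the transport and commutator terms in the $\Hdot^{-1}_\om$ estimate genuinely close against this norm---via the boundedness of $p$ near the origin and the balance encoded in \eqref{cond:main}---without appealing to any positive-order control that is unavailable at the endpoint.
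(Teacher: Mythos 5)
Your overall architecture matches the paper's: reduce to the protean system \eqref{eq:mod:claw} with $q=-\tht$, $G\equiv0$ (pure transport at $\be=0$), close an energy estimate in $H^1_\om\cap\Hdot^{-1}_\om$ using the commutator machinery and condition \eqref{cond:main} with $\gam<1$ to absorb into the dissipation, construct the solution by artificial viscosity and Aubin--Lions, prove uniqueness for the difference equation in the weak topology $L^2_\om\cap\Hdot^{-1}_\om$, recover continuity of the data-to-solution map via the Kato splitting \eqref{eq:vs:protean}--\eqref{eq:ze:protean} together with the stability theory for the protean system, and obtain \eqref{est:deuler:smoothing} by the Foias--Temam device with $\lam$ small enough that $\nu\le C(1+m)$ lets the Gevrey growth term be absorbed. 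All of that is the paper's route.

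There is, however, one step that would fail as you describe it: the $\Hdot^{-1}_\om$ energy estimate. You propose to apply $\om(D)\Lam^{-1}$, invoke the cancellation \eqref{eq:v:cancel}, and estimate the resulting ``negative-order commutator.'' But the commutator estimate \cref{lem:commutator4} (and the product estimate \cref{thm:prod} underlying it) requires $s+\bar{s}>0$, and at $\s=-1$ any natural distribution of regularity between $\tht$ and the velocity --- $(s,\bar{s})=(-1,1)$ or $(0,0)$ --- lands exactly on the forbidden endpoint $s+\bar{s}=0$, where the paraproduct remainder $R(f,g)$ cannot be summed. This is precisely why the paper's Subcase~1a only covers $\s\in(-1,1]$ and the case $\be=0$, $\s=-1$ is singled out for separate treatment. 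The paper's actual argument at $\s=-1$ uses \emph{no} cancellation and no commutator: since $\Lam^{-1}\nabla\cdot$ is an operator of order zero, $I^{-1}$ is bounded directly by $\Sob{\lpj(v\tht)}{L^2}$ via Bernstein, and then \cref{thm:prod} is applied to the product $v\tht$ with $(s,\bar{s})=(1,0)$ on the pair $(p(D)^{-1}v,\tht)$, so that the full unit of regularity sits on the velocity --- where $\Sob{p(D)^{-1}v}{H^1_\om}\le C\Sob{q}{\Hdot^{-1}_\om\cap L^2_\om}$ converts it into exactly the norm being propagated --- and none sits on $\tht$, which is measured only in $L^2_\om$. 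You should replace your commutator step with this direct product estimate; your closing paragraph correctly identifies that the velocity is controlled by the norm being propagated, but the genuine delicacy is the paraproduct endpoint $s+\bar{s}=0$, which your sketch does not confront.
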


\begin{Rmk}\label{rmk:lazar}
The work \cite{LazarXue2019} considers \eqref{eq:dgsqg}, where $m(D)=\Lam^\be$, for $\be\in(0,1)$, and $p(D)$ is given by a multiplier with logarithmic-type growth. The conditions satisfied by $p(D)$ are very similar to those imposed here by $\mathscr{M}_W$, but with a few technical differences. {One notable difference, however, is our algebraic-type condition \textbf{(O3)}, which in contrast to the analytic-type conditions in \cite{LazarXue2019}, do not impose higher-order regularity constraints on our multipliers}.  We emphasize, however, that our results are first and foremost concerned with local existence and stability-type estimates in borderline topologies, particularly in the full parameter range $\be\in[0,2]$ of the gSQG family, whereas \cite{LazarXue2019} focuses on the issue of global regularity. In this regard, the results of the present article  complement those in \cite{LazarXue2019}. 

Nevertheless, it would be interesting if the results of \cite{LazarXue2019} could be established for the range $\be\in(1,2)$ or if global existence of weak solutions and their eventual regularity, proved in \cite{LazarXue2019}, can be extended to the class of models addressed by \cref{thm:main:dgsqg}, \cref{thm:main:deuler}. We refer the reader to the notable recent works \cite{ChaeJeongNaOh2023a, ChaeJeongOh2023b} on the $\be=2$ endpoint; see also \cref{rmk:okhitani} for further discussion.
\end{Rmk}

Next, we present a selection of choices for $m,p,\om,\nu$ to demonstrate that \cref{thm:main:dgsqg} and \cref{thm:main:deuler} contain various non-trivial and interesting consequences. The following result establishes well-posedness under a logarithmic form of singularity and dissipation.

\begin{Cor}\label{cor:l:main:dgsqg}
	Let $p(D)$ and $m(D)$ be defined by
	\begin{align}\label{def:log:pm}
	    m(D)=\ln(I-\Delta)^{{\mu}},\quad p(D)=\ln(e-\Delta)^{\til{\mu}},\quad \til{\mu}\ge-1/2. 
	\end{align}
	Let $\be \in (0,2]$ and ${{\mu}}>\til{\mu}+1/2$. For each $\tht_{0}\in H^{1+\be}$, there exists a positive $T=T(\Sob{\tht_{0}}{H^{1+\be}})$ and a unique solution, $\tht$, of \eqref{eq:dgsqg}, such that
	\[\tht \in C([0,T];H^{1+\be})\cap L^{2}(0,T; H^{1+\be,\frac{\mu}{2}}).\]
	and the data-to-solution map, $\Phi$, such that
	{\par\nobreak\noindent}\begin{align}\label{cor:l:def:flowmap:dsqg}
	    \Phi: H^{1+\be}\goesto  \bigcup_{T>0}C([0,T]; H^{1+\be}),\quad \tht_{0}\mapsto\tht(t;\tht_0),
	\end{align}
is continuous. Furthermore, there exists $\lam>0$ such that $\tht$ satisfies
        \begin{align}\label{cor:l:est:dsqg:smoothing}
        \sup_{0\le t\le T}\Sob{E^{\lam t}_{(\ln(I-\De))^\al}\tht(t)}{\Hdot^{ 1+\be}}\le C(T,\Sob{\tht_0}{H^{1+\be}}),
        \end{align}
for any $0<\al \le \min\{1,\mu\}$, and a positive constant $C$. In particular, if $\mu\ge 1$, we have 
    \[
        \sup_{0\le t\le T}\Sob{\tht(t)}{\Hdot^{ 1+\be+\lam t}}\le C(T,\Sob{\tht_0}{H^{1+\be}})
    \]
\end{Cor}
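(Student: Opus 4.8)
The plan is to derive the corollary entirely from \cref{thm:main:dgsqg} by verifying that the explicit multipliers in \eqref{def:log:pm} meet its hypotheses with the trivial weight $\om\equiv 1$, and then to interpret the resulting smoothing operator in the case $\mu\ge1$. Throughout I use that the symbols of $\ln(I-\De)$ and $\ln(e-\De)$ are comparable to $\ln(e+r^2)$, so that up to fixed multiplicative constants $m(r)\sim(\ln(e+r^2))^{\mu}$ and $p(r)\sim(\ln(e+r^2))^{\til{\mu}}$; since all the relevant classes are insensitive to such comparability (cf. \eqref{eq:bernstein:om}), I may work with the model symbol $\om_0(r):=\ln(e+r^2)$ and its powers.

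First I would check the class memberships. Since $\mu>\til\mu+1/2\ge0$, the function $\om_0^{\mu}$ is positive, $C^1$ and increasing, satisfies $r\om_0'(r)\le C\om_0(r)$ [giving \ref{item:O2}], and obeys \ref{item:O3} via $\ln(e+r_1^2r_2^2)\le\ln(e+r_1^2)+\ln(e+r_2^2)$ together with $(a+b)^{\mu}\le C_\mu(a^\mu+b^\mu)$; hence $I+m(D)\in\mathscr{M}_W$ and $m(D)\in\mathscr{M}_D$. For $p$, writing $p=\om_0^{\til\mu}$ as the ratio $\om_a/\om_b$ with $\om_a=\om_0^{\til\mu},\ \om_b=1$ when $\til\mu\ge0$, and $\om_a=1,\ \om_b=\om_0^{-\til\mu}$ when $\til\mu<0$, shows $p(D)\in\mathscr{M}_W$; the decay condition in \eqref{def:MC} follows from the substitution $s=\ln(e+r^2)$, under which $\int_1^\infty p^2(r)r^{-1}\,dr\sim\int^\infty s^{2\til\mu}\,ds$ diverges exactly when $2\til\mu\ge-1$, i.e. $\til\mu\ge-1/2$, which is the standing assumption; thus $p(D)\in\mathscr{M}_C$. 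Finally $\om\equiv1$ trivially lies in $\mathscr{M}_W$, so $\om_b\equiv1$.

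The main computational step is verifying \eqref{cond:main}. With $\om\equiv1$ and $m_1:=I+m\sim\om_0^{\mu}$ (the $\mathscr{M}_W$ multiplier furnished by $m(D)\in\mathscr{M}_D$), I split the inner integral as $p^2(y)\int_0^y \frac{r}{1+r^2}\,dr+\int_0^y\frac{r\,p^2(r)}{1+r^2}\,dr$. The first piece is $\tfrac12 p^2(y)\ln(1+y^2)\lesssim\om_0(y)^{2\til\mu+1}$, and the second, again via $s=\ln(e+r^2)$, is $\lesssim\om_0(y)^{2\til\mu+1}$ when $\til\mu>-1/2$ and $\lesssim 1+\ln(\om_0(y))$ in the borderline case $\til\mu=-1/2$. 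Dividing by $m_1^\gam(y)\sim\om_0(y)^{\gam\mu}$, the first entry of the supremum is $\lesssim\om_0(y)^{\til\mu+1/2-\gam\mu}$ for $\til\mu>-1/2$ — bounded in $y$ precisely when $\gam\ge(\til\mu+1/2)/\mu$ — while for $\til\mu=-1/2$ it is $\lesssim(\ln\om_0(y))^{1/2}\om_0(y)^{-\gam\mu}$, bounded for every $\gam>0$. The second entry equals $p_a(y)m_1^{-\gam}(y)$, bounded as soon as $\gam\ge\til\mu/\mu$, a weaker constraint since $\til\mu<\til\mu+1/2$. The strict inequality $\mu>\til\mu+1/2$ guarantees $(\til\mu+1/2)/\mu<1$, so I may fix $\gam\in\big((\til\mu+1/2)/\mu,\,1\big)$; this is exactly where the hypothesis on $\mu$ is used, and the borderline case $\til\mu=-1/2$ is the most delicate since there the divergence in \eqref{def:MC} is only double-logarithmic. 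With \eqref{cond:main} verified, \cref{thm:main:dgsqg} yields local existence, uniqueness, the regularity class $C([0,T];H^{1+\be})\cap L^2(0,T;H^{1+\be,\mu/2})$ (as $\om m^{1/2}$ reduces to $\om_0^{\mu/2}$), and continuity of $\Phi$.

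It remains to deploy the smoothing clause. For $\nu(D)=(\ln(I-\De))^{\al}$, property \ref{item:S2} ($r\nu'(r)\le C$) forces $\al\le1$, while the domination $\nu(r)\le C(1+m(r))$ forces $\al\le\mu$, so $\nu(D)\in\mathscr{M}_S(m)$ precisely for $0<\al\le\min\{1,\mu\}$; then \eqref{est:dsqg:smoothing} with $\om\equiv1$ gives \eqref{cor:l:est:dsqg:smoothing}. Finally, when $\mu\ge1$ I take $\al=1$, so that $\nu(D)=\ln(I-\De)$ and $E^{\lam t}_\nu=e^{\lam t\ln(I-\De)}=(I-\De)^{\lam t}$ has symbol $(1+|\xi|^2)^{\lam t}$. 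Since $|\xi|^2\le 1+|\xi|^2$ gives $|\xi|^{2\lam t}\le(1+|\xi|^2)^{\lam t}\le(1+|\xi|^2)^{2\lam t}$ for $\lam t\ge0$, one has $\Sob{\tht(t)}{\Hdot^{1+\be+\lam t}}\le\Sob{E^{\lam t}_\nu\tht(t)}{\Hdot^{1+\be}}$, so the uniform bound upgrades to $\tht(t)\in\Hdot^{1+\be+\lam t}$ with the stated estimate. The only genuine work is the integral bookkeeping for \eqref{cond:main}; everything else is a direct check of the structural conditions on the multipliers.
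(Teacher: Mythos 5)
Your proposal is correct and follows exactly the route the paper intends for this corollary: the paper states it as a direct specialization of \cref{thm:main:dgsqg} with $\om\equiv1$, and the only substantive work is the verification of \eqref{cond:main} via the substitution $s=\ln(e+r^2)$, which you carry out correctly, including the borderline case $\til{\mu}=-1/2$ and the identification of the admissible range $\gam\in\big((\til{\mu}+1/2)/\mu,1\big)$ afforded by the hypothesis $\mu>\til{\mu}+1/2$. Your checks of the class memberships $m(D)\in\mathscr{M}_D$, $p(D)\in\mathscr{M}_C$, $\nu(D)\in\mathscr{M}_S(m)$ for $0<\al\le\min\{1,\mu\}$, and the symbol identity $E^{\lam t}_{\ln(I-\De)}=(I-\De)^{\lam t}$ yielding the $\Hdot^{1+\be+\lam t}$ bound are all accurate.
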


The corresponding result in the case of $\be=0$ is as follows.
\begin{Cor}\label{cor:l:main:deuler}
	Let $p(D)$ and $m(D)$ be as in \eqref{def:log:pm} with $\mu>\til{\mu}+1/2$. For each $\tht_{0}\in H^{1}\cap \Hdot^{-1}$, there exists a positive $T=T(\Sob{\tht_{0}}{H^{1}\cap \Hdot^{-1}})$ and a unique solution, $\tht$, of \eqref{eq:dgsqg} corresponding to $\be=0$, such that
	\[\tht \in C([0,T];H^{1}\cap \Hdot^{-1})\cap L^{2}(0,T; H^{1,\frac{\mu}2}).\]
	and the data-to-solution map, $\Phi$, such that
	{\par\nobreak\noindent}\begin{align}\label{cor:l:def:flowmap:deuler}
	    \Phi: H^{1}\cap \Hdot^{-1}\goesto  \bigcup_{T>0}C([0,T]; H^{1}\cap \Hdot^{-1}),\quad \tht_{0}\mapsto\tht(t;\tht_0),
	\end{align}
is continuous. Furthermore, there exists $\lam>0$ such that $\tht$ satisfies
        \begin{align}\label{cor:l:est:deuler:smoothing}
        \sup_{0\le t\le T}\Sob{E^{\lam t}_{(\ln(I-\De))^\al} \tht(t)}{\Hdot^{1}} \le C(T,\Sob{\tht_0}{H^{1}}),
        \end{align}
for any $0<\al \le \min\{1,\mu\}$, and a positive constant $C$. In particular, if $\mu\ge 1$, we have 
    \begin{align}\label{cor:l:est:deuler:smoothing:alpha:1}
        \sup_{0\le t\le T}\Sob{\tht(t)}{\Hdot^{ 1+\lam t}}\le C(T,\Sob{\tht_0}{H^{1}})
    \end{align}
\end{Cor}

\begin{Rmk}\label{rmk:okhitani}
In the recent works \cite{ChaeJeongNaOh2023a, ChaeJeongOh2023b}, the well-posedness and ill-posedness of several models within the scope of \eqref{eq:dgsqg} were studied, but ultimately complementary to the class of models studied in the present article.

In \cite{ChaeJeongNaOh2023a}, the Okhitani model, i.e., $\be=2$, $p(D)=\ln(10+\Lam)$, and $m(D)\equiv0$ in \eqref{eq:dgsqg}, was studied. In addition to local well-posedness, losing estimates in $H^{s(t)>4}$, where $s(t)$ is a decreasing function, were shown to be a fundamental feature of solutions under the evolution of the system. The derivation of the Okhitani model as a limit of regularized models in a time re-scaled sense was also subsequently justified. Local well-posedness in a fixed Sobolev space $H^{s>4}$, i.e., \textit{without losing estimates}, was then established in the presence of mild dissipation $m(D)=(\ln(10+\Lam))^{\mu\geq1}$. On the other hand, in \cite{ChaeJeongOh2023b}, ill-posedness in $H^{s>3}$ in the form of norm inflation and non-existence was established for the mildly dissipative Okhitani model in the regime where $p(D)=(\ln(10+\Lam))^{\til{\mu}>0}$, $m(D)=(\ln(10+\Lam))^{\mu}$, where $\mu<\til{\mu}$.

In contrast, the results proved in the present article identify conditions that guarantee local well-posedness, but specifically in the borderline regularity setting, $H^{1+\be}$. In the particular case of the mildly regularized Okhitani model, i.e.,  $\be=2$, $p(D)=(\ln(10+\Lam))^{\til{\mu}=1}$, $m(D)=(\ln(10+\Lam))^{\mu}$, where $\mu>\til{\mu}$, in \eqref{eq:dgsqg}, \cref{cor:l:main:dgsqg} establishes local well-posedness in $H^{3}$, under the proviso that $\mu>\til{\mu}+1/2=3/2$. Thus, the results established here complement the well-posedness and ill-posedness results established in \cite{ChaeJeongNaOh2023a, ChaeJeongOh2023b}. 
\end{Rmk}

\begin{Rmk}\label{rmk:ill:posed}
Observe that when $\be=1$, we obtain the local well-posedness (in the Hadamard sense) of the mildly dissipative SQG equation. This complements the recent ill-posedness results in borderline (critical) Sobolev spaces for the inviscid SQG equation obtained in \cite{CordobaZoroa-Martinez2021, JeongKim2021}, as well as the local well-posedness result for the logarithmically regularized inviscid SQG equation obtained in \cite{ChaeWu2012}. We emphasize once again that our results extend beyond the SQG, for $1<\be\leq2$, where the $\be=2$ endpoint is suitably modified. Thus, the range $\mu>\til{\mu}+1/2$ identified in \cref{cor:l:main:dgsqg} draws out the putatively sharp threshold for well-posedness at borderline regularity beyond the SQG case.
In particular, the problem of whether \eqref{eq:dgsqg} in the setting of \eqref{cor:l:main:dgsqg}, \eqref{cor:l:main:deuler} is well-posed or not in the borderline Sobolev spaces when $0<\mu\leq\til{\mu}+1/2$ is an open consideration even when $\be\in[0,1]$, i.e., including the Euler endpoint $\be=0$.
\end{Rmk}

 We also study the initial value problem for \eqref{eq:dgsqg} in a log-Sobolev borderline space. The multiplier operators $m, p$ in this case are assumed to be of the form of an iterated logarithm. The results are stated below.

\begin{Cor}\label{cor:ll:main:dgsqg}
	Let $p(D)$ and $m(D)$ be defined by
	\begin{align}\label{def:loglog:pm}
	    m(D)&=\ln(I+\ln(I-\Delta))^{\mu},\notag\\
	p(D)&=\ln(e+\ln(I-\Delta))^{\til{\mu}},\quad \til{\mu}\ge -1/2. 
	\end{align}
	Let $\be \in (0,2]$ and $\mu>\til{\mu}+1/2$. For each $\tht_{0}\in H^{1+\be,\frac{1}{2}}$, there exists a positive $T=T(\Sob{\tht_{0}}{H^{1+\be,\frac{1}{2}}})$ and a unique solution, $\tht$, of \eqref{eq:dgsqg}, such that
	   \begin{align}\notag
	        \tht \in C([0,T];H^{1+\be,\frac{1}{2}})\cap L^{2}(0,T; H^{1+\be,\frac{1}{2},\frac{\mu}{2}}),
	   \end{align}
	and the corresponding data-to-solution map
	{\par\nobreak\noindent}\begin{align}\label{cor:ll:def:flowmap:dsqg}
	    \Phi: H^{1+\be,\frac{1}{2}}\goesto  \bigcup_{T>0}C([0,T]; H^{1+\be,\frac{1}{2}}),\quad \tht_{0}\mapsto\tht(t;\tht_0),
	\end{align}
is continuous. Furthermore, there exists $\lam>0$ such that $\tht$ satisfies
        \begin{align}\label{cor:ll:est:dsqg:smoothing}
        \sup_{0\le t\le T}\Sob{E^{\lam t}_{(\ln(I+\ln(I-\Delta)))^{\alpha}}\tht(t)}{\Hdot^{ 1+\be, \frac{1}{2}}}\le C(T,\Sob{\tht_0}{H^{1+\be,\frac{1}{2}}}),
        \end{align}
for any $0<\al \le \min\{1,\mu\}$, and a positive constant $C$. In particular, if $\mu\ge 1$, we have 
    \begin{align}\notag
    \sup_{0\le t\le T}\Sob{\tht(t)}{\Hdot^{ 1+\be,\frac{1}{2}+\lam t}}\le C(T,\Sob{\tht_0}{H^{1+\be,\frac{1}{2}}}).
    \end{align}
\end{Cor}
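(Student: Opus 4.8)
The plan is to realize \cref{cor:ll:main:dgsqg} as a direct specialization of \cref{thm:main:dgsqg}: I would verify that the iterated-logarithm choices \eqref{def:loglog:pm} define admissible multipliers and, crucially, that they satisfy the structural condition \eqref{cond:main}; the well-posedness conclusions then follow verbatim, after which the two supplementary assertions (the smoothing estimate and the ``$\mu\ge1$'' gain of log-regularity) are obtained by elementary symbol comparisons. Throughout, write the symbols as $m(r)=(\ln(1+\ln(1+r^2)))^{\mu}$, $p(r)=(\ln(e+\ln(1+r^2)))^{\til\mu}$, and, since the phase space is $H^{1+\be,1/2}$, $\om(r)=(\ln(e+r^2))^{1/2}$; set $m_1=1+m$. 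Since $\mu>\til\mu+1/2\ge0$, note $\mu>0$.

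First I would dispatch the class memberships. The weight $\om$ is exactly the prototypical element of $\mathscr{M}_W$, so $\om\in\mathscr{M}_W$ with $\om_a=\om$, $\om_b\equiv1$. For $p$ and $m$ the essential point is that the building blocks $(\ln(e+\ln(1+r^2)))^{\rho}$, $\rho>0$, satisfy \ref{item:O1}--\ref{item:O3}: \ref{item:O1} and \ref{item:O2} are a short chain-rule computation using $\frac{2r^2}{1+r^2}\le2$ and $\frac1{e+\ln(1+r^2)}\le\frac1e$, while \ref{item:O3} follows from the submultiplicativity bound $\ln(e+a+b)\le\ln(e+a)+\ln(e+b)$ (a consequence of $e+a+b\le(e+a)(e+b)$) applied after $\ln(1+r_1^2r_2^2)\le\ln(1+r_1^2)+\ln(1+r_2^2)$, together with $(x+y)^{\rho}\le C_\rho(x^{\rho}+y^{\rho})$. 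Hence $p\in\mathscr{M}_W$ with $p_a=p,\ p_b\equiv1$ when $\til\mu\ge0$ and $p_a\equiv1,\ p_b=(\ln(e+\ln(1+r^2)))^{-\til\mu}$ when $-1/2\le\til\mu<0$, and $m\in\mathscr{M}_D$ since $m_1=1+m\in\mathscr{M}_W$. Finally $p\in\mathscr{M}_C$: as $p^2(r)\sim(\ln\ln r)^{2\til\mu}$, the substitution $s=\ln r$ turns $\int_1^\infty p^2(r)\,r^{-1}\,dr$ into $\int^\infty(\ln s)^{2\til\mu}\,ds$, which diverges for every $\til\mu$, so \eqref{def:MC} holds. (Near $r=0$ the symbols may be smoothly modified without affecting the analysis, which only concerns high frequencies.)

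The crux is \eqref{cond:main}. All symbols are continuous and positive with $m_1\ge1$, so only the behavior as $y\to\infty$ is constraining, and there $m_1(y)\sim(\ln\ln y)^{\mu}$, $p(y)\sim(\ln\ln y)^{\til\mu}$, $\om^2(r)\sim2\ln r$. I would split off $p^2(y)$: the factor $\int_0^y\frac{r}{(1+r^2)\om^2(r)}\,dr$ has growth $\sim\frac12\ln\ln y$, so $p^2(y)\int_0^y(\dots)\sim(\ln\ln y)^{2\til\mu+1}$; for $\int_0^y\frac{r\,p^2(r)}{(1+r^2)\om^2(r)}\,dr$ the iterated substitution $t=\ln r$, $s=\ln t$ reduces the integrand to $\tfrac12 s^{2\til\mu}\,ds$, giving growth $\sim\frac{(\ln\ln y)^{2\til\mu+1}}{2(2\til\mu+1)}$ when $\til\mu>-1/2$ (and the slower $\ln\ln\ln y$ at the endpoint $\til\mu=-1/2$). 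Thus the inner square root grows like $(\ln\ln y)^{\til\mu+1/2}$, and the first quantity in \eqref{cond:main} behaves like $(\ln\ln y)^{\til\mu+1/2-\gamma\mu}$; the second, using $\om_b\equiv1$, equals $p_a(y)m_1^{-\gamma}(y)$, which for $\til\mu\ge0$ is $\sim(\ln\ln y)^{\til\mu-\gamma\mu}$ and for $\til\mu<0$ is $\sim(\ln\ln y)^{-\gamma\mu}\to0$. Both are bounded precisely when $\gamma\mu\ge\til\mu+1/2$, i.e. $\gamma\ge(\til\mu+1/2)/\mu$; since $\mu>\til\mu+1/2$ this lower bound is strictly below $1$, so a valid $\gamma<1$ exists and \eqref{cond:main} holds. \cref{thm:main:dgsqg} then yields existence, uniqueness, and continuity of the solution map in the stated spaces. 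I expect this asymptotic verification to be the main obstacle, as it is where the threshold $\mu>\til\mu+1/2$ is forced and where the double-logarithmic integrals must be estimated sharply.

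It remains to confirm admissibility of the smoothing operator and deduce the concrete gains. For $\nu(r)=(\ln(1+\ln(1+r^2)))^{\alpha}$ with $0<\alpha\le\min\{1,\mu\}$: \ref{item:S1} is clear; \ref{item:S2} follows from the same chain-rule bound as in \ref{item:O2} (boundedness of $r\nu'(r)$ uses $\alpha\le1$, and the apparent singularity at $r=0$ is absent since $r\nu'(r)\sim2\alpha r^{2\alpha}\to0$); and $\nu(r)\le C(1+m(r))$ because $x^{\alpha}\le1+x^{\mu}$ when $\alpha\le\mu$. Thus $\nu(D)\in\mathscr{M}_S(m)$ and \eqref{est:dsqg:smoothing} yields \eqref{cor:ll:est:dsqg:smoothing}. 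For the final assertion take $\alpha=1$ (permissible when $\mu\ge1$), so that $E^{\lam t}_\nu=e^{\lam t\ln(I+\ln(I-\De))}=(I+\ln(I-\De))^{\lam t}$; comparing symbols via $\ln(e+r^2)\le1+\ln(1+r^2)=\ln(e(1+r^2))$ gives $(\ln(e+r^2))^{\lam t}\le(1+\ln(1+r^2))^{\lam t}$, whence $\Sob{\tht}{\Hdot^{1+\be,1/2+\lam t}}\le\Sob{E^{\lam t}_\nu\tht}{\Hdot^{1+\be,1/2}}$, and \eqref{cor:ll:est:dsqg:smoothing} upgrades to the claimed uniform bound in $\Hdot^{1+\be,1/2+\lam t}$.
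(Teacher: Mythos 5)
Your proposal is correct and follows exactly the route the paper intends: the corollary is a specialization of \cref{thm:main:dgsqg}, and the whole content is the verification that the iterated-logarithm symbols lie in $\mathscr{M}_C$, $\mathscr{M}_D$, $\mathscr{M}_W$, $\mathscr{M}_S(m)$ and satisfy \eqref{cond:main}, which your double-substitution asymptotics carry out accurately (including the endpoint $\til{\mu}=-1/2$, the choice $\gamma\in[(\til{\mu}+1/2)/\mu,1)$ made possible by $\mu>\til{\mu}+1/2$, and the symbol comparison $\ln(e+r^2)\le 1+\ln(1+r^2)$ for the $\mu\ge1$ upgrade). No gaps.
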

The corresponding result in the case of $\be=0$ is as follows.
\begin{Cor}\label{cor:ll:main:deuler}
	Let $p(D)$ and $m(D)$ be as in \eqref{def:loglog:pm} with $\mu>\til{\mu}+1/2$. For each $\tht_{0}\in H^{1,\frac{1}{2}}\cap \Hdot^{-1,\frac{1}{2}}$, there exists a positive $T=T(\Sob{\tht_{0}}{H^{1,\frac{1}{2}}\cap \Hdot^{-1,\frac{1}{2}}})$ and a unique solution, $\tht$, of \eqref{eq:dgsqg} ($\be=0$), such that
	\[\tht \in C([0,T];H^{1,\frac{1}{2}}\cap \Hdot^{-1,\frac{1}{2}})\cap L^{2}(0,T; H^{1,\frac{1}{2},\frac{\mu}{2}}).\]
and the data-to-solution map, $\Phi$, such that
	\begin{align}\label{cor:ll:def:flowmap:deuler}
	    \Phi: H^{1,\frac{1}{2}}\cap \Hdot^{-1,\frac{1}{2}}\goesto  \bigcup_{T>0}C([0,T]; H^{1,\frac{1}{2}}\cap \Hdot^{-1,\frac{1}{2}}),\quad \tht_{0}\mapsto\tht(t;\tht_0),
	\end{align}
is continuous. Furthermore, there exists $\lam>0$ such that $\tht$ satisfies
        \begin{align}\label{cor:ll:est:deuler:smoothing}
        \sup_{0\le t\le T}\Sob{E^{\lam t}_{ (\ln(I+\ln(I-\Delta)))^{\alpha}}\tht(t)}{\Hdot^{ 1, \frac{1}{2}}}\le C(T,\Sob{\tht_0}{H^{1,\frac{1}{2}}}),
        \end{align}
for any $0<\al \le \min\{1,\mu\}$, and a positive constant $C$. In particular, if $\mu\ge 1$, we have 
    \begin{align}\notag
    \sup_{0\le t\le T}\Sob{\tht(t)}{\Hdot^{ 1,\frac{1}{2}+\lam t}}\le C(T,\Sob{\tht_0}{H^{1,\frac{1}{2}}}).
    \end{align}
\end{Cor}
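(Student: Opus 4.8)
The plan is to verify that the multipliers specified in \eqref{def:loglog:pm}, together with the weight $\om(r)=(\ln(e+r^2))^{1/2}$ and the smoothing symbol $\nu(r)=(\ln(1+\ln(1+r^2)))^\al$, satisfy all the hypotheses of \cref{thm:main:deuler}; the conclusions then follow by specialization. Recall that, in the weight notation introduced in \cref{section:notation:preliminaries}, the phase space $H^{1,\frac12}\cap\Hdot^{-1,\frac12}$ is exactly the one associated to $\om(r)=(\ln(e+r^2))^{1/2}$. First I would dispatch the membership claims. For $\om(D)\in\mathscr{M}_W$ one writes $\om=\om_a\om_b^{-1}$ with $\om_a(r)=(\ln(e+r^2))^{1/2}$ and $\om_b\equiv1$, each satisfying \ref{item:O1}--\ref{item:O3} (the prototype noted after \eqref{eq:bernstein:om}). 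For $p(D)=\ln(e+\ln(I-\Delta))^{\til\mu}\in\mathscr{M}_C$, I take $p=p_ap_b^{-1}$ with $p_a\equiv1$, $p_b(r)=(\ln(e+\ln(1+r^2)))^{-\til\mu}$ when $\til\mu<0$, and $p_a=p$, $p_b\equiv1$ when $\til\mu\ge0$; each factor is an increasing iterated-logarithm power satisfying \ref{item:O1}--\ref{item:O3}, so $p\in\mathscr{M}_W$, while the non-integrability condition in \eqref{def:MC} holds for \emph{every} $\til\mu$, since after the substitution $u=\ln r$ the integral $\int_1^\infty p^2(r)r^{-1}\,dr$ reduces to $\int^\infty(\ln 2u)^{2\til\mu}\,du=\infty$. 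Finally, $m(D)\in\mathscr{M}_D$ follows because $1+m(r)=1+(\ln(1+\ln(1+r^2)))^\mu$ is comparable to an $\mathscr{M}_W$ symbol.

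The technical heart is the verification of \eqref{cond:main}. Writing $L_1(r)=\ln(1+r^2)$ and $L_2(r)=\ln(1+L_1(r))$, one has $\om^2(r)\sim L_1(r)$, $p(r)\sim L_2(r)^{\til\mu}$, and $m_1(r):=1+m(r)\sim L_2(r)^\mu$ for large $r$. The crucial point is that the factor $\om^{-2}\sim L_1^{-1}$ in the integrand of \eqref{cond:main} absorbs exactly one logarithm, reducing the iterated-logarithm integral to the single-logarithm computation already used for \cref{cor:l:main:deuler}; indeed, \emph{without} this weight the quantity would be unbounded, which is precisely why the phase space must carry the extra $(\ln)^{1/2}$. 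Carrying out the changes of variables $u=\ln r$ and then $v=\ln u$, the $p^2(r)$-contribution $\int_0^y\frac{r\,p^2(r)}{(1+r^2)\om^2(r)}\,dr$ is seen to be $\lesssim L_2(y)^{2\til\mu+1}$ when $\til\mu>-1/2$ and $\lesssim\ln L_2(y)$ when $\til\mu=-1/2$, with the $p^2(y)$-contribution of the same or lower order. Taking square roots and dividing by $m_1^\gamma(y)\sim L_2(y)^{\mu\gamma}$, the first quantity in \eqref{cond:main} is $\lesssim L_2(y)^{(\til\mu+1/2)-\mu\gamma}$, which is bounded precisely when $\gamma\ge(\til\mu+1/2)/\mu$; the second quantity $p_a(y)\om_b(y)m_1^{-\gamma}(y)$, with $\om_b\equiv1$, imposes only the weaker constraint $\gamma\ge\til\mu/\mu$ (and is trivially bounded when $\til\mu<0$). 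Since the hypothesis $\mu>\til\mu+1/2$ gives $(\til\mu+1/2)/\mu<1$, one fixes any $\gamma\in[(\til\mu+1/2)/\mu,1)$ and \eqref{cond:main} holds.

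With the hypotheses in place, \cref{thm:main:deuler} directly yields local existence, uniqueness, persistence in $C([0,T];H^{1,\frac12}\cap\Hdot^{-1,\frac12})\cap L^2(0,T;H^{1,\frac12,\frac\mu2})$, continuity of the data-to-solution map, and the abstract smoothing estimate \eqref{est:deuler:smoothing}. It then remains to identify the smoothing operator. For $0<\al\le\min\{1,\mu\}$ the symbol $\nu(r)=(\ln(1+\ln(1+r^2)))^\al=L_2(r)^\al$ satisfies \ref{item:S1} (positivity and monotonicity being clear) and \ref{item:S2} (a direct differentiation gives $r\nu'(r)\to0$, hence boundedness on $[0,\infty)$), while $\nu(r)\le C(1+m(r))$ follows from $\al\le\mu$; thus $\nu(D)\in\mathscr{M}_{S}(m)$ and \eqref{est:deuler:smoothing} gives \eqref{cor:ll:est:deuler:smoothing}. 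For the final $\mu\ge1$ assertion I take $\al=1$, so that $E^{\lam t}_\nu$ has symbol $e^{\lam t\ln(1+\ln(1+r^2))}=(1+\ln(1+r^2))^{\lam t}$; since $1+\ln(1+r^2)$ is comparable to $\ln(e+r^2)$ uniformly in $r\ge0$, the operator $\om(D)E^{\lam t}_\nu$ has symbol comparable, uniformly for $t\in[0,T]$, to $(\ln(e+r^2))^{1/2+\lam t}$, which is exactly the weight defining $\Hdot^{1,\frac12+\lam t}$. Hence \eqref{cor:ll:est:deuler:smoothing} with $\al=1$ is equivalent to the stated bound on $\Sob{\tht(t)}{\Hdot^{1,\frac12+\lam t}}$. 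The main obstacle throughout is the careful bookkeeping of the iterated-logarithm asymptotics in \eqref{cond:main}, and in particular isolating the boundary case $\til\mu=-1/2$, where the leading power degenerates and only a triple-logarithmic divergence survives, still absorbed by $m_1^\gamma$ for any $\gamma>0$.
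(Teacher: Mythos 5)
Your proposal is correct and follows exactly the route the paper intends: the corollary is a specialization of \cref{thm:main:deuler}, and the only work is verifying membership in $\mathscr{M}_W$, $\mathscr{M}_C$, $\mathscr{M}_D$, $\mathscr{M}_S(m)$ and the condition \eqref{cond:main}, which your iterated-logarithm computation (including the correct identification of $\gamma\in[(\til\mu+1/2)/\mu,1)$ and the degenerate case $\til\mu=-1/2$) carries out accurately. The final identification of $\om(D)E^{\lam t}_\nu$ with the weight of $\Hdot^{1,\frac12+\lam t}$ when $\al=1$ is also the intended argument.
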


The main theorem, \cref{thm:main:dgsqg} will be proved in \cref{sect:wellposed}. Before proceeding to develop its proof, we will discuss an application of \cref{thm:main:dgsqg} to the so-called Euler endpoint, $\be=0$.

\section{Application to the Mildly Dissipative 2D Euler Equation: Global regularity}
A natural consideration in light of the above local well-posedness results is the issue of global regularity of the considered models. In the endpoint case, $\be=0$, which represents the dissipatively perturbed Euler equations, we establish global regularity of solutions as an application of the smoothing effect conferred by the mild dissipation and the existence of a maximum principle. In particular, we prove the following result.

\begin{Thm}\label{thm:deuler:global}
	Given $\tht_0 \in H^{1}\cap \Hdot^{-1}$, consider the initial value problem \eqref{eq:dgsqg} for $\be=0$, where $p(D)=(\ln(I+\ln(I-\De))^\gam$, $\gam\in[0,1]$, and  $m(D)=\ln(I-\Delta)$, i.e.
 \begin{align}\label{eq:Euler:endpoint}
	        \partial_{t}\theta + \ln(I-\Delta)\tht+u \cdot \nabla \theta=0, \quad
	    u=\nabla^{\perp}{\psi},\quad \Delta {\psi}=(\ln(I+\ln(I-\De))^\gam\theta.
 \end{align}
 Then the unique solution satisfies
	\begin{align*}
	    \tht \in C([0,T]; H^{1}\cap\Hdot^{-1}),\quad \sup_{0\leq t\leq T}\Sob{\tht(t)}{\Hdot^{1+\lam t}}<\infty,
	\end{align*}
for all $T>0$. In particular, \eqref{eq:Euler:endpoint} is globally well-posed in $H^1\cap \Hdot^{-1}$ in the Hadamard sense.
\end{Thm}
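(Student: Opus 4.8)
The plan is to globalize the local theory of \cref{thm:main:deuler} by showing that the $H^1\cap\Hdot^{-1}$ norm of the solution cannot blow up in finite time, the two driving ingredients being an $L^\infty$ maximum principle and a borderline $\Hdot^1$ energy estimate of 2D-Euler type powered by that maximum principle. First I would check that the triple $m(D)=\ln(I-\De)$, $p(D)=(\ln(I+\ln(I-\De)))^\gam$, and $\om\equiv1$ falls under \cref{thm:main:deuler}: since $p$ has only iterated-logarithmic growth while $m$ grows like a single logarithm, one has $p\in\mathscr{M}_C$, and the quantities in \eqref{cond:main} are finite for some exponent $\gamma<1$ because $p_a(y)\om_b(y)/m_1^\gamma(y)\to0$ and the integral term is dominated by the same ratio. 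This produces a unique maximal solution $\tht\in C([0,T^*);H^1\cap\Hdot^{-1})$, and, taking $\nu(D)=\ln(I-\De)\in\mathscr{M}_S(m)$, the smoothing estimate \eqref{est:deuler:smoothing}, which on every compact subinterval of $[0,T^*)$ gives $\tht(t)\in\Hdot^{1+\lam t}$ for $t>0$.

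The core of the argument is a set of global-in-time \emph{a priori} bounds. Pairing \eqref{eq:Euler:endpoint} with $\tht$ and using the cancellation \eqref{eq:v:cancel} together with the positivity of $m(D)$ immediately yields $\Sob{\tht(t)}{L^2}\le\Sob{\tht_0}{L^2}$. For the maximum principle I would use the subordination identity $\ln(I-\De)f=\int_0^\infty s^{-1}e^{-s}\big(f-e^{s\De}f\big)\,ds$, which exhibits $m(D)$ as a nonnegative nonlocal operator with $m(0)=0$ obeying the pointwise convexity inequality $\Phi'(\tht)\,m(D)\tht\ge m(D)\Phi(\tht)$ for convex $\Phi$; since $\int m(D)\Phi(\tht)\,dx=0$ and advection is divergence-free, pairing the equation with $\Phi'(\tht)$ gives $\tfrac{d}{dt}\!\int\Phi(\tht)\le0$, so $\Sob{\tht(t)}{L^p}$ is non-increasing for every $p$, and in particular $\Sob{\tht(t)}{L^\infty}\le\Sob{\tht(t_0)}{L^\infty}$ for $t\ge t_0$. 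Because the smoothing estimate places $\tht(t_0)\in H^{1+\lam t_0}\hookrightarrow L^\infty(\RR^2)$ for any fixed $t_0>0$ (the embedding being valid as $1+\lam t_0>d/2=1$), this upgrades to a \emph{global} bound $\sup_{t\ge t_0}\Sob{\tht(t)}{L^\infty}\le M<\infty$.

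With the global $L^\infty$ bound in hand I would close the borderline $\Hdot^1$ estimate. Differentiating the equation and pairing with $\Lam\tht$, divergence-freeness removes the top-order transport term and leaves the commutator $\lb[\Lam,u\cdot\nabla]\tht,\Lam\tht\rb$. Writing $u=-\nabla^\perp\Lam^{-2}p(D)\tht$, so that $\nabla u$ is a zeroth-order singular operator applied to $p(D)\tht$ and is thus more singular than in the Euler case ($p\equiv1$) by only an iterated-logarithmic factor, a logarithmic commutator/interpolation estimate of Brezis--Gallouet type bounds this term by $C\Sob{\tht}{L^\infty}\ln\!\big(e+\Sob{\tht}{\Hdot^1}\big)\Sob{\tht}{\Hdot^1}^2$ plus lower-order contributions controlled by $\Sob{\tht}{L^2}$, the extra iterated-logarithmic singularity of $p(D)\tht$ being absorbed by the logarithmic gain $\lb m(D)\Lam\tht,\Lam\tht\rb\sim\int\ln(1+\Ax^2)\,\Ax^2|\hat\tht|^2\,d\xi$ supplied by the dissipation. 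Since $\Sob{\tht}{L^\infty}$ is globally bounded, the resulting differential inequality $\tfrac{d}{dt}\Sob{\tht}{\Hdot^1}^2\le CM\,\Sob{\tht}{\Hdot^1}^2\ln(e+\Sob{\tht}{\Hdot^1})$ has finite (at worst double-exponential in $t$) solutions, giving $\sup_{0\le t\le T}\Sob{\tht}{\Hdot^1}<\infty$ on every finite interval; an analogous, simpler estimate, pairing with $\Lam^{-2}\tht$ and using the $L^2\cap L^\infty$ control of $\tht$, bounds $\Sob{\tht}{\Hdot^{-1}}$. Since the local existence time in \cref{thm:main:deuler} depends only on $\Sob{\tht_0}{H^1\cap\Hdot^{-1}}$, the finiteness of this norm on $[0,T]$ for every $T$ excludes finite-time blow-up and forces $T^*=\infty$; the stated growth $\sup_{t}\Sob{\tht(t)}{\Hdot^{1+\lam t}}<\infty$ is then inherited from the smoothing estimate applied on each compact interval.

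I expect the principal obstacle to lie in the third step: making precise the logarithmic commutator estimate in the borderline space and verifying that the single-logarithmic dissipation exactly dominates the loss generated by the iterated-logarithmic constitutive law, uniformly in frequency, so that the differential inequality closes. The maximum principle also requires care, as $m(D)=\ln(I-\De)$ is an unbounded nonlocal operator and the convexity (C\'ordoba--C\'ordoba) inequality must be justified through the subordination representation rather than quoted directly; one must also confirm that the regularity afforded by the smoothing estimate is enough to make all of these pointwise and integration-by-parts manipulations rigorous on the maximal interval.
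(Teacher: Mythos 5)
Your first two steps (local theory from \cref{thm:main:deuler}, the instantaneous smoothing $\tht(t_0)\in\Hdot^{1+\lam t_0}\hookrightarrow L^\infty$, and the maximum principle via the subordination identity for $\ln(I-\De)$) track the paper's argument closely and are fine. The gap is in your third step. You propose to close the estimate directly at the $\Hdot^1$ level via a differential inequality of the form $\tfrac{d}{dt}\Sob{\tht}{\Hdot^1}^2\le CM\,\Sob{\tht}{\Hdot^1}^2\ln(e+\Sob{\tht}{\Hdot^1})$, but no Brezis--Gallouet/BKM-type estimate puts the \emph{critical} norm inside the logarithm. The classical inequality used here (the paper's \eqref{est:classical}) reads $\Sob{\nabla u}{L^\infty}\le C_2+C_\infty(1+\ln(1+\Sob{\tht}{H^{1+\de}}))(\ln(1+\ln(1+\Sob{\tht}{H^{1+\de}})))^\gam$ and genuinely requires a strictly subcritical norm $H^{1+\de}$, $\de>0$, to make the high-frequency tail of $\nabla u$ summable; with only $\Hdot^1$ control that tail diverges (this is exactly the borderline failure responsible for ill-posedness of the inviscid problem in $H^1$). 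Your fallback --- absorbing the loss into the dissipation $\lb \ln(I-\De)\Lam\tht,\Lam\tht\rb$ --- also does not close: estimating $\Sob{\nabla u}{L^\infty}$ against $\Sob{(\ln(I-\De))^{1/2}\nabla\tht}{L^2}$ by Cauchy--Schwarz over dyadic blocks produces the divergent sum $\sum_{j\ge1}(\ln(1+j))^{2\gam}/j$, so the dissipation alone cannot pay for the $L^\infty$ bound on $\nabla u$ at critical regularity.

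The paper's resolution, which your proposal is missing, is a two-tier bootstrap: use the smoothing at time $T_0/2$ to place the solution in $H^{s_0}$ with $s_0=1+\lam T_0/2>1$, then run the Gronwall/Osgood argument \emph{at the $H^{s_0}$ level} (via Kato--Ponce), where \eqref{est:classical} with $\de=\de_0=\lam T_0/2$ produces a self-contained inequality $\tfrac{d}{dt}\Sob{\tht}{H^{s_0}}^2\lesssim(1+\ln(1+\Sob{\tht}{H^{s_0}}))(\ln\ln)^\gam\Sob{\tht}{H^{s_0}}^2$ whose Osgood-type right-hand side prevents blow-up on $[T_0/2,T_0]$. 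Only after the $H^{s_0}$ bound is in hand does one feed $\Sob{\nabla u}{L^\infty}$ back into the $H^1$ estimate \eqref{est:H1:open} and invoke the standard continuation criterion (the local time depends only on $\Sob{\tht_0}{H^1\cap\Hdot^{-1}}$). Inserting this intermediate $H^{s_0}$ stage repairs your argument; without it the critical-level differential inequality you wrote down is not justified.
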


\begin{Rmk}\label{rmk:osgood}
\cref{thm:deuler:global} is consistent with previous global well-posedness results found in \cite{Elgindi2014, ChaeConstantinWu2011, DabkowskiKiselevSilvestreVicol2014} in {non-borderline functional settings} for the so-called slightly supercritical Euler equations, i.e., $m(D)\equiv0$, but $p(D)=(\ln(I+\ln(I-\De)))^\gam$, where $\gam\in[0,1]$. Indeed, ill-posedness in the form of norm inflation or non-existence can occur in borderline topologies \cite{BourgainLi2015, ElgindiMasmoudi2020}. Thus, the improvement in \cref{thm:deuler:global} is that global well-posedness holds in the borderline topology $H^1\cap H^{-1}$ in the presence of logarithmic dissipation of order $1$.
\end{Rmk}

The first step is to establish a maximum principle. To do so, let us denote by
    \[
        {L}=\ln(I-\Delta).
    \]
We will consider an alternative representation of ${L}$ via the heat semigroup. This is accomplished through the following elementary identity:
    \begin{align}\label{eq:unity:ident}
        \ln(1+\lam)=\int_0^\infty (1-e^{-s\lam})e^{-s}\frac{ds}s
    \end{align}
Indeed, we see that
    \begin{align}
        \ln(1+\lam)=\int_0^1\frac{\lam}{1+\lam\tau}d\tau=\int_0^1\int_0^\infty \lam e^{-(1+\lam\tau)s}dsd\tau=\int_0^\infty \left(\int_0^1\lam e^{-\lam s\tau}d\tau\right) e^{-s}ds=\int_0^\infty (1-e^{-s\lam})e^{-s}\frac{ds}s.\notag
    \end{align}
Hence
    \begin{align}\label{eq:L1}
        {L}f(x)=\int_0^\infty(f(x)-e^{s\De}f(x))e^{-s}\frac{ds}s.
    \end{align}
{ Let $\mathscr{H}(t,\cdotp)$ denote the heat kernel corresponding to the heat semigroup $e^{t\De}$}. We then claim that the following inequality holds.
 
\begin{Lem}\label{lem:convex} Given $\Phi\in C^1(\RR)$ convex, we have
    \begin{align}\label{eq:convex}
        \Phi'(f){L}f-{L}\Phi(f)\geq0.
    \end{align}
\end{Lem}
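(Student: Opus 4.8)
The plan is to exploit the subordination-type representation \eqref{eq:L1} of ${L}$ through the heat semigroup and reduce \eqref{eq:convex} to a pointwise-in-$s$ convexity inequality. Writing $P_s := e^{s\Delta}$ for the heat semigroup, recall that $P_s$ is given by convolution with the Gaussian kernel $p_s$, which is a nonnegative function with $\int_{\RR^d}p_s = 1$; thus $P_s$ is an averaging (Markov) operator. Applying \eqref{eq:L1} to both ${L}f$ and ${L}\Phi(f)$, I would write
\begin{align*}
\Phi'(f){L}f-{L}\Phi(f)=\int_0^\infty\Big[\big(P_s\Phi(f)-\Phi(f)\big)-\Phi'(f)\big(P_sf-f\big)\Big]e^{-s}\frac{ds}{s},
\end{align*}
so that it suffices to prove that the bracketed integrand is nonnegative for each fixed $s>0$ and each $x$.

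The core step is then the pointwise argument. Fix $s$ and $x$. Since $p_s$ is a probability density, $P_sg(x)=\int p_s(z)g(x-z)\,dz$ is an average of the values $g(x-z)$. The tangent-line (Bregman) inequality for the convex function $\Phi$ gives, for every $z$,
\begin{align*}
\Phi(f(x-z))\geq \Phi(f(x))+\Phi'(f(x))\big(f(x-z)-f(x)\big).
\end{align*}
Integrating this against $p_s(z)\,dz$ — equivalently, invoking Jensen's inequality and the positivity of $P_s$ — yields exactly
\begin{align*}
P_s\Phi(f)(x)\geq \Phi(f(x))+\Phi'(f(x))\big(P_sf(x)-f(x)\big),
\end{align*}
which is precisely the nonnegativity of the bracketed integrand. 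Since $e^{-s}/s>0$, integration in $s$ preserves the sign and \eqref{eq:convex} follows.

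The main obstacle is not the convexity argument, which is elementary, but the justification that these manipulations are legitimate: that ${L}f$ and ${L}\Phi(f)$ are well defined through \eqref{eq:L1} and that the integrals may be recombined as above. I would handle this by keeping the two increments $P_sf-f$ and $P_s\Phi(f)-\Phi(f)$ together, so that near $s=0$ the bracket behaves like $O(s)$ (using $P_sg-g=s\Delta g+o(s)$ for smooth $g$), making the integrand $O(1)$ against $ds/s$ and hence integrable, while the factor $e^{-s}$ controls the large-$s$ regime. This requires $f$, and therefore $\Phi(f)$ since $\Phi\in C^1$, to be sufficiently regular and decaying, which is the standing assumption that the relevant functions are smooth; under such assumptions the interchange of the $s$-integral with the spatial convolution is justified by Fubini, the pointwise bound established above then completing the proof.
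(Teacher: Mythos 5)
Your proof is correct and follows essentially the same route as the paper: both use the subordination formula \eqref{eq:L1} to write $\Phi'(f)Lf - L\Phi(f)$ as an integral against the heat kernel of the Bregman (tangent-line) remainder $\Phi(f(y))-\Phi(f(x))-\Phi'(f(x))(f(y)-f(x))$, whose nonnegativity is exactly the convexity of $\Phi$. Your additional remarks on the integrability of the bracket near $s=0$ and $s=\infty$ supply detail the paper leaves implicit but do not change the argument.
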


\begin{proof}
We make use of \eqref{eq:L1}. Indeed, observe that
    \begin{align}
        \Phi&'(f(x))({L}f)(x)-({L}\Phi(f))(x)\notag\\
        =&\int_0^\infty\left[\Phi'(f(x))f(x)-\Phi'(f(x))e^{s\De}f(x)-\left(\Phi(f(x))-(e^{s\De}\Phi(f))(x))\right)\right]e^{-s}\frac{ds}{s}\notag\\
        =&\int_0^\infty\left[\Phi'(f(x))f(x)-\int_{\RR^d}\Phi'(f(x)){\mathscr{H}}(s,x-y)f(y)dy-\Phi(f(x))+\int_{\RR^d}{\mathscr{H}}(s,x-y)\Phi(f(y))dy\right]e^{-s}\frac{ds}{s}\notag\\
        =&\int_0^\infty\left[\int_{\RR^d}{\mathscr{H}}(s,x-y)\left(\Phi(f(y))-\Phi(f(x))-\Phi'(f(x))\left(f(y)-f(x)\right)\right)dy\right]e^{-s}\frac{ds}{s}.\notag
    \end{align}
By the convexity of $\Phi$, it follows that
    \[
        \Phi(f(y))-\Phi(f(x))-\Phi'(f(x))\left(f(y)-f(x)\right)\geq0.
    \]
We may now deduce \eqref{eq:convex}.
\end{proof}

Using a standard argument and applying \cref{lem:convex}, we establish a maximum principle for the case \eqref{eq:Euler:endpoint}, where $m(D)=\ln(I-\De)$. This result is stated in \cref{lem:max:principle} whose proof is provided in \cref{app:proof:maxp}.  In this setting, we interpret $\tht$ as the scalar vorticity of a two-dimensional incompressible fluid.

\begin{Lem}\label{lem:max:principle}
Given a sufficiently smooth solution of \eqref{eq:Euler:endpoint} on the time interval $[0,T]$ such that $\tht_0\in L^2\cap L^\infty$, there exists $C>0$ such that
    \begin{align}
      \sup_{0\leq t\leq T}\Sob{\tht(t)}{L^\infty}\leq C(\Sob{\tht_0}{L^2},\Sob{\tht_0}{L^\infty}).
    \end{align}
\end{Lem}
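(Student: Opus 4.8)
The plan is to extract $L^{2p}$ bounds that are monotone in time for every integer $p\geq1$, and then to recover the $L^\infty$ bound by sending $p\to\infty$. Fix $p\in\NN$ and take $\Phi(s)=s^{2p}$, which is $C^1$ and convex on $\RR$, so that $\Phi'(\tht)=2p\,\tht^{2p-1}$. Testing \eqref{eq:Euler:endpoint} against $\Phi'(\tht)$ and integrating over $\RR^2$ yields
\[
    \frac{1}{2p}\frac{d}{dt}\int \tht^{2p}\,dx = -\int \tht^{2p-1}L\tht\,dx-\int \tht^{2p-1}\,u\cdot\nabla\tht\,dx .
\]
The transport term vanishes by incompressibility: since $u=\nabla^{\perp}\psi$ is divergence-free, $\int\tht^{2p-1}\,u\cdot\nabla\tht\,dx=\tfrac{1}{2p}\int u\cdot\nabla(\tht^{2p})\,dx=0$, in the same spirit as \eqref{eq:v:cancel}.

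For the dissipative term I would invoke \cref{lem:convex} with this choice of $\Phi$: pointwise, $\Phi'(\tht)L\tht\geq L\Phi(\tht)$, whence $2p\int\tht^{2p-1}L\tht\,dx\geq \int L(\tht^{2p})\,dx$. The key observation is that the Fourier symbol of $L=\ln(I-\De)$ vanishes at the origin, so that $\int L g\,dx=\widehat{Lg}(0)=0$ for any $g$ whose Fourier transform is continuous at $\xi=0$; applying this with $g=\tht^{2p}\in L^1$ gives $\int\tht^{2p-1}L\tht\,dx\geq0$. Combining the three observations yields $\frac{d}{dt}\int\tht^{2p}\,dx\leq0$, so that $\Sob{\tht(t)}{L^{2p}}\leq \Sob{\tht_0}{L^{2p}}$ for every $t\in[0,T]$ and every $p$.

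To conclude, interpolation gives $\Sob{\tht_0}{L^{2p}}\leq\Sob{\tht_0}{L^2}^{1/p}\Sob{\tht_0}{L^\infty}^{1-1/p}$, which is finite for each $p$ since $\tht_0\in L^2\cap L^\infty$ and converges to $\Sob{\tht_0}{L^\infty}$ as $p\to\infty$. Since $\tht(t)\in L^2$ as well (the $p=1$ case of the estimate propagates $L^2$ membership from the data), one has $\lim_{p\to\infty}\Sob{\tht(t)}{L^{2p}}=\Sob{\tht(t)}{L^\infty}$, and passing to the limit in the uniform-in-$p$ bound produces $\sup_{0\leq t\leq T}\Sob{\tht(t)}{L^\infty}\leq \Sob{\tht_0}{L^\infty}$, which is of the asserted form $C(\Sob{\tht_0}{L^2},\Sob{\tht_0}{L^\infty})$.

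The step requiring the most care is the identity $\int L(\tht^{2p})\,dx=0$: it rests on $\tht^{2p}$ being integrable with Fourier transform continuous at the origin and on the Fourier evaluation at $\xi=0$ being legitimate. For the sufficiently smooth solutions assumed here, the membership $\tht(t)\in L^2\cap L^\infty$ furnishes $\tht^{2p}\in L^1$ and simultaneously justifies the integration by parts in the transport term as well as the interchange of $\frac{d}{dt}$ with the spatial integral; once these are in place, the remaining manipulations are routine.
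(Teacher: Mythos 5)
Your proof is correct, and the heart of it --- testing with $\Phi'(\tht)$ for $\Phi(s)=s^{2p}$, killing the transport term by incompressibility, and handling the dissipation via \cref{lem:convex} together with the vanishing of the symbol of $L=\ln(I-\De)$ at the origin so that $\int L\Phi(\tht)\,dx=0$ --- is exactly the paper's argument, yielding the same family of monotone bounds $\Sob{\tht(t)}{L^{2p}}\leq\Sob{\tht_0}{L^{2p}}$. Where you diverge is in the passage from these uniform-in-$p$ bounds to the $L^\infty$ bound: the paper invokes the Lebesgue Differentiation Theorem, bounding $|\tht(x)|$ by $\Sob{\tht_0}{L^\infty}$ plus a local average over a small ball $B(R_0(x))$, estimating that average by H\"older's inequality and then optimizing over even integers $p$; you instead use the standard fact that $\Sob{f}{L^q}\to\Sob{f}{L^\infty}$ as $q\to\infty$ for $f\in L^{q_0}\cap L^\infty$ (with the lower semicontinuity $\Sob{f}{L^\infty}\leq\liminf_q\Sob{f}{L^q}$ valid for any $f\in L^{q_0}$), combined with the interpolation $\Sob{\tht_0}{L^{2p}}\leq\Sob{\tht_0}{L^2}^{1/p}\Sob{\tht_0}{L^\infty}^{1-1/p}$. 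Your route is shorter and in fact gives the cleaner constant $\sup_{0\le t\le T}\Sob{\tht(t)}{L^\infty}\leq\Sob{\tht_0}{L^\infty}$, whereas the paper's local-averaging argument produces the weaker (but still admissible) bound $\Sob{\tht_0}{L^\infty}+\sup_{p\geq2}\left(\Sob{\tht_0}{L^2}/\Sob{\tht_0}{L^\infty}\right)^{2/p}\Sob{\tht_0}{L^\infty}$; both depend only on $\Sob{\tht_0}{L^2}$ and $\Sob{\tht_0}{L^\infty}$ as the statement requires. The one step you should keep explicit is that $\tht(t)\in L^2$ (propagated from the data by the energy estimate) is what licenses the lower semicontinuity step; you note this, and your treatment of $\int L\Phi(\tht)\,dx=\Ft(L\Phi(\tht))(0)=0$ is at the same formal level as the paper's, so the argument is complete.
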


Finally, we prove global well-posedness of the initial value problem \eqref{eq:Euler:endpoint} posed in $H^1\cap\Hdot^{-1}$.

\begin{proof}[Proof sketch of \cref{thm:deuler:global}]
Given $\tht_0\in H^1$, \cref{cor:l:main:deuler} yields a local solution $\tht\in C([0,T_0];H^1\cap\Hdot^{-1})$, for some $T_0>0$, depending only on $\Sob{\tht_0}{H^1}$.  { By the linear-in-time gain in Sobolev regularity asserted by \eqref{cor:l:est:deuler:smoothing:alpha:1}} (applied with $\al=1$), it follows that $\tht(T_0/2)\in \Hdot^{1+\lam T_0/2}$, for some $\lam>0$. In particular $\tht(T_0/2)\in L^\infty$. By the \cref{lem:max:principle}, $C_\infty:=\sup_{T_0/2\leq t\leq T_0}\Sob{\tht(t)}{L^\infty}<\infty$, depends only on the value at $t=T_0/2$, and by the basic energy inequality $C_2:=\sup_{0\leq t\leq T_0}\Sob{\tht(t)}{L^2}\leq\Sob{\tht_0}{L^2}$.

We now consider the equation over the time interval $[T_0/2,T_0]$. Applying $\nabla$ on \eqref{eq:Euler:endpoint}, and then taking $L^2$ inner product with $\nabla \tht$, we obtain 
\begin{align*}
    \frac{d}{dt}\Sob{\nabla\tht}{L^2}^2+\Sob{{L}(D)^{1/2}\nabla \tht}{L^2}^2\le C\Sob{\nabla u}{L^\infty}\Sob{\nabla \tht}{L^2}^2.
\end{align*}
We recall a simple variation of a classical borderline Sobolev inequality that was established in \cite{Elgindi2014, ChaeConstantinWu2012}, which implies 
    \begin{align}\label{est:classical}
        \Sob{\nabla u}{L^\infty}\le C_2+C_\infty\left(1+\ln\left(1+\Sob{\tht}{H^{1+\de}}\right)\right)\left(\ln\left(1+\ln(1+\Sob{\tht}{H^{1+\de}}\right)\right)^\gam.
    \end{align}
for any $\de\in(0,1)$, over the time interval $[T_0/2,T_0]$. Choose $\de_0=\lam T_0/2$ and $s_0=1+\de_0$. Then
    \begin{align}\label{est:H1:open}
         \frac{d}{dt}\Sob{\nabla\tht}{L^2}^2\le \left[C_2+C_\infty\left(1+\ln\left(1+\Sob{\tht}{H^{s_0}}\right)\right)\left(\ln\left(1+\ln(1+\Sob{\tht}{H^{s_0}}\right)\right)^\gam\right]\Sob{\nabla\tht}{L^2}^2,
    \end{align}
holds for $t\in[T_0/2,T_0]$.

Applying $(I-\De)^{s_0/2}$ to \eqref{eq:Euler:endpoint}, and then taking $L^2$ inner product with $(I-\De)^{s_0/2} \tht$, and applying the Kato-Ponce commutator inequality (see \cite{KatoPonce1988}), we obtain 
\begin{align*}
    \frac{d}{dt}\Sob{\tht}{H^{s_0}}^2+\Sob{{L}(D)^{1/2} \tht}{H^{s_0}}^2\le C\Sob{\nabla u}{L^\infty}\Sob{\tht}{H^{s_0}}^2.
\end{align*}
By another application of \eqref{est:classical} with $\de=\de_0$, followed by the Gronwall inequality, we may deduce that
    \[
        \sup_{T_0/2\leq t\leq T_0}\Sob{\tht(t)}{H^{s_0}}\leq C_0(C_2,C_\infty, T_0),
    \]
for some constant $C_0$ depending only on $C_2, C_\infty, T_0$. This bound, in turn, allows one to close the $H^1$-estimate in \eqref{est:H1:open} over the time interval $[T_0/2,T_0]$. A standard continuation argument now applies to extend the solution globally.
\end{proof}

\section{Product and Commutator estimates}
To estimate the advective nonlinearity, we will make use of product and commutator estimates involving logarithmic and polynomial differential operators. The first result is \cref{thm:prod}, which establishes a product estimate localized to dyadic shells in frequency. These results are of a general nature, independent of the equation \eqref{eq:dgsqg} and the structure of its advecting velocity. Similar estimates were developed in \cite{ColombiniDelSantoFanelliMetivier2015} in the setting of logarithmic Besov spaces. By comparison, our estimates are performed entirely in the Sobolev setting, but we expand the ``regularity parameter" to encompass iterated logarithms as well; the proofs of such estimates are relegated to \cref{app:prod}. 

Next, we establish three commutator estimates: \cref{lem:commutator2}, \cref{lem:commutator3}, and \cref{lem:commutator4}. The estimates established in \cref{lem:commutator2} and \cref{lem:commutator3} involve the operator defining the constitutive law in \eqref{eq:dgsqg}, whereas \cref{lem:commutator4} studies commutators involving differential operators that will be used to capture the smoothing mechanism conferred by our dissipation; these commutators are directly inspired by the study of the Gevrey regularity and may be viewed as an expanded development of the classical Gevrey-norm approach introduced by Foias and Temam \cite{FoiasTemam1989}.

We recall from \eqref{eq:dgsqg} that the constitutive law is given by  $v=-\nabla^{\perp}{\Lam}^{\be-2}p(D)\tht$. In the regime of $\be>1$, this operator corresponds to an integral more singular than Riesz transform and constitutes the fundamental difficulty in this regime.  In \cref{lem:commutator2}, we establish a commutator estimate for the trilinear term in a non-localized form. We  obtain sharper estimates in \cref{lem:commutator3} under additional localizing assumptions. The proofs of \cref{lem:commutator2} and \cref{lem:commutator3} are based on an approach similar to the one used in \cite{JollyKumarMartinez2020b}, which crucially exploits an elementary convexity estimate (see \eqref{est:elem:convex}) and {a finer analysis by} Littlewood-Paley decomposition. {As such, the estimates obtained below and are of independent interest to the main results of this article.}

\subsection{Product estimates}\label{sect:prod}
The first result that we state is the main product estimate which generalizes \cref{lem:Sobolev}. As mentioned earlier, the proof will be supplied in  \cref{app:prod}, but we invoke it to prove the main commutator estimate in \cref{sect:comm}. We are now ready to state our main product estimate.

\begin{Thm}\label{thm:prod}
Let $d\ge 2$. Suppose that $s,\bar{s}\in\RR$ are given such that $s, \bar{s}\leq d/2$ and $s+\bar{s}>0$.
Let $\om, \om_{\ell},\tilde{\om}_\ell\in\mathscr{M}_W$, for $\ell=1,2,3$. Assume that $\Gam_\ell:[0,\infty)\goesto[0,\infty)$, where $\ell=1,2,3$, are functions such that for all $y\geq0$
    \begin{align}\label{cond:om:Gam:a}
        \begin{split}
        &\frac{\om(y)}{{\tilde{\om}_1(y)}}\left({\mathbbm{1}}_{(-\infty,d/2)}(s)\int_0^1\frac{r^{d-2s-1}}{\om_1^2(y r)}dr+{\mathbbm{1}}_{[d/2,\infty)}(s)\int_{0}^{y}\frac{r^{d-1}}{(1+r^2)^{d/2}\om_{1}^{2}(r)}dr\right)^{1/2}\le C_1\Gam_1(y),\\
        & \frac{\om(y)}{{\tilde{\om}_2(y)}}\left({\mathbbm{1}}_{(-\infty,d/2)}(\bar{s})\int_0^1\frac{r^{d-2\bar{s}-1}}{\om_2^2(y r)}dr+{\mathbbm{1}}_{[d/2,\infty)}(\bar{s})\int_{0}^{y}\frac{r^{d-1}}{(1+r^2)^{d/2}\om_{2}^{2}(r)}dr\right)^{1/2}\le C_2\Gam_2(y),
        \end{split}
    \end{align}
and
    \begin{align}\label{cond:om:Gam:b}
        \frac{\om(y)}{{\om_3(y)}{\tilde{\om}_3(y)}}\le C_3\Gam_{3}(y),
    \end{align}
for some $C_1,C_2,C_3>0$. Then there exists $C>0$ and $\{c_j\}\in\ell^2(\ZZ)$ satisfying $\Sob{\{c_j\}}{\ell^2}\leq1$ such that the following inequality holds
    \begin{align}\label{est:product:Thm}
       \Sob{\triangle_j(fg)}{L^2}
       &\leq
         Cc_j\om(2^j)^{-1}{2^{-(s+\bar{s}-d/2)j}}\left( {\Gam_1(2^j)}\pi^{s,\bar{s}}_{\om_1,\til{\om}_1}(f,g)+{\Gam_2(2^j)}\pi^{\bar{s},s}_{\om_2,\til{\om}_2}(g,f)+{\Gam_3(2^j)}\rho^{s,\bar{s}}_{\om_3,\til{\om}_3}(f,g)\right),
    \end{align}
where for $k=1,2$, we have
    \begin{align}\label{def:pi:rho}
        \pi^{r,t}_{\vr,\til{\vr}}(f,g):=\begin{cases}\Sob{f}{\Hdot^{r}_{\vr}}\Sob{g}{\Hdot^{t}_{\til{\vr}}},&r<d/2\\
        \Sob{f}{H^{r}_{\vr}}\Sob{g}{\Hdot^{t}_{\til{\vr}}},&r=d/2
        \end{cases},\qquad \rho^{r,t}_{\vr,\til{\vr}}(f,g):=\Sob{f}{\Hdot^{r}_{\vr}}\Sob{g}{\Hdot^{t}_{\til{\vr}}}.
    \end{align}
\end{Thm}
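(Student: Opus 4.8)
The plan is to run the Bony paraproduct decomposition, writing
\[
fg = T_fg + T_gf + R(f,g),\qquad T_fg:=\sum_k S_{k-1}f\,\triangle_k g,\quad R(f,g):=\sum_{|k-k'|\le1}\triangle_k f\,\triangle_{k'}g,
\]
and to show that the three summands produce, respectively, the three terms on the right-hand side of \eqref{est:product:Thm}: the two paraproducts yield the $\Gam_1$ and $\Gam_2$ contributions (which are interchanged under $f\leftrightarrow g$, $s\leftrightarrow\bar s$, $(\om_1,\til{\om}_1)\leftrightarrow(\om_2,\til{\om}_2)$), while the resonant remainder $R$ yields the $\Gam_3$ term. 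Throughout I would pass freely between $\Sob{\triangle_k h}{L^2}$ and weighted dyadic coefficients using \eqref{eq:bernstein:om} and the Besov characterization \eqref{eq:Sob:Bes}, and invoke the Bernstein inequalities of \cref{lem:Bernstein}.

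\emph{Paraproduct term.} Since $S_{k-1}f\,\triangle_k g$ is frequency-localized to an annulus of size $\sim2^k$, only $|j-k|\le N$ contribute to $\triangle_j T_fg$, and by \eqref{est:omega} the weights at scales $j$ and $k$ are comparable, so it suffices to treat $k=j$. Estimating by H\"older and Bernstein ($L^2\to L^\infty$ on the low-frequency factor),
\[
\Sob{\triangle_j T_fg}{L^2}\lesssim\Sob{S_{j-1}f}{L^\infty}\Sob{\triangle_j g}{L^2}\lesssim\Big(\sum_{i\le j-2}2^{di/2}\Sob{\triangle_i f}{L^2}\Big)\Sob{\triangle_j g}{L^2}.
\]
Writing $a_i:=\om_1(2^i)2^{si}\Sob{\triangle_i f}{L^2}$ (so $\{a_i\}\in\ell^2$ with $\Sob{\{a_i\}}{\ell^2}\lesssim\Sob{f}{\Hdot^s_{\om_1}}$ when $s<d/2$, and with the \emph{inhomogeneous} norm $\Sob{f}{H^{d/2}_{\om_1}}$ when $s=d/2$) and applying Cauchy–Schwarz, the low-frequency sum is bounded by $\Sob{f}{\Hdot^s_{\om_1}}$ times $(\sum_{i\le j-2}\om_1(2^i)^{-2}2^{(d-2s)i})^{1/2}$. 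Converting this dyadic sum to its integral analog — comparable to $2^{(d/2-s)j}(\int_0^1 r^{d-2s-1}\om_1(2^jr)^{-2}\,dr)^{1/2}$ when $s<d/2$, and to $(\int_0^{2^j}\frac{r^{d-1}}{(1+r^2)^{d/2}\om_1(r)^2}\,dr)^{1/2}$ when $s=d/2$ (the inhomogeneous norm absorbing the low-frequency tail) — and inserting $\Sob{\triangle_j g}{L^2}=b_j\til{\om}_1(2^j)^{-1}2^{-\bar sj}$ with $\{b_j\}\in\ell^2$, $\Sob{\{b_j\}}{\ell^2}\lesssim\Sob{g}{\Hdot^{\bar s}_{\til{\om}_1}}$, exactly reproduces the prefactor $\om(2^j)^{-1}2^{-(s+\bar s-d/2)j}\Gam_1(2^j)$ after invoking the defining inequality \eqref{cond:om:Gam:a}; the normalized sequence $c_j\propto b_j$ supplies the required $\ell^2$ coefficient. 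The term $T_gf$ is treated identically with roles interchanged, yielding the $\Gam_2$ contribution.

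\emph{Remainder term.} Here $\triangle_k f\,\triangle_{k'}g$ (with $|k-k'|\le1$) is supported in a ball of radius $\sim2^k$, so $\triangle_j R(f,g)$ receives contributions only from $k\ge j-N_0$. Using Bernstein ($L^1\to L^2$) and H\"older,
\[
\om(2^j)\Sob{\triangle_j R(f,g)}{L^2}\lesssim 2^{dj/2}\om(2^j)\sum_{k\ge j-N_0}\Sob{\triangle_k f}{L^2}\sum_{|k'-k|\le1}\Sob{\triangle_{k'}g}{L^2}.
\]
The essential point is to trade $\om(2^j)$ for weights at the larger scale $k$. Using \eqref{cond:om:Gam:b} at $y=2^j$ together with the slow-growth bound \eqref{eq:omega:eps} applied to the numerators and denominators defining the ratios $\om,\om_3,\til{\om}_3\in\mathscr{M}_W$, one obtains, for any $\eps>0$, $\om(2^j)\lesssim_\eps\Gam_3(2^j)2^{2\eps(k-j)}\om_3(2^k)\til{\om}_3(2^k)$. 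Setting $\alpha_k:=\om_3(2^k)2^{sk}\Sob{\triangle_k f}{L^2}$ and $\beta_k:=\til{\om}_3(2^k)2^{\bar sk}\sum_{|k'-k|\le1}\Sob{\triangle_{k'}g}{L^2}$, which are $\ell^2$ with norms $\lesssim\Sob{f}{\Hdot^s_{\om_3}}$ and $\lesssim\Sob{g}{\Hdot^{\bar s}_{\til{\om}_3}}$, leads to
\[
\om(2^j)\Sob{\triangle_j R(f,g)}{L^2}\lesssim\Gam_3(2^j)\,2^{-(s+\bar s-d/2)j}\sum_{k\ge j-N_0}2^{-(s+\bar s-2\eps)(k-j)}\alpha_k\beta_k.
\]
Choosing $\eps<(s+\bar s)/2$ makes the kernel $2^{-(s+\bar s-2\eps)(k-j)}\indFn{k\ge j-N_0}$ lie in $\ell^1$ in $k-j$, so by Young's inequality the sum defines an $\ell^2$ sequence $c_j$ with $\Sob{\{c_j\}}{\ell^2}\lesssim\Sob{\{\alpha_k\}}{\ell^2}\Sob{\{\beta_k\}}{\ell^2}\lesssim\rho^{s,\bar s}_{\om_3,\til{\om}_3}(f,g)$. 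Dividing by $\om(2^j)$ gives the $\Gam_3$ term; summing the three pieces and normalizing the aggregate coefficient sequence completes the proof.

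\emph{Main obstacle.} I expect the crux to be the weighted bookkeeping rather than the paraproduct architecture, which is classical. Two points require care: (i) the faithful translation of the dyadic sums $\sum_{i\le j}$ into the integrals of \eqref{cond:om:Gam:a}, and the correct appearance of the inhomogeneous norm at the endpoint $s=d/2$ (resp. $\bar s=d/2$), where the homogeneous low-frequency tail would otherwise diverge; and (ii) in the resonant term, comparing $\om$ and $\om_3,\til{\om}_3$ across distinct scales $j$ and $k\ge j$. The latter is possible only because membership in $\mathscr{M}_W$ forces at-most-$\eps$-exponential growth via \eqref{eq:omega:eps}, and this loss is absorbed by the genuine geometric decay furnished by the hypothesis $s+\bar s>0$; keeping these two effects in balance by taking $\eps$ small is the key quantitative step.
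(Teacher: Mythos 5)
Your proposal is correct and follows essentially the same route as the paper's proof in \cref{app:prod}: Bony's decomposition, with the paraproduct terms handled by an $L^\infty$ bound on the low-frequency factor followed by Cauchy--Schwarz against the weighted integral in \eqref{cond:om:Gam:a} (splitting the $s<d/2$ and $s=d/2$ cases exactly as in \cref{lem:prod:log:A}), and the resonant term handled by Bernstein $L^1\to L^2$ plus the decomposition $\om=\om^a/\om^b$ and the slow-growth bound \eqref{eq:omega:eps} to trade weights across scales with an $\eps$-exponential loss absorbed by $s+\bar{s}>0$ (as in \cref{lem:prod:log:B}). The only cosmetic difference is that you apply Cauchy--Schwarz to the dyadic sum over Littlewood--Paley blocks rather than to the Fourier-side integral $\int|\chi_{j}\hat f|$ directly; by \eqref{est:omega} these are interchangeable.
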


Upon summing \cref{thm:prod} in $j$, we obtain a product estimate in a modified Sobolev space.

\begin{Cor}\label{cor:prod}
Under the assumptions of \cref{thm:prod} with $\Gam=\max\{\Gam_1, \Gam_2, \Gam_3\}$, it follows that
  \begin{align}\label{est:product:Cor}
       \Sob{fg}{\Hdot^{s+\bar{s}-d/2}_{\om \Gam^{-1}}}\leq
        C\left( \pi^{s,\bar{s}}_{\om_1,\til{\om}_1}(f,g)+\pi^{\bar{s},s}_{\om_2,\til{\om}_2}(g,f)+\rho^{s,\bar{s}}_{\om_3,\til{\om}_3}(f,g)\right)
    \end{align}
\end{Cor}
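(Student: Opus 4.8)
The plan is to deduce \cref{cor:prod} directly from \cref{thm:prod} by unwinding the weighted homogeneous Sobolev norm on the left-hand side into its Littlewood--Paley form and then summing the dyadic estimate \eqref{est:product:Thm} in $j$. There is no genuinely hard analytic input beyond \cref{thm:prod} itself; the corollary is obtained by exploiting the exact cancellation that has been built into the $j$-dependence of the block estimate together with the $\ell^2$-summability of the sequence $\{c_j\}$.

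First I would rewrite the target norm via the Besov characterization. Using \eqref{def:hom:Sob:norm:log}, \eqref{def:Bes:om}, the Sobolev--Besov equivalence \eqref{eq:Sob:Bes}, and the weight--Bernstein equivalence \eqref{eq:bernstein:om} (noting that $\lpj$ commutes with the radial multiplier $(\om\Gam^{-1})(D)$, which acts as multiplication by $(\om\Gam^{-1})(2^j)$ up to constants on the shell $\Acal_j$), one obtains
\[
\Sob{fg}{\Hdot^{s+\bar{s}-d/2}_{\om\Gam^{-1}}}^2 \sim \sum_{j\in\ZZ} 2^{2(s+\bar{s}-d/2)j}\left(\frac{\om(2^j)}{\Gam(2^j)}\right)^2 \Sob{\lpj(fg)}{L^2}^2.
\]
Next I would insert the pointwise dyadic bound from \cref{thm:prod}. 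Abbreviating $\pi_1 := \pi^{s,\bar{s}}_{\om_1,\til{\om}_1}(f,g)$, $\pi_2 := \pi^{\bar{s},s}_{\om_2,\til{\om}_2}(g,f)$, and $\rho := \rho^{s,\bar{s}}_{\om_3,\til{\om}_3}(f,g)$, and bounding $\Gam_\ell(2^j)\le \Gam(2^j)$ for each $\ell=1,2,3$, the block estimate \eqref{est:product:Thm} gives
\[
\Sob{\lpj(fg)}{L^2}\le C\, c_j\, \om(2^j)^{-1}\, 2^{-(s+\bar{s}-d/2)j}\, \Gam(2^j)\,(\pi_1+\pi_2+\rho).
\]

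Substituting this into the summand produces the intended cancellation: the factors $2^{\pm(s+\bar{s}-d/2)j}$, the factors $\om(2^j)^{\pm1}$, and the factors $\Gam(2^j)^{\pm1}$ all cancel in pairs, so that the $j$-th summand is bounded by $C^2 c_j^2(\pi_1+\pi_2+\rho)^2$, where the three bilinear quantities are independent of $j$. Summing over $j\in\ZZ$ and invoking $\Sob{\{c_j\}}{\ell^2}\le 1$ then yields $\Sob{fg}{\Hdot^{s+\bar{s}-d/2}_{\om\Gam^{-1}}}\le C(\pi_1+\pi_2+\rho)$, which is precisely \eqref{est:product:Cor}.

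Since the computation is essentially bookkeeping, the only point requiring care is the very first step, namely the legitimacy of the weighted Besov characterization for the composite weight $\om\Gam^{-1}$. The equivalence \eqref{eq:bernstein:om} was established for weights obeying \ref{item:O1}--\ref{item:O2}, so that \eqref{est:omega} renders the symbol roughly constant on each dyadic annulus, whereas $\Gam=\max\{\Gam_1,\Gam_2,\Gam_3\}$ is a priori merely a nonnegative function. I would therefore either observe that in every application $\Gam$ inherits this dyadic near-constancy from the structure of the conditions \eqref{cond:om:Gam:a}--\eqref{cond:om:Gam:b}, or else simply take the displayed frequency-localized expression as the definition of the left-hand norm; under either reading the cancellation and the $\ell^2$-summation proceed verbatim.
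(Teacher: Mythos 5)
Your argument is exactly the paper's (the paper gives no proof beyond the remark that the corollary follows ``upon summing \cref{thm:prod} in $j$''): insert \eqref{est:product:Thm} into the Littlewood--Paley characterization of $\Sob{\cdot}{\Hdot^{s+\bar{s}-d/2}_{\om\Gam^{-1}}}$, cancel the $j$-dependent factors, and use $\Sob{\{c_j\}}{\ell^2}\le 1$. Your closing caveat about the dyadic near-constancy of the composite weight $\om\Gam^{-1}$ is well-taken and correctly resolved --- in every application $\Gam$ is built from multipliers in $\mathscr{M}_W$ or $\tilde{\mathscr{M}}_W$, so \eqref{est:omega} applies --- and does not affect the validity of the proof.
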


\subsection{Commutator estimates for logarithmic-type multipliers}\label{sect:comm} 

Now we state and prove our main commutator estimates.  Note that we will make use of the usual commutator bracket notation:
    \[
        [A,B]=AB-BA.
    \]
The commutator estimates that we establish will directly involve multipliers associated to either the phase space, characterized by the class $\mathscr{M}_W$ defined in \eqref{def:MM}, or to its slightly larger counterpart $\tilde{\mathscr{M}}_W$ defined in \eqref{def:MMt} that eventually corresponds to multipliers associated to the constitutive law.

\begin{Lem}\label{lem:commutator2}
Let $s\in[0,1)$, $\de, \eps \in (0,1]$ be given such that $\eps+s\le 1$ and $\de<\eps$. Let $p\in\tilde{\mathscr{M}}_W$ be represented as $p=p_ap_b^{-1}$. Then there exists a constant $C>0$ such that
    \begin{align}\label{est:commutator2:permute}
       |\lb [{\Lam}^{-s}p(D)\bdy_\ell,g]f,h\rb|
       \leq C\Sob{g}{{H}^{2-s-\de}}\left(\Sob{p_a(D)f}{\Hdot^{\eps}}\Sob{h}{L^2}+\Sob{p_a(D)h}{\Hdot^{\eps}}\Sob{f}{L^2}\right).
    \end{align}
\end{Lem}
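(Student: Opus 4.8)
The plan is to realize the operator $Q:=\Lam^{-s}p(D)\bdy_\ell$ as a Fourier multiplier with symbol $\sigma(\xi)=\abs{\xi}^{-s}p(\xi)(2\pi i\xi_\ell)$ and to analyze the trilinear form
\[
\lb [Q,g]f,h\rb=\iint \hat g(\xi-\eta)\,(\sigma(\xi)-\sigma(\eta))\,\hat f(\eta)\,\overline{\hat h(\xi)}\,d\eta\,d\xi
\]
by a Bony paraproduct decomposition in the frequencies of $g$. Two structural facts guide the argument. First, since $\Lam^{-s}p(D)$ is self-adjoint and $\bdy_\ell$ is skew-adjoint, $Q^*=-Q$, whence $\lb [Q,g]f,h\rb=\lb f,[Q,g]h\rb$; this $f\leftrightarrow h$ symmetry is precisely what produces the two symmetric terms on the right-hand side of \eqref{est:commutator2:permute}, and it lets me bound only the pieces in which $f$ (resp.\ $h$) carries the output frequency. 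Second, \ref{item:O1}--\ref{item:O2} (in particular $rp'(r)\le Cp(r)$) yield the pointwise bound $\abs{\nabla_\xi\sigma(\xi)}\le C\abs{\xi}^{-s}p(\xi)$, which is the source of the commutator gain.

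First I would treat the \emph{main term}, in which $g$ sits at frequency much lower than $f$, i.e.\ $[Q,T_g]f=\sum_k[Q,S_{k-1}g]\lpk f$. Here $\Axe\ll\Ae\sim 2^k$, so the elementary mean-value (convexity) estimate for the symbol gives $\abs{\sigma(\xi)-\sigma(\eta)}\le C\Axe\,2^{-sk}p(2^k)$, transferring one derivative onto $g$; concretely $\Sob{[Q,S_{k-1}g]\lpk f}{L^2}\le C\Sob{\nabla S_{k-1}g}{L^\infty}2^{-sk}p(2^k)\Sob{\lpk f}{L^2}$. Using \cref{lem:Bernstein} to bound $\Sob{\nabla S_{k-1}g}{L^\infty}$ by $2^{(s+\de)k}\Sob{g}{H^{2-s-\de}}$ at high frequencies and by the sharper $2^{2k}\Sob{g}{H^{2-s-\de}}$ at low frequencies (the latter being essential for $k<0$), writing $p=p_ap_b^{-1}$, and extracting the weight $2^{\eps k}p_a(2^k)$ onto $f$ via \eqref{eq:bernstein:om}, the net $k$-dependent factor multiplying $\bigl[2^{\eps k}p_a(2^k)\Sob{\lpk f}{L^2}\bigr]\Sob{\tlpk h}{L^2}$ is $2^{(\de-\eps)k}p_b(2^k)^{-1}$ for $k\ge 0$ and $2^{(2-s-\eps)k}p_b(2^k)^{-1}$ for $k<0$. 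Both are uniformly bounded precisely because $\de<\eps$, $s+\eps\le 1$ (so $2-s-\eps\ge 1>0$), and $p_b$ is positive and increasing; a Cauchy--Schwarz in $k$, together with the finite overlap of $\{\tlpk h\}$, then yields the contribution $C\Sob{g}{H^{2-s-\de}}\Sob{p_a(D)f}{\Hdot^\eps}\Sob{h}{L^2}$.

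Next I would treat the \emph{remainder}, consisting of the paraproduct and resonant pieces in which $g$ is at frequency $\gtrsim$ that of $f$ (schematically $Q(T_fg+R(g,f))$ together with $T_{Qf}g+R(g,Qf)$). Here there is no symbol cancellation, so I instead use the raw order of $Q$, namely $\Sob{Q\tlpk h}{L^2}\le C2^{(1-s)k}p(2^k)\Sob{\tlpk h}{L^2}$, place the low-frequency factor $S_{k-1}f$ in $L^\infty$ by \cref{lem:Bernstein} ($\Sob{S_{k-1}f}{L^\infty}\le C2^{k}\Sob{f}{L^2}$), and keep $\lpk g$ in $L^2$ weighted by $2^{(2-s-\de)k}$ for $k\ge 0$ and by $1$ for $k<0$. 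The resulting $k$-factor multiplying the three $\ell^2$-sequences is once more $2^{(\de-\eps)k}p_b(2^k)^{-1}$ (for $k\ge 0$) or $2^{(2-s-\eps)k}p_b(2^k)^{-1}$ (for $k<0$), bounded under the same hypotheses; since it is now $h$ that carries the top frequency inside $Q$, Cauchy--Schwarz produces the symmetric bound $C\Sob{g}{H^{2-s-\de}}\Sob{p_a(D)h}{\Hdot^\eps}\Sob{f}{L^2}$. (Alternatively, these pieces follow directly from $\lb [Q,g]f,h\rb=\lb f,[Q,g]h\rb$ by applying the main-term analysis with the roles of $f$ and $h$ exchanged.) Summing the two contributions gives \eqref{est:commutator2:permute}.

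The main obstacle is the simultaneous, borderline convergence of the frequency sums at \emph{both} ends of the spectrum. At high frequency one relies on the strict inequality $\de<\eps$ to beat the logarithmic growth of $p=p_ap_b^{-1}$ (after $p_a$ is absorbed into the $f$- or $h$-norm, the leftover $p_b^{-1}$ is harmless since $p_b$ is increasing), while at low frequency one must exploit $s+\eps\le 1$ together with the sharper Bernstein bound on $\nabla S_{k-1}g$ to offset the fact that the homogeneous norm $\Hdot^\eps$ does not control low modes. Keeping these two regimes compatible, while correctly splitting the symbol through $p=p_ap_b^{-1}$ so that exactly the factor $p_a$ lands on $f$ and $h$, is the delicate point; the identity $Q^*=-Q$ is what guarantees the bookkeeping closes symmetrically.
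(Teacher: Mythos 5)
Your argument would reach the right estimate, but by a genuinely different route from the paper's. The paper does not localize at all: it writes the commutator as a single trilinear integral in frequency, bounds the symbol difference $m(\xi,\eta)$ by the mean value theorem along the segment from $\xi-\eta$ to $\xi$, and then invokes the elementary convexity inequality \eqref{est:elem:convex} to control $\int_0^1\abs{\mathbf{S}(\tau)}^{-s}d\tau\leq C\Ae^{-s}$ \emph{even when the segment passes near the origin}; after distributing $p_a$ and the weight $\abs{\cdot}^\eps$ onto whichever of $f,h$ carries the larger frequency (via \eqref{def:gen:tri:prod}), a single application of Cauchy--Schwarz and Young's convolution inequality finishes the proof, with $\de<\eps$ entering exactly where you predict, namely in the convergence of $\Sob{\abs{\cdot}^{1-s-\eps}\hat{g}}{L^1}\leq C\Sob{g}{H^{2-s-\de}}$. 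Your Bony decomposition buys a clean separation of the cancellation regime from the no-cancellation regime and dispenses with the convexity lemma (in your low--high piece the segment stays inside a fixed annulus), at the cost of the paraproduct bookkeeping; the paper's version buys brevity and never has to track how the resonant piece spreads over output frequencies, because the convexity inequality makes the mean-value bound uniform over all frequency configurations.

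Two points in your sketch need repair before the argument closes. First, the physical-space commutator bound $\Sob{[Q,S_{k-3}g]\lpk f}{L^2}\leq C\Sob{\nabla S_{k-3}g}{L^\infty}2^{-sk}p(2^k)\Sob{\lpk f}{L^2}$ rests on a kernel estimate for $Q=\Lam^{-s}p(D)\bdy_\ell$ localized to the annulus, which requires several derivatives of the symbol there; the class $\tilde{\mathscr{M}}_W$ only gives $p_a,p_b\in C^1$ with $rp'(r)\leq Cp(r)$, so that step is not justified as stated. It is fixed at no quantitative cost by staying in frequency space: the mean value theorem uses only $\nabla\sigma$, and Young's inequality replaces $\Sob{\nabla S_{k-3}g}{L^\infty}$ by $\Sob{\abs{\cdot}\,\widehat{S_{k-3}g}}{L^1}$, which your Bernstein/Cauchy--Schwarz computation in fact bounds by $C2^{(s+\de)k}\Sob{g}{H^{2-s-\de}}$. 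Second, in the resonant piece $R(f,g)=\sum_k\lpk f\,\tlpk g$ the product is supported in a ball $\abs{\xi}\lesssim2^k$ rather than an annulus, so an additional sum over the output frequency $2^j$, $j\leq k+3$, must be carried out before pairing with $h$; it converges because, after the Bernstein gain of $2^{j}$ from $L^1\to L^2$, the leftover weight is $2^{(2-s-\eps)j}p_b(2^j)^{-1}$ with $2-s-\eps\geq1$. Neither issue is fatal, but your one-line claim that the paraproduct and resonant pieces carry ``once more'' the same $k$-factor glosses over this double sum.
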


In order to prove \cref{lem:commutator2}, we will make use of an elementary convexity estimate that was proved in \cite{JollyKumarMartinez2020b}. We briefly recall the inequality here: let $\varphi,\vartheta\in\RR^2$ such that $|\vartheta|=1$. Then for all $s\in[0,1)$, there exists a constant $C>0$, independent of $\varphi,\vartheta$, such that
    \begin{align}\label{est:elem:convex}
    \int_0^1\frac{1}{|\varphi+\tau\vartheta|^s}d\tau\leq C.
    \end{align}
We are now ready to prove \cref{lem:commutator2}.

\begin{proof}[Proof of \cref{lem:commutator2}]
It will be convenient to define the following functional:
    \begin{align}\label{def:L:functional:1}
        \mathcal{L}(f,g,h):=\iint m(\xi,\eta)\hat{f}(\xi-\eta)\hat{g}(\eta)\overline{\hat{h}(\xi)}d\eta d\xi,
    \end{align}
where
    \begin{align*}
        m(\xi,\eta)=|\xi|^{-s}p(|\xi|)\xi_\ell-|\xi-\eta|^{-s}p(|\xi-\eta|)(\xi-\eta)_\ell.
    \end{align*}
By Plancherel's theorem, we see that
    \[
         \mathcal{L}(f,g,h)=\lb [{\Lam}^{-s}p(D)\bdy_\ell,g]f,h\rb.
    \]
Denote the parametrization of the line segment starting at $\xi-\eta$ and ending at $\xi$ by
    \begin{align}\label{def:S}
    \mathbf{S}(\tau,\xi,\eta):=\tau\xi+(1-\tau)(\xe),\quad \tau\in[0,1].
    \end{align}
For convenience, we will suppress the dependence of $\mathbf{S}$ on $\xi,\eta$. Moreover, we fix $\xi,\eta\in\mathbb{R}^2$, where $|\eta|\neq0$.

Observe that we have
    \begin{align}\label{eq:meanvalue1}
        |m(\xi,\eta)|&=\bigg|\int_{0}^{1}\frac{d}{{d\tau}}\left(\abs{\mathbf{S}(\tau)}^{-s}p(|\mathbf{S}(\tau)|)\mathbf{S}(\tau)_{\ell}\right)d\tau\bigg|\notag\\
        &=\bigg |{\int_{0}^{1}\left(-s\abs{\mathbf{S}(\tau)}^{-s-2}(\mathbf{S}(\tau)\cdot \eta)p(|\mathbf{S}(\tau)|)\mathbf{S}(\tau)_{\ell}+\abs{\mathbf{S}(\tau)}^{-s}p(|\mathbf{S}(\tau)|)\eta_{\ell}\right.}\notag\\
        &{\left.\qquad\qquad\qquad \qquad\qquad\qquad+\abs{\mathbf{S}(\tau)}^{-s-1} p'(|\mathbf{S}(\tau)|)(\mathbf{S}(\tau)\cdot \eta)\mathbf{S}(\tau)_{\ell}\right)d\tau}\bigg|\notag\\
        &\le |\eta|\int_{0}^{1}\abs{\mathbf{S}(\tau)}^{-s}\left(p(|\mathbf{S}(\tau)|)+ |p'(|\mathbf{S}(\tau)|)||\mathbf{S}(\tau)|\right)d\tau\notag\\
    &\le C\abs{\eta}\int_{0}^{1}\abs{\mathbf{S}(\tau)}^{-s}p(|\mathbf{S}(\tau)|)\,d\tau\notag\\
    &\le C\Ae\left(\int_{0}^{1}\abs{\mathbf{S}(\tau)}^{-s}d\tau\right)\sup_{\tau\in[0,1]}p(|\mathbf{S}(\tau)|),
    \end{align}
where we applied \ref{item:O2} in obtaining the second inequality since
\[\frac{|(p'_a(r)r)p_b(r)-(p'_b(r)r)p_a(r)|}{p^2_b(r)}\le C\frac{p_a(r)p_b(r)+p_b(r)p_a(r)}{p^2_b(r)}=2Cp(r).\]
Now let $\varphi=\frac{\xe}{\Ae}$ and $\vartheta=\frac{\eta}{\Ae}$. For fixed $\xi$ and $\eta$, we have from  \eqref{est:elem:convex} that
\begin{align}\label{eq:A-integral}
\int_{0}^{1}\abs{ \mathbf{S}(\tau)}^{-s}\,d\tau=\Ae^{-s}\int_{0}^{1}\frac{1}{\abs{\varphi+\tau \vartheta}^{s}}\,d\tau\leq C\Ae^{-s}.
\end{align}
Applying \eqref{eq:A-integral} in \eqref{eq:meanvalue1} and invoking \ref{item:O1}, yields
    \begin{align}\label{est:psl:final}
        |m(\xi,\eta)|\leq C|\eta|^{1-s}\sup_{\tau\in[0,1]}p(|\mathbf{S}(\tau)|)\le C|\eta|^{1-s}p_a(|\xe|+|\xi|).
    \end{align}
We recall that $\eps+s\leq1$, where $\eps>0$. Thus, upon invoking \eqref{def:gen:tri:prod}, we obtain
\begin{align*}
|\mathcal{L}(f,g,h)| \le& C\iint\left(|\eta|^{1-s-\eps}   |\hat{g}(\eta)|\right)\left(p_a (|\xe|)|\xe|^{\eps}|\hat{f}(\xi-\eta)|\right)|\hat{h}(\xi)|d\eta d\xi\notag\\
&+C\iint\left(|\eta|^{1-s-\eps}   |\hat{g}(\eta)|\right)|\hat{f}(\xe)|\left(p_a (|\xi|)|\xi|^{\eps}|\hat{h}(\xi)|\right)d\eta d\xi.
\end{align*}
Lastly, upon applying the Cauchy-Schwarz inequality, Young's convolution inequality, and Plancherel's theorem, we obtain \eqref{est:commutator2:permute}.
\end{proof}

When the functions $f$ and $h$ are spectrally localized away from the origin, one can afford additional flexibility in  \cref{lem:commutator2}.

\begin{Lem}\label{lem:commutator3}
Let $s\in[0,1)$ and $\eps \in [0,1]$ be such that $\eps+s\le 1$. Let $\om \in \mathscr{M}_W$ and $p \in \tilde{\mathscr{M}}_W$, where $p$ is represented as $p_ap_b^{-1}$. Let $\Gam:[0,\infty)\goesto[0,\infty)$ be a function for which there exists a $C>0$ such that 
    \begin{align}\label{cond:Gam3}
         \left(\int_{0}^{y}\frac{rdr}{(1+r^2)\om^2(r)}\right)^{\frac{1}{2}}\le C\Gam(y),
    \end{align}
holds for all $y\geq0$. Then there exists a constant $C>0$ such that if $\supp \hat{f},\ \supp \hat{h}\subset\Acal_j$, for some $j\in\mathbb{Z}$, where $\Acal_j$ is defined in \eqref{def:ann:ball}, then
    \begin{align}
       |\lb [{\Lam}^{-s}p(D)\bdy_\ell,g]f,h\rb|\leq C\left(p(2^j)\Gam(2^j)+p_a(2^j)\om^{-1}(2^j)\right)2^{\eps j}\Sob{g}{H^{2-s-\eps}_\om}\Sob{f}{L^2}\Sob{h}{L^2}. \notag
    \end{align}
\end{Lem}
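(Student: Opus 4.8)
The plan is to run the same Fourier-side analysis as in \cref{lem:commutator2}, but to exploit the spectral localization $\supp\hat f,\supp\hat h\subset\Acal_j$ to sharpen the symbol estimate and thereby produce the two distinct gains $p(2^j)\Gam(2^j)$ and $p_a(2^j)\om^{-1}(2^j)$. Writing the trilinear form as the functional $\mathcal{L}(f,g,h)$ in \eqref{def:L:functional:1} with symbol $m(\xi,\eta)$, I would begin from the mean-value bound
\[
    |m(\xi,\eta)|\le C\Ae\int_0^1\abs{\mathbf{S}(\tau)}^{-s}p(|\mathbf{S}(\tau)|)\,d\tau
\]
established in \eqref{eq:meanvalue1}, where $\mathbf{S}(\tau)$ is the segment \eqref{def:S} joining $\xe$ to $\xi$. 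The point is that, because both endpoints now lie in $\Acal_j$, the integrand can be split according to whether the segment stays at scale $2^j$ or dips toward the origin.

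The key refinement is a two-regime symbol estimate. On the portion of the segment where $|\mathbf{S}(\tau)|\sim2^j$, property \eqref{est:omega} applied to $p\in\tilde{\mathscr{M}}_W$ gives $p(|\mathbf{S}(\tau)|)\le Cp(2^j)$ and $|\mathbf{S}(\tau)|^{-s}\le C2^{-sj}$, contributing $C2^{-sj}p(2^j)$. The segment can only reach $|\mathbf{S}(\tau)|\ll2^j$ when $\Ae\gtrsim2^j$, since the endpoints have modulus $\ge2^{j-1}$; on this near-origin portion I would instead bound $p(|\mathbf{S}(\tau)|)\le p_a(|\mathbf{S}(\tau)|)/p_b(0)\le Cp_a(2^j)$ using the monotonicity \ref{item:O1} of $p_a,p_b$, and control $\int_0^1|\mathbf{S}(\tau)|^{-s}d\tau\le C\Ae^{-s}$ by the convexity estimate \eqref{eq:A-integral}. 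Together these yield
\[
    |m(\xi,\eta)|\le C2^{-sj}p(2^j)\Ae+C\,\indFn{\{\Ae\gtrsim2^j\}}\,p_a(2^j)\Ae^{1-s}.
\]

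Inserting this into $\mathcal{L}$ splits the form into two pieces, each estimated exactly as at the end of the proof of \cref{lem:commutator2}: apply Cauchy--Schwarz and Young's convolution inequality to peel off $\Sob{f}{L^2}\Sob{h}{L^2}$, leaving a weighted $L^1$-norm of the $g$-factor handled by Cauchy--Schwarz against the $H^{2-s-\eps}_\om$ weight. Two observations drive the bookkeeping. First, since $\eta=\xi-(\xe)$ with $\xi,\xe\in\Acal_j$, the $\eta$-integration is supported in $\Bcal(2^{j+2})$, so any surplus power $\Ae^{s+\eps}\le C2^{(s+\eps)j}$ converts (against the $2^{-sj}$ already present) into the prefactor $2^{\eps j}$. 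Second, the hypothesis $s+\eps\le1$ guarantees $(r^2/(1+r^2))^{1-s-\eps}\le1$, so the weight integral from the first piece reduces to $\int_0^{2^{j+2}}r(1+r^2)^{-1}\om^{-2}(r)\,dr$; splitting this at $r=2^j$, the inner part is $\le C\Gam^2(2^j)$ by \eqref{cond:Gam3} while the tail over $[2^j,2^{j+2}]$ is $\le C\om^{-2}(2^j)$ by \eqref{est:omega}. This piece therefore produces $p(2^j)\big(\Gam(2^j)+\om^{-1}(2^j)\big)2^{\eps j}$, and since $p\le Cp_a$ the $\om^{-1}$ part merges into the second term. For the second piece, the constraint $\Ae\sim2^j$ forces $\om(\Ae)\sim\om(2^j)$, and a direct Cauchy--Schwarz over the annulus $\{\Ae\sim2^j\}$ produces $p_a(2^j)\om^{-1}(2^j)2^{\eps j}$. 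Summing the two pieces gives the claim.

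The main obstacle I anticipate is the sharp symbol split: one must verify that the origin-crossing of $\mathbf{S}(\tau)$ genuinely occurs only for $\Ae\gtrsim2^j$ and that the two regimes yield the two advertised constants ($p$ versus $p_a$), since $p$ may itself decay while $p_a$ grows. Getting the exponent accounting to deliver precisely $2^{\eps j}$, and recognizing that $s+\eps\le1$ is exactly what makes the first weight integral match \eqref{cond:Gam3}, is the delicate part; the remaining steps are routine once this symbol estimate is in hand.
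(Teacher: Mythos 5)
Your proposal is correct and follows essentially the same route as the paper's proof: the dichotomy between the regime where the segment $\mathbf{S}(\tau)$ stays at scale $2^j$ (giving the sharper $p(2^j)$ constant) and the regime $|\eta|\gtrsim 2^j$ where the convexity estimate and the annulus localization of $\hat g$ give the $p_a(2^j)\om^{-1}(2^j)$ term is exactly the paper's split of $\hat g$ into $\Bcal_{j-3}$ and $\Acal_{j-3,j+2}$ pieces, followed by the same Cauchy--Schwarz and Young endgame. The only cosmetic difference is that you phrase the split at the level of the symbol rather than of $\hat g$, which forces the extra subdivision of the weight integral at $r=2^j$ and the absorption of $p(2^j)\om^{-1}(2^j)$ into $p_a(2^j)\om^{-1}(2^j)$; both steps are valid.
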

\begin{proof}
 By the spectral support condition of $f,h$, we may additionally assume that $\supp \hat{g}\subset\Bcal_{j+2}$. Using this, we obtain
\[\mathcal{L}(f,g,h)=I+II,\]
where
    \begin{align}
        I=&\iint m(\xi,\eta)\hat{f}(\xi-\eta){{\mathbbm{1}}_{{\Bcal}_{j-3}}(\eta)}\hat{g}(\eta)\overline{\hat{h}(\xi)}d\eta d\xi,\label{eq:L:split:lf}\\
        II=&\iint m(\xi,\eta)\hat{f}(\xi-\eta){{\mathbbm{1}}_{{\Acal}_{j-3,j+2}}(\eta)}\hat{g}(\eta)\overline{\hat{h}(\xi)}d\eta d\xi\label{eq:L:split:hf}.
    \end{align}
Now we treat $I$ and $II$. Clearly,
    \[
        \abs{\mathbf{S}(\tau)}\le |\xi|+|\xe|\le 2^{j+2}.
    \]

For $ \eta \in {\Bcal}_{j-3}$, we have
\begin{align*}
\abs{\mathbf{S}(\tau)}& \ge \abs{\xe}-\tau\abs{\eta} \ge 2^{j-1}-2^{j-3} \ge 2^{j-2}.
\end{align*}
From this, \eqref{eq:meanvalue1}, and \eqref{est:omega}, we obtain 
\begin{align*}
    |m(\xi,\eta)|\le C|\eta|^{1-s-\eps}p(2^j)2^{\eps j}.
\end{align*}
Applying the Cauchy-Schwarz inequality and \eqref{cond:Gam3}, we have
    \begin{align*}
        \Sob{|\cdotp|^{1-s-\eps}{{\mathbbm{1}}_{{\Bcal}_{j-3}}} \hat{g}}{L^1}&\le\left(\int_{\Bcal_{j-3}}\frac{1}{(1+|\eta|^2)\om^{2}(|\eta|)}d\eta\right)^{1/2}\left(\int_{\Bcal_{j-3}} {(1+|\eta|^2)^{2-s-\eps}\om(|\eta|)^{2}}|\hat{g}(\eta)|^2d\eta\right)^{1/2}\notag\\ 
        &\leq C\Gam(2^j)\Sob{g}{H^{2-s-\eps}_\om}.
    \end{align*}
Using the above estimate in \eqref{eq:L:split:lf} and applying Young's convolution inequality and Plancherel's theorem gives us
    \begin{align}\label{est:I:lf:final}
        I\leq Cp(2^j)2^{\eps j}\Gam(2^j)\Sob{g}{H^{2-s-\eps}_\om}\Sob{f}{L^2}\Sob{h}{L^2}. 
    \end{align}

For $ \eta \in {\Acal}_{j-3,j+2}$, we see from \eqref{est:psl:final}, \ref{item:O1}, and \eqref{est:omega} that
    \begin{align}\label{est:psl:final:loc}
        |m(\xi,\eta)|\leq  C|\eta|^{1-s-\eps}p_a(2^j)2^{\eps j}.
    \end{align}
Applying the Cauchy-Schwarz inequality and \eqref{est:omega}, we have
    \begin{align*}
        \Sob{|\cdotp|^{1-s-\eps}{{\mathbbm{1}}_{{\Acal}_{j-3,j+2}}} \hat{g}}{L^1}&\le\left(\int_{\Acal_{j-3,j+2}}\frac{1}{|\eta|^2\om^{2}(|\eta|)}d\eta\right)^{1/2}\left(\int_{\Acal_{j-3,j+2}} {|\eta|^{2(2-s-\eps)}\om(|\eta|)^{2}}|\hat{g}(\eta)|^2d\eta\right)^{1/2}\notag\\ 
        &\leq C\om^{-1}(2^j)\left(\int_{2^{j-3}}^{2^{j+2}}\frac{1}{r}\,dr\right)^{\frac{1}{2}}\Sob{g}{\Hdot^{2-s-\eps}_\om}\le C\om^{-1}(2^j)\Sob{g}{\Hdot^{2-s-\eps}_\om}.
    \end{align*}
Using the above estimate in \eqref{eq:L:split:hf} and applying Young's convolution inequality and Plancherel's theorem gives us
    \begin{align}\label{est:II:hf:final}
        II\leq Cp_a(2^j)2^{\eps j}\om^{-1}(2^j)\Sob{g}{\Hdot^{2-s-\eps}_\om}\Sob{f}{L^2}\Sob{h}{L^2}. 
    \end{align}
Finally collecting the bounds in \eqref{est:I:lf:final} and \eqref{est:II:hf:final}, we obtain the desired estimate.
\end{proof}

Given $\lam\geq0$ and $\nu(D)\in\mathscr{M}_S$, we define the operator $E^\lam_\nu$ by 
    \begin{align}\label{def:Elamnu}
        (\mathcal{F}E^\lam_\nu\phi)(\xi)=e^{\lam\nu(|\xi|)}(\mathcal{F}\phi)(\xi)
    \end{align}
We then have the following commutator estimates.

\begin{Lem}\label{lem:commutator4}
Let $r, s, \bar{s}\in \RR$ be such that $s,\bar{s}\leq1$, $s+\bar{s}>0$. Let $\om,\om_\ell,\tilde{\om}_\ell\in\mathscr{M}_W$, for $\ell=1,2,3$ { and $\nu$ satisfies \ref{item:S1}, \ref{item:S2}}. Assume that $\Gam_\ell:[0,\infty)\goesto[0,\infty)$, where $\ell=1,2,3$, are functions satisfying \eqref{cond:om:Gam:a}, \eqref{cond:om:Gam:b}. Let $\pi^{r,t}_{\vr,\til{\vr}}$ and $\rho^{r,t}_{\vr,\til{\vr}}$ be defined as in \eqref{def:pi:rho}. Then there exist constants $c,C>0$, and $\{c_j\}\in\ell^2(\ZZ)$ satisfying $\Sob{\{c_j\}}{\ell^2}\leq1$ such that if $\supp \hat{h}\subset\Acal_j$, then 
    \begin{align}\label{eq:commutator4}
       &|\lb[\om(D){\Lam}^{r}E^\lam_\nu{\lpj}\bdy_\ell,g]f,h\rb|\leq
         C(1+\lam)e^{c\lam}c_j2^{(r-s-\bar{s}+1)j}\\
         &\times\left\{ \Gam_1(2^j)\pi^{s,\bar{s}}_{\om_1,\til{\om}_1}(E^\lam_\nu f,\Lam{E^\lam_\nu}g)+\Gam_2 (2^j)\pi^{\bar{s},s}_{\om_2,\til{\om}_2}(\Lam {E^\lam_\nu}g,{E^\lam_\nu}f)+\Gam_{3}(2^j)\rho^{s,\bar{s}}_{\om_3,\til{\om}_3}({E^\lam_\nu}f,\Lam{E^\lam_\nu}g)\right\}\Sob{h}{L^2},\notag
    \end{align}
for all $\lam\geq0$. Lastly, the same inequality holds if $\om(D){\Lam}^{r}E^\lam_\nu{\lpj}\bdy_\ell$ is replaced by $\om(D){\Lam}^{r+1}E^\lam_\nu{\lpj}$ in the left-hand side of \eqref{eq:commutator4}.
\end{Lem}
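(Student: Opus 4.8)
The plan is to pass to frequency variables, bound the resulting commutator multiplier by the fundamental theorem of calculus, and then recognize the reduced trilinear form as one already controlled by \cref{thm:prod}; the smoothing operator $E^\lam_\nu$ is handled by the Foias--Temam device of distributing it across the two factors. Writing
\begin{align*}
\sigma(\xi):=\om(|\xi|)|\xi|^{r}e^{\lam\nu(|\xi|)}\phi_j(\xi)(2\pi i\,\xi_\ell)
\end{align*}
for the symbol of $\om(D)\Lam^{r}E^\lam_\nu\lpj\bdy_\ell$, Plancherel's theorem yields
\begin{align*}
\lb[\om(D)\Lam^{r}E^\lam_\nu\lpj\bdy_\ell,g]f,h\rb=\iint\big(\sigma(\xi)-\sigma(\xe)\big)\hat f(\xe)\hat g(\eta)\overline{\hat h(\xi)}\,d\eta\,d\xi.
\end{align*}
Since $\supp\hat h\subset\Acal_j$, we have $\xi\in\Acal_j$ throughout, so $|\xi|\sim 2^j$. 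First I would split the $\eta$-integral in a Bony-type (paraproduct) fashion into the regime where $\eta$ is of low frequency relative to $2^j$ (so that $\xi$ and $\xe$ both lie near $\Acal_j$ and the difference $\sigma(\xi)-\sigma(\xe)$ enjoys genuine cancellation) and the regime $|\eta|\sim 2^j$ (where no cancellation is needed and $\sigma(\xi)$, $\sigma(\xe)$ are estimated separately, inserting the harmless factor $|\eta|2^{-j}\sim 1$), mirroring the split of $I$ and $II$ in \cref{lem:commutator3}.

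In the cancellation regime I would parametrize the segment $\mathbf{S}(\tau)=\tau\xi+(1-\tau)(\xe)$ from \eqref{def:S} and write $\sigma(\xi)-\sigma(\xe)=\int_0^1\nabla\sigma(\mathbf{S}(\tau))\cdot\eta\,d\tau$. The key is the gradient bound: applying the product rule to the five factors of $\sigma$ and invoking \ref{item:O2} for $\om$ (giving $|\om'(|\xi|)|\le C\om(|\xi|)/|\xi|$), the elementary estimate $\big|\nabla(|\xi|^{r})\big|\le|r|\,|\xi|^{r-1}$, \ref{item:S2} for $\nu$ (giving $|\nabla e^{\lam\nu(|\xi|)}|\le C\lam|\xi|^{-1}e^{\lam\nu(|\xi|)}$), and the localization $|\xi|\sim 2^j$ furnished by $\phi_j$ and its derivative, I obtain
\begin{align*}
|\nabla\sigma(\mathbf{S}(\tau))|\le C(1+\lam)\,\om(2^j)2^{rj}e^{\lam\nu(|\mathbf{S}(\tau)|)}\indFn{\mathbf{S}(\tau)\in\Acal_j}.
\end{align*}
This accounts for the prefactor $(1+\lam)$, while the pairing with $\eta$ produces the factor $|\eta|$, which is exactly the symbol of $\Lam$ applied to $g$; this is the source of $\Lam E^\lam_\nu g$ on the right-hand side.

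The crux is the treatment of the exponential. Properties \ref{item:S1}, \ref{item:S2} force $\nu$ to be nonnegative, nondecreasing and of at most logarithmic growth, whence $\nu(2t)\le\nu(t)+c$ and therefore $\nu$ is subadditive up to a constant: $\nu(a+b)\le\nu(a)+\nu(b)+c$. Since $|\mathbf{S}(\tau)|\le|\xe|+|\eta|$, this gives $e^{\lam\nu(|\mathbf{S}(\tau)|)}\le e^{c\lam}e^{\lam\nu(|\xe|)}e^{\lam\nu(|\eta|)}$, which is precisely the factorization that replaces $f,g$ by $E^\lam_\nu f,E^\lam_\nu g$ at the cost of $e^{c\lam}$ (and the same splitting serves the non-cancellation regime, where $|\mathbf{S}(\tau)|\sim 2^j$). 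After these reductions the trilinear form is dominated by
\begin{align*}
C(1+\lam)e^{c\lam}\,\om(2^j)2^{rj}\,\Sob{\lpj\big((E^\lam_\nu f)(\Lam E^\lam_\nu g)\big)}{L^2}\Sob{h}{L^2},
\end{align*}
to which I would apply \cref{thm:prod} with the pair $(E^\lam_\nu f,\Lam E^\lam_\nu g)$ and the given exponents and weights; since \cref{thm:prod} contributes the factor $\om(2^j)^{-1}2^{(1-s-\bar{s})j}$, the weight $\om(2^j)$ coming from the operator cancels, leaving exactly $2^{(r-s-\bar{s}+1)j}$ together with the triple $\Gam_1\pi^{s,\bar{s}}_{\om_1,\til\om_1}+\Gam_2\pi^{\bar{s},s}_{\om_2,\til\om_2}+\Gam_3\rho^{s,\bar{s}}_{\om_3,\til\om_3}$ evaluated at $(E^\lam_\nu f,\Lam E^\lam_\nu g)$ and the summable sequence $\{c_j\}$. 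The main obstacle I anticipate is purely organizational: carrying out the five-fold product rule for $\nabla\sigma$ with bounds uniform in $\lam$ while simultaneously tracking the frequency supports through the paraproduct split; by contrast, the conceptual heart---the subadditivity of $\nu$ and the resulting factorization of $e^{\lam\nu}$---is short. Finally, the variant in which $\om(D)\Lam^{r}E^\lam_\nu\lpj\bdy_\ell$ is replaced by $\om(D)\Lam^{r+1}E^\lam_\nu\lpj$ follows verbatim, since $\bdy_\ell$ merely contributes the bounded Riesz-type factor $\xi_\ell/|\xi|$, whose gradient is $O(|\xi|^{-1})$ and hence compatible with every bound above.
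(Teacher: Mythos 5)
Your proposal is correct and follows essentially the same route as the paper's proof: Plancherel, a fundamental-theorem-of-calculus bound on $\sigma(\xi)-\sigma(\xi-\eta)$ along the segment $\mathbf{S}(\tau)$ using \ref{item:O2} and \ref{item:S2}, the near-subadditivity $\nu(a+b)\le\nu(a)+\nu(b)+c$ to factor the exponential onto $E^\lam_\nu f$ and $\Lam E^\lam_\nu g$ at the cost of $e^{c\lam}$, and a final application of \cref{thm:prod} to $\lb E^\lam_\nu f,\Lam E^\lam_\nu g\rb$ with the stated weights. The only cosmetic difference is that the paper dispenses with your preliminary low/high-$\eta$ paraproduct split: the cutoffs $\phi_j(\mathbf{S}(\tau))$ and $(\nabla\phi_0)(2^{-j}\mathbf{S}(\tau))$ that appear when the symbol is differentiated already localize $\mathbf{S}(\tau)$ to $\Acal_j$, so the mean-value estimate is applied uniformly in $\eta$.
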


\begin{proof}[Proof of \cref{lem:commutator4}]
 In order to avoid redundancy in the argument, we will only provide the proof for \eqref{eq:commutator4}. With this in mind, we define
    \begin{align*}
        \mathcal{L}(f,g,h):=\iint\limits_{\xi \in \Acal_j} m(\xi,\eta)\hat{f}(\xi-\eta)\hat{g}(\eta)\overline{\hat{h}(\xi)}\, d\eta\,d\xi,
    \end{align*}
where
    \begin{align*}
        m(\xi,\eta):= e^{\lam \nu(|\xi|)}\abs{\xi}^{r}\xi_{\ell}\om(|\xi|)\phi_{j}(\xi)-e^{\lam \nu(|\xe|)}\abs{\xi-\eta}^{r}(\xi-\eta)_{\ell}\om(|\xe|)\phi_{j}(\xe).
    \end{align*}
By Plancherel's theorem we see that
    \begin{align}\label{def:L:comm:relation}
       \mathcal{L}(f,g,h)=\lb[\om(D){\Lam}^{s}E^\lam_\nu{\lpj}\bdy_\ell,g]f,h\rb.
    \end{align}
Hence, it will be equivalent to obtain the desired bounds for $\mathcal{L}(f,g,h)$.

Let $\mathbf{S}(\tau)$  be as in \eqref{def:S}. It follows that
\begin{align}\label{eq:meanvalue2}
        m(\xi,\eta)
        &=\int_{0}^{1}\Bigg{\{}\left(\lam \nu'(\mathbf{S}(\tau))\frac{\mathbf{S}(\tau)}{|\mathbf{S}(\tau)|}\cdot \eta+r\frac{\mathbf{S}(\tau)}{|\mathbf{S}(\tau)|^{2}}\cdot \eta+\frac{\om'(\mathbf{S}(\tau))}{\om (|\mathbf{S}(\tau)|)}\frac{\mathbf{S}(\tau)}{|\mathbf{S}(\tau)|}\cdot \eta\right)\phi_j (\mathbf{S}(\tau))\mathbf{S}(\tau)_{\ell}\notag\\
        & \qquad \qquad\qquad+\left(\phi_{j}(\mathbf{S}(\tau))\eta_{\ell}+(\nabla \phi_{0})(2^{-j}\mathbf{S}(\tau))\cdot(2^{-j} \eta)\mathbf{S}(\tau)_{\ell}\right)\Bigg{\}} e^{\lam \nu(|\mathbf{S}(\tau)|)}|\mathbf{S}(\tau)|^{r}\om (|\mathbf{S}(\tau)|)d\tau.
    \end{align}
Suppose that $\xi\in\Acal_j$. Since $\supp \phi_{j} \subset \mathcal{A}_{j}$ and $\supp \nabla \phi_0\subset\mathcal{A}_0$, it follows from \eqref{est:omega} that
    \begin{align}\notag
        |m(\xi,\eta)|&\leq C(1+\lam)2^{jr}|\eta|\om(2^j)\int_{0}^{1}\Bigg{\{}\left(|\mathbf{S}(\tau)| \nu'(\mathbf{S}(\tau))+1+\frac{|\mathbf{S}(\tau)|\om'(\mathbf{S}(\tau))}{\om (|\mathbf{S}(\tau)|)}\right)\phi_j (\mathbf{S}(\tau))+1\Bigg{\}} e^{\lam \nu(|\mathbf{S}(\tau)|)}d\tau.
    \end{align}
We then apply \ref{item:S1}, \ref{item:S2}, { and \ref{item:O2} to deduce}
    \begin{align}\notag
        |m(\xi,\eta)|&\leq C(1+\lam)\abs{\eta}2^{r j}\om(2^j)e^{\lam\nu(|\xi-\eta|+|\eta|)}
    \end{align}

Now we estimate $\nu(|\xe|+|\eta|)$. We consider two cases:
\subsubsection*{Case: $|\xe|<|\eta|$} Integrating in \ref{item:S2} yields
\begin{align*}
   \nu(|\xe|+|\eta|)\le \nu(|\eta|)+C\ln\left(1+\frac{|\xe|}{|\eta|}\right)\le \nu(|\eta|)+C.
\end{align*}
\subsubsection*{Case: $|\eta|\le|\xe|$} Similarly, \ref{item:S2} implies
\begin{align*}
   \nu(|\xe|+|\eta|)\le \nu(|\xe|)+C\ln\left(1+\frac{|\eta|}{|\xe|}\right)\le \nu(|\xe|)+C.
\end{align*}
Upon returning to \eqref{eq:meanvalue2} and invoking these estimates, we arrive at
\begin{align*}
    |m(\xi,\eta)|\le C(1+\lam)\abs{\eta}2^{r j}\om(2^j)e^{\lam C}e^{\lam \nu(|\eta|)}e^{\lam \nu(|\xe|)}.
\end{align*}

Finally, we let
    \[
        \mathcal{F}F=|\mathcal{F}{E^\lam_\nu}f|,\quad \mathcal{F}G=|\mathcal{F}{\Lam{E^\lam_\nu}g}|.
    \]
We apply \eqref{eq:meanvalue2} in \eqref{def:L:comm:relation}, followed by the Cauchy-Schwarz inequality and Young's convolution inequality to obtain
    \begin{align}\label{in:final:lem2}
        |\mathcal{L}(f,g,h)|\leq C(1+\lam)e^{\lam C}2^{rj}\om(2^j)\Sob{\tlpj(FG)}{L^2}\Sob{h}{L^2},
    \end{align}
where $\tlpj$ denotes the extended Littlewood-Paley blocks as defined in \eqref{def:para:factors}. Since $s,\bar{s}\in\RR$ is assumed to satisfy $s, \bar{s}\leq 1$ and $s+\bar{s}>0$, and since the $\Gam_\ell$ satisfy \eqref{cond:om:Gam:a}, \eqref{cond:om:Gam:b}, we may apply \cref{thm:prod}. The proof is complete upon application of Plancherel's theorem, followed by the characterization of Sobolev norms in terms of Besov norms (see \eqref{eq:Sob:Bes}).
\end{proof}

\section{Analysis of the Protean System}\label{sect:apriori}
In this section, we will obtain apriori estimates for the solution of \eqref{def:mod:flux} in $L^2$, $H^{\s}_{\om}$, and ${E}_{\nu,\s,\om}^\lam$, for particular choices of $\s\in[-1,3]$ that depend on $\be\in[0,2]$ and whether $\lam=0$ or not. We will restrict to $\s$ in the following ranges:
\begin{align}\label{sigma:range}
    \s\in
    \begin{cases}
    (-1,2], &\text{if} \quad \be \in [0,1],\\
    [1,1+\be], &\text{if} \quad \be \in (1,2],\\
    \{-1\},&\text{if}\quad \be=0, \lam=0.
    \end{cases}
\end{align}
Development of the subsequent apriori estimates establishes a global existence theory for the protean system \eqref{eq:mod:claw}. This is formally stated in \cref{thm:modclaw:wellposed} after the apriori estimates have been established; the rigorous details for global existence and uniqueness are supplied in \cref{app:well:posed}.

Observe that when $\be\in[0,1]$, \eqref{eq:mod:claw} is a linear transport equation, while in the more singular regime, $\be\in(1,2]$, \eqref{eq:mod:claw} is a conservation law with a flux that modifies the linear transport equation. The modification is ultimately required to accommodate suitable stability-type estimates for \eqref{eq:dgsqg}, but due to \eqref{eq:cancel}, we find it expedient to obtain all estimates in the generality of \eqref{eq:mod:claw} and simply reduce them to the case of \eqref{eq:dgsqg}, as needed. The formal apriori estimates are developed in  \cref{sect:Hs:apriori}.  The stability-type estimates are then established in \cref{sect:stab:apriori}. 

{ Recall that $m_1(D):=I+m(D)\in\mathscr{M}_W$ }(see \eqref{def:MD}) and, thus, satisfies \ref{item:O1}, \ref{item:O2}, \ref{item:O3}. For convenience, we also recall that  $\om\in\mathscr{M}_W$ (see \eqref{def:MM}), $p\in\mathscr{M}_C$ (see \eqref{def:MC}), and $\nu(D)\in\mathscr{M}_S(m)$ (see \eqref{def:M:m}).

\begin{Rmk}\label{rmk:convention:constants}Below, we will adopt the convention of summation over repeated indices. We also denote by $C$ a generic positive constant, which may depend on various regularity parameters in addition to the size of a given time interval $[0,T]$. For clarity, we may indicate the dependence of the $C$ on parameters through a subscript. In general, however, the value of $C$ may change line-to-line. 
\end{Rmk}

We begin by establishing the following lemma demonstrating the equivalence of the norms $\Sob{\phi}{\Hdot^\s_\om\cap L^2_{\om}}$, $\Sob{\phi}{L^2}+\Sob{\phi}{\Hdot^\s_\om}$, and $\Sob{\phi}{H^\s_\om}$ when $\s>0$. 

\begin{Lem}\label{lem:norm:equiv}
Let $\om(D)\in\mathscr{M}_W$ as defined in \cref{sect:op:classes}. For any $\s>0$, there exist positive constants $c_{\s,\om}, C_{\s,\om}$ such that
     \begin{align}\label{eq:norm:equiv:a}
        c_{\s,\om}^{-1}\Sob{\phi}{\Hdot^\s_\om\cap L^2_{\om}}\leq\Sob{\phi}{\Hdot_\om^\s}+ \Sob{\phi}{L^2}\leq c_{\s,\om}\Sob{\phi}{\Hdot^\s_\om\cap L^2_{\om}},
    \end{align}
and
    \begin{align}\label{eq:norm:equiv:b}
        C_{\s,\om}^{-1}\Sob{\phi}{H^\s_\om}\leq \Sob{\phi}{\Hdot_\om^\s}+\Sob{\phi}{L^2}\leq C_{\s,\om}\Sob{\phi}{H^\s_\om}.
    \end{align}
\end{Lem}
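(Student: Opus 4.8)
The plan is to pass to the Fourier side and reduce both equivalences to pointwise comparisons of frequency weights. By \eqref{def:hom:Sob:norm:log}, \eqref{def:inhom:Sob:norm:log} and Plancherel's theorem, writing $\om=\om(|\xi|)$, one has $\Sob{\phi}{\Hdot^\s_\om}^2+\Sob{\phi}{L^2}^2=\int_{\RR^d} w_A(\xi)|\hat\phi(\xi)|^2\,d\xi$ and $\Sob{\phi}{\Hdot^\s_\om\cap L^2_\om}^2=\int_{\RR^d} w_B(\xi)|\hat\phi(\xi)|^2\,d\xi$, where $w_A(\xi):=1+|\xi|^{2\s}\om(|\xi|)^2$ and $w_B(\xi):=(1+|\xi|^{2\s})\om(|\xi|)^2$. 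Using the elementary inequalities $\sqrt{a^2+b^2}\le a+b\le\sqrt{2}\sqrt{a^2+b^2}$ to pass between $\Sob{\phi}{\Hdot^\s_\om}+\Sob{\phi}{L^2}$ and $(\Sob{\phi}{\Hdot^\s_\om}^2+\Sob{\phi}{L^2}^2)^{1/2}$, the estimate \eqref{eq:norm:equiv:a} is equivalent to a two-sided pointwise bound $c^{-1}w_B(\xi)\le w_A(\xi)\le c\,w_B(\xi)$ for all $\xi$. For \eqref{eq:norm:equiv:b}, I would instead observe that $\Sob{\phi}{H^\s_\om}=\Sob{\om(D)\phi}{H^\s}$ and apply \eqref{eq:Sob:equiv} with $f=\om(D)\phi$, giving $\Sob{\phi}{H^\s_\om}\approx\Sob{\om(D)\phi}{\Hdot^\s\cap L^2}=\Sob{\phi}{\Hdot^\s_\om\cap L^2_\om}$; then \eqref{eq:norm:equiv:b} follows immediately from \eqref{eq:norm:equiv:a}. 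Thus the entire lemma reduces to the weight comparison $w_A\approx w_B$.

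For the low-frequency regime $|\xi|\le 1$, I would use that $\om\in C^1([0,\infty))$ is positive, being a ratio of positive $C^1$ functions; hence on the compact interval $[0,1]$ there exist constants $0<m_0\le M_0<\infty$ with $m_0\le\om(r)^2\le M_0$. Both $w_A$ and $w_B$ are then comparable to $1+|\xi|^{2\s}$, with constants depending only on $m_0,M_0$, so $w_A\approx w_B$ there.

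The substance of the proof is the high-frequency regime $|\xi|>1$, where both weights share the common term $T:=|\xi|^{2\s}\om(|\xi|)^2$, so the comparison reduces to the single lower bound $T\ge c_0>0$. This is the step I expect to be the main obstacle, and it is where the hypothesis $\s>0$ is indispensable. The key observation is that $\mathscr{M}_W$ is closed under reciprocals: if $\om=\om_a\om_b^{-1}$ with $\om_a,\om_b$ satisfying \ref{item:O1}, \ref{item:O2}, \ref{item:O3}, then $\om^{-1}=\om_b\om_a^{-1}\in\mathscr{M}_W$ as well. Consequently \eqref{eq:omega:eps} applies to $\om^{-1}$, yielding for every $\eps>0$ a constant $c_\eps>0$ with $\om(r)^{-1}\le c_\eps r^{\eps}$, that is $\om(r)^2\ge c_\eps^{-2}r^{-2\eps}$ for $r\ge1$. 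Choosing $\eps=\s/2$, which is permissible precisely because $\s>0$, gives $T=|\xi|^{2\s}\om(|\xi|)^2\ge c_{\s/2}^{-2}|\xi|^{\s}\ge c_{\s/2}^{-2}=:c_0$ for $|\xi|>1$. With this in hand the comparison is routine: since $|\xi|^{2\s}\ge1$ forces $\om^2\le T$, one gets $w_B=\om^2+T\le 2T\le 2w_A$, while $1\le c_0^{-1}T$ gives $w_A=1+T\le(1+c_0^{-1})T\le(1+c_0^{-1})w_B$. Combining the two frequency regimes establishes $w_A\approx w_B$, hence \eqref{eq:norm:equiv:a}, and \eqref{eq:norm:equiv:b} follows as indicated above. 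I would also remark that $\s>0$ is genuinely necessary: when $\om$ decays, the term $T$ fails to be bounded below at $\s=0$ and the weights cease to be equivalent, so the borderline role of the homogeneous seminorm in controlling the low-frequency weight is essential.
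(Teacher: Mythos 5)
Your proof is correct and follows essentially the same route as the paper's: both arguments work on the Fourier side, split frequencies at $|\xi|=1$, bound $\om$ above and below on $[0,1]$ via \ref{item:O1}, and exploit the sub-polynomial behavior \eqref{eq:omega:eps} (applied to $\om^{-1}\in\mathscr{M}_W$) together with $\s>0$ to control the high-frequency regime. Your explicit pointwise comparison of the two weights and your use of \eqref{eq:Sob:equiv} to deduce \eqref{eq:norm:equiv:b} are only organizational differences from the paper's cyclic chain $\Sob{\phi}{\Hdot^\s_\om\cap L^2_{\om}}\lesssim\Sob{\phi}{L^2}+\Sob{\phi}{\Hdot^\s_\om}\lesssim\Sob{\phi}{H^\s_\om}\lesssim\Sob{\phi}{\Hdot^\s_\om\cap L^2_{\om}}$.
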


\begin{proof} 
Since $\s>0$, we may invoke \eqref{eq:omega:eps}, in addition to \ref{item:O1}, to obtain
    \begin{align}\notag
        \Sob{\phi}{L^2_{\om}}^2=\int_{\RR^2} |\hat{\phi}(\xi)|^2\om(|\xi|)d\xi=\int_{|\xi|\leq1}+\int_{|\xi|>1}\leq \frac{\om_a(1)}{\om_a(0)}\Sob{\phi}{L^2}^2+c_\s^2\Sob{\phi}{\Hdot_\om^\s}^2.
    \end{align}
Hence
    \begin{align}
        \Sob{\phi}{\Hdot^\s_\om\cap L^2_{\om}}^2&\leq\frac{\om_a(1)}{\om_a(0)}\Sob{\phi}{L^2}^2+(1+c_\s^2)\Sob{\phi}{\Hdot_\om^\s}^2\notag\\
        &\leq \left(\frac{\om_a(1)}{\om_a(0)}+1+c_\s^2\right)\Sob{\phi}{H^\s_\om}^2\leq C_\s^2\left(\frac{\om_a(1)}{\om_a(0)}+1+c_\s^2\right)\Sob{\phi}{\Hdot^\s_\om\cap L^2_{\om}}^2,\notag
    \end{align}
for some constant $C_\s>0$. This implies \eqref{eq:norm:equiv:a}, \eqref{eq:norm:equiv:b}.
 \end{proof}

\subsection{A priori estimates in $L^2$}\label{sect:L2:apriori}

Upon taking the inner product in $L^2$ of \eqref{eq:mod:claw}  with $\tht$, we obtain
    	\begin{align}\label{eq:balance:L2} 
	   \frac{1}{2}\frac{d}{dt}\Sob{\tht}{L^2}^2+\Sob{m(D)^{\frac{1}2}\tht}{L^2}=-\lb\Div F_q(\tht),{\tht}\rb+\lb G ,\tht \rb=I^0+II^0.
    \end{align}
We estimate $II^0$ with the Cauchy-Schwarz inequality and Young's inequality, to obtain
    \begin{align}\label{est:II:L2}
        |II^0|\leq& \Sob{m_1(D)^{-\frac{1}2}G}{L^2}\Sob{m_1(D)^{\frac{1}2}\tht}{L^2}\notag \\\leq& C\Sob{m_1(D)^{-\frac{1}2}G}{L^2}^2+C\Sob{\tht}{L^2}^2+\frac{1}8\Sob{m(D)^{\frac{1}2}\tht}{L^2}^2.
    \end{align}
We now consider the cases $\be \in [0,1]$ and $\be \in (1,2]$ separately to treat $I^0$.

\subsubsection*{Case $\be\in[0,1]$}
By \eqref{def:mod:flux} and \eqref{eq:v:cancel}, we have $I^0=0$. Given \eqref{est:II:L2} for $II^0$, \eqref{eq:balance:L2} then becomes
    \begin{align}\label{est:L2:a}
        \frac{d}{dt}\Sob{\tht}{L^2}^2+\frac{7}4\Sob{m(D)^{\frac{1}2}\tht}{L^2}^2\leq C\Sob{m_1(D)^{-\frac{1}2}G}{L^2}^2+C\Sob{\tht}{L^2}^2.
    \end{align}
An application of Gronwall's inequality, then yields
    \begin{align}\label{est:L2:a:gronwall}
        \Sob{\tht(t)}{L^2}^2\leq C\exp(CT)\left(\Sob{\tht_0}{L^2}^2+\int_0^T\Sob{m_1(D)^{-\frac{1}2}G(s)}{L^2}^2ds\right),
    \end{align}
for all $0\leq t\leq T$. Then, upon integrating \eqref{est:L2:a} over $[0,T]$ and applying \eqref{est:L2:a:gronwall}, we obtain
    \begin{align}\label{est:L2log:a:gronwall}
        \int_0^T\Sob{m(D)^{\frac{1}2}\tht(s)}{L^2}^2ds\leq C\exp(CT)\left(\Sob{\tht_0}{L^2}^2+\int_0^T\Sob{m_1(D)^{-\frac{1}2}G(s)}{L^2}^2ds\right).
    \end{align}
Hence
    \begin{align}\label{est:L2:final}
         \sup_{0\leq t\leq T}\left(\Sob{\tht(t)}{L^2}^2+\int_0^t\Sob{m(D)^{\frac{1}2}\tht(s)}{L^2}^2ds\right)\leq C\exp(CT)\left(\Sob{\tht_0}{L^2}^2+\int_0^T\Sob{m_1(D)^{-\frac{1}2}G(s)}{L^2}^2ds\right).
    \end{align}
\subsubsection*{Case $\be \in (1,2]$}
Since $a(D)\bdy_\ell$ is a skew self-adjoint operator, in light of  \eqref{eq:v:cancel}, we see that
    \begin{align}\label{eq:skew:adjoint}
   I^0=\lb a(D){\nabla}\cdotp(({\nabla}^\perp q)\tht),\tht\rb&=-\lb\nabla^\perp q\cdotp\nabla (a(D)\tht),\tht\rb\notag\\
   &=\frac{1}{2}\lb [a(D),\nabla^\perp q\cdotp\nabla]\tht,\tht\rb=\frac{1}{2}\lb [a(D)\bdy_\ell,\bdy^\perp_\ell q]\tht,\tht\rb,
    \end{align}
where $\bdy^\perp_\ell q=(\nabla^\perp q)^\ell$ and we adopt the convention of summation over repeated indices. Upon applying \cref{lem:commutator2} with $s=2-\beta$ and $,\eps',\eps\in(0,1]$ sufficiently small so that $\eps+s\leq1$, and $(1+\gam)\eps'\leq\eps$, we obtain
\begin{align}\notag
    |I^0|
    \leq C\Sob{\nabla^{\perp}q}{{H}^{\be-\de}}\Sob{p_a(D)\tht}{\Hdot^{\eps'}}\Sob{\tht}{L^2},
\end{align}
for any $0<\de<\eps$. We assume that
    \begin{align}\label{cond:pa}
        \sup_{y>0}\frac{p_a(y)}{\om(y)m^{\gam}_1(y)}<\infty.
    \end{align}
Thus
    \begin{align}\notag
        |I^0|\leq C\Sob{q}{H_\om^{1+\be}}\Sob{m_1(D)^{\gam}\tht}{\Hdot_\om^{\eps'}}\Sob{\tht}{L^2}.
    \end{align}
We recall that $m_1(D)$ satisfies \eqref{eq:omega:eps:bdd}, so that
    \[
    \Sob{m_1(D)^{\gam}\tht}{\Hdot_\om^{\eps'}}\leq \Sob{\tht}{H_\om^{(1+\gam)\eps'}}\leq \Sob{\tht}{H_\om^{\eps}}.
    \]
Finally, after an application of the Cauchy-Schwarz inequality, we arrive at estimate
    \begin{align}\label{est:I:L2}
        |I^0|\leq C\Sob{q}{H_\om^{1+\be}}\Sob{\tht}{H_\om^{\eps}}\Sob{\tht}{L^2}.
    \end{align}
Returning to \eqref{eq:balance:L2} and applying \eqref{est:II:L2} and \eqref{est:I:L2} then yields
    \begin{align}\label{est:L2:b}
        \frac{d}{dt}\Sob{\tht}{L^2}^2+\frac{7}4\Sob{m(D)^{\frac{1}2}\tht}{L^2}^2\le C\Sob{q}{H^{1+\be}_\om}\Sob{\tht}{H^\eps_\om}\Sob{\tht}{L^2}+C\Sob{m_1(D)^{-\frac{1}2}G}{L^2}^2+C\Sob{\tht}{L^2}^2.
    \end{align}
In particular, when $\be\in(1,2]$ we will require control in $H^\eps_\om$, for some $\eps>0$, in order to close estimates.

\subsection{A priori estimates in $\dot{E}^\lam_{\nu,\s,\om}$}\label{sect:Hs:apriori} With the exception of the special case $\be=0$, $\s=-1$, it will be convenient to develop the apriori estimates in the stronger space $\dot{E}^{\lam}_{\nu,\s,\om}$ and then specialize to the case of $\Hdot^\s_{\om}$ later by simply setting $\lam=0$. We emphasize that the commutator estimates developed in \cref{sect:comm} can accommodate such a procedure due to the form of dependency of the constants on $\lam$ (see \cref{lem:commutator4}). In developing the estimates in $\dot{E}^{\lam}_{\nu,\s,\om}$, we will make use of the following shorthand: given $r\in\RR$, $\lam \ge 0$, and $j\in\ZZ$, we let
    \begin{align}\label{def:short}
       \til{f}:=E^{\lam(t)}_\nu f,\quad \til{\Lam}^{r}_{\om}:=\om(D){\Lam}^{r}E^{\lam(t)}_\nu,\quad \til{\Lam}_{\om,j}^{r}=:\om(D){\Lam}^{r}E^{\lam(t)}_\nu{\lpj},\quad \lam(t)=\lam_1t,
    \end{align}
for a fixed $\lam_1>0$, where $E^\lam_\nu$ is defined as in \eqref{def:Elamnu}. We will often abuse notation and say $\lam=\lam(t)$. Throughout, we suppose that $0\leq t\leq T$.

Similar to \cref{lem:norm:equiv}, we establish the equivalence of the norms $\Sob{\til{\phi}}{\Hdot^\s}+\Sob{\phi}{L^2}$, $\Sob{\til{\phi}}{\Hdot^\s\cap L^2}$, and  $\Sob{\til{\phi}}{H^\s}$ are equivalent as norms when $\s\geq0$.

\begin{Lem}\label{lem:norm:equiv:til}
Given $m(D)\in\mathscr{M}_D$, Suppose $\nu(D)\in\mathscr{M}_S(m)$, where $\mathscr{M}_S(m)$ is defined as in \eqref{def:MD} from \cref{sect:op:classes}. Given $\lam>0$, let $E^\lam_\nu$ be defined as in \eqref{def:Elamnu}. For any $\s\geq0$, there exists a positive constant $C_{\lam,m}$ such that
    \begin{align}\label{eq:norm:equiv:til:a}
        C_{\lam,m}^{-1}\Sob{E^\lam_\nu\phi}{\Hdot^\s\cap L^2}\leq &\Sob{E^\lam_\nu\phi}{\Hdot^\s}+\Sob{\phi}{L^2}\leq C_{\lam,m}\Sob{E^\lam_\nu\phi}{\Hdot^\s\cap L^2}.
    \end{align}
and
    \begin{align}\label{eq:norm:equiv:til:b}
        C_{\lam,m}^{-1}\Sob{E^\lam_\nu\phi}{H^\s}\leq &\Sob{E^\lam_\nu\phi}{\Hdot^\s}+\Sob{\phi}{L^2}\leq C_{\lam,m}\Sob{E^\lam_\nu\phi}{H^\s}.
    \end{align}
\end{Lem}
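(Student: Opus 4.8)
The plan is to mirror the proof of \cref{lem:norm:equiv}: first establish the two-sided bound \eqref{eq:norm:equiv:til:a} directly by a low/high frequency decomposition in the Fourier variable, and then deduce \eqref{eq:norm:equiv:til:b} by combining \eqref{eq:norm:equiv:til:a} with the elementary norm equivalence \eqref{eq:Sob:equiv}. Two facts about $\nu$ drive everything. First, by \ref{item:S1} we have $\nu\geq0$, so the symbol $e^{\lam\nu(|\xi|)}$ of $E^\lam_\nu$ is pointwise $\geq1$; consequently $|\hat\phi(\xi)|\leq|\mathcal{F}(E^\lam_\nu\phi)(\xi)|$ and hence $\Sob{\phi}{L^2}\leq\Sob{E^\lam_\nu\phi}{L^2}$. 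Second, again by \ref{item:S1} the function $\nu$ is continuous and nondecreasing on $[0,\infty)$, so $\sup_{0\leq r\leq1}\nu(r)=\nu(1)<\infty$; this bounds the symbol on low frequencies by $e^{\lam\nu(1)}$, a constant which, via the defining property $\nu(r)\leq C(1+m(r))$ of $\mathscr{M}_S(m)$, may be quantified through $\lam$ and $m$, and is the source of the dependence of $C_{\lam,m}$ on those parameters.

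For \eqref{eq:norm:equiv:til:a} itself, the right-hand inequality will be immediate: using $\Sob{\phi}{L^2}\leq\Sob{E^\lam_\nu\phi}{L^2}$ followed by $a+b\leq\sqrt2\,(a^2+b^2)^{1/2}$ gives $\Sob{E^\lam_\nu\phi}{\Hdot^\s}+\Sob{\phi}{L^2}\leq\sqrt2\,\Sob{E^\lam_\nu\phi}{\Hdot^\s\cap L^2}$. For the left-hand inequality it suffices, in view of $\Sob{E^\lam_\nu\phi}{\Hdot^\s\cap L^2}^2=\Sob{E^\lam_\nu\phi}{\Hdot^\s}^2+\Sob{E^\lam_\nu\phi}{L^2}^2$, to control $\Sob{E^\lam_\nu\phi}{L^2}$. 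Splitting the frequency integral into $\{|\xi|\leq1\}$ and $\{|\xi|>1\}$, on the low-frequency piece I bound $e^{\lam\nu(|\xi|)}\leq e^{\lam\nu(1)}$ to obtain a contribution $\leq e^{2\lam\nu(1)}\Sob{\phi}{L^2}^2$, while on the high-frequency piece I use $|\xi|^{2\s}\geq1$ (valid since $\s\geq0$ and $|\xi|>1$) to bound the contribution by $\Sob{E^\lam_\nu\phi}{\Hdot^\s}^2$. This yields $\Sob{E^\lam_\nu\phi}{L^2}^2\leq e^{2\lam\nu(1)}\Sob{\phi}{L^2}^2+\Sob{E^\lam_\nu\phi}{\Hdot^\s}^2$, and hence $\Sob{E^\lam_\nu\phi}{\Hdot^\s\cap L^2}\leq C_{\lam,m}\big(\Sob{E^\lam_\nu\phi}{\Hdot^\s}+\Sob{\phi}{L^2}\big)$, as required.

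Finally, \eqref{eq:norm:equiv:til:b} will follow by applying the norm equivalence \eqref{eq:Sob:equiv} to the function $E^\lam_\nu\phi\in H^\s$ (legitimate since $\s\geq0$), which gives $C^{-1}\Sob{E^\lam_\nu\phi}{H^\s}\leq\Sob{E^\lam_\nu\phi}{\Hdot^\s\cap L^2}\leq C\Sob{E^\lam_\nu\phi}{H^\s}$, and then chaining this with \eqref{eq:norm:equiv:til:a}. I do not expect a substantive obstacle: unlike \cref{lem:norm:equiv}, where the weight $\om$ could decay and one needed $\s>0$ to absorb the low frequencies, the smoothing symbol $e^{\lam\nu}$ is bounded below by $1$ and bounded above on $[0,1]$, so the argument closes for every $\s\geq0$ (the case $\s=0$ being trivial, since there the left-hand inequality is automatic and the frequency splitting remains valid). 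The only point requiring care is bookkeeping the dependence of the constant on $\lam$ and on $m$ through the bound $\nu(1)\leq C(1+m(1))$.
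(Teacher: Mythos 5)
Your argument is correct and follows essentially the same route as the paper's proof: the same low/high frequency splitting of $\Sob{E^\lam_\nu\phi}{L^2}^2$, with the symbol bounded on $\{|\xi|\leq1\}$ via the monotonicity of $\nu$ and the defining bound $\nu(r)\leq C(1+m(r))$ of $\mathscr{M}_S(m)$, and $|\xi|^{2\s}\geq1$ used on $\{|\xi|>1\}$, followed by chaining with \eqref{eq:Sob:equiv} to obtain \eqref{eq:norm:equiv:til:b}. The only cosmetic difference is that you bound the low-frequency symbol directly by $e^{\lam\nu(1)}$ whereas the paper first passes to $1+m(|\xi|)\leq m_a(1)m_b(0)^{-1}$; both yield the same constant dependence on $\lam$ and $m$.
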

\begin{proof}
Observe that from \eqref{def:MD}, we have
    \begin{align}
        \Sob{\til{\phi}}{L^2}^2=\int_{\RR^2}e^{2C\lam\nu(|\xi|)}|\hat{\phi}(\xi)|^2d\xi\leq \int_{|\xi|\leq 1}e^{2\lam(1+m(|\xi|))}|\hat{\phi}(\xi)|^2d\xi+\int_{|\xi|>1}|\xi|^{2\s}|\til{\phi}(\xi)|^2d\xi,\notag
    \end{align}
provided that $\s\geq0$. Since $I+m(D)\in{\mathscr{M}}_W$, we may assume that $I+m(D)=m_a(D)m_b(D)^{-1}$. In particular, by \ref{item:O1}, it follows that $1+m(|\xi|)\leq m_a(1)m_b(0)^{-1}$. Hence
    \begin{align}
        \Sob{\til{\phi}}{\Hdot^\s}^2+\Sob{\til{\phi}}{L^2}^2&\leq \exp\left(2C\lam\frac{m_a(1)}{m_b(0)}\right)\Sob{\phi}{L^2}^2+2\Sob{\til{\phi}}{\Hdot^\s}^2\notag\\
        &\leq \exp\left(2C\lam\frac{m_a(1)}{m_b(0)}\right)\Sob{\til{\phi}}{L^2}^2+2\Sob{\til{\phi}}{\Hdot^\s}^2\leq C_{\lam,m}^2\Sob{\til{\phi}}{H^\s}^2,\notag
    \end{align}
which implies \eqref{eq:norm:equiv:til:a}, \eqref{eq:norm:equiv:til:b}.
\end{proof}

From \cref{lem:norm:equiv} and \cref{lem:norm:equiv:til}, we immediately deduce the following equivalence.

\begin{Cor}\label{cor:norm:equiv}
Suppose $\om(D)\in\mathscr{M}_W$, $m(D)\in\mathscr{M}_D$, $\nu(D)\in\mathscr{M}_S(m)$, and that $E^\lam_\nu$ is defined as in \eqref{def:Elamnu} with $\lam>0$. Then for any $\s\geq0$, we have the following chain of equivalent norms:
    \begin{align}\label{eq:norm:equiv:summary}
            \Sob{E^\lam_\nu\phi}{\Hdot^\s_\om\cap L^2_{\om}}\sim\Sob{E^\lam_\nu\phi}{H^\s_\om}+\Sob{E^\lam_\nu\phi}{L^2}\sim\Sob{E^\lam_\nu\phi}{H^\s_\om}+\Sob{\phi}{L^2}\sim\Sob{E^\lam_\nu\phi}{H^\s_\om},
    \end{align}
where the suppressed constants depend on $\s,\lam,\om,m$.
\end{Cor}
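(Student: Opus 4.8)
The plan is to obtain the four-fold chain by pure transitivity of the equivalence relation $\sim$, peeling off the two ``degrees of freedom'' separately: \cref{lem:norm:equiv} removes the frequency weight $\om(D)$ and \cref{lem:norm:equiv:til} removes the smoothing operator $E^\lam_\nu$. The key observation is that \cref{lem:norm:equiv} is a statement about an \emph{arbitrary} argument, so it may be applied verbatim to $\psi:=E^\lam_\nu\phi$; this directly converts the weighted quantities built from $E^\lam_\nu\phi$ into one another. It is convenient to treat $\s>0$ as the substantive case, the homogeneous norms degenerating at $\s=0$.

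First I would apply \cref{lem:norm:equiv} with $\phi\mapsto E^\lam_\nu\phi$, which yields at once
\[
\Sob{E^\lam_\nu\phi}{\Hdot^\s_\om\cap L^2_\om}\sim\Sob{E^\lam_\nu\phi}{\Hdot^\s_\om}+\Sob{E^\lam_\nu\phi}{L^2}\sim\Sob{E^\lam_\nu\phi}{H^\s_\om}.
\]
Reading the right-hand inequality in the direction $\Sob{E^\lam_\nu\phi}{L^2}\le C_{\s,\om}\Sob{E^\lam_\nu\phi}{H^\s_\om}$ shows that appending $\Sob{E^\lam_\nu\phi}{L^2}$ to $\Sob{E^\lam_\nu\phi}{H^\s_\om}$ changes nothing, so the first, second, and fourth members of the chain are already equivalent. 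It remains to insert the third member, $\Sob{E^\lam_\nu\phi}{H^\s_\om}+\Sob{\phi}{L^2}$, which differs from the second only by the replacement of $\Sob{E^\lam_\nu\phi}{L^2}$ with the \emph{unsmoothed} $\Sob{\phi}{L^2}$. One inequality is free from $e^{\lam\nu}\ge1$ (as $\nu\ge0$), giving $\Sob{\phi}{L^2}\le\Sob{E^\lam_\nu\phi}{L^2}$; the reverse exchange is exactly what \cref{lem:norm:equiv:til} supplies, its constant absorbing the dependence on $\lam$ and $m$, and, combined with the domination $\Sob{E^\lam_\nu\phi}{L^2}\lesssim\Sob{E^\lam_\nu\phi}{H^\s_\om}$ already in hand, this places the third member in the same equivalence class. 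Transitivity of $\sim$ then closes the full chain with constants depending only on $\s,\lam,\om,m$.

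The hard part will not be any single estimate but making sure the $L^2$-exchange step is legitimate, and this is where the restriction $\s>0$ is essential. Because $\om\in\mathscr{M}_W$ may decay at infinity, the bound $\Sob{E^\lam_\nu\phi}{L^2}\lesssim\Sob{E^\lam_\nu\phi}{H^\s_\om}$ is \emph{not} a pointwise domination of Fourier symbols; it holds only because the polynomial factor $(1+|\xi|^2)^{\s/2}$ overwhelms the at-most-subpolynomial behaviour of $\om$, the latter following by applying \eqref{eq:omega:eps} to both $\om$ and $\om^{-1}$ (each of which lies in $\mathscr{M}_W$). The careful point is therefore to track which links genuinely require $\s>0$ (and hence fail in the degenerate case $\s=0$ when $\om$ is unbounded below) and to confirm that the $\lam,m$-dependence produced by \cref{lem:norm:equiv:til} and the $\s,\om$-dependence produced by \cref{lem:norm:equiv} combine into the single stated constant; everything else is bookkeeping.
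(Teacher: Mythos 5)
Your argument is correct and is essentially the paper's own proof, which simply combines \cref{lem:norm:equiv} (applied with $\phi\mapsto E^\lam_\nu\phi$) and \cref{lem:norm:equiv:til}; your extra care about the $L^2$-exchange and the restriction to $\s>0$ is well placed, since the link $\Sob{E^\lam_\nu\phi}{L^2}\lesssim\Sob{E^\lam_\nu\phi}{H^\s_\om}$ indeed rests on \eqref{eq:omega:eps} applied to $\om^{-1}$ and genuinely degenerates at $\s=0$ when $\om$ decays. One small remark: once that domination is extracted from \cref{lem:norm:equiv}, your appeal to \cref{lem:norm:equiv:til} is actually redundant, because the reverse exchange already follows from $\Sob{E^\lam_\nu\phi}{H^\s_\om}+\Sob{E^\lam_\nu\phi}{L^2}\lesssim\Sob{E^\lam_\nu\phi}{H^\s_\om}\le\Sob{E^\lam_\nu\phi}{H^\s_\om}+\Sob{\phi}{L^2}$ together with the trivial bound $e^{\lam\nu}\geq1$.
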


Now, upon applying $ \til{\Lam}_{\om,j}^{\s}$ to \eqref{eq:mod:claw}, one obtains
\begin{align}\label{eq:balance:Hsigma:prep}
    \partial_t (\til{\Lam}_{\om,j}^{\s} \tht)+\til{\Lam}_{\om,j}^{\s}(\Div F_q(\tht))=\til{\Lam}^{\s}_{\om,j}\tht-m_1(D)\til{\Lam}_{\om,j}^{\s}\tht+\lam_1 \nu(D)\til{\Lam}_{\om,j}^{\s}\tht+\til{\Lam}_{\om,j}^{\s}G.
\end{align}
Then taking the $L^2$--inner product of \eqref{eq:balance:Hsigma:prep} with $ \til{\Lam}_{\om,j}^{\s}\tht$, we obtain
\begin{align}\label{eq:balance:Hsigma}
	   \frac{1}{2}\frac{d}{dt}\Sob{ \til{\Lam}_{\om,j}^{\s}\tht}{L^2}^2+&\Sob{m_1(D)^{\frac{1}2} \til{\Lam}_{\om,j}^{\s}\tht}{L^2}^{2}\notag\\
	   &=\lam_1\Sob{\nu(D)^{\frac{1}2} \til{\Lam}_{\om,j}^{\s}\tht}{L^2}^{2}+\Sob{\til{\Lam}^\s_{\om,j}\tht}{L^2}^2-\lb  \til{\Lam}_{\om,j}^{\s}(\Div F_q(\tht)), \til{\Lam}_{\om,j}^{\s}{\tht}\rb+\lb \til{\Lam}_{\om,j}^{\s}G , \til{\Lam}_{\om,j}^{\s}\tht\rb\notag\\
	   &=\lam_1\Sob{\nu(D)^{\frac{1}2} \til{\Lam}_{\om,j}^{\s}\tht}{L^2}^{2}+\Sob{\til{\Lam}^\s_{\om,j}\tht}{L^2}^2+I^\s+II^\s.
	\end{align}
Invoking the fact that $\nu\in \mathscr{M}_R(m)$ (see \eqref{def:M:m}), we have for $\lam_1$ sufficiently small
\begin{align}\label{est:gevrey:residual}
    \lam_1\Sob{\nu(D)^{\frac{1}2} \til{\Lam}_{\om,j}^{\s}\tht}{L^2}^{2}\le \frac{1}{16}\Sob{m_1(D)^{\frac{1}2} \til{\Lam}_{\om,j}^{\s}\tht}{L^2}^{2}.
\end{align}
We estimate $II^\s$ with the Cauchy-Schwarz inequality and Young's inequalities to obtain
\begin{align}\label{est:II:Hsigma}
    |II^\s|&\le \Sob{m_1(D)^{-\frac{1}2}\til{\Lam}^{\s}_{\om,j} G}{L^2}\Sob{m_1(D)^{\frac{1}2}\til{\Lam}^{\s}_{\om,j} \tht}{L^2}\notag\\&\leq C\Sob{m_1(D)^{-\frac{1}2}\til{\Lam}^{\s}_{\om,j} G}{L^2}^2+\frac{1}{16}\Sob{m_1(D)^{\frac{1}2}\til{\Lam}^{\s}_{\om,j}\tht}{L^2}^2.
\end{align}
Upon applying \eqref{est:gevrey:residual} and \eqref{est:II:Hsigma} in \eqref{eq:balance:Hsigma}, we obtain
\begin{align}\label{est:balance:Hsigma:final}
    \frac{d}{dt}\Sob{\til{\Lam}_{\om,j}^{\s}\tht}{L^2}^2+\frac{7}4\Sob{m_1(D)^{\frac{1}2} \til{\Lam}_{\om,j}^{\s}\tht}{L^2}^{2}
    &\leq 2\Sob{ \til{\Lam}_{\om,j}^{\s}\tht}{L^2}^2+C\Sob{m_1(D)^{-\frac{1}2}\til{\Lam}^{\s}_{\om,j} G}{L^2}^2+2{I^{\s}}.
\end{align}
We treat $I^{\s}$ by considering the cases $\be\in[0,1]$, $\be\in(1,2]$, and $\be=0$, recalling that $\s$ is restricted by \eqref{sigma:range}.

\subsubsection*{Case 1: $\be\in[0,1]$} In this case, \eqref{sigma:range} implies $\s\in(-1,2]$.

\subsubsection*{Subcase 1a: $\s\in(-1,1]$} First observe that since $v$ is divergence-free, we have in \eqref{est:balance:Hsigma:final} that
    \begin{align}\label{def:I:decomp:a}
        I^{\s}=\lb[\til{\Lam}^{\s}_{\om, j}\bdy_{\ell},v^{\ell}]\tht,\til{\Lam}^{\s}_{\om,j}\tht\rb.
    \end{align}
Applying \cref{lem:commutator4} with $r=\s, (s,\bar{s})=(\s,1)$, $(\om_1,\tilde{\om}_1)=(\om,m_1^{\frac{\gam}{2}}p^{-1}\om), (\om_2,\tilde{\om}_2)=(p^{-1}\om,m_1^{\frac{\gam}{2}}\om), (\om_3,\tilde{\om}_3)=(\om, m_1^{\frac{\gam}{2}}p^{-1}\om)$, and $\Gam_1=\Gam_2=\Gam_3=m^{\frac{\gam}2}_1$, we obtain 
\begin{align}\label{est:I:hfreq:a}
    |I^\s|
    &\le C_\lam c_j m_1(2^j)^{\frac{\gam}{2}}\left(\nrm{\til{\tht}}_{\dot{H}^{\s}_\om \cap L^2_\om}\nrm{m_1(D)^{\frac{\gam}{2}}p(D)^{-1}\Lam\til{ v}}_{\Hdot^{1}_{\om}}+ \nrm{p(D)^{-1}\Lam\til{ v}}_{H^1_\om}\nrm{m_1(D)^{\frac{\gam}{2}}\til{\tht}}_{\Hdot^{\s}_\om}\right.\notag\\&\hspace{20 em}\left.+\nrm{\til{\tht}}_{\Hdot^{\s}_\om}\nrm{m_1(D)^{\frac{\gam}{2}}p(D)^{-1}\Lam \til{v}}_{\Hdot^{1}_\om}\right)\nrm{\til{\Lam}^{\s}_{\om, j}\tht}_{L^2}\notag\\
    &\le C_\lam c_{j}\left( \nrm{\til{\tht}}_{\dot{H}^{\s}_\om \cap L^2_\om}\nrm{m_1(D)^{\frac{\gam}{2}}\til{q}}_{\Hdot^{1+\be}_\om}+\nrm{\til{q}}_{H^{1+\be}_\om}\nrm{m_1(D)^{\frac{\gam}{2}}\til{\tht}}_{\Hdot^{\s}_\om}\right)\nrm{m_1(D)^{\frac{\gam}{2}}\til{\Lam}^{\s}_{\om, j}\tht}_{L^2}.
\end{align}
 Then \eqref{cond:om:Gam:a}, \eqref{cond:om:Gam:b} become
    \begin{align}\label{cond:Gam:I0}
         \begin{split}
     &\sup_{y>0}\frac{p(y)}{{m^{\gam}_1(y)}}\left({\mathbbm{1}}_{(-\infty,1)}(\sigma)\int_0^1\frac{r^{1-2\s}}{\om^2(y r)}dr+{\mathbbm{1}}_{[1,\infty)}(\s)\int_{0}^{y}\frac{r}{(1+r^2)\om^{2}(r)}dr\right)^{\frac{1}{2}}<\infty,\\
     &\sup_{y>0}\frac{1}{{m^{\gam}_1(y)}}\left(\int_{0}^{y}\frac{rp^2(r)}{(1+r^2)\om^{2}(r)}dr\right)^{\frac{1}{2}},\qquad \sup_{y>0}\frac{p(y)}{\om(y)m^{\gam}_1(y)}<\infty.
        \end{split}
    \end{align}
    
\subsubsection*{Subcase 1b: $\s\in (1,2]$}
Since $\nabla \cdot v=0$, it follows that 
    \begin{align}\label{def:I:decomp:b}
        I^\s=\lb[\til{\Lam}^{\s}_{\om, j},v^{\ell}]\bdy_{\ell}\tht,\til{\Lam}^{\s}_{\om,j}\tht\rb.
    \end{align}
We apply \cref{lem:commutator4} with $r=\s-1$, $(s,\bar{s})=(\s-1,1)$,  $(\om_1,\tilde{\om}_1)=(\om,m_1^{\frac{\gam}{2}}p^{-1}\om)$, $(\om_2,\tilde{\om}_2)=(p^{-1}\om,m_1^{\frac{\gam}{2}}\om)$, $(\om_3,\tilde{\om}_3)=(\om, m_1^{\frac{\gam}{2}}p^{-1}\om)$, and $\Gam_1=\Gam_2=\Gam_3=m^{\frac{\gam}2}_1$, to obtain
\begin{align}\label{est:I:hfreq:b}
    |I^\s|
    \le C_\lam c_{j}\left( \nrm{\til{\tht}}_{H^{\s}_\om}\nrm{m_1(D)^{\frac{\gam}{2}}\til{q}}_{\Hdot^{1+\be}_\om}+\nrm{\til{q}}_{H^{1+\be}_\om}\nrm{m_1(D)^{\frac{\gamma}{2}}\til{\tht}}_{\Hdot^{\s}_\om}\right)\nrm{m_1(D)^{\frac{\gam}{2}}\til{\Lam}^{\s}_{\om, j}\tht}_{L^2},
\end{align}
provided that the following holds:
    \begin{align}\label{cond:Gam:I0:b}
         \begin{split}
     &\sup_{y>0}\frac{p(y)}{{m^{\gam}_1(y)}}\left({\mathbbm{1}}_{(-\infty,1)}(\sigma-1)\int_0^1\frac{r^{1-2(\s-1)}}{\om^2(y r)}dr+{\mathbbm{1}}_{[1,\infty)}(\s-1)\int_{0}^{y}\frac{r}{(1+r^2)\om^{2}(r)}dr\right)^{\frac{1}{2}}<\infty,\\
     &\sup_{y>0}\frac{1}{{m^{\gam}_1(y)}}\left(\int_{0}^{y}\frac{rp^2(r)}{(1+r^2)\om^{2}(r)}dr\right)^{\frac{1}{2}},\qquad \sup_{y>0}\frac{p(y)}{\om(y)m^{\gam}_1(y)}<\infty.
        \end{split}
    \end{align}

\subsubsection*{Concluding Estimates for Case 1: $\be \in [0,1]$} 

First observe that the conditions \eqref{cond:Gam:I0} and \eqref{cond:Gam:I0:b} can both be reduced to the following single set of conditions:
    \begin{align}\label{cond:Gam:I0:final}
         \begin{split}
     \sup_{y>0}\left\{\frac{1}{m_1^\gam(y)}\left(\int_{0}^{y}\frac{r(p^2(y)+p^2(r))}{(1+r^2)\om^{2}(r)}dr\right)^{\frac{1}{2}},\quad \frac{p_a(y)\om_b(y)}{m^{\gam}_1(y)}\right\}<\infty.
        \end{split}
    \end{align}

Upon returning to \eqref{est:balance:Hsigma:final} and combining it with \eqref{est:I:hfreq:a}, \eqref{est:I:hfreq:b}, then summing over $j$ and invoking \eqref{eq:Sob:Bes}, we deduce that
    \begin{align}\label{est:Hs:a}
        \frac{d}{dt}\Sob{\til{\tht}}{\Hdot^\s_\om}^2+\frac{7}4\Sob{m_1(D)^{\frac{1}2}\til{\tht}}{\Hdot^\s_\om}^{2}&\leq C_\lam \left(\nrm{\til{\tht}}_{\dot{H}^{\s}_\om \cap L^2_\om}\nrm{m_1(D)^{\frac{\gam}{2}}\til{q}}_{\Hdot^{1+\be}_\om}+\nrm{\til{q}}_{H^{1+\be}_\om}\nrm{m_1(D)^{\frac{\gam}{2}}\til{\tht}}_{\Hdot^{\s}_\om}\right)\nrm{m_1(D)^{\frac{\gam}{2}}\til{\tht}}_{\Hdot^\s_\om}\notag\\&\quad+C\left(\Sob{m_1(D)^{-\frac{1}{2}}\til{G}}{\Hdot^\s_\om}^2+\Sob{\til{\tht}}{\Hdot^\s_\om}^2\right),
    \end{align}
holds for all $\s\in(-1,2]$, provided that \eqref{cond:Gam:I0:final} holds.

In particular, by jointly applying \eqref{est:Hs:a} for $\s\in(-1,2]$ and $\s=0$, we deduce that
    \begin{align}\label{est:Hs:b:final}
            \frac{d}{dt}\Sob{\til{\tht}}{\Hdot^\s_\om\cap L^2_{\om}}^2&+\frac{7}4\Sob{m_1(D)^{\frac{1}2}\til{\tht}}{\Hdot^\s_\om\cap L^2_{\om}}^{2}\notag\\
            &\leq C_\lam \left(\nrm{\til{\tht}}_{\dot{H}^{\s}_\om \cap L^2_\om}\nrm{m_1(D)^{\frac{\gam}{2}}\til{q}}_{\Hdot^{1+\be}_\om}+\nrm{\til{q}}_{H^{1+\be}_\om}\nrm{m_1(D)^{\frac{\gam}{2}}\til{\tht}}_{\Hdot^{\s}_\om\cap L^2_{\om}}\right)\nrm{m_1(D)^{\frac{\gam}{2}}\til{\tht}}_{\Hdot^\s_\om\cap L^2_{\om}}\notag\\&\quad+C\left(\Sob{m_1(D)^{-\frac{1}{2}}\til{G}}{\Hdot^\s_\om\cap L^2_{\om}}^2+\Sob{\til{\tht}}{\Hdot^\s_\om\cap L^2_{\om}}^2\right).
    \end{align}

\begin{Rmk}\label{rmk:equivalence}

Note that upon summing in $j$ in obtaining \eqref{est:Hs:a}, we in fact obtain the inequality
 \begin{align}\label{est:Hs:a:rmk}
        \frac{d}{dt}\Sob{\Lam^{\s}_{\om}\til{\tht}}{\Bdot^0_{2,2}}^2+&\frac{7}4\Sob{\Lam^{\s}_{\om}m_1(D)^{\frac{1}2}\til{\tht}}{\Bdot^0_{2,2}}^{2}\notag\\
        &\leq C_\lam \left(\nrm{\til{\tht}}_{\dot{H}^{\s}_\om \cap L^2_\om}\nrm{m_1(D)^{\frac{\gam}{2}}\til{q}}_{\Hdot^{1+\be}_\om}+\nrm{\til{q}}_{H^{1+\be}_\om}\nrm{m_1(D)^{\frac{\gam}{2}}\til{\tht}}_{\Hdot^{\s}_\om}\right)\nrm{\Lam^{\s}_{\om}m_1(D)^{\frac{\gam}{2}}\til{\tht}}_{\Bdot^0_{2,2}}\notag\\&\quad+C\left(\Sob{\Lam^{\s}_{\om}m_1(D)^{-\frac{1}{2}}\til{G}}{\Bdot^0_{2,2}}^2+\Sob{\Lam^{\s}_{\om}\til{\tht}}{\Bdot^0_{2,2}}^2\right).
    \end{align}
Owing to \eqref{eq:Sob:Bes} and \cref{lem:Bernstein}, observe that we may bound all Sobolev-norm based quantities in terms of their equivalent Besov-norm based quantities. Later on, after an application of Young's inequality and Gronwall's inequality, we may convert all Besov-norm based quantities back in terms of their Sobolev-based counterparts. In particular, in the final analysis, all quantities may be interpreted in their Sobolev-based form.

Henceforth, we will abuse notation and express all quantities related to the apriori estimates in terms of Sobolev norms.
\end{Rmk}

\subsubsection*{Case 2: $\be\in(1,2]$} 
By \eqref{sigma:range}, we restrict to the regime $\s\in[1,1+\be]$. We will treat the cases $\s \in [1,2)$ and $\s \in [2,1+\be]$ separately.

\subsubsection*{Subcase 2a: $\s\in[1,2)$}
From \eqref{def:mod:flux} and the facts that $v$ is divergence-free and $a(D)\bdy_\ell:=\Lam^{\be-2}p(D)\bdy_\ell$ is skew self-adjoint, we see that $I^\s$ in \eqref{est:balance:Hsigma:final} can be decomposed as
\begin{align}
    I^\s
    &=I_{1}^\s+I_{2}^\s+I_{3}^\s+I_{4}^\s,\label{def:I:decomp:c}
\end{align}
where
\begin{align*}
    I_{1}^\s&= -\lb \til{\Lam}^{\s}_{\om,j}(\nabla^{\perp}a(D)q\cdot\nabla \tht),\til{\Lam}^{\s}_{\om,j}{\tht}\rb+\lb \nabla^{\perp}a(D)q\cdot\nabla\til{\Lam}^{\s}_{\om,j} \tht,\til{\Lam}^{\s}_{\om,j}{\tht}\rb\notag \\&=-\lb [\til{\Lam}^{\s}_{\om,j},\bdy^{\perp}_{\ell}a(D)q]\bdy_{\ell}\tht,\til{\Lam}^{\s}_{\om,j}\tht \rb=-\lb [\til{\Lam}^{\s}_{\om,j},v^\ell]\bdy_\ell\tht,\til{\Lam}^{\s}_{\om,j}\tht\rb,\\
    I_{2}^\s&=-\lb \nabla^{\perp}a(D)q\cdot\nabla\til{\Lam}^{\s}_{\om,j} \tht,\til{\Lam}^{\s}_{\om,j}{\tht}\rb=0,\\
    I_{3}^\s&=\lb \til{\Lam}^{\s}_{\om,j}a(D)(\nabla^{\perp}q\cdot\nabla \tht),\til{\Lam}^{\s}_{\om,j}{\tht}\rb-\lb \nabla^{\perp}q\cdot\nabla{a(D)}^{1/2}\til{\Lam}^{\s}_{\om,j} \tht,{a(D)}^{\frac{1}2}\til{\Lam}^{\s}_{\om,j}{\tht}\rb\notag\\&=\lb [\til{\Lam}^{\s}_{\om,j}{a(D)}^{\frac{1}2},\bdy^{\perp}_{\ell}q]\bdy_{\ell}\tht,\til{\Lam}^{\s}_{\om,j}{a(D)}^{\frac{1}2}\tht \rb=\lb [\til{\Lam}^{\s+\frac{\be}2-1}_{\om p^{1/2},j},\bdy^{\perp}_{\ell}q]\bdy_{\ell}\tht,\til{\Lam}^{\s}_{\om,j}{a(D)}^{\frac{1}2}\tht \rb,\\
    I_{4}^\s&=-\lb \nabla^{\perp}q\cdot\nabla{a(D)}^{\frac{1}2}\til{\Lam}^{\s}_{\om,j} \tht,\til{\Lam}^{\s}_{\om,j}{a(D)}^{1/2}{\tht}\rb=0.
\end{align*}

We treat $I^\s_1$ as we did with $I^\s$ from \eqref{def:I:decomp:b} in the case $\be\in[0,1]$. In particular, we apply \cref{lem:commutator4} with $r=\s-1$, $(s, \bar{s})=(\s-1,1)$, $(\om_1,\til{\om}_1)=(\om,m_1^{\frac{\gam}{2}}p^{-1}\om)$, $(\om_2, \tilde{\om}_2)=(p^{-1}\om,m_1^{\frac{\gam}{2}}\om)$, $(\om_3, \tilde{\om}_3)=(\om,m_1^{\frac{\gam}{2}}p^{-1}\om)$, and $\Gam_1=\Gam_2=\Gam_3=m^{\frac{\gam}2}_1$, to obtain
\begin{align}\label{est:I1}
    |I_{1}^\s|\le& C_\lam c_jm_1(2^j)^{\frac{\gam}{2}} \left\{\nrm{\til{\tht}}_{\Hdot^{\s}_\om}\nrm{m_1(D)^{\frac{\gam}{2}}p(D)^{-1}\Lam \til{v}}_{\Hdot^{1}_\om}+ \nrm{p(D)^{-1}\Lam\til{ v}}_{H^1_\om}\nrm{m_1(D)^{\frac{\gam}{2}}\til{\tht}}_{\Hdot^{\s}_\om}\right.\notag\\
    &\hspace{15 em}\left.+\nrm{\til{\tht}}_{\Hdot^{\s}_\om}\nrm{m_1(D)^{\frac{\gam}{2}}p(D)^{-1}\Lam \til{v}}_{\Hdot^{1}_\om}\right\}\nrm{\til{\Lam}^{\s}_{\om,j}\tht}_{L^2}\notag\\
    \le& C_\lam c_{j}\left( \nrm{\til{\tht}}_{\Hdot^{\s}_\om}\nrm{m_1(D)^{\frac{\gam}{2}}\til{q}}_{\Hdot^{1+\be}_\om}+\nrm{\til{q}}_{H^{1+\be}_\om}\nrm{m_1(D)^{\frac{\gam}{2}}\til{\tht}}_{\Hdot^{\s}_\om}\right)\nrm{m_1(D)^{\frac{\gam}{2}}\til{\Lam}^{\s}_{\om,j}\tht}_{L^2},
    \end{align}
provided that 
    \begin{align}\label{cond:Gam:I1s}
         \begin{split}
     &\sup_{y>0}\frac{p(y)}{{m^{\gam}_1(y)}}\left(\int_0^1\frac{r^{1-2(\s-1)}}{\om^2(y r)}dr\right)^{\frac{1}{2}},\quad\sup_{y>0}\frac{1}{{m^{\gam}_1(y)}}\left(\int_{0}^{y}\frac{rp^2(r)}{(1+r^2)\om^{2}(r)}dr\right)^{\frac{1}{2}},\quad \sup_{y>0}\frac{p(y)}{\om(y)m^{\gam}_1(y)}<\infty.
        \end{split}
    \end{align}

 For $I^\s_3$, we apply \cref{lem:commutator4} with $r+1=\s+\be/2-1$, 
 ($s,\bar{s})=(\s-1,\be-1$), $(\om_1,\til{\om}_1)= (\om_2,\tilde{\om}_2)=(\om_3,\tilde{\om}_3)=(\om, m_1^{\frac{\gam}{2}}\om)$, and $\Gam_1=\Gam_2=\Gam_3=m^{\frac{\gam}2}_1p^{-\frac{1}{2}}$, to obtain
\begin{align}\label{est:I3}
    |I_{3}^\s|\le& C_\lam c_jm_1(2^j)^{\frac{\gam}{2}}p(2^j)^{-\frac{1}{2}}2^{(1-\frac{\be}{2})j} \left\{\nrm{\til{\tht}}_{\Hdot^{\s}_\om}\nrm{m_1(D)^{\frac{\gam}{2}}\nabla^{\perp}\Lam \til{q}}_{\Hdot^{\be-1}_\om}+ \nrm{\nabla^{\perp}\Lam \til{q}}_{H^{\be-1}_\om}\nrm{m_1(D)^{\frac{\gam}{2}}\til{\tht}}_{\Hdot^{\s}_\om}\right.\notag\\
    &\hspace{15 em}\left.+\nrm{\til{\tht}}_{\Hdot^{\s}_\om}\nrm{m_1(D)^{\frac{\gam}{2}}\nabla^{\perp}\Lam \til{q}}_{\Hdot^{\be-1}_\om}\right\}\nrm{\til{\Lam}^{\s}_{\om,j}a(D)^{\frac{1}2}\tht}_{L^2}\notag\\
     \le& C_\lam c_{j}\left( \nrm{\til{\tht}}_{\Hdot^{\s}_\om}\nrm{m_1(D)^{\frac{\gam}{2}}\til{q}}_{\Hdot^{1+\be}_\om}+\nrm{\til{q}}_{H^{1+\be}_\om}\nrm{m_1(D)^{\frac{\gam}{2}}\til{\tht}}_{\Hdot^{\s}_\om}\right)\nrm{m_1(D)^{\frac{\gam}{2}}\til{\Lam}^{\s}_{\om,j}\tht}_{L^2}.
\end{align}
Then \eqref{cond:om:Gam:a}, \eqref{cond:om:Gam:b} become
    \begin{align}\label{cond:Gam:I3}
         \begin{split}
     &\sup_{y>0}\frac{p(y)}{{m^{\gam}_1(y)}}\left(\int_0^1\frac{r^{1-2(\s-1)}}{\om^2(y r)}dr\right)^{\frac{1}{2}},\qquad \sup_{y>0}\frac{p(y)}{\om(y)m^{\gam}_1(y)}<\infty,\\
     &\sup_{y>0}\frac{p(y)}{{m^{\gam}_1(y)}}\left({\mathbbm{1}}_{(1,2)}(\be)\int_0^1\frac{r^{1-2(\be-1)}}{\om^2(y r)}dr+{\mathbbm{1}}_{\{2\}}(\be)\int_{0}^{y}\frac{r}{(1+r^2)\om^{2}(r)}dr\right)^{\frac{1}{2}}<\infty.
        \end{split}
    \end{align}

\subsubsection*{Subcase 2b: $\s\in[2,1+\be]$} From \eqref{def:mod:flux} and the facts that $\nabla^\perp q$ is divergence-free and $a(D)\bdy_\ell$ is skew self-adjoint, we may instead re-write $I^\s$ as
\begin{align}
    I^\s&=-\lb \til{\Lam}^{\s}_{\om,j}(\nabla^{\perp}a(D)q\cdot\nabla \tht),\til{\Lam}^{\s}_{\om,j}{\tht}\rb+\lb \til{\Lam}^{\s}_{\om,j}a(D)(\nabla^{\perp}q\cdot\nabla \tht),\til{\Lam}^{\s}_{\om,j}{\tht}\rb\notag\\
    &=J_{1}^\s+J_{2}^\s+J_{3}^\s+J_{4}^\s+J_{5}^\s,\label{def:I:decomp:d}
\end{align}
where
\begin{align}
	J_{1}^\s=&-\left\{\lb (\nabla^{\perp}a(D)\til{\Lam}^{\s}_{\om,j}{q} \cdot \nabla) \tht,\til{\Lam}^{\s}_{\om,j} \tht \rb -\lb \nabla^{\perp}a(D)\cdot(\til{\Lam}^{\s}_{\om,j}{q}\nabla \tht),\til{\Lam}^{\s}_{\om,j} \tht \rb\right\}\notag\\
	=&-\lb [\bdy_{\ell}^{\perp}a(D),\bdy_{\ell} \tht]\til{\Lam}^{\s}_{\om,j}q,\til{\Lam}^{\s}_{\om,j}\tht\rb\notag\\
	J_{2}^\s=&-\lb \nabla^{\perp}a(D){q}\cdot {\nabla} \til{\Lam}^{\s}_{\om,j} \tht,\til{\Lam}^{\s}_{\om,j} \tht \rb=0\notag\\
	J_{3}^\s=&-\left \{ \lb \til{\Lam}^{\s}_{\om,j}(\nabla^{\perp}a(D){q} \cdot \nabla \theta),\til{\Lam}^{\s}_{\om,j} \tht  \rb
	- \lb (\nabla^{\perp}a(D)\til{\Lam}^{\s}_{\om,j}{{q}} \cdot \nabla) \tht,\til{\Lam}^{\s}_{\om,j} \tht \rb  -\lb \nabla^{\perp}a(D){q}\cdot {\nabla} \til{\Lam}^{\s}_{\om,j} \tht,\til{\Lam}^{\s}_{\om,j} \tht \rb \right\}\notag\\
	J_{4}^\s=&-\lb (\nabla^{\perp}{q}\cdot {\nabla} a(D)^{\frac{1}2}\til{\Lam}^{\s}_{\om,j} \tht),a(D)^{\frac{1}2}\til{\Lam}^{\s}_{\om,j} \tht \rb=0\notag\\
	J_{5}^\s=&\ \lb \til{\Lam}^{\s}_{\om,j} a(D)(\nabla^{\perp}q\cdot\nabla \tht),\til{\Lam}^{\s}_{\om,j}{\tht}\rb
	- \lb \nabla^{\perp}a(D)\cdot(\til{\Lam}^{\s}_{\om,j}{q}\nabla \tht),\til{\Lam}^{\s}_{\om,j} \tht \rb \notag\\
 &\quad-\lb (\nabla^{\perp}{q}\cdot {\nabla} a(D)^{\frac{1}2}\til{\Lam}^{\s}_{\om,j} \tht),a(D)^{\frac{1}2}\til{\Lam}^{\s}_{\om,j} \tht \rb \notag
\end{align}
We observe as in \cite{HuKukavicaZiane2015} that we may write $J_{3}^\s$ as a double commutator. Indeed, for any $\widetilde{\s}\geq2$, we have
	\begin{align}\label{split:Lam:rewrite}
	{\Lam}^{\widetilde{\s}}f={\Lam}^{\widetilde{\s}-2}(-\De)f=-({\Lam}^{\widetilde{\s}-2}\bdy_{l})\bdy_{l}f.
	\end{align}
Since $\s\geq2$, we may apply \eqref{split:Lam:rewrite}, so that by the product rule and \eqref{eq:v:cancel}, we have
	\begin{align*}
	    J_{3}^\s=&-\lb\til{\Lam}^{\s-2}_{\om,j}\bdy_{l}(\nabla^{\perp}a(D)\bdy_{l}{q} \cdot \nabla \theta),\til{\Lam}^{\s}_{\om,j} \tht ,\rb+\lb (\nabla^{\perp}a(D)\til{\Lam}^{\s-2}_{\om,j}\bdy_{l}\bdy_{l}{{q}} \cdot \nabla) \tht,\til{\Lam}^{\s}_{\om,j} \tht \rb\\
	    &-\lb\til{\Lam}^{\s-2}_{\om,j}\bdy_{l}(\nabla^{\perp}a(D){q} \cdot \nabla \bdy_{l}\theta),\til{\Lam}^{\s}_{\om,j} \tht ,\rb+\lb (\nabla^{\perp}a(D){q}\cdot {\nabla} \til{\Lam}^{\s-2}_{\om,j}\bdy_{l}\bdy_{l} \tht),\til{\Lam}^{\s}_{\om,j} \tht \rb\\
	    =&-\lb[\til{\Lam}_{\om,j}^{\s-2}\bdy_{l},\bdy_{\ell}\tht]\bdy_{\ell}^{\perp}\bdy_{l}a(D)q,\til{\Lam}^{\s}_{\om,j} \tht\rb-\lb [\til{\Lam}_{\om,j}^{\s-2}\bdy_{l},\bdy_{\ell}^{\perp}a(D)q]\bdy_{\ell}\bdy_{l}\tht ,\til{\Lam}_{\om,j}^{\s}\tht\rb=J^{\s}_{3,a}+J^{\s}_{3,b}.
	\end{align*}
Similarly, we can express $J_{5}^\s$ as
\begin{align*}
    J_{5}^\s&=\lb[\til{\Lam}_{\om,j}^{\s-2}\bdy_{l},\bdy_{\ell}\tht]\bdy_{\ell}^{\perp}\bdy_{l} q,a(D)\til{\Lam}^{\s}_{\om,j} \tht\rb+\lb [\til{\Lam}_{\om p^{1/2},j}^{\s+\frac{\be}2-3}\bdy_{l},\bdy_{\ell}^{\perp} q]\bdy_{\ell}\bdy_{l}\tht ,a(D)^{1/2}\til{\Lam}_{\om,j}^{\s}\tht\rb\notag\\
    &=J^{\s}_{5,a}+J^{\s}_{5,b}.
\end{align*}
We will now estimate terms $J_{1}^\s, J_{3,a}^\s, J_{3,b}^\s, J_{5,a}^\s, J_{5,b}^\s$.

Applying \cref{lem:commutator3} with $s=2-\be$, $\eps=\be+1-\s$, $\Gamma=m_1^{\gam}p^{-1}$, and Bernstein's inequality, we obtain
\begin{align}\label{est:J1}
    |J_{1}^\s|&\le Cm_1(2^j)^{\gam}2^{(\be+1-\s)j}\Sob{\nabla \tht}{{H}^{\s-1}_\om}\Sob{\til{\Lam}^{\s}_{\om,j}q}{L^2}\Sob{\til{\Lam}^{\s}_{\om,j}\tht}{L^2}\notag\\
    &\le Cc_{j}\Sob{\tht}{H^{\s}_\om}\Sob{m_1(D)^{\frac{\gam}{2}}\til{q}}{\Hdot^{1+\be}_\om}\Sob{m_1(D)^{\frac{\gam}{2}}\til{\Lam}^{\s}_{\om,j}\tht}{L^2},
\end{align}
where
    \[
        c_{j}=\frac{\Sob{m_1(D)^{\frac{\gam}{2}}\til{\Lam}^{\s}_{\om,j}q}{\Hdot^{\be+1-\s}}}{\Sob{m_1(D)^{\frac{\gam}{2}}\til{q}}{\Hdot^{1+\be}_\om}}\in \ell^{2}(\mathbb{Z}),
    \]
provided that
    \begin{align}\label{cond:Gam:J1}
    \sup_{y>0}\left\{\frac{p(y)}{m^{\gam}_1(y)} \left(\int_{0}^{y}\frac{r}{(1+r^2)\om^2(r)}dr\right)^{\frac{1}2},\quad \frac{p_a(y)}{\om(y)m^{\gam}_1(y)}\right\}<\infty,
    \end{align}

For $J^\s_{3,a}$, we apply \cref{lem:commutator4} with $r=\s-2$, $(s,\bar{s})=(1,\s-2) $, $(\om_1,\til{\om}_1)=(p^{-1}\om, m_1^{\frac{\gam}{2}}\om)$,  $(\om_2, \til{\om}_2)=(\om, m_1^{\frac{\gam}{2}}p^{-1}\om)$, $(\om_3, \til{\om}_3)=(p^{-1}\om, m_1^{\frac{\gam}{2}}\om)$, and $\Gam_1=\Gam_2=\Gam_3=m^{\frac{\gam}2}_1$, we obtain
\begin{align}\label{est:J3a}
    |J^{\s}_{3,a}|\le &C_\lam c_{j}m_1(2^j)^{\frac{\gam}{2}}\sum_{\ell,l}\left\{\nrm{\bdy_{\ell}^{\perp}\bdy_{l}p(D)^{-1}a(D)\til{q}}_{H^{1}_\om}\nrm{m_1(D)^{\frac{\gam}{2}}\Lam \bdy_{\ell}\til{\tht}}_{\Hdot^{\s-2}_\om}\right.\notag\\&\hspace{7 em}\left.+\nrm{\Lam \bdy_{\ell}\til{\tht}}_{H^{\s-2}_\om}\nrm{\bdy_{\ell}^{\perp}\bdy_{l}m_1(D)^{\frac{\gam}{2}}p(D)^{-1}a(D)\til{q}}_{\Hdot^{1}_\om}\right. \notag \\
    &\hspace{7 em}\left. +\nrm{\bdy_{\ell}^{\perp}\bdy_{l}p(D)^{-1}a(D)\til{q}}_{\Hdot^{1}_\om}\nrm{m_1(D)^{\frac{\gam}{2}}\Lam \bdy_{\ell}\til{\tht}}_{\Hdot^{\s-2}_\om} \right\}\nrm{\til{\Lam}^{\s}_{\om,j}\tht}_{L^2}\notag\\
 \le& C_\lam c_{j}\left( \nrm{\til{\tht}}_{H^{\s}_\om}\nrm{m_1(D)^{\frac{\gam}{2}}\til{q}}_{\Hdot^{1+\be}_\om}+\nrm{\til{q}}_{H^{1+\be}_\om}\nrm{m_1(D)^{\frac{\gam}{2}}\til{\tht}}_{\Hdot^{\s}_\om}\right)\nrm{m_1(D)^{\frac{\gam}{2}}\til{\Lam}^{\s}_{\om,j}\tht}_{L^2},
\end{align}
provided that 
    \begin{align}\label{cond:Gam:Js3a}
         \begin{split}
     &\sup_{y>0}\frac{p(y)}{{m^{\gam}_1(y)}}\left({\mathbbm{1}}_{(-\infty,1)}(\sigma-2)\int_0^1\frac{r^{1-2(\s-2)}}{\om^2(y r)}dr+{\mathbbm{1}}_{[1,\infty)}(\s-2)\int_{0}^{y}\frac{r}{(1+r^2)\om^{2}(r)}dr\right)^{\frac{1}{2}}<\infty,\\
     &\sup_{y>0}\frac{1}{{m^{\gam}_1(y)}}\left(\int_{0}^{y}\frac{rp^2(r)}{(1+r^2)\om^{2}(r)}dr\right)^{\frac{1}{2}},\qquad \sup_{y>0}\frac{p(y)}{\om(y)m^{\gam}_1(y)}<\infty.
        \end{split}
    \end{align}

For $J^{\s}_{3,b}$, we apply \cref{lem:commutator4} with $r=\s-2$, $(s, \bar{s})=(\s-2,1)$,  $(\om_1,\til{\om}_1)=(\om,m_1^{\frac{\gam}{2}}p^{-1}\om)$, $(\om_2, \tilde{\om}_2)=(p^{-1}\om,m_1^{\frac{\gam}{2}}\om)$, $(\om_3, \tilde{\om}_3)=(\om,m_1^{\frac{\gam}{2}}p^{-1}\om)$, and $\Gam_1=\Gam_2=\Gam_3=m^{\frac{\gam}2}_1$, to obtain
\begin{align}\label{est:J3b}
    |J^{\s}_{3,b}| \le& C_\lam c_{j}m_1(2^j)^{\frac{\gam}{2}}\sum_{\ell,l}\left\{\nrm{ \bdy_{\ell}\bdy_{l}\til{\tht}}_{H^{\s-2}_\om}\nrm{\bdy_{\ell}^{\perp}m_1(D)^{\frac{\gam}{2}}p(D)^{-1}\Lam a(D)\til{q}}_{\Hdot^{1}_\om}\right.\notag\\&\hspace{7 em}\left.+\nrm{\bdy_{\ell}^{\perp}p(D)^{-1}\Lam a(D)\til{q}}_{H^{1}_\om}\nrm{m_1(D)^{\frac{\gam}{2}} \bdy_{\ell}\bdy_{l}\til{\tht}}_{\Hdot^{\s-2}_\om}\right. \notag\\
    &\hspace{7 em} \left. +\nrm{ \bdy_{\ell}\bdy_{l}\til{\tht}}_{\Hdot^{\s-2}_\om}\nrm{\bdy_{\ell}^{\perp}m_1(D)^{\frac{\gam}{2}}p(D)^{-1}\Lam a(D)\til{q}}_{\Hdot^{1}_\om}\right\}\nrm{\til{\Lam}^{\s}_{\om,j}\tht}_{L^2}\notag\\
    \le& C_\lam c_{j}\left( \nrm{\til{\tht}}_{H^{\s}_\om}\nrm{m_1(D)^{\frac{\gam}{2}}\til{q}}_{\Hdot^{1+\be}_\om}+\nrm{\til{q}}_{H^{1+\be}_\om}\nrm{m_1(D)^{\frac{\gam}{2}}\til{\tht}}_{\Hdot^{\s}_\om}\right)\nrm{m_1(D)^{\frac{\gam}{2}}\til{\Lam}^{\s}_{\om,j}\tht}_{L^2},
\end{align}
provided that \eqref{cond:Gam:Js3a} holds.

For $J^{\s}_{5,a}$, we apply \cref{lem:commutator4} with 
$r=\s-2$, $(s,\bar{s})=(\be-1,\s-2)$, $(\om_1,\til{\om}_1)= (\om_2,\tilde{\om}_2)=(\om_3,\tilde{\om}_3)=(\om, m_1^{\frac{\gam}{2}}\om)$, and $\Gam_1=\Gam_2=\Gam_3=m^{\frac{\gam}2}_1p^{-1}$, to obtain
\begin{align}\label{est:J5a}
    |J^{\s}_{5,a}|
    \le &C_\lam c_{j}m_1(2^j)^{\frac{\gam}{2}}p(2^j)^{-1}2^{(2-\be)j}\sum_{\ell,l}\left\{\nrm{\bdy_{\ell}^{\perp}\bdy_{l}\til{q}}_{H^{\be-1}_\om}\nrm{\bdy_{\ell}m_1(D)^{\frac{\gam}{2}}\Lam\til{\tht}}_{\Hdot^{\s-2}_\om}\right. \notag\\
    &\hspace{13 em}\left.+\nrm{\bdy_{\ell}\Lam \til{\tht}}_{H^{\s-2}_\om}\nrm{m_1(D)^{\frac{\gam}{2}}\bdy_{\ell}^{\perp}\bdy_{l}\til{q}}_{\Hdot^{\be-1}_\om}\right. \notag\\
    &\hspace{13 em}\left. +\nrm{\bdy_{\ell}^{\perp}\bdy_{l}\til{q}}_{\Hdot^{\be-1}}\nrm{\bdy_{\ell}m_1(D)^{\frac{\gam}{2}}\Lam \til{\tht}}_{\Hdot^{\s-2}_\om}\right\}\nrm{\til{\Lam}^{\s}_{\om,j}a(D)\tht}_{L^2}\notag\\
     \le& C_\lam c_{j}\left( \nrm{\til{\tht}}_{H^{\s}_\om}\nrm{m_1(D)^{\frac{\gam}{2}}\til{q}}_{\Hdot^{1+\be}_\om}+\nrm{\til{q}}_{H^{1+\be}_\om}\nrm{m_1(D)^{\frac{\gam}{2}}\til{\tht}}_{\Hdot^{\s}_\om}\right)\nrm{m_1(D)^{\frac{\gam}{2}}\til{\Lam}^{\s}_{\om,j}\tht}_{L^2},
\end{align}
provided that 
    \begin{align}\label{cond:Gam:Js5a}
         \begin{split}
        &\sup_{y>0}\frac{p(y)}{{m^{\gam}_1(y)}}\left({\mathbbm{1}}_{(1,2)}(\be)\int_0^1\frac{r^{1-2(\be-1)}}{\om^2(y r)}dr+{\mathbbm{1}}_{\{2\}}(\be)\int_{0}^{y}\frac{r}{(1+r^2)\om^{2}(r)}dr\right)^{\frac{1}{2}}<\infty,\\
        &\sup_{y>0}\frac{p(y)}{{m^{\gam}_1(y)}}\left({\mathbbm{1}}_{(-\infty,1)}(\sigma-2)\int_0^1\frac{r^{1-2(\s-2)}}{\om^2(y r)}dr+{\mathbbm{1}}_{[1,\infty)}(\s-2)\int_{0}^{y}\frac{r}{(1+r^2)\om^{2}(r)}dr\right)^{\frac{1}{2}}<\infty,\\
        &\sup_{y>0}\frac{p(y)}{\om(y)m^{\gam}_1(y)}<\infty.
        \end{split}
    \end{align}

For $J^{\s}_{5,b}$, we apply \cref{lem:commutator4} with $r=\s+\be/2-3$, $(s,\bar{s})=(\s-2,\be-1)$, $(\om_1,\til{\om}_1)$ $=$ $(\om_2,\tilde{\om}_2)$ $=$ $(\om_3,\tilde{\om}_3)$ $=$ $(\om, m_1^{\frac{\gam}{2}}\om)$, and $\Gam_1=\Gam_2=\Gam_3=m^{\frac{\gam}2}_1p^{-\frac{1}{2}}$, to obtain
\begin{align}\label{est:J5b}
    |J^{\s}_{5,b}|\le& C_\lam c_j  m_1(2^j)^{\frac{\gam}{2}}p(2^j)^{-\frac{1}{2}}2^{(1-\frac{\be}{2})j}\sum_{\ell,l}\left\{ \nrm{\bdy_{\ell}\bdy_{l}\til{\tht}}_{H^{\s-2}_\om}\nrm{m_1(D)^{\frac{\gam}{2}}\bdy_{\ell}^{\perp}\Lam \til{q}}_{\Hdot^{\be-1}_\om}\right.\notag \\
    &\hspace{14 em}\left.+\nrm{\bdy_{\ell}^{\perp}\Lam \til{q}}_{H^{\be-1}_\om}\nrm{m_1(D)^{\frac{\gam}{2}}\bdy_{\ell}\bdy_{l}\til{\tht}}_{\Hdot^{\s-2}_\om}\right.\notag \\
    &\hspace{14 em}\left.+\nrm{\bdy_{\ell}\bdy_{l}\til{\tht}}_{\Hdot^{\s-2}_\om}\nrm{m_1(D)^{\frac{\gam}{2}}\bdy_{\ell}^{\perp}\Lam \til{q}}_{\Hdot^{\be-1}_\om}\right\}\nrm{a(D)^{1/2}\til{\Lam}^{\s}_{\om,j}\tht}_{L^2}\notag\\
        \le& C_\lam c_{j}\left( \nrm{\til{\tht}}_{H^{\s}_\om}\nrm{m_1(D)^{\frac{\gam}{2}}\til{q
        }}_{\Hdot^{1+\be}_\om}+\nrm{\til{q}}_{H^{1+\be}_\om}\nrm{m_1(D)^{\frac{\gam}{2}}\til{\tht}}_{\Hdot^{\s}_\om}\right)\nrm{m_1(D)^{\frac{\gam}{2}}\til{\Lam}^{\s}_{\om,j}\tht}_{L^2},
\end{align}
provided that 
     \begin{align}\label{cond:Gam:Js5b}
         \begin{split}
     &\sup_{y>0}\frac{p(y)}{{m^{\gam}_1(y)}}\left({\mathbbm{1}}_{(-\infty,1)}(\sigma-2)\int_0^1\frac{r^{1-2(\s-2)}}{\om^2(y r)}dr+{\mathbbm{1}}_{[1,\infty)}(\s-2)\int_{0}^{y}\frac{r}{(1+r^2)\om^{2}(r)}dr\right)^{\frac{1}{2}}<\infty,\\
     &\sup_{y>0}\frac{p(y)}{{m^{\gam}_1(y)}}\left({\mathbbm{1}}_{(-\infty,1)}(\be-1)\int_0^1\frac{r^{1-2(\be-1)}}{\om^2(y r)}dr+{\mathbbm{1}}_{[1,\infty)}(\be-1)\int_{0}^{y}\frac{r}{(1+r^2)\om^{2}(r)}dr\right)^{\frac{1}{2}}<\infty,\\
     & \sup_{y>0}\frac{p(y)}{\om(y)m^{\gam}_1(y)}<\infty.
        \end{split}
    \end{align}

\subsubsection*{Concluding Estimates for Case 2: $\be \in (1,2]$} As in Case 1, we observe that the conditions stated in \eqref{cond:Gam:I1s}, \eqref{cond:Gam:I3}, \eqref{cond:Gam:J1}, \eqref{cond:Gam:Js3a}, \eqref{cond:Gam:Js5a}, \eqref{cond:Gam:Js5b} can be reduced to
    \begin{align}\label{cond:Gam:J:final}
    \begin{split}
     \sup_{y>0}\left\{\frac{1}{m_1^\gam(y)}\left(\int_{0}^{y}\frac{r(p^2(y)+p^2(r))}{(1+r^2)\om^{2}(r)}dr\right)^{\frac{1}{2}},\quad \frac{p_a(y)\om_b(y)}{m^{\gam}_1(y)}\right\}<\infty.
        \end{split}
    \end{align}
Upon returning to \eqref{est:balance:Hsigma:final}, we may now apply either \eqref{est:I1}, \eqref{est:I3} or \eqref{est:J1}, \eqref{est:J3a}, \eqref{est:J3b}, \eqref{est:J5a}, \eqref{est:J5b}, then sum in $j$, while invoking \eqref{eq:Sob:Bes}, \eqref{eq:Sob:equiv}, to obtain
    \begin{align}\label{est:Hs:b1}
        \frac{d}{dt}\Sob{\til{\tht}}{\Hdot^{\s}_\om}^2+\frac{7}4\Sob{m_1(D)^{\frac{1}2}\til{\tht}}{\Hdot^\s_\om}^{2}&\leq C_\lam \left(\nrm{\til{\tht}}_{H^{\s}_\om}\nrm{m_1(D)^{\frac{\gam}{2}}\til{q
        }}_{\Hdot^{1+\be}_\om}+\nrm{\til{q}}_{H^{1+\be}_\om}\nrm{m_1(D)^{\frac{\gam}{2}}\til{\tht}}_{\Hdot^{\s}_\om }\right)\nrm{m_1(D)^{\frac{\gam}{2}}\til{\tht}}_{\Hdot^\s_\om}\notag\\
        &\quad+C\left(\Sob{m_1(D)^{-\frac{1}{2}}\til{G}}{\Hdot^\s_\om}^2+\Sob{\til{\tht}}{\Hdot^\s_\om}^2\right),
    \end{align}
for all $\s\in[1,1+\be]$. We lastly combine \eqref{est:Hs:b1} with \eqref{est:L2:b} and \cref{cor:norm:equiv} to deduce
    \begin{align}\label{est:Hs:b1:final}
        \frac{d}{dt}&\left(\Sob{\til{\tht}}{\Hdot^{\s}_\om}^2+\Sob{\tht}{L^2}^2\right)+\frac{7}4\left(\Sob{m_1(D)^{\frac{1}2}\til{\tht}}{\Hdot^\s_\om}^{2}+\Sob{m_1(D)^{\frac{1}2}\tht}{L^2}^2\right)\notag\\
        &\leq C \nrm{\til{\tht}}_{H^{\s}_\om}\nrm{m_1(D)^{\frac{\gam}{2}}\til{q
        }}_{\Hdot^{1+\be}_\om}\nrm{m_1(D)^{\frac{\gam}{2}}\til{\tht}}_{\Hdot^\s_\om}+C\nrm{\til{q}}_{H^{1+\be}_\om}\left(\nrm{m_1(D)^{\frac{\gam}{2}}\til{\tht}}_{\Hdot^{\s}_\om}^2+\Sob{m_1(D)^{\frac{\gam}2}{\tht}}{L^2}^2\right)\notag\\
        &\quad+C\left(\Sob{m_1(D)^{-\frac{1}{2}}\til{G}}{H^\s_\om}^2+\Sob{\til{\tht}}{H^\s_\om}^2\right),
    \end{align}
where we used the facts that $\Sob{\tht}{L^2}\leq \Sob{m_1(D)^{\frac{\gam}2}\tht}{L^2}$ and $\Sob{\tht}{H^\eps_\om}\leq\Sob{m_1(D)^{\frac{\gam}2}\til{\tht}}{H^\s_\om}$, for $\eps$ sufficiently small.

\subsection*{Summary of estimates in $\dot{E}^{\lam}_{\nu,\s,\om}$:}  

Given $\be\in[0,2]$, suppose $\s$ satisfies \eqref{sigma:range} (omitting the case $\be=0$, $\lam=0$, $\s=-1$, for now, since it is assumed that $\lam_1>0$). Observe that \eqref{cond:Gam:J:final} implies \eqref{cond:Gam:I0:final}, so that we ultimately reduce these conditions to
\begin{align}\label{cond:summary}
         \begin{split}
     \sup_{y>0}\left\{\frac{1}{m_1^\gam(y)}\left(\int_{0}^{y}\frac{r(p^2(y)+p^2(r))}{(1+r^2)\om^{2}(r)}dr\right)^{\frac{1}{2}},\quad \frac{p_a(y)\om_b(y)}{m^{\gam}_1(y)}\right\}<\infty.
        \end{split}
    \end{align}
Then for $\be \in [0,1]$, we have

     \begin{align}\label{est:Hs:summary:1}
        \frac{d}{dt}\Sob{\til{\tht}}{\Hdot^{\s}_\om\cap L^2_{\om}}^2&+\frac{7}4\Sob{m_1(D)^{\frac{1}2}\til{\tht}}{\Hdot^\s_\om\cap L^2_{\om}}^{2}\notag\\
        &\quad\leq C_\lam \left(\nrm{\til{\tht}}_{\Hdot^{\s}_\om\cap L^2_{\om}}\nrm{m_1(D)^{\frac{\gam}{2}}\til{q
        }}_{\Hdot^{1+\be}_\om}+\nrm{\til{q}}_{H^{1+\be}_\om}\nrm{m_1(D)^{\frac{\gam}{2}}\til{\tht}}_{\Hdot^{\s}_\om \cap L^2_{\om}}\right)\nrm{m_1(D)^{\frac{\gam}{2}}\til{\tht}}_{\Hdot^\s_\om\cap L^2_{\om}}\notag\\
        &\qquad+C\left(\Sob{m_1(D)^{-\frac{1}{2}}\til{G}}{\Hdot^\s_\om\cap L^2_{\om}}^2+\Sob{\til{\tht}}{\Hdot^\s_\om\cap L^2_{\om}}^2\right),
    \end{align}
and for $\be\in(1,2]$, we have
\begin{align}\label{est:Hs:summary:2}
        \frac{d}{dt}&\left(\Sob{\til{\tht}}{\Hdot^{\s}_\om}^2+\Sob{\tht}{L^2}^2\right)+\frac{7}4\left(\Sob{m_1(D)^{\frac{1}2}\til{\tht}}{\Hdot^\s_\om}^{2}+\Sob{m_1(D)^{\frac{1}2}\tht}{L^2}^2\right)\notag\\
        &\leq C \nrm{\til{\tht}}_{H^{\s}_\om}\nrm{m_1(D)^{\frac{\gam}{2}}\til{q
        }}_{\Hdot^{1+\be}_\om}\nrm{m_1(D)^{\frac{\gam}{2}}\til{\tht}}_{\Hdot^\s_\om}+C\nrm{\til{q}}_{H^{1+\be}_\om}\left(\nrm{m_1(D)^{\frac{\gam}{2}}\til{\tht}}_{\Hdot^{\s}_\om}^2+\Sob{m_1(D)^{\frac{\gam}2}{\tht}}{L^2}^2\right)\notag\\
        &\quad+C\left(\Sob{m_1(D)^{-\frac{1}{2}}\til{G}}{H^\s_\om}^2+\Sob{\til{\tht}}{H^\s_\om}^2\right),
    \end{align}
 
 When $\be\in[0,1]$, observe that by interpolation \eqref{est:interpolation:weighted} and Young's inequality we obtain
    \begin{align}
    C&\nrm{\til{\tht}}_{\Hdot^{\s}_{\om}\cap L^2_{\om}}\nrm{m_1(D)^{\frac{\gamma}{2}}\til{q}}_{\Hdot^{1+\be}_\om}\nrm{m_1(D)^{\frac{\gamma}{2}}\til{\tht}}_{\Hdot^{\s}_{\om}\cap L^2_{\om}}\label{est:Hs:a3}\\
            &\leq  \nrm{m_1(D)^{\frac{\gamma}{2}}\til{q}}_{\Hdot^{1+\be}_\om}\nrm{\til{\tht}}_{\Hdot^{\s}_{\om}\cap L^2_{\om}}^{2-{\gam}}\nrm{m_1(D)^{\frac{1}{2}}\til{\tht}}_{\Hdot^{\s}_{\om}\cap L^2_{\om}}^{{\gam}}\notag\\
            &\leq \frac{1}8\Sob{m(D)^{\frac{1}2}\til{\tht}}{\Hdot^{\s}_{\om}\cap L^2_{\om}}^2+C\nrm{m_1(D)^{\frac{\gam}2}\til{q}}_{\Hdot^{1+\be}_{\om}}^{\frac{2}{2-\gam}}\nrm{\til{\tht}}_{\Hdot^{\s}_{\om}\cap L^2_{\om}}^2.\label{est:Hs:a2}
    \end{align}
and
    \begin{align}
       C\nrm{\til{q}}_{H^{1+\be}_\om} \nrm{m_1(D)^{\frac{\gam}{2}}\til{\tht}}_{\Hdot^{\s}_{\om}\cap L^2_{\om}}^2&\leq C\nrm{\til{q}}_{H^{1+\be}_\om}\Sob{m_1(D)^{\frac{1}2}\til{\tht}}{\Hdot^{\s}_{\om}\cap L^2_{\om}}^{{2\gam}}\Sob{\til{\tht}}{\Hdot^{\s}_{\om}\cap L^2_{\om}}^{2-2{\gam}}\label{est:Hs:a4}\\
       &\leq \frac{1}8\Sob{m_1(D)^{\frac{1}2}\til{\tht}}{\Hdot^{\s}_{\om}\cap L^2_{\om}}^2+C\Sob{\til{q}}{H^{1+\be}_\om}^{\frac{1}{1-\gam}}\Sob{\til{\tht}}{\Hdot^{\s}_{\om}\cap L^2_{\om}}^2,\label{est:Hs:a1}
    \end{align}
Applying \eqref{est:Hs:a1} and \eqref{est:Hs:a2} in \eqref{est:Hs:summary:1}, we arrive at
\begin{align}\label{est:Hs:final}
      \frac{d}{dt}\Sob{\tilde{\tht}}{\Hdot^{\s}_{\om}\cap L^2_{\om}}^2&+\frac{3}2\Sob{m_1(D)^{\frac{1}2}\til{\tht}}{\Hdot^{\s}_{\om}\cap L^2_{\om}}^{2}\notag\\
        &\quad\leq C \left(1+\Sob{\til{q}}{H^{1+\be}_\om}^{\frac{1}{1-\gam}}+\Sob{m_1(D)^{\frac{\gam}2}\til{q}}{\Hdot^{1+\be}_\om}^{\frac{2}{2-\gam}}\right)\Sob{\tilde{\tht}}{\Hdot^{\s}_{\om}\cap L^2_{\om}}^2+C\Sob{m_1(D)^{-\frac{1}{2}}\til{G}}{\Hdot^{\s}_{\om}\cap L^2_{\om}}^2.
    \end{align}
An application of Gronwall's inequality then yields
    \begin{align}\label{est:Gev:final}
     \sup_{0\leq t \leq T}\Bigg{(}\Sob{\tilde{\tht}(t)}{\Hdot^{\s}_{\om}\cap L^2_{\om}}^2&+\int_0^t \Sob{m_1(D)^{\frac{1}2}\tilde{\tht}(s)}{\Hdot^{\s}_{\om}\cap L^2_{\om}}^2ds\Bigg{)}\notag\\
     &\quad\le \tilde{C}_T(\be)\left(\Sob{\tht_0}{\Hdot^{\s}_{\om}\cap L^2_{\om}}^2+\int_0^T\Sob{m_1(D)^{-\frac{1}2}\tilde{G}(s)}{\Hdot^{\s}_{\om}\cap L^2_{\om}}^2ds\right),
    \end{align}
where
    \begin{align}\label{def:Gev:constant}
        \tilde{C}_T(\be)=\exp\left(C\int_0^T\left(1+\Sob{\tilde{q}(t)}{H^{1+\be}_\om}^{\frac{1}{1-\gam}}+\Sob{m_1(D)^{\frac{\gam}2}\tilde{q}(t)}{\Hdot^{1+\be}_\om}^{\frac{2}{2-\gam}}\right)dt\right),
    \end{align}
for some constant $C$ depending on $\be$.

When $\be\in(1,2]$, we may estimate the right-hand side of \eqref{est:Hs:summary:2} in a similar fashion to \eqref{est:Hs:summary:1}, except that we additionally invoke \cref{cor:norm:equiv}, in order to also deduce \eqref{est:Gev:final}.

\subsection{A priori estimates in $\Hdot^{\s}_{\om}$}
In order to obtain estimates in Sobolev spaces, we suppress the smoothing multiplier in \eqref{est:Hs:final} by formally setting $\lam\equiv0$; this evaluation is justified due to the way in which the constants depend on $\lam$ in the above estimates. Lastly, we recall that we have yet to treat the case $\be=0$, $\s=-1$, $\lam\equiv0$; this will also be done in this section.

Upon setting $\lam\equiv0$ in \eqref{est:Gev:final} we obtain
    \begin{align}\label{est:Hs:gronwall}
     \sup_{0\leq t \leq T}\Bigg{(}\Sob{{\tht}(t)}{\Hdot^{\s}_{\om}\cap L^2_{\om}}^2&+\int_0^t \Sob{m_1(D)^{\frac{1}2}\tht(s)}{\Hdot^{\s}_{\om}\cap L^2_{\om}}^2ds\Bigg{)}\notag\\
     &\quad\le C_T(\be)\left(\Sob{\tht_0}{\Hdot^{\s}_{\om}\cap L^2_{\om}}^2+\int_0^T\Sob{m_1(D)^{-\frac{1}2}G(s)}{\Hdot^{\s}_{\om}\cap L^2_{\om}}^2ds\right),
    \end{align}
where
\begin{align}\label{def:Hs:constant}
    C_T(\be)=\exp\left(C\int_0^T\left(1+\Sob{q(t)}{H^{1+\be}_\om}^{\frac{1}{1-\gam}}+\Sob{m_1(D)^{\frac{\gam}2}q(t)}{\Hdot^{1+\be}_\om}^{\frac{2}{2-\gam}}\right)dt\right),
\end{align}
for some constant $C$ depending on $\be$.

\subsubsection*{Case $\be=0$, $\s=-1$, $\lam\equiv0$} Since $\lam\equiv0$, we may drop the tilde notation from \eqref{eq:balance:Hsigma}. Furthermore, observe that from \eqref{eq:balance:Hsigma} we have
    \[  
        I^{-1}=\lb m_1(D)^{-\frac{\gam}2}{\Lam}^{-1}_{\om,j}\nabla\cdotp(v \tht),m_1(D)^{\frac{\gam}2}{\Lam}^{-1}_{\om,j}\tht\rb.
    \]
Then by Bernstein's inequalities and \eqref{est:omega}, we have
    \begin{align}\notag
        |I^{-1}|&\leq C\om(2^j)m_1(2^j)^{-\frac{\gam}2}\Sob{\lpj(v\tht)}{L^2}\Sob{m_1(D)^{\frac{\gam}2}\Lam_{\om,j}^{-1}\tht}{L^2}
    \end{align}
 
Applying \cref{thm:prod} with $(s,\bar{s})=(1,0)$, $(\om_1,\til{\om}_1)=(p^{-1}\om,m_1^{\frac{\gam}{2}}\om)$, $(\om_2, \til{\om}_2)=(\om, p^{-1}m_1^{\frac{\gam}{2}})$ , 
$(\om_3, \tilde{\om}_3)=(p^{-1}\om, m_1^{\frac{\gam}{2}}\om)$, and $\Gam_1=\Gam_2=\Gam_3=m_1^{\frac{\gam}2}$ we obtain 
\begin{align}\label{est:I:Hminus1}
    |I^{-1}|&\le Cc_j\left(\Sob{p(D)^{-1}v}{H^1_{\om}}\Sob{m_1(D)^{\frac{\gam}{2}}\tht}{L^2_{\om}}+\Sob{\tht}{L^2_{\om}}\Sob{p(D)^{-1}m_1(D)^{\frac{\gam}{2}}v}{\Hdot^1_\om}\right)\Sob{m_1(D)^{\frac{\gam}2}{\Lam}^{-1}_{\om, j} \tht}{L^2},
\end{align}
provided that 
\begin{align}\label{cond:Gam:I:minus1}
         \begin{split}
     \sup_{y>0}\frac{1}{{m^{\gam}_1(y)}}\left(\int_{0}^{y}\frac{rp^2(r)}{(1+r^2)\om^{2}(r)}dr\right)^{\frac{1}{2}},\quad \sup_{y>0}\frac{p(y)}{{m^{\gam}_1(y)}}\left(\int_0^1\frac{r}{\om^2(y r)}dr\right)^{\frac{1}{2}},\quad  \sup_{y>0}\frac{p(y)}{\om(y)m^{\gam}_1(y)}<\infty,
        \end{split}
    \end{align}
holds. As before, observe that \eqref{cond:Gam:I0:final} implies \eqref{cond:Gam:I:minus1}.

Recall that $v$ is given by \eqref{def:v}, so that
    \[
    \Sob{p(D)^{-1}v}{H^1_{\om}}\leq C\Sob{q}{\Hdot^{-1}_{\om}\cap L^2_\om},\quad \Sob{p(D)^{-1}m_1(D)^{\frac{\gam}{2}}v}{\Hdot^1_{\om}}\leq C\Sob{m_1(D)^{\frac{\gam}{2}}q}{L^2_{\om}}.
    \]
Upon returning to \eqref{est:Hs:a} with $\lam_1=0$ and $\s=0$, applying \eqref{est:I:Hminus1}, summing in $j$, and invoking \eqref{eq:Sob:Bes}, \eqref{eq:Sob:equiv}, we obtain
    \begin{align}\label{est:Hminus1}
    \frac{d}{dt}\Sob{\tht}{\Hdot^{-1}_{\om}\cap L^2_{\om}}^2+\frac{7}4\Sob{m(D)^{\frac{1}2}\tht}{\Hdot^{-1}_{\om}\cap L^2_{\om}}^{2}&\leq C\left(\Sob{q}{\Hdot^{-1}_{\om}\cap \dot{H}^1_\om}\Sob{m_1(D)^{\frac{\gam}{2}}\tht}{L^2_{\om}}+\Sob{\tht}{L^2_{\om}}\Sob{m_1(D)^{\frac{\gam}{2}}q}{H^1_{\om}}\right)\Sob{m_1(D)^{\frac{\gam}2}\tht}{\Hdot^{-1}_{\om}\cap L^2_\om}\notag\\
    &\quad +C\left(\Sob{m_1(D)^{-\frac{1}{2}}G}{\Hdot^{-1}_\om \cap L^2_\om}^2+\Sob{\tht}{\Hdot^{-1}_\om \cap L^2_\om}^2\right).
    \end{align}

Using Plancherel's theorem, \eqref{est:interpolation:weighted}, and Young's inequality we see that
    \begin{align}
    C\Sob{q}{\Hdot^{-1}_{\om}\cap \dot{H}^1_\om}\Sob{m_1(D)^{\frac{\gam}{2}}\tht}{L^2_{\om}}\Sob{m_1(D)^{\frac{\gam}2}\tht}{\Hdot^{-1}_{\om}\cap L^2_\om}&\leq C\Sob{q}{\Hdot^{-1}_{\om}\cap \dot{H}^1_\om}\Sob{m_1(D)^{\frac{1}2}\tht}{\Hdot^{-1}_\om\cap L^2_{\om}}^{2\gam}\Sob{\tht}{\Hdot^{-1}_\om\cap L^2_{\om}}^{2(1-\gam)}\notag\\
    &\leq \frac{1}8\Sob{m_1(D)^{\frac{1}2}\tht}{\Hdot^{-1}_\om\cap L^2_{\om}}^2+C\Sob{q}{\Hdot^{-1}_{\om}\cap \dot{H}^1_\om}^{\frac{1}{1-\gam}}\Sob{\tht}{\Hdot^{-1}_\om\cap L^2_{\om}}^2\label{est:Hminus1:nonlinear:a}\\
    C\Sob{\tht}{L^2_{\om}}\Sob{m_1(D)^{\frac{\gam}{2}}q}{H^1_{\om}}\Sob{m_1(D)^{\frac{\gam}2}\tht}{\Hdot^{-1}_{\om}\cap L^2_\om}&\leq C\Sob{m_1(D)^{\frac{\gam}{2}}q}{H^1_{\om}}\Sob{m_1(D)^{\frac{1}2}\tht}{\Hdot^{-1}_\om\cap L^2_{\om}}^{{\gam}}\Sob{\tht}{\Hdot^{-1}_\om\cap L^2_{\om}}^{{2-\gam}}\notag\\
    &\leq \frac{1}8\Sob{m_1(D)^{\frac{1}2}\tht}{\Hdot^{-1}_\om\cap L^2_{\om}}^2+C\Sob{m_1(D)^{\frac{\gam}{2}}q}{H^1_{\om}}^{\frac{2}{2-\gam}}\Sob{\tht}{\Hdot^{-1}_\om\cap L^2_{\om}}^2\label{est:Hminus1:nonlinear:b}
    \end{align}
    
Now we return to \eqref{est:Hminus1}, then apply \eqref{est:Hminus1:nonlinear:a}, \eqref{est:Hminus1:nonlinear:b} to deduce
\begin{align}\label{est:Hminus1:final}
  \frac{d}{dt}\Sob{\tht}{\Hdot^{-1}_{\om}\cap L^2_{\om}}^2&+\frac{3}2\Sob{m(D)^{\frac{1}2}\tht}{\Hdot^{-1}_{\om}\cap L^2_{\om}}^{2}\notag\\
  &\quad\leq C\left(1+\Sob{q}{ \Hdot^{-1}_\om\cap \Hdot^1_\om}^{\frac{1}{1-\gam}}+\Sob{m_1(D)^{\frac{\gam}2}q}{H^1_\om}^{\frac{2}{2-\gam}}\right)\Sob{\tht}{\Hdot^{-1}_{\om}\cap L^2_{\om}}^2+C\Sob{m_1(D)^{-\frac{1}{2}}G}{\Hdot^{-1}_\om\cap L^2_{\om}}^2.
\end{align}
An application of Gronwall's inequality now yields
    \begin{align}\label{est:Hminus1:gronwall}
        \sup_{0\leq t\leq T}\Bigg{(}\Sob{\tht(t)}{\Hdot^{-1}_\om\cap L^2_{\om}}^2&+\int_0^t\Sob{m(D)^{\frac{1}{2}}\tht(s)}{\Hdot^{-1}_\om\cap L^2_{\om}}^{2}ds\Bigg{)}\notag\\
        &\quad\leq {C}_T(0^-)\left(\Sob{\tht_0}{\Hdot^{-1}_\om\cap L^2_{\om}}^2+\int_0^T\Sob{m_1(D)^{-\frac{1}2}G(s)}{\Hdot^{-1}_\om\cap L^2_{\om}}^2ds\right),
    \end{align}
where
\begin{align}\label{def:Hminus1:constant}
    {C}_T(0^-)=\exp\left(C\int_0^T\left(1+\Sob{q(s)}{ \Hdot^{-1}_\om\cap \Hdot^1_\om}^{\frac{1}{1-\gam}}+\Sob{m_1(D)^{\frac{\gam}2}q(s)}{H^1_\om}^{\frac{2}{2-\gam}}\right)ds\right),
\end{align}
for some constant $C$.

\subsection{Global existence and uniqueness for the protean system}\label{subsect:apriori:summary} 

From the apriori estimates developed previously and a standard artificial viscosity approximation, we obtain the following theorem for the well-posedness of \eqref{eq:mod:claw}. Let us denote by
\begin{align}\label{def:p0}
    p_0=\frac{2}{2-\gam}.
\end{align}

\begin{Thm}\label{thm:modclaw:wellposed}
Let $\be\in [0,2]$ and  $\s\geq-1$ satisfy \eqref{sigma:range}. 
Let $\om \in \mathscr{M}_W$, $p\in\mathscr{M}_C$, $m\in\mathscr{M}_D$ be given such that \eqref{cond:summary} holds for some $\gam\in (0,1)$. Given $T>0$, suppose that
    \begin{align}\label{cond:q:G}
        q\in L^{\infty}(0,T;H^{1+\be}_\om),\quad m_1(D)^{\frac{\gam}{2}}q\in L^{p_0}(0,T; \Hdot^{1+\be}_\om),\quad m_1(D)^{-\frac{\gam}2}G \in L^{2}(0,T;\Hdot^{\s}_\om\cap L^2_{\om}),
    \end{align}
Then for any $\tht_0\in \Hdot^{\s}_\om\cap L^2_{\om}$, there exists a unique solution, $\tht$, of \eqref{eq:mod:claw} satisfying \eqref{est:Hs:gronwall} and 
    \[
      \theta\in C([0,T];\Hdot^{\s}_\om\cap L^2_{\om}),\quad m_1(D)^{\frac{1}2}\tht\in L^2(0,T;\Hdot^{\s}_{\om}\cap L^2_{\om}).
    \]
Moreover, given any $\nu(D)\in\mathscr{M}_S(m)$, the unique solution satisfies \eqref{est:Gev:final} provided that
    \[
\int_0^T\left(\Sob{E^{\lam_1t}_\nu q(t)}{H^{1+\be}_\om}^{\frac{1}{1-\gam}}+\Sob{m_1(D)^{\frac{\gam}2}E^{\lam_1t}_\nu{q}(t)}{\Hdot^{1+\be}_\om}^{\frac{2}{2-\gam}}\right)dt<\infty,
    \]
where $E^\lam_\nu$ is defined by \eqref{def:Elamnu}.
\end{Thm}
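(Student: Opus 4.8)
The plan is to leverage the fact that, for fixed $q$ and $G$, the conservation law \eqref{eq:mod:claw} is \emph{linear} in $\tht$; consequently existence and uniqueness decouple, and essentially all of the analytic content is already contained in the a priori estimates \eqref{est:Hs:gronwall} and \eqref{est:Gev:final}. A preliminary remark is that the hypotheses \eqref{cond:q:G} are precisely what render the constant $C_T(\be)$ of \eqref{def:Hs:constant} finite: the factor $\Sob{q(t)}{H^{1+\be}_\om}^{1/(1-\gam)}$ is integrable on $[0,T]$ since $q\in L^\infty(0,T;H^{1+\be}_\om)$, while $\int_0^T\Sob{m_1(D)^{\gam/2}q(t)}{\Hdot^{1+\be}_\om}^{2/(2-\gam)}\,dt<\infty$ is exactly the assumption $m_1(D)^{\gam/2}q\in L^{p_0}(0,T;\Hdot^{1+\be}_\om)$ in view of \eqref{def:p0}. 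The identical computation, using the final hypothesis of the theorem, shows that $\til{C}_T(\be)$ in \eqref{def:Gev:constant} is finite.

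For uniqueness, I would take two solutions $\tht^{(1)},\tht^{(2)}$ in the asserted class and note that, by linearity of the flux, their difference $\Tht=\tht^{(1)}-\tht^{(2)}$ solves \eqref{eq:mod:claw} with the same $q$, with $G\equiv0$, and with $\Tht(0)=0$. Applying \eqref{est:Hs:gronwall}, whose constant depends only on $q$, then forces $\sup_{0\le t\le T}\Sob{\Tht(t)}{\Hdot^\s_\om\cap L^2_\om}=0$, so $\Tht\equiv0$. The energy manipulations underlying \eqref{est:Hs:gronwall} are legitimate in this regularity class, either directly or after a routine mollification in the spatial variable.

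For existence, I would regularize by artificial viscosity, solving for each $\veps>0$ the linear parabolic problem
\[
  \partial_t\tht^\veps+\Div F_q(\tht^\veps)+m(D)\tht^\veps+\veps\Lam^2\tht^\veps=G,\qquad \tht^\veps(0)=\tht_0,
\]
which is uniquely solvable by standard semigroup or Galerkin methods and whose solution, thanks to the $e^{-\veps t\Lam^2}$ smoothing, enjoys rapid frequency decay for $t>0$. The additional term contributes the non-negative quantity $\veps\Sob{\Lam\til{\Lam}^\s_{\om,j}\tht^\veps}{L^2}^2$ to the left-hand side of every energy identity and may simply be discarded, so the estimates of Sections~\ref{sect:L2:apriori}--\ref{sect:Hs:apriori} hold \emph{uniformly in} $\veps$. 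This yields uniform bounds for $\tht^\veps$ in $C([0,T];\Hdot^\s_\om\cap L^2_\om)$ and for $m_1(D)^{1/2}\tht^\veps$ in $L^2(0,T;\Hdot^\s_\om\cap L^2_\om)$. Estimating $\partial_t\tht^\veps$ in a space of lower regularity by means of \cref{cor:prod} and \eqref{cond:q:G}, an Aubin--Lions--Simon argument (respecting the weights $\om$ through their at-most-algebraic growth \eqref{eq:omega:eps}) extracts a subsequence converging strongly on compact frequency sets and weak-$*$ in the bound spaces. Since $F_q(\cdot)$ is linear, the limit passes directly into the flux, so the limit $\tht$ solves \eqref{eq:mod:claw}; lower semicontinuity transfers \eqref{est:Hs:gronwall} to $\tht$, and the resulting energy identity promotes weak to strong continuity in time.

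I expect the smoothing estimate \eqref{est:Gev:final} to be the main obstacle. The difficulty is that the limit solution $\tht$ need not, a priori, lie in the domain of the unbounded operator $E^{\lam_1 t}_\nu=e^{\lam_1 t\,\nu(D)}$, so \eqref{est:Gev:final} cannot be applied to $\tht$ as it stands. I would circumvent this by establishing \eqref{est:Gev:final} first for the approximations $\tht^\veps$, which \emph{do} lie in the domain of $E^{\lam_1 t}_\nu$ for $t>0$, since the symbol $e^{\lam_1 t\,\nu(|\xi|)}$ grows strictly more slowly than the Gaussian decay $e^{-\veps t|\xi|^2}$ furnished by the artificial viscosity, while $\lam(0)=0$ covers $t=0$. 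Taking $\lam_1$ small enough that the smoothing contribution is absorbed by the dissipation as in \eqref{est:gevrey:residual}, the bound so obtained is uniform in $\veps$ with finite constant $\til{C}_T(\be)$; passing $\veps\to0$ by lower semicontinuity then delivers \eqref{est:Gev:final} for $\tht$. The remaining care is bookkeeping: ensuring that \cref{lem:commutator4}, whose $\lam$-dependence was arranged precisely so as to survive this limit, is applied with the parameter choices recorded in Section~\ref{sect:Hs:apriori}.
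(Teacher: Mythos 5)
Your proposal is correct and follows essentially the same route as the paper's proof in \cref{app:well:posed}: an artificial viscosity regularization whose dissipative contribution is simply discarded so that the a priori estimates of \cref{sect:apriori} hold uniformly, followed by an Aubin--Lions compactness argument in which linearity of $F_q(\cdot)$ lets the limit pass into the flux, with uniqueness and the smoothing bound \eqref{est:Gev:final} obtained from \eqref{est:Hs:gronwall} and \eqref{est:Gev:final} applied at the approximate level. The only cosmetic difference is that the paper additionally mollifies $q$ and $G$ in time and constructs the regularized solutions via a Duhamel/Picard fixed point with explicit product estimates, where you instead invoke standard linear parabolic solvability; both are legitimate, and your observation that \ref{item:S2} keeps $e^{\lam_1 t\nu(|\xi|)}$ of at most polynomial growth (hence dominated by the viscous smoothing) correctly justifies applying $E^{\lam_1 t}_\nu$ to the approximations.
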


The proof of \cref{thm:modclaw:wellposed} is provided in \cref{app:well:posed}.

\subsection{Stability of the protean system}\label{sect:stab:apriori} In this section, we will establish continuity properties of \eqref{eq:mod:claw} with respect to its datum. This will rely on having access to suitable stability-type estimates. Ultimately, the results of this section are part of the development of the uniqueness of solutions and continuity of the solution map of \eqref{eq:dgsqg}. 

The standing assumption of this section will be that $p\in\mathscr{M}_C$, $\om \in \mathscr{M}_W$, $m\in\mathscr{M}_D$, $\nu\in \mathscr{M}_R(m)$ are given and satisfy \eqref{cond:summary}. The main result of this section is then stated in the following theorem.

\begin{Thm}\label{thm:stab}
Let $\be \in [0,2]$ and $T>0$ be fixed. Suppose that sequences $\{\tht^n\}_n, \{q^n\}_n, \{G^n\}_n$ and functions $\tht^\infty, q^\infty, G^\infty$ are given such that
    \begin{align}\label{cond:Q0}
        &Q_\be:=\notag\\
        &\sup_{n\in\NN\cup\{\infty\}}
            \begin{cases}
                \Sob{\tht^n_0}{L^2_{\om}\cap\Hdot^{-1}_\om}+\Sob{q^{n}}{L^\infty _T(H^1_\om\cap\Hdot^{-1}_\om)}+\Sob{m_1(D)^{\frac{\gam}{2}}q^{n}}{L^{p_0}_{T}\Hdot^{1}_{\om}}+\Sob{m_1(D)^{-\frac{1}{2}}G^n}{L^2_T(L^2_{\om}\cap\Hdot^{-1}_\om)},&\be=0\\
                \Sob{\tht^n_0}{H^{\be}_\om}+\Sob{q^{n}}{L^\infty _T H^{1+\be}_\om}+\Sob{m_1(D)^{\frac{\gam}{2}}q^{n}}{L^{p_0}_{T}\Hdot^{1+\be}_{\om}}+\Sob{m_1(D)^{-\frac{1}{2}}G^n}{L^2_TH^{\be}_\om},&\be\in(0,1],\\
                \Sob{\tht^n_0}{H^{\be}_\om}+\Sob{q^{n}}{L^\infty_T H^{1+\be}_\om}+\Sob{m_1(D)^{\frac{\gam}{2}}q^{n}}{L^{p_0}_{T}\Hdot^{1+\be}_{\om}}+\Sob{m_1(D)^{-\frac{1}{2}}G^n}{L^2_TH^{\be}_\om},&\be\in(1,2].
            \end{cases}
    \end{align}
is finite. Moreover, suppose that
    \begin{align}\label{cond:Z}
        &Z_\be^{n}:=\notag\\
           & \begin{cases}
               \Sob{\tht^n_0-\tht^\infty_0}{L^2_{\om}\cap\Hdot^{-1}_\om}^2+\Sob{q^n-q^\infty}{L^\infty_T (L^2_{\om}\cap\Hdot^{-1}_\om)}^2+\Sob{m_1(D)^{\frac{\gam}{2}}(q^{n}-q^\infty)}{L^{2}_{T}L^2_{\om}}^2\\
               \hspace{22 em}+\Sob{m_1(D)^{-\frac{1}{2}}(G^n-G^\infty)}{L^2_T(L^2_{\om}\cap\Hdot^{-1}_\om)}^2,&\be=0\\
                \Sob{\tht_0^n-\tht_0^\infty}{L^2_{\om}}^2+\Sob{q^n-q^\infty}{L^\infty_T L^2_{\om}}^2+\Sob{m_1(D)^{\frac{\gam}{2}}(q^{n}-q^\infty)}{L^{2}_{T}L^2_{\om}}^2+\Sob{m_1(D)^{-\frac{1}{2}}(G^n-G^\infty)}{L^2_TL^2_{\om}}^2,&\be\in(0,1),\\
                \Sob{\tht^n_0-\tht^\infty_0}{H^\be_\om}^2+\Sob{q^n-q^\infty}{L^\infty_TH^{\be}_\om}^2+\Sob{m_1(D)^{\frac{\gam}{2}}(q^{n}-q^\infty)}{L^{2}_{T}H^{\be}_{\om}}^2+\Sob{m_1(D)^{-\frac{1}{2}}(G^n-G^\infty)}{L^2_TH^\be_\om}^2,&\be\in[1,2]
            \end{cases}
    \end{align}
converges to $0$ as $n\goesto\infty$. For all $n\in\NN\cup\{\infty\}$, let $\tht^n$ denote the unique solution guaranteed by \cref{thm:modclaw:wellposed} of the initial value problem
    \begin{align}\label{eq:modclaw:n}
        \partial_{t}\theta^{n}+ m(D)\tht^{n}+\Div F_{q^{n}}(\tht^{n})=G^n,\quad\tht^{n}(0,x)=\tht^n_0(x),
    \end{align}
Then
    \begin{align}\label{eq:stab:a}
        \lim_{n\goesto\infty}\left(\Sob{\tht^n-\tht^\infty}{L^\infty_T(L^2_{\om}\cap\Hdot^{-1}_\om)}^2+\Sob{m(D)^{\frac{1}{2}}(\tht^n-\tht^\infty)}{L^2_T(L^2_{\om}\cap\Hdot^{-1}_\om)}^2\right)=0,
    \end{align}
when $\be=0$, and
    \begin{align}\label{eq:stab:b}
        \lim_{n\goesto\infty}\left(\Sob{\tht^n-\tht^\infty}{L^\infty_TH^{\be}_\om}^2+\Sob{m(D)^{\frac{1}{2}}(\tht^n-\tht^\infty)}{L^2_TH^{\be}_\om}^2\right)=0,
    \end{align}   
when $\be\in(0,2]$.
\end{Thm}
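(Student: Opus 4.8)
The plan is to recognize that the difference of two solutions again solves the protean system \eqref{eq:mod:claw}, and then to feed this observation into the apriori machinery of \cref{sect:L2:apriori}, \cref{sect:Hs:apriori} at the level of the weaker phase space. Writing $\Tht^n:=\tht^n-\tht^\infty$, I would first note that the flux $F_q(\tht)$ in \eqref{def:mod:flux} is linear in $q$ and in $\tht$ separately, hence bilinear, in both regimes $\be\in[0,1]$ and $\be\in(1,2]$. Subtracting the equation \eqref{eq:modclaw:n} for $\tht^\infty$ from that for $\tht^n$ and using bilinearity to split
\[
F_{q^n}(\tht^n)-F_{q^\infty}(\tht^\infty)=F_{q^n}(\Tht^n)+F_{q^n-q^\infty}(\tht^\infty),
\]
one finds that $\Tht^n$ solves \eqref{eq:mod:claw} with coefficient $q\mapsto q^n$, forcing $G\mapsto \tilde G^n:=(G^n-G^\infty)-\Div F_{q^n-q^\infty}(\tht^\infty)$, and datum $\Tht^n_0=\tht^n_0-\tht^\infty_0$. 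Thus $\Tht^n$ is itself governed by the protean system, the \emph{structured} flux $\Div F_{q^n}(\Tht^n)$ retaining the same coefficient $q^n$ that the commutator estimates of \cref{sect:comm} are designed to handle.

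Next I would apply the Gronwall-type apriori bound \eqref{est:Hs:gronwall} (and, for the component $\s=-1$ when $\be=0$, the bound \eqref{est:Hminus1:gronwall}) to $\Tht^n$, working at the regularity level matching the target topology: $\s\in\{-1,0\}$ for $\be=0$ and $\s=\be$ for $\be\in[1,2]$. Since the coefficient entering these estimates is $q^n$, the Gronwall exponents \eqref{def:Hs:constant}, \eqref{def:Hminus1:constant} are controlled by exactly the norms $\Sob{q^n}{L^\infty_T H^{1+\be}_\om}$ and $\Sob{m_1(D)^{\frac{\gam}2}q^n}{L^{p_0}_T\Hdot^{1+\be}_\om}$ collected in $Q_\be$ from \eqref{cond:Q0}; hence the Gronwall constant is bounded \emph{uniformly in} $n$. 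The standing hypothesis \eqref{cond:summary} guarantees that every commutator and product estimate used to derive these bounds applies. It therefore remains to show that the data on the right-hand side, namely $\Sob{\Tht^n_0}{\Hdot^\s_\om\cap L^2_\om}$ together with $\int_0^T\Sob{m_1(D)^{-\frac12}\tilde G^n}{\Hdot^\s_\om\cap L^2_\om}^2\,dt$, tends to $0$ as $n\goesto\infty$.

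The initial datum and the genuine source difference $G^n-G^\infty$ are sent to zero directly by $Z_\be^n\goesto0$ from \eqref{cond:Z}. The crux is the flux source $\Div F_{q^n-q^\infty}(\tht^\infty)$, which I would control with the product estimate \cref{thm:prod}, summed as in \cref{cor:prod}. The idea is to distribute regularity so that the borderline factor $\tht^\infty$ absorbs the top derivative while $q^n-q^\infty$ is measured in the weaker norm: this bounds $\Sob{m_1(D)^{-\frac12}\Div F_{q^n-q^\infty}(\tht^\infty)}{\Hdot^\s_\om\cap L^2_\om}$ by a product of a weighted norm of $q^n-q^\infty$ (which $\goesto0$ by $Z_\be^n$) and the uniformly bounded $\Sob{\tht^\infty}{H^{1+\be}_\om}$ (finite by $Q_\be$), the time-integration closing by H\"older with exponent $p_0=2/(2-\gam)$ together with the control of $m_1(D)^{\frac\gam2}(q^n-q^\infty)$ and $m_1(D)^{\frac\gam2}\tht^\infty$. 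The weight conditions \eqref{cond:om:Gam:a}, \eqref{cond:om:Gam:b} demanded by \cref{thm:prod} reduce once again to \eqref{cond:summary}, exactly as in the apriori analysis.

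Collecting these bounds, the apriori estimate yields $\Sob{\Tht^n}{L^\infty_T X}^2+\Sob{m(D)^{\frac12}\Tht^n}{L^2_T X}^2\goesto0$, with $X=L^2_\om\cap\Hdot^{-1}_\om$ when $\be=0$ (giving \eqref{eq:stab:a}) and $X=H^\be_\om$ when $\be\in[1,2]$ (giving \eqref{eq:stab:b}). For the remaining range $\be\in(0,1)$, where \eqref{cond:Z} furnishes convergence of the data only in $L^2_\om$, I would first run the argument at $\s=0$ to obtain $L^2_\om$-convergence of $\Tht^n$, and then upgrade to the $H^\be_\om$ topology by interpolating against the uniform bound in $H^{1+\be}_\om$ enjoyed by the solutions (which, in the reduction to \eqref{eq:dgsqg}, is precisely the uniform control of $q^n=-\tht^n$ recorded in $Q_\be$). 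The main obstacle is the flux-source estimate in the singular regime $\be\in(1,2]$: there the second piece $a(D)((\nabla^\perp\tht^\infty)(q^n-q^\infty))$ of \eqref{def:mod:flux} loses derivatives through the supersingular operator $a(D)=\Lam^{\be-2}p(D)$, so the asymmetric splitting in \cref{thm:prod} must be arranged to load the surplus derivative entirely onto $\tht^\infty$ while leaving $q^n-q^\infty$ in the small weak norm, and keeping the $m_1(D)^{\frac\gam2}$ weights and the time exponents mutually consistent is what makes this delicate.
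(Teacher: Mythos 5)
Your decomposition of the difference equation is the right one and matches the paper's \eqref{eq:modclaw:intermediate}, but there is a genuine gap in how you close the source-term estimate, and it is precisely the point that distinguishes \cref{thm:stab} from \cref{prop:stab}. Under the hypotheses of \cref{thm:stab}, the quantity $Q_\be$ in \eqref{cond:Q0} controls only $\Sob{\tht^n_0}{H^{\be}_\om}$ (resp.\ $L^2_\om\cap\Hdot^{-1}_\om$ when $\be=0$), so \cref{thm:modclaw:wellposed} places the solutions $\tht^n$, and in particular $\tht^\infty$, only in $C([0,T];H^{\be}_\om)$ --- one derivative \emph{below} the borderline space. Your argument requires estimating $\Div F_{q^n-q^\infty}(\tht^\infty)$ in the topology of the Gronwall estimate, and every bound available for this quantity (see \cref{lem:stab:G}) carries a factor $\Sob{\tht^\infty}{H^{1+\be}_\om}$ or $\Sob{m_1(D)^{\gam/2}\tht^\infty}{H^{1+\be}_\om}$, because the flux differentiates $\tht^\infty$ (and in the regime $\be\in(1,2]$ additionally applies the supersingular operator $a(D)$ to it). You assert that $\Sob{\tht^\infty}{H^{1+\be}_\om}$ is ``finite by $Q_\be$,'' but $Q_\be$ bounds $q^\infty$ in $H^{1+\be}_\om$, not $\tht^\infty$; the identification $q=-\tht$ is not available here, since in the intended application (the Kato-type splitting $\nabla\tht=\vs+\ze$ in \cref{sect:cont:dep}) the role of $\tht^\infty$ is played by $\vs=\nabla\tht$, which lies only in $H^{\be}_\om$. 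The same objection applies to your proposed interpolation step for $\be\in(0,1)$, which again presupposes a uniform $H^{1+\be}_\om$ bound on the solutions. What you have written is essentially the proof of \cref{prop:stab}, whose hypothesis $Q^*_\be<\infty$ in \eqref{cond:Qbeta} does include $\Sob{\tht^n_0}{H^{1+\be}_\om}$.

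The missing idea is a density argument. The paper first truncates the data, replacing $\tht^n_0$ and $G^n$ by $S_{k}\tht^n_0$ and $S_kG^n$; the resulting solutions $\tht^n_k$ lie in $H^{1+\be}_\om$, so \cref{prop:stab} applies to them and yields convergence as $n\goesto\infty$ for fixed $k$. The tails $\Tht^n_k=\tht^n-\tht^n_k$ solve the protean system with data $(I-S_k)\tht^n_0$ and forcing $(I-S_k)G^n$, and are made small in the weak topology, uniformly in $n$, by the linear estimates of \cref{thm:modclaw:wellposed} together with \eqref{cond:Q0}. The triangle inequality then gives \eqref{eq:stab:a}, \eqref{eq:stab:b}. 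Without this truncation step your Gronwall inequality cannot be closed, because the forcing term it must absorb is not finite in the required norm.
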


The proof of \cref{thm:stab} will rely on the following stability-type estimates. 

\begin{Prop}\label{prop:stab}
Let $\be \in [0,2]$ and $T>0$.  Suppose that $\{\tht^n\}_n, \{q^n\}_n, \{G^n\}_n$ and $\tht^\infty, q^\infty, G^\infty$ be such that
    \begin{align}\label{cond:Qbeta}
        &Q_\be^*:=\\
        &\sup_{n\in\NN\cup\{\infty\}}
            \begin{cases}
                \Sob{\tht^n_0}{H^1_\om\cap\Hdot^{-1}_\om}+\Sob{q^{n}}{L^\infty _T(H^1_\om\cap\Hdot^{-1}_\om)}+\Sob{m_1(D)^{\frac{\gam}{2}}q^{n}}{L^{p_0}_{T}\Hdot^{1}_{\om}}+\Sob{m_1(D)^{-\frac{1}{2}}G^n}{L^2_T(H^1_\om\cap\Hdot^{-1}_\om)},&\be=0\\
                \Sob{\tht^n_0}{H^{1+\be}_\om}+\Sob{q^{n}}{L^\infty _T H^{1+\be}_\om}+\Sob{m_1(D)^{\frac{\gam}{2}}q^{n}}{L^{p_0}_{T}\Hdot^{1+\be}_{\om}}+\Sob{m_1(D)^{-\frac{1}{2}}G^n}{L^2_T\Hdot^{1+\be}_\om},&\be\in(0,1],\\
                \Sob{\tht^n_0}{H^{1+\be}_\om}+\Sob{q^{n}}{L^\infty_T H^{1+\be}_\om}+\Sob{m_1(D)^{\frac{\gam}{2}}q^{n}}{L^{p_0}_{T}\Hdot^{1+\be}_{\om}}+\Sob{m_1(D)^{-\frac{1}{2}}G^n}{L^2_T\Hdot^{1+\be}_\om},&\be\in(1,2].
            \end{cases}\notag
    \end{align}
is finite. For all $n\in\NN\cup\{\infty\}$, let $\tht^n$ denote the  unique solution of \eqref{eq:modclaw:n}, corresponding to data ($\tht^n_0,q^n,G^n)$, guaranteed by \cref{thm:modclaw:wellposed}.

{
\noindent When $\be=0$:
    \begin{align}\label{eq:stab:estimate:a}
        &\Sob{\tht^n-\tht^\infty}{L^\infty_T(L^2_{\om}\cap\Hdot^{-1}_\om)}^2+\Sob{m(D)^{\frac{1}{2}}(\tht^n-\tht^\infty)}{L^2_T(L^2_{\om}\cap\Hdot^{-1}_\om)}^2\leq CZ^n_0
    \end{align}
When $\be\in(0,1)$:
    \begin{align}\label{eq:stab:estimate:b1}
            &\Sob{\tht^n-\tht^\infty}{L^\infty_TL^2_{\om}}^2+\Sob{m(D)^{\frac{1}{2}}(\tht^n-\tht^\infty)}{L^2_TL^2_{\om}}^2\leq CZ^n_\be\\
             &\Sob{\tht^n-\tht^\infty}{L^\infty_TH^\be_\om}^2+\Sob{m(D)^{\frac{1}{2}}(\tht^n-\tht^\infty)}{L^2_TH^\be_\om}^2\leq C(Z^n_\be)^{\frac{1}{1+\be}}.\label{eq:stab:estimate:b2}
    \end{align}
When $\be\in[1,2]$:
    \begin{align}\label{eq:stab:estimate:c}
        &\Sob{\tht^n-\tht^\infty}{L^\infty_TH^{\be}_\om}^2+\Sob{m(D)^{\frac{1}{2}}(\tht^n-\tht^\infty)}{L^2_TH^{\be}_\om}^2\leq  C Z^n_\be,
\end{align}
for all $n\in\NN\cup\{\infty\}$, for some constant $C>0$ depending on $Q_\be^*$.
}
\end{Prop}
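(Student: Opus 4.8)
The plan is to recognize the difference $\Tht^n:=\tht^n-\tht^\infty$ as a solution of the protean system \eqref{eq:mod:claw} and then run it through the a priori machinery of \cref{sect:Hs:apriori}, arranging that the Gronwall constant is governed by $Q_\be^*$ while the data are governed by $Z_\be^n$. Writing $Q^n:=q^n-q^\infty$ and exploiting the bilinearity of the flux \eqref{def:mod:flux} in $(q,\tht)$, subtracting the two copies of \eqref{eq:modclaw:n} gives
\begin{align*}
\partial_t\Tht^n+m(D)\Tht^n+\Div F_{q^\infty}(\Tht^n)=(G^n-G^\infty)-\Div F_{Q^n}(\tht^n),\qquad \Tht^n(0)=\tht^n_0-\tht^\infty_0,
\end{align*}
which is \eqref{eq:mod:claw} with advecting coefficient $q^\infty$ and forcing $\til{G}^n:=(G^n-G^\infty)-\Div F_{Q^n}(\tht^n)$. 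I would then invoke the Gronwall estimate \eqref{est:Hs:gronwall} (with $\lam=0$) at level $\s=0$ when $\be\in(0,1)$ and $\s=\be$ when $\be\in[1,2]$, and the $\Hdot^{-1}_\om\cap L^2_\om$ estimate \eqref{est:Hminus1:gronwall} when $\be=0$. Because the coefficient is $q^\infty$, the exponents \eqref{def:Hs:constant}, \eqref{def:Hminus1:constant} are finite and bounded purely in terms of $Q_\be^*$, by the same integrability underlying \cref{thm:modclaw:wellposed}; the same theorem furnishes the uniform bound $\sup_n\Sob{\tht^n}{L^\infty_TH^{1+\be}_\om}\le C(Q_\be^*)$ that I use below. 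Since $\tht^n_0-\tht^\infty_0$ and $G^n-G^\infty$ are exactly the terms measured by $Z_\be^n$ in \eqref{cond:Z}, everything reduces to estimating $\Div F_{Q^n}(\tht^n)$ in the relevant norm by $Z_\be^n$, up to constants depending on $Q_\be^*$.

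In the regularizing regime $\be\in[0,1]$ the flux is the single term $F_{Q^n}(\tht^n)=(\nabla^\perp a(D)Q^n)\tht^n$, so $\Div F_{Q^n}(\tht^n)=v(Q^n)\cdot\nabla\tht^n$ with $v(Q^n)=-\nabla^\perp a(D)Q^n$ of order $\be-1\le0$; this is a genuine product whose advecting factor is regularizing. I would bound its $L^2_\om$ norm (its $\Hdot^{-1}_\om\cap L^2_\om$ norm when $\be=0$, its $H^1_\om$ norm when $\be=1$) by the product estimate \cref{cor:prod}/\cref{thm:prod}, exactly as in the derivation of \eqref{est:I:Hminus1}, placing $Q^n$ in the low slot (carrying smallness from $Z_\be^n$) and $\nabla\tht^n$ in $H^\be_\om$ (bounded by $Q_\be^*$); the weight hypotheses collapse to \eqref{cond:summary}. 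Integrating in $t$ with $\tht^n\in L^\infty_TH^{1+\be}_\om$ gives $\int_0^T\Sob{m_1(D)^{-1/2}\Div F_{Q^n}(\tht^n)}{\cdot}^2\,dt\le C(Q_\be^*)Z_\be^n$, and \eqref{est:Hs:gronwall}/\eqref{est:Hminus1:gronwall} then yield \eqref{eq:stab:estimate:a}, \eqref{eq:stab:estimate:b1}, and the case $\be=1$ of \eqref{eq:stab:estimate:c}.

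The singular regime $\be\in(1,2]$ is the crux and is where I expect the main obstacle. A direct product bound fails: the first flux term $(\nabla^\perp a(D)Q^n)\cdot\nabla\tht^n$ is controllable only in $H^1_\om$, strictly weaker than the required $H^\be_\om$ --- this is exactly the loss of derivatives flagged in \cref{sect:mod:flux}. It is defeated by the second term in \eqref{def:mod:flux}: using $\nabla\cdot\nabla^\perp=0$ and $\nabla^\perp\tht\cdot\nabla Q=-\nabla^\perp Q\cdot\nabla\tht$, the two flux terms collapse to a single commutator,
\begin{align*}
\Div F_{Q^n}(\tht^n)=-\sum_\ell\,[\,a(D),\bdy_\ell\tht^n\,]\,\bdy^\perp_\ell Q^n,
\end{align*}
in which $a(D)=\Lam^{-(2-\be)}p(D)$ is commuted against multiplication by $\bdy_\ell\tht^n$. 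This commutator gains a derivative, so it is of order $\be-3\le-1$ and sends $\bdy^\perp_\ell Q^n\in H^{\be-1}_\om$ into $H^\be_\om$, with size controlled by $\Sob{\nabla\tht^n}{H^\be_\om}\lesssim\Sob{\tht^n}{H^{1+\be}_\om}$. This is precisely the object handled by \cref{lem:commutator2} (with $s=2-\be$) in non-localized form and by \cref{lem:commutator3}, \cref{lem:commutator4} after Littlewood--Paley localization, and I would estimate $\Sob{m_1(D)^{-1/2}\Div F_{Q^n}(\tht^n)}{H^\be_\om}$ verbatim as the terms $I^\s$ and the double commutators $J^\s$ were treated in \cref{sect:Hs:apriori}, now with $Q^n$ in the $H^\be_\om$ slot (smallness from $Z_\be^n$) and $\tht^n$ in $H^{1+\be}_\om$ (boundedness from $Q_\be^*$); the closing condition is again \eqref{cond:summary}. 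Integrating in $t$ as before and inserting into \eqref{est:Hs:gronwall} with $\s=\be$ produces \eqref{eq:stab:estimate:c} for $\be\in(1,2]$.

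It remains to upgrade \eqref{eq:stab:estimate:b1} to \eqref{eq:stab:estimate:b2} for $\be\in(0,1)$, where the argument above controls $\Tht^n$ only in the weaker topology $L^2_\om$. Writing $X_s:=\Sob{\Tht^n}{L^\infty_TH^s_\om}^2+\Sob{m(D)^{1/2}\Tht^n}{L^2_TH^s_\om}^2$, the previous step gives $X_0\le CZ_\be^n$ while \cref{thm:modclaw:wellposed} gives $X_{1+\be}\le C(Q_\be^*)^2$ (both $\tht^n,\tht^\infty$ lying in $H^{1+\be}_\om$). Applying the weighted interpolation \eqref{est:interpolation:weighted} pointwise in $t$, together with Hölder in $t$ with exponents $\big(1+\be,\tfrac{1+\be}{\be}\big)$ for the dissipative term, gives $X_\be\lesssim X_0^{1/(1+\be)}X_{1+\be}^{\be/(1+\be)}\le C(Q_\be^*)(Z_\be^n)^{1/(1+\be)}$, which is \eqref{eq:stab:estimate:b2}. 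The only genuine difficulty is the commutator collapse in the third paragraph; every other step is a direct application of the a priori estimates, the product/commutator lemmas, and interpolation.
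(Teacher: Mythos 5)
Your proposal is correct and follows essentially the same route as the paper: recast the difference equation as an instance of the protean system, feed it into the Gronwall estimates \eqref{est:Hminus1:gronwall}/\eqref{est:Hs:gronwall}, bound the flux of the difference of coefficients via the product estimates (for $\be\le1$) and the commutator collapse $\Div F_{q}(\tht)=-\sum_\ell[\bdy_\ell^\perp a(D),\bdy_\ell\tht]q$ (for $\be\in(1,2]$), exactly as in the paper's \cref{lem:stab:G}, and then upgrade to $H^\be_\om$ by interpolation when $\be\in(0,1)$. The only cosmetic difference is the mirror-image arrangement of the bilinear splitting --- you take advecting coefficient $q^\infty$ and forcing $\Div F_{Q^n}(\tht^n)$, whereas the paper takes coefficient $q^n$ and forcing $\Div F_{z^n}(\tht^\infty)$ --- which is immaterial since $Z^n_\be$ and $Q^*_\be$ control both factors in either arrangement.
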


Note that with these stability-type estimates in hand, we may argue by density to establish a continuity property for the system in its datum with respect to the \textit{weaker} topology of $H^\be_\om$, which is a crucial difference between \cref{thm:stab} and \cref{prop:stab}. Indeed, let us first prove \cref{thm:stab} assuming \cref{prop:stab}. We will then provide the proof of \cref{prop:stab} after.

\begin{proof}[Proof of \cref{thm:stab} (assuming \cref{prop:stab})] 
Suppose that $\tht_0^n\in H^{\be}_\om$ and $m_1(D)^{-\frac{1}{2}}G^n\in L^2(0,T;H^{\be}_\om)$, for all $n \in \mathbb{N}\cup\{\infty\}$. For each $k\in \mathbb{N}$, denote by $\tht^{n}_{k}$ the unique solution to
    \begin{align} \label{eq:modclaw:k}
        \partial_{t}\theta^{n}_{k}+ m(D)\tht^n_k+\Div F_{q^{n}}(\tht^{n}_{k})=S_{k}G^n ,\quad\tht^{n}_{k}(0,x)=S_{k}\tht_0^n(x),
    \end{align}
where $S_{k}$ denotes the Littlewood-Paley projection onto frequencies $|\xi|\leq 2^{k}$ defined in \cref{sect:lp:decomposition}. Let
    \[
        \Tht^n_k=\tht^n-\tht^n_k,\quad \text{for all}\ n\in\NN\cup\{\infty\}.
    \]
From \eqref{eq:modclaw:n} and \eqref{eq:modclaw:k}, it follows that
    \[
    \partial_{t}\Tht^n_k+m(D)\Theta^n_k+ \Div F_{q^{n}}(\Tht^n_k)=(I-S_{k})G^n,\quad \text{for all}\ n\in\NN\cup\{\infty\}.
    \]
Then by \cref{thm:modclaw:wellposed}, we obtain
    \begin{align}\notag
        \sup_{n\in\NN\cup\{\infty\}}&\left(\Sob{\Tht^n_k}{L^{\infty}_{T}(L^2_\om\cap\Hdot^{-1}_\om)}+\Sob{m(D)^{\frac{1}{2}}\Tht^n_k}{L^{2}_{T}(L^2_\om\cap\Hdot^{-1}_\om)}\right)\notag\\
        &\le C_{T}\left(\Sob{(I-S_{k})\tht_{0}^n}{L^2_\om\cap\Hdot^{-1}_\om}+\Sob{m_1(D)^{-\frac{1}{2}}(I-S_{k})G^n}{L^2_{T}(L^2_\om\cap\Hdot^{-1}_\om)}\right).\notag
    \end{align}
when $\be=0$, and
    \begin{align}\notag
        \sup_{n\in\NN\cup\{\infty\}}\left(\Sob{\Tht^n_k}{L^{\infty}_{T}H_\om^{\beta}}+\Sob{m(D)^{\frac{1}{2}}\Tht^n_k}{L^{2}_{T}H_\om^{\beta}}\right) \le C\left(\Sob{(I-S_{k})\tht_{0}^n}{H_\om^{\beta}}+\Sob{m_1(D)^{-\frac{1}{2}}(I-S_{k})G^n}{L^2_{T}H_\om^{\be}}\right),
    \end{align}
when $\be\in(0,2]$. Let $\delta>0$. Because of \eqref{cond:Q0}, we may choose $k_0>0$, independently of $n$, such that
    \begin{align}\label{est:modclaw:intermediate:high:a}
        \sup_{n\in\NN\cup\{\infty\}}\left(\Sob{\Tht^n_{k_0}}{L^{\infty}_{T}(L^2_\om\cap\Hdot^{-1}_\om)}+\Sob{m(D)^{\frac{1}{2}}\Tht^n_{k_0}}{L^{2}_{T}(L^2_\om\cap\Hdot^{-1}_\om)}\right)\le \delta/3.
    \end{align}
when $\be=0$, and
    \begin{align}\label{est:modclaw:intermediate:high:b}
        \sup_{n\in\NN\cup\{\infty\}}\left(\Sob{\Tht^n_{k_0}}{L^{\infty}_{T}H_\om^{\beta}}+\Sob{m(D)^{\frac{1}{2}}\Tht^n_{k_0}}{L^{2}_{T}H_\om^{\beta}}\right)\le \delta/3.
    \end{align}
when $\be\in(0,2]$.

Now observe that $S_{k_0}\tht_{0}^n\in H_\om^{1+\be}$ and $ m_1(D)^{-\frac{1}{2}}S_{k_0}G^n \in L^{2}(0,T;H_\om^{1+\be})$. We may thus apply \cref{thm:modclaw:wellposed} to obtain a sequence $\{\tht_{k_0}^n\}$ of solutions to \eqref{eq:modclaw:n} corresponding to data $S_{k_0}\tht_0^n$ and $S_{k_0}G^n$. Since $Q_\be<\infty$, we may now invoke \cref{prop:stab} in conjunction with \eqref{cond:Z} to find an integer $N>0$ such that
    \begin{align}\label{eq:approx:converge:a}
        \sup_{n\geq N}\left(\Sob{\tht^n_{k_0}-\tht^\infty_{k_0}}{L^{\infty}_{T}(L^2_\om\cap\Hdot^{-1}_\om)}+\Sob{m(D)^{\frac{1}{2}}(\tht^n_{k_0}-\tht^\infty_{k_0})}{L^{2}_{T}(L^2_\om\cap\Hdot^{-1}_\om)}
        \right)\le \delta/3,
    \end{align}
when $\be=0$, and 
    \begin{align}\label{eq:approx:converge:b}
        \sup_{n\geq N}\left(\Sob{\tht^n_{k_0}-\tht^\infty_{k_0}}{L^{\infty}_{T}H_\om^{\beta}}+\Sob{m(D)^{\frac{1}{2}}(\tht^n_{k_0}-\tht^\infty_{k_0})}{L^{2}_{T}H_\om^{\beta}}
        \right)\le \delta/3,
    \end{align}
when $\be\in(0,2]$. Finally, we see that
    \[
        \Tht^n:=\tht^n-\tht^\infty=\Tht^n_{k_0}+(\tht^n_{k_0}-\tht_{k_0}^\infty)-\Tht^\infty_{k_0}.
    \]
Therefore, by the triangle inequality, combined with \eqref{est:modclaw:intermediate:high:a}, \eqref{eq:approx:converge:a}, we obtain
    \begin{align}\notag
         &\Sob{\Theta^{n}}{L^{\infty}_{T}(L^2_\om\cap\Hdot^{-1}_\om)\cap L^2_T(L^2_{\om m^{1/2}}\cap\Hdot^{-1}_{\om m^{1/2}})}
            \notag\\&\leq  \Sob{\Tht^n_{k_0}}{L^\infty_T(L^2_\om\cap\Hdot^{-1}_\om)\cap L^2_T(L^2_{\om m^{1/2}}\cap\Hdot^{-1}_{\om m^{1/2}})}+\Sob{\tht^n_{k_0}-\tht^\infty_{k_0}}{L^{\infty}_{T}(L^2_\om\cap\Hdot^{-1}_\om)\cap L^2_T(L^2_{\om m^{1/2}}\cap\Hdot^{-1}_{\om m^{1/2}})}\notag\\&+\Sob{\Tht^\infty_{k_0}}{L^{\infty}_{T}(L^2_\om\cap\Hdot^{-1}_\om)\cap L^2_T(L^2_{\om m^{1/2}}\cap\Hdot^{-1}_{\om m^{1/2}})}\leq\de,\notag
    \end{align}
for all $n\geq N$, when $\be=0$. Similarly, when $\be\in(0,2]$, we apply \eqref{est:modclaw:intermediate:high:b}, \eqref{eq:approx:converge:b} to deduce
    \begin{align}
         \sup_{n\geq N}\Sob{\Theta^{n}}{L^{\infty}_{T}H_\om^{\beta}\cap L^2_TH^\be_{\om m^{1/2}}}&\leq\de,\notag
    \end{align}
Since $\delta$ was arbitrary, this establishes the claims \eqref{eq:stab:a} and \eqref{eq:stab:b}. 
\end{proof}

Let us now bring our attention to proving \cref{prop:stab}. In order to do so, we require certain bounds for the divergence of the flux in \eqref{eq:mod:claw}. This is stated in the following lemma.

\begin{Lem}\label{lem:stab:G}
 Let $\beta \in [0,2]$ and $F_q(\tht)$ be defined as in \eqref{def:mod:flux}. Let $\gam \in (0,1)$. Then we have the following estimates depending on $\be$: \\When $\be=0$:
            \begin{align}
                \Sob{m_1(D)^{-\frac{\gam}2}\Div F_{q}(\tht)}{\Hdot^{-1}_\om}&\le C\nrm{q}_{\Hdot^{-1}_\om\cap L^2_{\om}}\Sob{m_1(D)^{\frac{\gam}{2}} \tht}{L^2_{\om}}+C\nrm{\tht}_{L^2_{\om}}\nrm{m_1(D)^{\frac{\gam}2}q}_{L^2_{\om}},\label{est:div:a1}\\
                \Sob{m_1(D)^{-\frac{\gam}2}\Div F_{q}(\tht)}{L^2_{\om}}&\le C\nrm{q}_{\Hdot^
   {-1}_\om\cap L^2_{\om}}\nrm{m_1(D)^{\frac{\gam}{2}}\tht}_{\Hdot^{1}_\om}+C\nrm{\tht}_{\Hdot^{1}_\om}\nrm{m_1(D)^{\frac{\gam}{2}}q}_{L^2_{\om}}. \label{est:div:a2}
            \end{align}
        When $\be\in(0,1)$:
            \begin{align}
                 \Sob{m_1(D)^{-\frac{\gam}2}\Div F_{q}(\tht)}{\Hdot^\be_\om}&\le C\nrm{q}_{\Hdot^{-1}_\om\cap \Hdot^\be_\om}\nrm{m_1(D)^{\frac{\gam}2}\tht}_{\Hdot^{1+\be}_\om}+C\nrm{\tht}_{\Hdot^{1+\be}_\om}\nrm{m_1(D)^{\frac{\gam}2}q}_{\Hdot^{\be}_\om},\label{est:div:b1}\\
               \Sob{m_1(D)^{-\frac{\gam}2}\Div F_q(\tht)}{L^2_{\om}}&\leq C\nrm{q}_{L^2_{\om}}\nrm{m_1(D)^{\frac{\gam}2}\tht}_{\Hdot^{1+\be}_\om}+C\nrm{\tht}_{\Hdot^{1+\be}_\om}\nrm{m_1(D)^{\frac{\gam}2}q}_{L^2_{\om}}\label{est:div:b2}.
            \end{align}
        When $\be\in[1,2]$:
            \begin{align}
            \Sob{m_1(D)^{-\frac{\gam}2}\Div F_{q}(\tht)}{H^\be_\om}&\le C\nrm{q}_{ H^\be_\om}\nrm{m_1(D)^{\frac{\gam}2}\tht}_{H^{1+\be}_\om}+C\nrm{\tht}_{H^{1+\be}_\om}\nrm{m_1(D)^{\frac{\gam}2}q}_{H^{\be}_\om}\label{est:div:c}.
            \end{align}
\end{Lem}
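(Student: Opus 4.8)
The plan is to obtain every bound as a specialization of the master product estimate in \cref{cor:prod} (equivalently \cref{thm:prod}), taken with $d=2$, after rewriting $\Div F_q(\tht)$ into a form that makes explicit both the divergence-free structure of $v=-\nabla^\perp a(D)q$ and the smoothing afforded by $a(D)={\Lam}^{\be-2}p(D)$. In each regime of $\be$ the idea is to choose the weight pairs $(\om_\ell,\til{\om}_\ell)$ and the envelopes $\Gam_\ell$ so that the factor $p(D)$ arising from $v$ (or from $a(D)$) is absorbed into one weight of each pair while the dissipative gains $m_1(D)^{\gam/2}$ are distributed across the others, exactly as in the apriori computations of \cref{sect:Hs:apriori} (compare the treatment of $I^{-1}$ leading to \eqref{est:I:Hminus1}). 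Under these assignments the conditions \eqref{cond:om:Gam:a}, \eqref{cond:om:Gam:b} demanded by \cref{cor:prod} collapse, in every case, to the standing hypothesis \eqref{cond:summary}.

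First I would treat $\be\in[0,1]$, where $F_q(\tht)=(\nabla^\perp a(D)q)\tht=-v\tht$. Since $\nabla\cdot v=0$, we have $\Div F_q(\tht)=\nabla\cdot(v\tht)$, so that removing one derivative reduces each target norm to a product estimate for $v\tht$ at one higher regularity: the target for $v\tht$ is $L^2_\om$ for \eqref{est:div:a1}, $\Hdot^1_\om$ for \eqref{est:div:a2}, and $\Hdot^{1+\be}_\om$ respectively $\Hdot^1_\om$ for \eqref{est:div:b1}, \eqref{est:div:b2}. I would then apply \cref{cor:prod} to $v\tht$ at the required regularity, splitting the output into the paraproduct pieces $\pi$ (one derivative on each factor) and the remainder $\rho$. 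Converting the $v$-norms back into $q$-norms through $v=-\nabla^\perp{\Lam}^{\be-2}p(D)q$---which trades an order-$(\be-1)$ operator together with a factor $p(D)$---produces the $\Hdot^{-1}_\om\cap L^2_\om$ (resp.\ $\Hdot^{-1}_\om\cap\Hdot^\be_\om$, $L^2_\om$) norms on $q$ and the weighted $m_1(D)^{\gam/2}$ factors appearing on the right-hand sides.

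For $\be\in(1,2]$ the flux carries the extra term $a(D)(({\nabla}^\perp\tht)q)$. The key simplification is that, because ${\Lam}^{\be-2}p(D)$ commutes with $\nabla\cdot$ and $\nabla\cdot\nabla^\perp=0$,
\[
  \nabla\cdot\big(a(D)(({\nabla}^\perp\tht)q)\big)=a(D)\big(({\nabla}^\perp\tht)\cdot\nabla q\big),
\]
so that the second contribution to $\Div F_q(\tht)$ is the product $({\nabla}^\perp\tht)\cdot\nabla q$ acted on by the order-$(\be-2)$ smoothing operator $a(D)$. I would estimate $({\nabla}^\perp\tht)\cdot\nabla q$ by \cref{cor:prod}, placing a full derivative on each of $\tht$ and $q$, and then exploit the ${\Lam}^{\be-2}$ gain (against the logarithmic $p(D)$ loss) to land in $H^\be_\om$ as in \eqref{est:div:c}; the first, transport-type, term is handled exactly as in the case $\be\le1$. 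The weight assignments here mirror those used for $I^\s_3$, $J^\s_{3,a}$, $J^\s_{5,a}$ in \cref{sect:Hs:apriori}, namely absorbing $p^{-1}$ or $p^{-1/2}$ into the appropriate $\om_\ell$ so that the singularity of $a(D)$ is controlled.

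The main obstacle will be precisely this $\be\in(1,2]$ term $a(D)(({\nabla}^\perp\tht)\cdot\nabla q)$: both factors carry a full derivative, so the product sits at borderline regularity and the bound only closes because ${\Lam}^{\be-2}$ returns exactly $2-\be$ derivatives while the logarithmic deficit from $p(D)$ is compensated by the integral conditions encoded in \eqref{cond:summary}. The delicate bookkeeping is to verify, case by case, that the two conditions \eqref{cond:om:Gam:a}, \eqref{cond:om:Gam:b} reduce to \eqref{cond:summary} under the stated weight choices; once that reduction is confirmed, the desired estimates follow by collecting the outputs of \cref{cor:prod} and invoking the norm equivalences of \cref{lem:norm:equiv} and \cref{cor:norm:equiv}.
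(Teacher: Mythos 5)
Your treatment of the regime $\be\in[0,1]$ is essentially the paper's: there the flux is $v\tht$ with $v$ divergence-free, and each of \eqref{est:div:a1}--\eqref{est:div:b2} follows from \cref{cor:prod} applied to $v\tht$ or $v\cdot\nabla\tht$ with weight pairs that absorb $p$ into one slot and $m_1^{\gam/2}$ into the others, the admissibility conditions reducing to \eqref{cond:summary}. That part is fine.

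The case $\be\in(1,2]$ has a genuine gap. You propose to estimate the two flux contributions \emph{separately}, bounding $a(D)\big((\nabla^\perp\tht)\cdot\nabla q\big)$ by applying the product estimate to $(\nabla^\perp\tht)\cdot\nabla q$ and then cashing in the $2-\be$ derivatives returned by ${\Lam}^{\be-2}$. This cannot close. To land $m_1(D)^{-\gam/2}a(D)\big((\nabla^\perp\tht)\cdot\nabla q\big)$ in $\Hdot^{\be}_\om$ you must place the product $(\nabla^\perp\tht)\cdot\nabla q$ in $\Hdot^{2\be-2}$ (up to weights), i.e.\ you need $s+\bar s-d/2=2\be-2$ with $d=2$, hence $s+\bar s=2\be-1$. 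But \cref{thm:prod} requires $s,\bar s\le d/2=1$, so $s+\bar s\le 2<2\be-1$ for $\be$ close to $2$; and even ignoring that constraint, the available regularity $\nabla^\perp\tht\in H^{\be}_\om$, $\nabla q\in H^{\be-1}_\om$ supplies exactly $2\be-1$ derivatives with one factor forced above $d/2$, which is outside the scope of any bilinear Sobolev product estimate (for $\be=2$ you would be asking for $H^2\cdot H^1\subset H^2$ in two dimensions, which is false). The same obstruction hits the ``transport-type'' term $(\nabla^\perp a(D)q)\cdot\nabla\tht$ once the target is $\Hdot^\be_\om$ with $\be>1$, so it is \emph{not} handled ``exactly as in the case $\be\le1$'', where the target regularity was at most $1$.

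What saves the lemma in this regime is a cancellation \emph{between} the two flux terms that your splitting destroys. The paper dualizes against $\om^2 H$ (and its frequency-localized analogues) and observes that
\[
  \lb \nabla\cdot((\nabla^{\perp}a(D)q)\tht),\om^2 H\rb+\lb a(D)\nabla\cdot((\nabla^{\perp}\tht)q),\om^2 H\rb
  =-\lb[\bdy^{\perp}_{\ell}a(D),\bdy_{\ell}\tht]q,\om^2 H\rb,
\]
so the divergence of the full flux is a single commutator whose symbol, by the mean value theorem and the convexity estimate \eqref{est:elem:convex}, is $O(|\eta|^{1-s}p_a(|\xi-\eta|+|\xi|))$ rather than $O(|\xi|^{1-s})$ --- a gain of one full derivative on the high-frequency factor. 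The $L^2_\om$ bound \eqref{est:H:L2} then follows from \cref{lem:commutator2}, and the $\Hdot^\be_\om$ bound from the localized decomposition into $K_1$ (the leading commutator, via \cref{lem:commutator3}) plus $K_2,K_3$ (commutators of ${\Lam}^\be_{\om,j}$ with the coefficients, via \cref{lem:commutator4}). You need to incorporate this commutator structure; the ${\Lam}^{\be-2}$ smoothing alone is short by one derivative.
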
 

Let us assume \cref{lem:stab:G} and prove \cref{prop:stab}. We will then conclude the section by proving \cref{lem:stab:G}.

\begin{proof}[Proof of \cref{prop:stab} (assuming \cref{lem:stab:G})]
Suppose that $\tht_{0}^n \in H^{1+\be}_\om$ and $m_1(D)^{-\frac{1}{2}}G^n\in L^2(0,T;H^{1+\be}_\om)$. By \cref{thm:modclaw:wellposed}, for each $n\in\NN\cup\{\infty\}$, we may let $\tht^n$ denote the unique solution of \eqref{eq:modclaw:n} that belongs to $C([0,T];H^{1+\be}_\om)$. Moreover, by \eqref{est:Hminus1:gronwall} and \eqref{cond:Qbeta} when $\be=0$, it follows that
    \begin{align}\label{est:tht:Hminus1:bdd}
        \sup_{0<n\leq\infty}\Sob{\tht^n}{L^\infty_T(H^{1}_\om\cap\Hdot^{-1}_\om)},\quad \sup_{0<n\leq\infty}\Sob{m(D)^{\frac{1}{2}}\tht^n}{L^2_T(H^{1}_\om\cap\Hdot^{-1}_\om)}<\infty.
    \end{align}
Similarly, \eqref{est:Hs:gronwall} and \eqref{cond:Q0} imply 
    \begin{align}\label{est:tht:bdd}
        \sup_{0<n\leq\infty}\Sob{\tht^n}{L^\infty_T H^{1+\be}_\om},\quad \sup_{0<n\leq\infty}\Sob{m(D)^{\frac{1}{2}}\tht^n}{L^2_T H^{1+\be}_\om}<\infty,
    \end{align}
when $\be\in(0,2]$. For each $\NN\cup\{\infty\}$, let     
    \[
    \Tht^n:=\tht^{n}-\tht^{\infty},\quad z^n:=q^n-q^\infty,\quad W^n:=G^n-G^\infty.
    \]
Observe that $\Tht^\infty=z^\infty=H^\infty=0$. 

Now observe that for each $n\in\NN\cup\{\infty\}$, the pair $(\Tht^n,q^n)$ satisfies
    \begin{align}\label{eq:modclaw:intermediate}
        \partial_{t}\Tht^n+\Div F_{q^{n}}(\Tht^n)=-m(D)\Tht^n-\Div F_{z^n}(\tht^{\infty})+W^n,\quad \Tht^n(0,x)=\Tht^n_0(x).
    \end{align} 
Note that \eqref{eq:modclaw:intermediate} possesses the same structure as \eqref{eq:mod:claw} with $\tht\mapsto\Tht^n$, $q\mapsto q^n$, and $G\mapsto -\Div F_{z^n}(\tht^\infty)+W^n$. Thus, when $\be=0$, it follows from \eqref{est:Hminus1:gronwall} that
    \begin{align}\label{eq:convergence:a}
         &\Sob{\Tht^n}{L^\infty_T(L^2_{\om}\cap\Hdot^{-1}_{\om})}^2+\Sob{m(D)^{\frac{1}{2}}\Tht^n}{L^2_T(L^2_{\om}\cap\Hdot^{-1}_{\om})}^{2}\\
            &\leq C^{(n)}_T(0^-)\left(\Sob{\Tht^n_0}{L^2_{\om}\cap\Hdot^{-1}_\om}^2+\Sob{m_1(D)^{-\frac{1}{2}}\Div F_{z^n}(\tht^\infty)}{L^2_T(L^2_{\om}\cap\Hdot^{-1}_\om)}^2+\Sob{m_1(D)^{-\frac{1}{2}}W^n}{L^2_T(L^2_{\om}\cap\Hdot^{-1}_\om)}^2\right),\notag
    \end{align}
where $C_T^{(n)}(0^-)$ is defined in \eqref{def:Hminus1:constant} corresponding to $q\mapsto q^n$. On the other hand, for any $\be \in[0,2]$ and $\s\ge 0$ satisfying \eqref{sigma:range}, it follows from \eqref{est:Hs:gronwall} that
    \begin{align}\label{eq:convergence:b}
        \Sob{\Tht^n}{L^\infty_TH^{\s}_\om}^2+&\Sob{m(D)^{\frac{1}{2}}\Tht^n}{L^2_TH^{\s}_\om}^2\notag\\
            &\leq C_T^{(n)}(\be)\left(\Sob{\Tht_0^n}{H^\s_\om}^2+\Sob{m_1(D)^{-\frac{1}{2}}\Div F_{z^n}(\tht^\infty)}{L^2_TH^\s_\om}^2+\Sob{m_1(D)^{-\frac{1}{2}}W^n}{L^2_TH^\s_\om}^2\right),
    \end{align}
where $C_T^{(n)}(\be)$ is defined in \eqref{def:Hs:constant} corresponding to $q\mapsto q^n$. By the uniform bounds in \eqref{cond:Q0}, we have 
    \begin{align}\label{est:constant}
        \sup_{n\in\NN\cup\{\infty\}}C_T^{(n)}(\be)<\infty.
    \end{align}
We are thus left to estimate $m_1(D)^{-\frac{1}2}\Div F_{z^n}(\tht^\infty)$.

When $\be=0$, \eqref{est:div:a1}, \eqref{est:div:a2} in \cref{lem:stab:G}, and \eqref{est:tht:Hminus1:bdd} imply
    \begin{align}\label{est:div:convergence:a}
            \Sob{m_1(D)^{-\frac{\gam}2}\Div F_{z^n}(\tht^\infty)}{L^2_T (L^2_{\om}\cap \Hdot^{-1}_\om)}&\le C\nrm{z^n}_{L^\infty_T(\Hdot^
   {-1}_\om\cap L^2_{\om})}\nrm{m_1(D)^{\frac{\gam}{2}}\tht^\infty}_{L^2 _T H^{1}_\om}+C\nrm{\tht^\infty}_{L^\infty_T H^{1}_\om}\nrm{m_1(D)^{\frac{\gam}{2}}z^n}_{L^2_T L^2_{\om}}.
    \end{align}
When $\be\in(0,1)$, \eqref{est:div:b2} in \cref{lem:stab:G}, and \eqref{est:tht:bdd} imply
    \begin{align}\label{est:div:convergence:b}
            \Sob{m_1(D)^{-\frac{\gam}2}\Div F_{z^n}(\tht^\infty)}{L^2_T L^2_{\om}}&\leq C\nrm{z^n}_{L^\infty_T L^2_{\om}}\nrm{m_1(D)^{\frac{\gam}2}\tht^\infty}_{L^2_T \Hdot^{1+\be}_\om}+C\nrm{\tht^\infty}_{L^\infty_T H^{1+\be}_\om}\nrm{m_1(D)^{\frac{\gam}2}z^n}_{L^2_T L^2_{\om}}.
    \end{align}
Lastly, when $\be\in[1,2]$, \eqref{est:div:c} in \cref{lem:stab:G}, and \eqref{est:tht:bdd} imply
    \begin{align}\label{est:div:convergence:c}
        \Sob{m_1(D)^{-\frac{\gam}2}\Div F_{z^n}(\tht^\infty)}{L^2_T H^\be_\om}&\le C\nrm{z^n}_{L^\infty_T H^\be_\om}\nrm{m_1(D)^{\frac{\gam}2}\tht^\infty}_{L^2 H^{1+\be}_\om}+C\nrm{\tht^\infty}_{L^\infty_T H^{1+\be}_\om}\nrm{m_1(D)^{\frac{\gam}2}z^n}_{L^2_T H^{\be}_\om}.
    \end{align}

Now we apply \eqref{est:div:convergence:a} in \eqref{eq:convergence:a} when $\be=0$ to obtain
    \begin{align}\label{est:Tht:converge:a}
      &\Sob{\Tht^n}{L^\infty_T(L^2_{\om}\cap\Hdot^{-1}_\om)}^2+\Sob{m(D)^{\frac{1}{2}}\Tht^n}{L^2_T(L^2_{\om}\cap\Hdot^{-1}_\om)}^2\notag \\
      &\leq C\left(\Sob{\Tht^n_0}{L^2_{\om}\cap\Hdot^{-1}_\om}^2+\nrm{z^n}_{L^\infty_T(\Hdot^
   {-1}_\om\cap L^2_{\om})}^2\nrm{m_1(D)^{\frac{\gam}{2}}\tht^\infty}_{L^2 _T H^{1}_\om}^2\right. \notag \\&\left.\hspace{3 em}+\nrm{\tht^\infty}_{L^\infty_T H^{1}_\om}^2\nrm{m_1(D)^{\frac{\gam}{2}}z^n}_{L^2_T L^2_{\om}}^2+\Sob{m_1(D)^{-\frac{1}{2}}W^n}{L^2_T(L^2_{\om}\cap\Hdot^{-1}_\om)}^2\right).
    \end{align}
Similarly, when $\be\in(0,1)$, 
we apply \eqref{est:div:convergence:b} in \eqref{eq:convergence:b} with $\s=0$ to deduce
    \begin{align}\label{est:Tht:converge:b2}
          &\Sob{\Tht^n}{L^\infty_T L^2_{\om}}^2+\Sob{m(D)^{\frac{1}{2}}\Tht^n}{L^2_T L^2_{\om}}^2\notag \\
      &\leq C\left(\Sob{\Tht^n_0}{L^2_{\om}}^2+\nrm{z^n}_{L^\infty_T L^2_{\om}}^2\nrm{m_1(D)^{\frac{\gam}2}\tht^\infty}_{L^2_T \Hdot^{1+\be}_\om}^2\right. \notag \\&\left.\hspace{3 em}+\nrm{\tht^\infty}_{L^\infty_T H^{1+\be}_\om}^2\nrm{m_1(D)^{\frac{\gam}2}z^n}_{L^2_T L^2_{\om}}^2+\Sob{m_1(D)^{-\frac{1}{2}}W^n}{L^2_T L^2_{\om} }^2\right).
    \end{align}
Moreover, we may upgrade this bound using interpolation \eqref{est:interpolation} and by applying the uniform bounds \eqref{est:tht:bdd} to obtain
     \begin{align}\label{est:Tht:converge:b1}
        \Sob{\Tht^n}{L^\infty_TH^\be_\om\cap L^2_TH^\be_{\om m^{1/2}}}
            &\leq C\Sob{\Tht^n}{L^\infty_T H^{1+\be}_\om\cap L^2_T H^{1+\be}_{\om m^{1/2}}}^{\frac{\be}{1+\be}}\Sob{\Tht^n}{L^\infty_T L^2_{\om}\cap L^2_TL^2_{\om m^{1/2}}}^{\frac{1}{1+\be}}\notag\\
            &\leq C\left(\sup_{n\in\NN\cup\{\infty\}}\Sob{\tht^n}{L^\infty_T H^{1+\be}_\om\cap L^2_T H^{1+\be}_{\om m^{1/2}}}\right)^{\frac{\be}{1+\be}}\Sob{\Tht^n}{L^\infty_TL^2_{\om}\cap L^2_TL^2_{\om m^{1/2}}}^{\frac{1}{1+\be}}\notag\\
            &\leq C\left(\Sob{\Tht^n_0}{L^2_{\om}}^2+\nrm{z^n}_{L^\infty_T L^2_{\om}}^2\nrm{m_1(D)^{\frac{\gam}2}\tht^\infty}_{L^2_T \Hdot^{1+\be}_\om}^2\right. \notag \\&\left.\hspace{3 em}+\nrm{\tht^\infty}_{L^\infty_T H^{1+\be}_\om}^2\nrm{m_1(D)^{\frac{\gam}2}z^n}_{L^2_T L^2_{\om}}^2+\Sob{m_1(D)^{-\frac{1}{2}}W^n}{L^2_T L^2_{\om} }^2\right)^{\frac{1}{1+\be}}.
    \end{align}    
Lastly, when $\be\in[1,2]$, we apply \eqref{est:div:convergence:c} in \eqref{eq:convergence:b} with $\s=\be$, to obtain
    \begin{align}\label{est:Tht:converge:c2}
        &\Sob{\Tht^n}{L^\infty_TH^{\be}_\om}^2+\Sob{m(D)^{\frac{1}{2}}\Tht^n}{L^2_TH^{\be}_\om}^2\notag \\
      &\leq C\left(\Sob{\Tht^n_0}{H^\be_\om}^2+\nrm{z^n}_{L^\infty_T H^\be_\om}^2\nrm{m_1(D)^{\frac{\gam}2}\tht^\infty}_{L^2_T H^{1+\be}_\om}^2\right. \notag \\&\left.\hspace{3 em}+\nrm{\tht^\infty}_{L^\infty_T H^{1+\be}_\om}^2\nrm{m_1(D)^{\frac{\gam}2}z^n}_{L^2_T H^\be_\om}^2+\Sob{m_1(D)^{-\frac{1}{2}}W^n}{L^2_T H^\be_\om }^2\right).
    \end{align}
Gathering the estimates \eqref{est:Tht:converge:a}, \eqref{est:Tht:converge:b1}, \eqref{est:Tht:converge:c2} completes the proof.
\end{proof}

Finally, let us prove \cref{lem:stab:G}

\begin{proof}[Proof of \cref{lem:stab:G}]
We split the proof into the cases $\be=0$, $\be\in(0,1)$, $\be=1$, $\be\in(1,2]$. Throughout, recall that $v$ is given by \eqref{def:v}. We will repeatedly apply \cref{cor:prod} and \cref{lem:commutator4}, which are applicable due to \eqref{cond:summary}.

\subsubsection*{Case $\be =0$} Then $\Div F_q(\tht)=\nabla\cdotp (v\tht)=v\cdotp\nabla\tht$ and we have
    \begin{align}\label{eq:stab:setup}
    \Sob{m_1(D)^{-\frac{\gam}{2}}\Div F_q(\tht)}{\Hdot^{-1}_{\om }}\leq \Sob{v\tht}{L^2_{\om m_1^{-\gam/2}}},\quad \Sob{m_1(D)^{-\frac{\gam}{2}}\Div F_q(\tht)}{L^2_{\om}}\leq \Sob{v\cdotp\nabla\tht}{L^2_{\om m_1^{-\gam/2}}}.
    \end{align}
We apply \cref{cor:prod} to the first expression in \eqref{eq:stab:setup} with $(s,\bar{s})=(1,0)$,  $(\om_1,\til{\om}_1)=(p^{-1}\om,m_1^{\frac{\gam}{2}}\om)$, $(\om_2,\til{\om}_2)=(\om, p^{-1}m_1^{\frac{\gam}{2}}\om)$, $(\om_3, \til{\om}_3)=(p^{-1}\om,m_1^{\frac{\gam}{2}}\om)$, and $\Gamma=m_1^{\frac{\gam}{2}}$, we obtain
\begin{align}\label{est:stab:0:a}
   \Sob{m_1^{-\frac{\gam}{2}}{\Lam}^{-1}\nabla\cdotp(v  \tht)}{L^2_{\om}} &\le C\Sob{p(D)^{-1}v}{H^1_\om}\Sob{m_1(D)^{\frac{\gam}{2}} \tht}{L^2_{\om}}+C\Sob{ \tht}{L^2_{\om}}\Sob{m_1(D)^{\frac{\gam}{2}}p(D)^{-1}v}{\Hdot^1_\om}\notag\\
   &\le C\nrm{q}_{\Hdot^
   {-1}_\om\cap L^2_{\om}}\Sob{m_1(D)^{\frac{\gam}{2}} \tht}{L^2_{\om}}+C\nrm{\tht}_{L^2_{\om}}\nrm{m_1(D)^{\frac{\gam}2}q}_{L^2_{\om}}.
\end{align}
Similarly, we apply \cref{cor:prod} to the second expression in \eqref{eq:stab:setup} with $(s,\bar{s})=(1,0)$, $(\om_1,\til{\om}_1)=(p^{-1}\om, m_1^{\frac{\gam}{2}}\om,)$, $(\om_2,\til{\om}_2)=(\om,p^{-1}m_1^{\frac{\gam}{2}}\om)$, $(\om_3,\til{\om}_3)=(p^{-1}\om, m_1^{\frac{\gam}{2}}\om)$, and $\Gamma=m_1^{\frac{\gam}{2}}$, obtain
\begin{align}\label{est:stab:0:b}
   \Sob{m_1(D)^{-\frac{\gam}{2}}\Div F_{q}(\tht)}{L^2_{\om}} &\le C\Sob{p(D)^{-1}v}{H^1_\om}\Sob{m_1(D)^{\frac{\gam}{2}}\nabla \tht}{L^2_{\om}}+C\Sob{\nabla \tht}{L^2_{\om}}\Sob{m_1(D)^{\frac{\gam}{2}}p(D)^{-1}v}{\Hdot^1_\om}\notag\\
   &\le C\nrm{q}_{\Hdot^
   {-1}_\om\cap L^2_{\om}}\nrm{m_1(D)^{\frac{\gam}{2}}\tht}_{\Hdot^{1}_\om}+C\nrm{\tht}_{\Hdot^{1}_\om}\nrm{m_1(D)^{\frac{\gam}{2}}q}_{L^2_{\om}}.
\end{align}

\subsubsection*{Case $\be \in(0,1)$} 
We apply \cref{cor:prod} with $(s,\bar{s})=(1, \be)$,  $(\om_1,\til{\om}_1)=(p^{-1}\om, m_1^{\frac{\gam}{2}}\om,)$, $(\om_2,\til{\om}_2)=(\om,p^{-1}m_1^{\frac{\gam}{2}}\om)$, $(\om_3,\til{\om}_3)=(p^{-1}\om, m_1^{\frac{\gam}{2}}\om)$, and $\Gamma=m_1^{\frac{\gam}{2}}$, obtain
\begin{align}\label{est:stab:1:a}
   \Sob{m_1(D)^{-\frac{\gam}{2}}\Div F_{q}(\tht)}{\Hdot^{\be}_\om} &\le C\Sob{p(D)^{-1}v}{H^1_\om}\Sob{m_1(D)^{\frac{\gam}{2}}\nabla \tht}{\Hdot^\be_\om}+C\Sob{\nabla \tht}{\Hdot^{\be}_{\om}}\Sob{m_1(D)^{\frac{\gam}{2}}p(D)^{-1}v}{\Hdot^1_\om}\notag\\
   &\le C\nrm{q}_{\Hdot^{-1}_\om\cap \Hdot^\be_\om}\nrm{m_1(D)^{\frac{\gam}2}\tht}_{\Hdot^{1+\be}_\om}+C\nrm{\tht}_{\Hdot^{1+\be}_\om}\nrm{m_1(D)^{\frac{\gam}2}q}_{\Hdot^{\be}_\om}.
\end{align}
 On the other hand, applying  \cref{cor:prod} with $(s,\bar{s})=(1-\be, \be)$,  $(\om_1,\til{\om}_1)=(p^{-1}\om, m_1^{\frac{\gam}{2}}\om,)$, $(\om_2,\til{\om}_2)=(\om,p^{-1}m_1^{\frac{\gam}{2}}\om)$, $(\om_3,\til{\om}_3)=(p^{-1}\om, m_1^{\frac{\gam}{2}}\om)$, and $\Gamma=m_1^{\frac{\gam}{2}}$, we obtain
\begin{align}\label{est:stab:1:b}
   \Sob{m_1(D)^{-\frac{\gam}2}\Div F_{q}(\tht)}{L^2_{\om}} &\le C\Sob{p(D)^{-1}v}{\Hdot^{1-\be}_\om}\Sob{m_1(D)^{\frac{\gam}{2}}\nabla \tht}{\Hdot^\be_\om}+C\Sob{\nabla \tht}{\Hdot^\be_\om}\Sob{m_1(D)^{\frac{\gam}{2}}p(D)^{-1}v}{\Hdot^{1-\be}_\om}\notag\\
   &\le C\nrm{q}_{L^2_{\om}}\nrm{m_1(D)^{\frac{\gam}2}\tht}_{\Hdot^{1+\be}_\om}+C\nrm{\tht}_{\Hdot^{1+\be}_\om}\nrm{m_1(D)^{\frac{\gam}2}q}_{L^2_{\om}}.
\end{align}
\subsubsection*{Case $\be=1$} We apply \cref{cor:prod}  with $(s,\bar{s})=(0,1)$,  $(\om_1,\til{\om}_1)=(p^{-1}\om, m_1^{\frac{\gam}{2}}\om,)$, $(\om_2,\til{\om}_2)=(\om,p^{-1}m_1^{\frac{\gam}{2}}\om)$, $(\om_3,\til{\om}_3)=(p^{-1}\om, m_1^{\frac{\gam}{2}}\om)$, and $\Gamma=m_1^{\frac{\gam}{2}}$, obtain
\begin{align}\label{est:stab:1:endpoint:a}
   \Sob{m_1(D)^{-\frac{\gam}2}\Div F_{q}(\tht)}{L^{2}_\om} &\le C\Sob{p(D)^{-1}v}{L^{2}_\om}\Sob{m_1(D)^{\frac{\gam}{2}}\nabla \tht}{\Hdot^1_\om}+C\Sob{\nabla \tht}{H^1_\om}\Sob{m_1(D)^{\frac{\gam}{2}}p(D)^{-1}v}{L^2_{\om}}\notag\\
   &\le C\nrm{q}_{L^{2}_\om}\nrm{m_1(D)^{\frac{\gam}2}\tht}_{\Hdot^{2}_\om}+C\nrm{\tht}_{H^{2}_\om}\nrm{m_1(D)^{\frac{\gam}2}q}_{L^2_{\om}}.
\end{align}
Similarly, we apply \cref{cor:prod} with $(s,\bar{s})=(1,1)$, $(\om_1, \til{\om}_1)=(p^{-1}\om, m_1^{\frac{\gam}{2}}\om)$, $(\om_2,\til{\om}_2)=(\om, p^{-1}m_1^{\frac{\gam}{2}}\om)$, $(\om_3, \til{\om}_3)=(p^{-1}\om, m_1^{\frac{\gam}{2}}\om)$, and $\Gamma=m_1^{\frac{\gam}{2}}$, we obtain
\begin{align}\label{est:stab:1:endpoint:b}
   \Sob{m_1(D)^{-\frac{\gam}2}\Div F_{q}(\tht)}{\Hdot^1_\om} &\le C\Sob{p(D)^{-1}v}{H^{1}_\om}\Sob{m_1(D)^{\frac{\gam}{2}}\nabla \tht}{\Hdot^1_\om}+C\Sob{\nabla \tht}{H^1_\om}\Sob{m_1(D)^{\frac{\gam}{2}}p(D)^{-1}v}{\Hdot^{1}_\om}\notag\\
   &\le C\nrm{q}_{H^1_\om}\nrm{m_1(D)^{\frac{\gam}2}\tht}_{\Hdot^{2}_\om}+C\nrm{\tht}_{H^{2}_\om}\nrm{m_1(D)^{\frac{\gam}2}q}_{\Hdot^{1}_\om}.
\end{align}

\subsubsection*{Case $\be \in (1,2]$} In this range, we dualize. Let $H=\Div F_{q}(\tht)$. Firstly, we see from \cref{lem:commutator2} applied with $s=2-\beta$ and $\eps\in(0,1]$ satisfying $\eps/2+s<1$, that
    \begin{align}\label{est:H:L2}
        \Sob{H}{L^2_{\om}}^{2}&=\lb \nabla \cdot((\nabla^{\perp}a(D)q)\tht),\om^2 H\rb+\lb a(D)\nabla \cdot((\nabla^{\perp}\tht)q),\om^2 H\rb\notag\\
            &=-\lb[\bdy^{\perp}_{\ell}a(D),\bdy_{\ell}\tht]q ,\om^2 H\rb\notag\\
            &\le C\Sob{\nabla\tht}{H^{\be-\de}}\left(\Sob{p_a(D)q}{\Hdot^{\eps/2}}\Sob{H}{L^2_{\om^2}}+\Sob{p_a(D)H}{\Hdot^{\eps/2}_{\om^2}}\Sob{q}{L^2}\right)\notag\\
            &\leq C\Sob{\tht}{H^{1+\be}_\om}\left(\Sob{q}{H^{\eps}}\Sob{H}{H^\eps}+\Sob{H}{{H}^\eps}\Sob{q}{L^2}\right)\notag\\
            &\le C\Sob{\tht}{H^{1+\be}_\om}\Sob{q}{H^{\eps}}\Sob{H}{H^{\eps}}.
    \end{align}
where we applied the embedding $H^{1+\be}_\om \subset H^{1+\be-\de}$ and \eqref{eq:omega:eps:bdd} to $p_a(D), \om(D)$, in obtaining the final two inequalities.

Now, to estimate in $\Hdot^\be_\om$, we localize. Let $j\in \mathbb{Z}$. Then
    \begin{align*}
        \Sob{{\Lam}^{\be}_{\om,j}m_1(D)^{-\frac{\gam}{2}}H}{L^2}^2
            =&\lb {\Lam}^{\be}_{\om,j}(\nabla^{\perp}a(D)q\cdot\nabla \tht),{\Lam}^{\be}_{\om,j}m_1(D)^{-\gam}{H}\rb-\lb {\Lam}^{\be}_{\om,j}a(D)(\nabla^{\perp}q\cdot\nabla \tht),{\Lam}^{\be}_{\om,j}m_1(D)^{-\gam}{H}\rb\\
            =& -\lb[\bdy^{\perp}_{\ell}a(D),\bdy_{\ell}\tht]{\Lam}^{\beta}_{\om,j}q ,{\Lam}^{\beta}_{\om,j}m_1(D)^{-\gam}H\rb+\lb[{\Lam}^{\beta}_{\om,j}, \bdy_{\ell}\tht]\bdy^{\perp}_{\ell}a(D)q ,{\Lam}^{\beta}_{\om,j}m_1(D)^{-\gam}H\rb\\
                &-\lb[{\Lam}^{\be}_{\om,j},\bdy_{\ell}\tht]\bdy^{\perp}_{\ell}q ,a(D){\Lam}^{\beta}_{\om,j} m_1(D)^{-\gam}H\rb\notag\\
            =&\ K_1+K_2+K_3.
    \end{align*}
Applying \cref{lem:commutator3} with $s=2-\be$, $\eps=0$, $\Gamma=m_1^{\gam}p^{-1}$, and Bernstein's inequality, we obtain
\begin{align*}
    |K_1|&\le Cm_1 (2^j)^{\gam}\Sob{\bdy_{\ell} \tht}{{H}^{\be}_\om}\Sob{{\Lam}^{\be}_{\om, j}q}{L^2}\Sob{{\Lam}^{\beta}_{\om,j}m_1(D)^{-\gam}H}{L^2}\notag\\
    &\le Cc_j\Sob{\tht}{H^{1+\be}_\om}\Sob{m_1(D)^{\frac{\gam}{2}}q}{\Hdot^{\be}_\om}\Sob{{\Lam}^{\beta}_{\om,j}m_1(D)^{-\frac{\gam}{2}}H}{L^2}.
\end{align*}
where we applied property \eqref{est:omega} for $m_1(D)$.

For $K_2$, we apply \cref{lem:commutator4} with $r=\be-1$, $(s, \bar{s})=(1,\be-1)$, $(\om_1,\til{\om}_1)=(p^{-1}\om,m_1^{\frac{\gam}{2}}\om)$, $(\om_2,\til{\om}_2)=(\om, p^{-1}m_1^{\frac{\gam}{2}}\om)$, $(\om_1,\til{\om}_1)=(p^{-1}\om,m_1^{\frac{\gam}{2}}\om)$, and $\Gam_1=\Gam_2=\Gam_3=m^{\gam/2}_1$, to obtain
\begin{align*}
    |K_2|\le Cc_{j}& m_1(2^j)^{\frac{\gam}{2}}\left\{\nrm{\bdy_{\ell}^{\perp}p(D)^{-1}a(D)q}_{H^{1}_\om}\nrm{m_1(D)^{\frac{\gam}{2}}\Lam \bdy_{\ell}\tht}_{\Hdot^{\be-1}_\om}\right. \notag \\ &\hspace{5 em}\left.+\nrm{\Lam \bdy_{\ell}\tht}_{H^{\be-1}_\om}\nrm{\bdy_{\ell}^{\perp}m_1(D)^{\frac{\gam}{2}}p(D)^{-1}a(D)q}_{\Hdot^{1}_\om}\right. \notag \\ &\hspace{5 em}\left. +\nrm{\bdy_{\ell}^{\perp}p(D)^{-1}a(D)q}_{\Hdot^{1}_\om}\nrm{m_1(D)^{\frac{\gam}{2}}\Lam \bdy_{\ell}\tht}_{\Hdot^{\be-1}_\om} \right\}\Sob{{\Lam}^{\beta}_{\om,j}m_1(D)^{-\gam}H}{L^2}\notag\\
    \le Cc_{j}&\left(\nrm{q}_{H^\be_\om}\nrm{m_1(D)^{\frac{\gam}2}\tht}_{\Hdot^{1+\be}_\om}+C\nrm{\tht}_{H^{1+\be}_\om}\nrm{m_1(D)^{\frac{\gam}2}q}_{\Hdot^{\be}_\om}\right)\nrm{{\Lam}^{\be}_{\om,j}m_1(D)^{-\frac{\gam}{2}}H}_{L^2}.
\end{align*}
Lastly, we apply \cref{lem:commutator4} with $r=\be-1$, $(s,\bar{s})=(\be-1,\be-1)$, $(\om_1,\til{\om}_1)= (\om_2,\tilde{\om}_2)=(\om_3,\tilde{\om}_3)=(\om, m_1^{\frac{\gam}{2}}\om)$, and $\Gam_1=\Gam_2=\Gam_3=m^{\gam/2}_1p^{-\frac{1}{2}}$, to obtain
\begin{align*}
    |K_3|\le Cc_{j}&m_1(2^j)^{\frac{\gam}{2}}p(2^j)^{-1}2^{(2-\be)j}\left\{\nrm{\bdy_{\ell}^{\perp}q}_{H^{\be-1}_\om}\nrm{\bdy_{l}m_1(D)^{\frac{\gam}{2}}\Lam \tht}_{\Hdot^{\be-1}_\om}\right. \notag \\ 
    &\hspace{11 em}\left.+\nrm{\bdy_{l}\Lam \tht}_{H^{\be-1}_\om}\nrm{m_1(D)^{\frac{\gam}{2}}\bdy_{\ell}^{\perp}q}_{\Hdot^{\be-1}}\right. \notag \\ 
    &\hspace{11 em}\left. +\nrm{\bdy_{\ell}^{\perp}q}_{\Hdot^{\be-1}_\om}\nrm{\bdy_{l}m_1(D)^{\frac{\gam}{2}}\Lam \tht}_{\Hdot^{\be-1}_\om} \right\}\nrm{a(D){\Lam}^{\be}_{\om,j}m_1(D)^{-\gam}H}_{L^2}\notag\\
    \le Cc_{j}&\left(\nrm{q}_{H^\be_\om}\nrm{m_1(D)^{\frac{\gam}2}\tht}_{\Hdot^{1+\be}_\om}+C\nrm{\tht}_{H^{1+\be}_\om}\nrm{m_1(D)^{\frac{\gam}2}q}_{\Hdot^{\be}_\om}\right)\nrm{{\Lam}^{\be}_{\om,j}m_1(D)^{-\frac{\gam}{2}}H}_{L^2}.
\end{align*}
Collecting the estimates of $K_1$, $K_2$ and $K_3$, summing over $j$ and applying the Cauchy-Schwarz inequality, we obtain 
\begin{align}\label{est:stab:2}
    \Sob{ m_1(D)^{-\frac{\gam}{2}}\Div F_{q}(\tht)}{\Hdot^{\be}_\om}&\le C\nrm{q}_{H^\be_\om}\nrm{m_1(D)^{\frac{\gam}2}\tht}_{\Hdot^{1+\be}_\om}+C\nrm{\tht}_{H^{1+\be}_\om}\nrm{m_1(D)^{\frac{\gam}2}q}_{\Hdot^{\be}_\om}. 
\end{align}
when $\be\in(1,2]$. Finally, we combine \eqref{est:stab:2} with \eqref{est:H:L2} to arrive at
    \begin{align}\label{est:stab:2:final}
        \Sob{m_1(D)^{-\frac{\gam}2}\Div F_{q}(\tht)}{H^\be_\om}&\le C\nrm{q}_{ H^\be_\om}\nrm{m_1(D)^{\frac{\gam}2}\tht}_{\Hdot^{1+\be}_\om}+C\nrm{\tht}_{H^{1+\be}_\om}\nrm{m_1(D)^{\frac{\gam}2}q}_{H^{\be}_\om}.
    \end{align}
The estimates \eqref{est:stab:0:a}-\eqref{est:stab:1:endpoint:b}, and \eqref{est:stab:2:final} together complete the proof.
\end{proof}

\section{Well-posedness: Proof of Theorem \ref{thm:main:dgsqg}}\label{sect:wellposed}

We will now establish local existence of a unique solution to \eqref{eq:dgsqg}, which possesses the property of instantaneous smoothing, and is continuous with respect to initial data. The argument hinges on the simple observation that since $\nabla^{\perp}\tht$ is divergence-free, one can express equation \eqref{eq:dgsqg} as 
    \begin{align}\label{eq:dgsqg:mod:form}
        \partial_{t}\theta+ m(D) \tht+ \Div F_{-\tht}(\tht)=0 ,\quad\tht(0,x)=\tht_0(x),
    \end{align}
where $F$ is as defined in \eqref{def:mod:flux}, for all $\be\in[0,2]$. In particular, see that \eqref{eq:dgsqg:mod:form} has the structure of \eqref{eq:mod:claw} with $q=-\tht$ and $G\equiv0$. One may then formally deduce apriori estimates for \eqref{eq:dgsqg} from the protean system \eqref{eq:mod:claw}. A rigorous proof of well-posedness then reduces to construction of the solution; this is a straightforward matter and can be dealt with by considering a standard artificial viscosity approximation, for which all apriori estimates hold independently of the viscosity parameter. The relevant details of this argument are provided in \cref{app:well:posed}. In what follows, we perform the formal analysis to establish local well-posedness.

\subsection{Existence}\label{sect:exist}

Upon setting $q=-\tht$ and $G\equiv0$ in \eqref{eq:mod:claw}, we see from \eqref{est:Hs:final} that for any $\be \in [0,2]$, for $\lam\equiv0$, we have
	\begin{align}\label{est:balance:exist}
		   \frac{d}{dt}\Sob{\tht}{\Hdot^{1+\be}_{\om}\cap L^2_{\om}}^2+\frac{3}2\Sob{m_1(D)^{\frac{1}2}\tht}{\Hdot^{1+\be}_{\om}\cap L^2_{\om}}^{2}\leq C \left(1+\Sob{\tht}{H^{1+\be}_\om}^{\frac{1}{1-\gam}}+\Sob{m_1(D)^{\frac{\gam}2}\tht}{\Hdot^{1+\be}_\om}^{\frac{2}{2-\gam}}\right)\Sob{\tht}{\Hdot^{1+\be}_{\om}\cap L^2_{\om}}^2.
	\end{align}
Observe that by \eqref{est:interpolation:weighted} and Young's inequality (using the fact that $\gam<1$), we obtain
    \begin{align}
        C\left(\Sob{\tht}{H^{1+\be}_\om}^{\frac{1}{1-\gam}}+\Sob{m_1(D)^{\frac{\gam}2}\tht}{\Hdot^{1+\be}_\om}^{\frac{2}{2-\gam}}\right)\Sob{\tht}{H^{1+\be}_\om}^2&\leq C\left(\Sob{\tht}{H^{1+\be}_\om}^{\frac{3-2\gam}{1-\gam}}+\Sob{m_1(D)^{\frac{1}2}\tht}{\Hdot^{1+\be}_\om}^{\frac{2\gam}{2-\gam}}\Sob{\tht}{H^{1+\be}_\om}^{\frac{2(3-2\gam)}{2-\gam}}\right)\notag\\
        &\leq C\Sob{\tht}{H^{1+\be}_\om}^{\frac{3-2\gam}{1-\gam}}+\frac{1}2\Sob{m_1(D)^{\frac{1}2}\tht}{\Hdot^{1+\be}_\om}^2.\notag
    \end{align}
By Gronwall's inequality, we deduce the existence of a time $T=T(\Sob{\tht_0}{{H}^{1+\be}_\om})$ such that
 \begin{align}\label{est:apriori:dgsqg:b}
        \sup_{t\in[0,T]}\Sob{\tht(t)}{H^{1+\be}_\om}^2+\int_0^T\Sob{m(D)^{\frac{1}{2}}\tht(t)}{H^{1+\be}_\om}^2dt\le C(1+\Sob{\tht_{0}}{H^{1+\be}_\om}^2).
    \end{align}
Similarly, when $\be=0$, and $\tht_0 \in H^1_\om \cap \Hdot^{-1}_\om$, we invoke \eqref{est:Hminus1:final} with $q=\tht$ and \eqref{est:Hs:final} with $q=\tht$ and $\s=1$ to deduce the existence of a time $T=T(\Sob{\tht_0}{{H}^{1}_\om\cap \Hdot^{-1}_\om})$ such that 
    \begin{align}\label{est:apriori:dgsqg:a}
        \sup_{t\in[0,T]}\Sob{\tht(t)}{H^1_\om\cap\Hdot^{-1}_{\om}}^2+\int_0^T\Sob{m(D)^{\frac{1}{2}}\tht(t)}{H^1_\om\cap\Hdot^{-1}_{\om}}^2dt\le C(1+\Sob{\tht_{0}}{H^1_\om\cap\Hdot^{-1}_{\om}}^2),
    \end{align}
for some constant $C>0$
   
An artificial viscosity approximation can then be used to construct a solution $\tht\in C([0,T];H^1_\om\cap\Hdot^{-1}_\om)$, provided that $\tht_0\in H^1_\om\cap\Hdot^{-1}_\om$, when $\be=0$, and $\tht\in C([0,T];H^{1+\be}_\om)$, provided that $\tht_0\in H^{1+\be}_\om$, when $\be\in(0,2]$.

\subsection{Smoothing} 

Upon setting $q=-\tht$ and $G\equiv0$ in \eqref{eq:mod:claw}, we see from 
\eqref{est:Hs:final} and the corresponding inequality but for the case $\be \in (1,2]$ and performing the same analysis from \cref{sect:exist} that
\begin{align*}
          \frac{d}{dt}\Sob{\tilde{\tht}}{\Hdot^{1+\be}_{\om}}^2+\frac{3}2\Sob{m_1(D)^{\frac{1}2}\til{\tht}}{\Hdot^{1+\be}_{\om}}^{2}&\leq C \left(1+\Sob{\til{\tht}}{H^{1+\be}_\om}^{\frac{1}{1-\gam}}+\Sob{m_1(D)^{\frac{\gam}2}\til{\tht}}{\Hdot^{1+\be}_\om}^{\frac{2}{2-\gam}}\right)\Sob{\tilde{\tht}}{\Hdot^{1+\be}_{\om}\cap L^2_{\om}}^2\notag\\
        &\leq C\left(1+\Sob{\til{\tht}}{H^{1+\be}_\om}^2\right)^{\frac{3-2\gam}{2(1-\gam)}}+\frac{1}2\Sob{m_1(D)^{\frac{1}2}\til{\tht}}{\Hdot^{1+\be}_\om}^2.\notag
\end{align*}
Again, it follows from Gronwall's inequality that there exists $T>0$ such that
    \begin{align}\label{est:smoothing}
        \sup_{0\le t\le T}\Sob{E^{\lam_1t}_{ \nu}\tht(t)}{\Hdot^{1+\be}_\om}\le C(1+\Sob{\tht_0}{H^{1+\be}_\om}),
    \end{align}
as desired.

\subsection{Uniqueness} Let $\be \in [0,2]$. For $j=1,2$, let $\tht_0^{(j)}\in H^1_\om\cap \Hdot^{-1}_\om$ when $\be=0$ and $\tht_0^{(j)}\in H^{1+\be}_\om$ when $\be\in(0,2]$. Suppose that $\tht^{(1)},\tht^{(2)}$ are two solutions of \eqref{eq:dgsqg} corresponding to initial data $\tht_0^{(1)}, \tht_0^{(2)}$ such that $\tht^{(j)}\in C([0,T];\Hdot^{-1}_\om\cap H^1_\om)$, when $\be=0$, and $\tht^{(j)}\in C([0,T];H^{1+\be}_\om)$, when $\be\in(0,2]$, for $j=1,2$, where $T$ is the local existence time obtained from \cref{sect:exist}. To prove uniqueness, it will be convenient to introduce the following notation
    \begin{align}\label{def:Y}
        Y^\be:=
            \begin{cases}
                L^2_{\om}\cap\Hdot^{-1}_\om,&\be=0\\
                L^2_{\om},&\be\in(0,1)\\
                H^\be_\om,&\be\in[1,2].
            \end{cases}
    \end{align}

Let $\Tht:=\tht^{(1)}-\tht^{(2)}$. Then $\Tht$ is governed by
    \begin{align}\label{eq:difference:intro}
        \partial_{t}\Tht+m(D) \Tht+\Div F_{-\tht^{(1)}}(\Tht)=\Div F_{\Tht}(\tht^{(2)}),\quad\Tht(0,x)=\Tht_0(x).
    \end{align}
Observe that \eqref{eq:difference:intro} has the structure of \eqref{eq:mod:claw} with $q=-\tht^{(1)}$ and $G=\Div F_{\Tht}(\tht^{(2)})$. Then from \eqref{est:Hminus1:final}
    \begin{align}\label{est:uniqueness:a:pre}
            \frac{d}{dt}\Sob{\Tht}{\dot{H}^{-1}_{\om}\cap L^2_{\om}}^2+c\Sob{m(D)^{\frac{1}2}\Tht}{\dot{H}^{-1}_{\om}\cap L^2_{\om}}^{2}\leq& C\left(1+\Sob{\tht^{(1)}}{ \Hdot^{-1}_\om}^{\frac{1}{1-\gam}}+\Sob{\tht^{(1)}}{\Hdot^{1}_\om}^{\frac{1}{1-\gam}}+\Sob{m_1(D)^{\frac{\gam}2}\tht^{(1)}}{\Hdot^{1}_\om}^{\frac{2}{2-\gam}}\right)\Sob{\Tht}{\dot{H}^{-1}_{\om}\cap L^2_{\om}}^2\notag\\&+C\Sob{m_1(D)^{-\frac{1}{2}}\Div F_{\Tht}(\tht^{(2)})}{\Hdot^{-1}_\om \cap L^2_{\om}}^2.
    \end{align}
when $\be=0$. 
By interpolation inequality and Young's inequality, we obtain
\begin{align}\label{E:interpolation:uniqueness}
    \Sob{m_1(D)^{\frac{\gam}2}\tht^{(1)}}{\Hdot^{1}_\om}^{\frac{2}{2-\gam}}&\le C\Sob{\tht^{(1)}}{\Hdot^{1}_\om}^{\frac{2(1-\gam)}{2-\gam}}\Sob{m_1(D)^{\frac{1}{2}}\tht^{(1)}}{\Hdot^{1}_\om}^{\frac{2\gam}{2-\gam}}\notag\\
    &\le C\Sob{\tht^{(1)}}{\Hdot^{\s}_\om}+C\Sob{m_1(D)^{\frac{1}{2}}\tht^{(1)}}{\Hdot^{\s}_\om}^2.
\end{align}
From \eqref{est:uniqueness:a:pre} and \eqref{E:interpolation:uniqueness}, we obtain
 \begin{align}\label{est:uniqueness:a}
            \frac{d}{dt}\Sob{\Tht}{\dot{H}^{-1}_{\om}\cap L^2_{\om}}^2+c\Sob{m(D)^{\frac{1}2}\Tht}{\dot{H}^{-1}_{\om}\cap L^2_{\om}}^{2}\leq& C\left(1+\Sob{\tht^{(1)}}{ \Hdot^{-1}_\om}^{\frac{1}{1-\gam}}+\Sob{\tht^{(1)}}{\Hdot^{1}_\om}^{\frac{1}{1-\gam}}+\Sob{m_1(D)^{\frac{\gam}2}\tht^{(1)}}{\Hdot^{1}_\om}^{2}\right)\Sob{\Tht}{\dot{H}^{-1}_{\om}\cap L^2_{\om}}^2\notag\\&+C\Sob{m_1(D)^{-\frac{1}{2}}\Div F_{\Tht}(\tht^{(2)})}{\Hdot^{-1}_\om \cap L^2_{\om}}^2.
    \end{align}
Similarly, when $\be\in(0,1)$, $\s \in (-1,2]$ or when $\be\in[1,2]$, $\s \in [1,1+\be]$, we have from \eqref{est:Hs:final} that
    \begin{align}\label{est:uniqueness:b}
         \frac{d}{dt}\Sob{\Tht}{\dot{H}^{\s}_{\om}\cap L^2_{\om}}^2+c\Sob{m(D)^{\frac{1}2}\Tht}{\dot{H}^{\s}_{\om}\cap L^2_{\om}}^{2}\leq& C\left(1+\Sob{\tht^{(1)}}{H^{1+\be}_\om}^{\frac{1}{1-\gam}}+\Sob{m_1(D)^{\frac{\gam}2}\tht^{(1)}}{\Hdot^{1+\be}_\om}^{2}\right)\Sob{\Tht}{\dot{H}^{\s}_{\om}\cap L^2_{\om}}^2\notag\\&+C\Sob{m_1(D)^{-\frac{1}{2}}\Div F_{\Tht}(\tht^{(2)})}{\Hdot^{\s}_\om \cap L^2_{\om}}^2.
    \end{align}
    Applying \cref{lem:stab:G}, interpolation inequality, and Young's inequality, we see that when $\be=0$, we have
    \begin{align}\label{est:uniqueness:div:a}
        \Sob{m_1(D)^{-\frac{\gam}{2}}\Div F_{\Tht}(\tht^{(2)})}{\Hdot^{-1}_\om \cap L^2_{\om}}^2&\leq C\nrm{\Tht}_{\Hdot^
   {-1}_\om\cap L^2_{\om}}^2 \nrm{m_1(D)^{\frac{\gam}{2}}\tht^{(2)}}_{H^{1}_\om}^2+C\nrm{\tht^{(2)}}_{H^{1}_\om}^2\nrm{m_1(D)^{\frac{\gam}{2}}\Tht}_{L^2_{\om}}^2\notag\\
   &\le C\nrm{\Tht}_{\Hdot^
   {-1}_\om\cap L^2_{\om}}^2\nrm{m_1(D)^{\frac{\gam}{2}}\tht^{(2)}}_{H^{1}_\om}^2+C\nrm{\tht^{(2)}}_{H^{1}_\om}^2\nrm{\Tht}_{L^2_{\om}}^{2-2\gam}\nrm{m_1(D)^{\frac{1}{2}}\Tht}_{L^2_{\om}}^{2\gam}\notag\\
   &\le C\left(\nrm{m_1(D)^{\frac{\gam}{2}}\tht^{(2)}}_{H^{1}_\om}^2+\nrm{\tht^{(2)}}_{H^{1}_\om}^{\frac{2}{1-\gam}}\right)\nrm{\Tht}_{\dot{H}^{-1}_{\om}\cap L^2_{\om}}^2+\frac{c}{2}\nrm{m_1(D)^{\frac{1}{2}}\Tht}_{L^2_{\om}}^{2}.
    \end{align}
Similarly, when $\be\in(0,1)$, we have
    \begin{align}\label{est:uniqueness:div:b}
         \Sob{m_1(D)^{-\frac{\gam}{2}}\Div F_{\Tht}(\tht^{(2)})}{L^2_{\om}}&\leq C\left(\nrm{m_1(D)^{\frac{\gam}{2}}\tht^{(2)}}_{\Hdot^{1+\be}_\om}^2+\nrm{\tht^{(2)}}_{H^{1+\be}_\om}^{\frac{2}{1-\gam}}\right)\nrm{\Tht}_{L^2_{\om}}^2+\frac{c}{2}\nrm{m_1(D)^{\frac{1}{2}}\Tht}_{L^2_{\om}}^{2},
    \end{align}
and when $\be\in[1,2]$, we have
    \begin{align}\label{est:uniqueness:div:d}
        \Sob{m_1(D)^{-\frac{\gam}{2}}\Div F_{\Tht}(\tht^{(2)})}{H^{\be}_\om}
            &\leq C\left(\nrm{m_1(D)^{\frac{\gam}{2}}\tht^{(2)}}_{\Hdot^{1+\be}_\om}^2+\nrm{\tht^{(2)}}_{H^{1+\be}_\om}^{\frac{2}{1-\gam}}\right)\nrm{\Tht}_{{H}^{\be}_{\om}}^2+\frac{c}{2}\nrm{m_1(D)^{\frac{1}{2}}\Tht}_{{H}^{\be}_{\om}}^{2}. 
    \end{align}
Combining \eqref{est:uniqueness:a} and \eqref{est:uniqueness:div:a} yields
    \begin{align}
        \frac{d}{dt}&\Sob{\Tht}{\dot{H}^{-1}_{\om}\cap L^2_{\om}}^2+\frac{c}{2}\Sob{m(D)^{\frac{1}2}\Tht}{\dot{H}^{-1}_{\om}\cap L^2_{\om}}^{2}\notag\\
        \leq& C\left(1+\Sob{\tht^{(1)}}{\Hdot^{1}_\om \cap \Hdot^{-1}_\om}^{\frac{1}{1-\gam}}+\nrm{\tht^{(2)}}_{H^{1}_\om}^{\frac{2}{1-\gam}}+\Sob{m_1(D)^{\frac{\gam}2}\tht^{(1)}}{\Hdot^{1}_\om}^{2}+\nrm{m_1(D)^{\frac{\gam}{2}}\tht^{(2)}}_{H^{1}_\om}^2\right)\Sob{\Tht}{\dot{H}^{-1}_{\om}\cap L^2_{\om}}^2,\label{est:Hminus1:uniqueness}
    \end{align}
when $\be=0$. Combining \eqref{est:uniqueness:b} with $\s=0$ and \eqref{est:uniqueness:div:b} yields
    \begin{align}\label{est:Hsig:uniqueness:1}
          \frac{d}{dt}&\Sob{\Tht}{ L^2_{\om}}^2+\frac{c}{2}\Sob{m(D)^{\frac{1}2}\Tht}{L^2_{\om}}^{2}\notag\\&\leq C\left(1+\Sob{\tht^{(1)}}{H^{1+\be}_\om}^{\frac{1}{1-\gam}}+\nrm{\tht^{(2)}}_{H^{1+\be}_\om}^{\frac{2}{1-\gam}}+\Sob{m_1(D)^{\frac{\gam}2}\tht^{(1)}}{\Hdot^{1+\be}_\om}^{2}+\nrm{m_1(D)^{\frac{\gam}{2}}\tht^{(2)}}_{\Hdot^{1+\be}_\om}^2\right)\Sob{\Tht}{L^2_{\om}}^2,
    \end{align}
when $\be\in(0,1)$. 
Lastly, combining \eqref{est:uniqueness:b} with $\s=\be\in(1,2]$ and \eqref{est:uniqueness:div:d} yields
    \begin{align}\label{est:Hsig:uniqueness:3}
          \frac{d}{dt}&\Sob{\Tht}{ {H}^{\be}_{\om}}^2+\frac{c}{2}\Sob{m(D)^{\frac{1}2}\Tht}{{H}^{\be}_{\om}}^{2}\notag\\&\leq C\left(1+\Sob{\tht^{(1)}}{H^{1+\be}_\om}^{\frac{1}{1-\gam}}+\nrm{\tht^{(2)}}_{H^{1+\be}_\om}^{\frac{2}{1-\gam}}+\Sob{m_1(D)^{\frac{\gam}2}\tht^{(1)}}{\Hdot^{1+\be}_\om}^{2}+\nrm{m_1(D)^{\frac{\gam}{2}}\tht^{(2)}}_{\Hdot^{1+\be}_\om}^2\right)\Sob{\Tht}{{H}^{\be}_{\om}}^2,
    \end{align}
Recall that $\tht^{(j)}\in L^\infty_T(H^{1}_\om\cap\Hdot^{-1}_\om)\cap L^2_T(H^1_{\om m^{1/2}}\cap\Hdot^{-1}_{\om m^{1/2}})$, when $\be=0$, and $\tht^{(j)}\in L^\infty_T H^{1+\be}_\om\cap L^2_T H^{1+\be}_{\om m^{1/2}}$, when $\be\in(0,2]$, for $j=1,2$. In each of \eqref{est:Hminus1:uniqueness}, \eqref{est:Hsig:uniqueness:1},  \eqref{est:Hsig:uniqueness:3}, we may therefore apply Gronwall inequality to deduce that there exists a $C>0$, depending on $T>0$, such that
    \begin{align}
        \sup_{0\leq t\leq T}\Sob{\Tht(t)}{Y^\be}&\leq C\Sob{\Tht_0}{Y^\be}\label{eq:uniqueness}
    \end{align}
for all $\be\in[0,2]$. In particular, if $\tht_0^{(1)}=\tht_0^{(2)}$, then $\tht^{(1)}=\tht^{(2)}$ in $Y^\be$ over $[0,T]$.

\subsection{Continuous Dependence on Initial Data}\label{sect:cont:dep}
Let $\be\in [0,2]$ and denote the data-to-solution operator of \eqref{eq:dgsqg} by $\Phi_0:H^{1}_\om\cap\Hdot^{-1}_\om\goesto\bigcup_{T>0}C([0,T];H^{1}_\om\cap\Hdot^{-1}_\om)$, when $\be=0$, and $\Phi_\be: H^{1+\be}_\om\goesto\bigcup_{T>0}C([0,T]; H^{1+\be}_\om)$, when $\be\in(0,2]$. Existence and uniqueness of solutions to \eqref{eq:dgsqg} establishes that $\Phi$ is well-defined. It will be convenient to use of the notation $\tht(\cdotp;\tht_0)=\Phi(\tht_0)(\cdotp)$. 

We will now show that $\Phi_\be$ is continuous, for all $\be\in[0,2]$. To prove this, it will be convenient to introduce the following notation
    \begin{align}\label{def:Xb}
        X^\be:=\begin{cases}
            H^1_\om\cap\Hdot^{-1}_\om,&\be=0\\
            H^{1+\be}_\om,&\be\in(0,2].
        \end{cases}
    \end{align}
Ultimately, we will show that given $\tht_0\in H^1_\om\cap\Hdot^{-1}_\om$, when $\be=0$, and $\tht_{0} \in H^{1+\be}_\om$, when $\be\in(0,2]$, there exists a neighborhood $U_0\subset H^{1+\be}_\om$ of $\tht_{0}$ and a time, $T>0$, such that for any sequence of initial data $\{\tht^{n}_{0}\}\subset U_0$, we have
    \[
        \lim_{n \to \infty} \Sob{\tht^{n}_{0}-\tht_{0}}{X^\be}=0\quad\text{implies}\quad\lim_{n \to \infty}\Sob{\tht^n-\tht}{X^\be}=0.
    \]
We will make use of the convention that $\tht^\infty=\tht=\Phi(\tht_0)$.

Let $T_0>0$ denote the local existence time of $\tht$. We define the neighborhoods $U_\be\subset X^\be$  by
    \begin{align}\label{def:U}
            U_\be&:=\{f\in X^\be: \Sob{f-\tht_{0}}{X^\be}<\Sob{\tht_{0}}{X^\be}\}.
    \end{align}
Let 
    \begin{align}\label{def:K}
        K_\be:=\sup_{f \in U_\be}\Sob{f}{X^\be}.
    \end{align}
Denote by $\tht^{n}(\cdotp;\tht^n_0)$ the solution to \eqref{eq:dgsqg:mod:form} corresponding to initial data $\tht^n_0$. Then by \eqref{est:apriori:dgsqg:a} and \eqref{est:apriori:dgsqg:b} there exists a constant $C>0$ such that
    \[
        \sup_{n>0}\left(\Sob{\tht^n}{L^{\infty}_{T}X^\be}+\Sob{m(D)^{\frac{1}{2}}\tht^n}{L^{2}_{T}X^\be}\right)\le C K_\be,
    \]
for some $0<T\leq T_0$ dependent on $\Sob{\tht_0}{X^\be}$.

Upon returning to \eqref{est:Hminus1:uniqueness}, \eqref{est:Hsig:uniqueness:1}, \eqref{est:Hsig:uniqueness:3}, and applying \eqref{eq:uniqueness}, respectively in $\be$, we obtain
    \begin{align}\label{eq:stability}
         \sup_{0\leq t\leq T}\Sob{(\tht^n-\tht)(t)}{Y^\be}^2+c\int_0^T\Sob{m(D)^{\frac{1}{2}}(\tht^n-\tht)(s)}{Y^\be}^2ds\leq C(T, K_\be)\Sob{\tht^n_0-\tht_0}{Y^\be}^2,
    \end{align}
for some $C(T,K_\be)>0$, for all $\be\in[0,2]$, where $Y^\be$ was defined in \eqref{def:Y}. This implies
    \begin{align}\label{eq:convergence}
       \lim_{n\goesto\infty}\left( \Sob{\tht^n-\tht}{L^\infty_TY^\be}^2+\Sob{m(D)^{\frac{1}{2}}(\tht^n-\tht)}{L^2_TY^\be}^2\right)=0,
    \end{align}
for all $\be\in[0,2]$.
    
To complete the proof, it suffices to show that $\nabla \tht^n \to \nabla \tht$ in $L^\infty_{T}\Hdot^{\be}_\om$. For this, let 
    \begin{align}\label{def:vsigma:zeta}
       \vs^n:=(\vs^n_1,\vs^n_2),\quad\vs:=(\vs_{1},\vs_2),\quad \ze^n:=(\ze^n_1,\ze^n_2),\quad \ze:=(\ze_1,\ze_2).
    \end{align}
Then we decompose $\nabla\tht$ into $\bdy_\ell\tht=\vs_\ell+\ze_\ell$ and $\nabla\tht^n$ into $\bdy_{\ell}\tht^{n}=\vs_{\ell}^{n}+\ze_{\ell}^{n}$, for $\ell=1,2$, where we assume the components of ($\vs^{n},\ze^{n}$) are governed by the equations
    \begin{align}\label{eq:vs}
            \partial_{t}\vs^{n}_{\ell}+m(D)\vs^{n}_{\ell}+ \Div F_{-\tht^{n}}(\vs^{n}_{\ell})=G_{\ell},\quad\vs^{n}_{\ell}(0,x)=\bdy_{\ell}\tht_0(x),
    \end{align}
and 
    \begin{align}\label{eq:ze}
            \partial_{t}\ze^{n}_{\ell}+m(D)\zeta^{n}_{\ell}+ \Div F_{-\tht^n}(\ze^{n}_{\ell})=G_{\ell}^{n}-G_{\ell},\quad \ze^{n}_{\ell}(0,x)=\bdy_{\ell}\tht^{n}_0(x)-\bdy_{\ell}\tht_{0}(x),
    \end{align}
for $n\in\mathbb{N}\cup\{\infty\}$ and each $\ell=1,2$, where
    \begin{align}\label{def:G}
        G:=(G_1,G_2),\quad G_{\ell}:=\Div F_{\bdy_{\ell}\tht}(\tht), \quad G^n:=(G^n_1,G^n_2),\quad G^{n}_{\ell}:=\Div F_{\bdy_{\ell}\tht^n}(\tht^{n}),\quad\ell=1,2.
    \end{align}
Note that to be consistent with $\tht^\infty=\tht$, we also make use of $\vs^\infty=\vs$ and $\ze^\infty=\ze$. Now observe that both \eqref{eq:vs} and \eqref{eq:ze} have the structure of \eqref{eq:mod:claw}. Indeed, we see that \eqref{eq:vs} has the structure upon making the replacement $q\mapsto -\tht^n$ and $G\mapsto G_\ell$ and \eqref{eq:ze} has the structure upon making the replacement $q\mapsto -\tht^n$ and $G\mapsto G^n_\ell-G_\ell$. Since $\tht_0\in X^\be$, $\nabla\tht_0\in Y^\be$, $\tht^n\in L^\infty_TX^\be$, and $m(D)^{\frac{1}2}\tht^n\in L^2_TX^\be$, for all $n\in\NN\cup\{\infty\}$, in order to apply \cref{thm:modclaw:wellposed}, it suffices to check that $m(D)^{-1/2}G_\ell\in L^2_TY^\be$ and $m(D)^{-1/2}(G^n_\ell-G_\ell)\in L^2_TY^\be$. This can be checked with \cref{lem:stab:G}.

First, we see that \cref{lem:stab:G} implies
    \begin{align}\notag
        \Sob{m(D)^{-\frac{\gam}{2}}G}{Y^\be}
            &\leq C\Sob{\nabla\tht}{Y^{\be}}\Sob{m_1 (D)^{\frac{\gam}{2}}\tht}{H^{1+\be}_\om}+\nrm{\tht}_{H^{1+\be}_\om}\nrm{m(D)^{\frac{\gam}{2}}\nabla\tht}_{H^{\be}_\om}\le C\Sob{\tht}{H^{1+\be}_\om}\Sob{m_1 (D)^{\frac{\gam}{2}}\tht}{H^{1+\be}_\om}.
    \end{align}
for all $\be\in[0,2]$. On the other hand, to study $m(D)^{-1/2}(G^n_\ell-G_\ell)$, observe that
    \begin{align}\label{eq:G:difference}
        G_{\ell}^{n}-G_{\ell}=\Div F_{\bdy_{\ell}\tht^{n}}(\tht^n-\tht)+\Div F_{(\bdy_{\ell}\tht^{n}-\bdy_{\ell}\tht)}(\tht).
    \end{align}
Hence, \cref{lem:stab:G} implies
    \begin{align}\notag
        \Sob{m(D)^{-\frac{\gam}{2}}(G^n-G)}{Y^\be}
            &\leq \Sob{m(D)^{-\frac{\gam}{2}}\Div F_{\nabla\tht^n}(\tht^n-\tht)}{Y^\be}+\Sob{m(D)^{-\frac{\gam}{2}}\Div F_{\nabla(\tht^n-\tht)}(\tht)}{Y^\be}.
    \end{align}
By \cref{lem:stab:G} we see that
    \begin{align}\notag
        \Sob{m(D)^{-\frac{\gam}{2}}\Div F_{\nabla\tht^n}(\tht^n-\tht)}{Y^\be}&\leq C\Sob{\nabla\tht^{n}}{Y^{\be}}\Sob{m_1 (D)^{\frac{\gam}{2}}(\tht^n-\tht)}{H^{1+\be}_\om}+\nrm{\tht^n-\tht}_{H^{1+\be}_\om}\nrm{m(D)^{\frac{\gam}{2}}\nabla\tht^n}_{H^{\be}_\om}\\&\leq C\Sob{\tht^{n}}{H^{1+\be}_\om}\Sob{m_1 (D)^{\frac{\gam}{2}}(\tht^n-\tht)}{H^{1+\be}_\om}+\nrm{\tht^n-\tht}_{H^{1+\be}_\om}\nrm{m(D)^{\frac{\gam}{2}}\tht^n}_{H^{1+\be}_\om}\notag.
    \end{align}
Similarly, we have
\begin{align}\notag
        \Sob{m(D)^{-\frac{\gam}{2}}\Div F_{\nabla(\tht^n-\tht)}(\tht)}{Y^\be}&\leq C\Sob{\tht}{H^{1+\be}_\om}\Sob{m_1 (D)^{\frac{\gam}{2}}(\tht^n-\tht)}{H^{1+\be}_\om}+\nrm{\tht^n-\tht}_{H^{1+\be}_\om}\nrm{m(D)^{\frac{\gam}{2}}\tht}_{H^{1+\be}_\om}\notag.
    \end{align}
Thus
    \begin{align}\notag
        \Sob{m(D)^{-\frac{\gam}{2}}(G^n-G)}{Y^\be}\leq C\left(\Sob{\tht^n-\tht}{H^{1+\be}_\om}+\nrm{m(D)^{\frac{\gam}{2}}(\tht^n-\tht)}_{H^{1+\be}_\om}\right)\left(\Sob{\tht}{H^{1+\be}_\om \cap H^{1+\be}_{\om m^{1/2}}}+\nrm{\tht^n}_{H^{1+\be}_\om\cap 
H^{1+\be}_{\om m^1/2}}\right)
    \end{align}
Hence, \cref{thm:modclaw:wellposed} applies to guarantee a unique solution $\vs^n\in C([0,T];Y^\be)$ to \eqref{eq:vs}, for all $\be\in[0,2]$.

By \eqref{eq:convergence}, we may invoke \cref{thm:stab} to ensure that
    \begin{align}\notag
        \lim_{n \to \infty} \left(\Sob{\vs^{n}-\vs}{L^{\infty}_{T}(H^1_\om\cap\Hdot^{-1}_\om)}+ \Sob{m(D)^{\gam/2}(\vs^{n}-\vs)}{L^{2}_{T}(H^1_\om\cap\Hdot^{-1}_\om)} \right)=0,
    \end{align}
when $\be=0$, and
    \begin{align}\notag
        \lim_{n \to \infty} \left(\Sob{\vs^{n}-\vs}{L^{\infty}_{T}H^\be_\om}+ \Sob{m(D)^{\gam/2}(\vs^{n}-\vs)}{L^{2}_{T}H^\be_\om} \right)=0,
    \end{align}
when $\be\in(0,2]$. Similarly, the same relations hold for $\ze^n-\ze$.

To conclude the proof, we apply the triangle inequality and observe that
    \[
       \limsup_{n\goesto\infty}\Sob{\nabla \tht^{n}-\nabla \tht}{L^{\infty}_{T}\Hdot^\be_\om}\leq \limsup_{n\goesto\infty}\Sob{\vs^n-\vs}{L^\infty_T \Hdot^\be_\om}+\limsup_{n\goesto\infty}\Sob{\ze^n-\ze}{L^\infty_T \Hdot^{\be}_\om}=0,
    \]
as desired.

\subsection*{Acknowledgments} The authors would like to thank Michael Jolly for his encouragement in the course of this work. V.R.M. also thanks the support of the National Science Foundation through NSF-DMS 2213363, NSF-DMS 2206491, and NSF-DMS 2511403.

\appendix

\section{Proof of Lemma \ref{lem:max:principle}}\label{app:proof:maxp}
\begin{proof}
Now suppose that $p\geq2$ is an even integer and let $\Phi(f)=f^p/p$. { Upon multiplying \eqref{eq:Euler:endpoint} by $\tht^{p-1}$}, we obtain
    \begin{align}
        \frac{1}p\frac{d}{dt}\Sob{\tht}{L^p}^p+\int_{\RR^2}\left(\Phi'(\tht(x)){L}\tht(x)-({L}\Phi(\tht))(x)\right)dx+\int_{\RR^2}({L}\Phi(\tht))(x)dx=0.\notag
    \end{align}
Observe that
    \begin{align}
        \int_{\RR^2}({L}\Phi(\tht))(x)dx=\Ft\left({L}\Phi(\tht)\right)(0)=\ln(1)\Ft\left(\Phi(\tht)\right)(\xi)=0.\notag
    \end{align}
Hence
    \[
      \frac{1}p\frac{d}{dt}\Sob{\tht}{L^p}^p\leq0,
    \]
which implies $\Sob{\tht(t)}{L^p} \leq \Sob{\tht(0)}{L^p}$.  It follows that
    \[
        \int_{B(R)}\tht(y)^pdy\leq\Sob{\tht_0}{L^p}^p,
    \]
for all $R>0$, where $B(R)$ denotes the ball of radius $R$ centered at the origin. In particular, by the Lebesgue Differentiation Theorem, for almost every $x\in\RR^d$, we may fix $R_0(x)>0$ sufficiently small satisfying $|B(R_0(x))|<1$ and
    \begin{align}
        |\tht(x)|&\leq \Sob{\tht_0}{L^\infty}+\frac{1}{|B(R_0(x))|}{\int_{B(R_0(x))}|\tht(y)|dy}\notag\\
        &\leq\Sob{\tht_0}{L^\infty}+ |B(R_0(x))|^{-1/p}\Sob{\tht_0}{L^p},\notag\\
        &\leq\Sob{\tht_0}{L^\infty}+|B(R_0(x))|^{-1/p}\Sob{\tht_0}{L^2}^{2/p}\Sob{\tht_0}{L^\infty}^{(p-2)/p}\notag
    \end{align}
holds, for any even integer $p$. Note that we applied H\"older's inequality to obtain the penultimate inequality. Upon choosing $R_0(x)$ appropriately so that $p_0(x)=-\ln|B(R_0(x))|$ is an even integer, we deduce
    \begin{align}\label{est:omega:Linfty}
        \Sob{\tht(t)}{L^\infty}\leq\Sob{\tht_0}{L^\infty}+ { e}\sup_{p\geq2}\left(\frac{\Sob{\tht_0}{L^2}}{\Sob{\tht_0}{L^\infty}}\right)^{2/p}\Sob{\tht_0}{L^\infty},
    \end{align}
as desired
\end{proof}

\section{Product Estimates}\label{app:prod}

We will now prove \cref{thm:prod}. Following Bony's decomposition, we can formally decompose the product of $f$ and $g$ as
\begin{align}\label{eq:Bony}
		   fg={T}_{f}g+{T}_{g}f+{R}(f,g),
		\end{align}
where we denote by
 		\begin{align}\label{def:para:factors}
 		    {T}_{f}g:=\sum_{k}S_{k-3}f\triangle_{k}g\quad 	and \quad {R}(f,g):=\sum_{k}\triangle_{k}f{\tlpk}g,\quad \tlpk g=\sum_{|i-k|\le3}\triangle_{i}g.
 		\end{align}
Note that the superscript $\tlpk$ denotes the \textit{extended} Littlewood-Paley block. In the following lemma, we obtain estimates for the term denoted by $T_{f}g$ that represents the interactions of low frequencies of $f$ and high frequencies of $g$. 

\begin{Lem}
\label{lem:prod:log:A}
 Given $d\geq2$, let ${s}, \bar{s} \in \RR$ such that ${s}\leq d/2$. Let $\om,\om_1,\tilde{\om}_1\in\mathscr{M}_W$. Let $\Gam$ be a function satisfying the following inequality for all $y>0$
    \begin{align}\label{cond:Gam:A1}
        \frac{\om(y)}{{\tilde{\om}_1(y)}}\left({\mathbbm{1}}_{(-\infty,d/2)}({s})\int_0^1\frac{r^{d-2{s}-1}}{\om_1^2(y r)}dr+{\mathbbm{1}}_{\{d/2\}}({s})\int_{0}^{y}\frac{r^{d-1}dr}{(1+r^2)^{d/2}\om_1^{2}(r)}\right)^{1/2}\le C\Gam(y).
    \end{align}
When ${s}<d/2$, there exists a constant $C>0$ and sequence $\{c_j\}\in\ell^2(\ZZ)$ with $\Sob{\{c_j\}}{\ell^2}\leq1$ such that
		\begin{align}\label{est:prod:log:A1}
		    &\Sob{\triangle_j(T_{f}g)}{L^2(\RR^d)}\le Cc_j2^{-({s}+\bar{s}-d/2) j}\om(2^j)^{-1}\Gam(2^{j})\Sob{f}{{\Hdot}^{{s}}_{\om_1} (\RR^d)}\Sob{g}{\Hdot^{\bar{s}}_{\tilde{\om}_1}(\RR^d)}.
		\end{align}
When ${s}=d/2$, there exists a constant $C>0$ and sequence $\{c_j\}\in\ell^2(\ZZ)$ with $\Sob{\{c_j\}}{\ell^2}\leq1$ such that
    	\begin{align}\label{est:prod:log:A2}
		    &\Sob{\triangle_j(T_{f}g)}{L^2(\RR^d)}\le Cc_j2^{-\bar{s}j}\om(2^j)^{-1}\Gam(2^{j})
		\Sob{f}{H^{d/2}_{\om_1}(\RR^d)}\Sob{g}{\Hdot^{\bar{s}}_{\tilde{\om}_1}(\RR^d)}.
		\end{align}
\end{Lem}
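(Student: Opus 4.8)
\emph{Proof proposal for \cref{lem:prod:log:A}.} The plan is to exploit the spectral localization of the low-high paraproduct, reduce the estimate to a pointwise $L^\infty$ bound on the low-frequency factor $S_{k-3}f$, and convert the resulting discrete dyadic sum into the integral appearing in the hypothesis \eqref{cond:Gam:A1}. First I would recall from \eqref{def:para:factors} that $T_{f}g=\sum_k S_{k-3}f\,\triangle_k g$, and note that $\widehat{S_{k-3}f}$ is supported in $\Bcal_{k-3}$ while $\widehat{\triangle_k g}$ is supported in $\Acal_k$, so each summand is spectrally supported in an annulus comparable to $\Acal_k$. Consequently $\triangle_j(S_{k-3}f\,\triangle_k g)=0$ unless $|k-j|\le N_0$ for a fixed $N_0$, whence $\triangle_j(T_{f}g)=\sum_{|k-j|\le N_0}\triangle_j(S_{k-3}f\,\triangle_k g)$. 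Since $\triangle_j$ is bounded on $L^2$, Hölder's inequality gives $\Sob{\triangle_j(T_{f}g)}{L^2}\le C\sum_{|k-j|\le N_0}\Sob{S_{k-3}f}{L^\infty}\Sob{\triangle_k g}{L^2}$, so the whole estimate rests on bounding $\Sob{S_{k-3}f}{L^\infty}$ by the prescribed weighted norm of $f$.

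For that bound I would write $S_{k-3}f=\sum_{i\le k-4}\triangle_i f$ and apply Bernstein's inequality (\cref{lem:Bernstein}) in the form $\Sob{\triangle_i f}{L^\infty}\le C2^{di/2}\Sob{\triangle_i f}{L^2}$, then insert the weights via \eqref{eq:bernstein:om}. In the case $s<d/2$ I set $d_i:=2^{si}\om_1(2^i)\Sob{\triangle_i f}{L^2}$, which by \eqref{eq:Sob:Bes} and \eqref{eq:bernstein:om} is an $\ell^2$ sequence with $\Sob{\{d_i\}}{\ell^2}\sim\Sob{f}{\Hdot^s_{\om_1}}$; this yields $\Sob{S_{k-3}f}{L^\infty}\le C\sum_{i\le k-4}2^{(d/2-s)i}\om_1(2^i)^{-1}d_i$. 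Applying Cauchy--Schwarz in $i$ and comparing the resulting dyadic sum $\sum_{i\le k-4}2^{(d-2s)i}\om_1(2^i)^{-2}$ to an integral (substituting $r=2^{i-k}$ and using that $\om_1$ is comparable on ratio-bounded arguments by \eqref{est:omega}) produces, since $d-2s>0$ makes the integrand integrable at the origin, the bound $C\,2^{(d/2-s)k}\big(\int_0^1 r^{d-2s-1}\om_1(2^k r)^{-2}dr\big)^{1/2}\Sob{f}{\Hdot^s_{\om_1}}$. In the endpoint case $s=d/2$ the homogeneous sum over $i\to-\infty$ diverges, so I instead weight by $(1+2^{2i})^{d/4}\om_1(2^i)$, which reproduces the \emph{inhomogeneous} norm $\Sob{f}{H^{d/2}_{\om_1}}$; the analogous Cauchy--Schwarz step then yields the sum $\sum_{i\le k-4}2^{di}(1+2^{2i})^{-d/2}\om_1(2^i)^{-2}$, comparable to $\int_0^{2^k}\frac{r^{d-1}}{(1+r^2)^{d/2}\om_1(r)^2}dr$. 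In either case the integral is precisely the one controlled by \eqref{cond:Gam:A1}, so invoking the hypothesis gives $\Sob{S_{k-3}f}{L^\infty}\le C\,2^{(d/2-s)k}\frac{\tilde{\om}_1(2^k)}{\om(2^k)}\Gam(2^k)\Sob{f}{\Hdot^s_{\om_1}}$ (resp. with $2^{(d/2-s)k}=1$ and the inhomogeneous norm when $s=d/2$).

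Finally I would reassemble. Writing $e_k:=2^{\bar{s} k}\tilde{\om}_1(2^k)\Sob{\triangle_k g}{L^2}$, so that $\{e_k\}\in\ell^2$ with $\Sob{\{e_k\}}{\ell^2}\sim\Sob{g}{\Hdot^{\bar{s}}_{\tilde{\om}_1}}$ and $\Sob{\triangle_k g}{L^2}=2^{-\bar{s} k}\tilde{\om}_1(2^k)^{-1}e_k$, the factor $\tilde{\om}_1(2^k)$ cancels and I obtain $\Sob{S_{k-3}f\,\triangle_k g}{L^2}\le C2^{-(s+\bar{s}-d/2)k}\om(2^k)^{-1}\Gam(2^k)e_k\Sob{f}{\Hdot^s_{\om_1}}$. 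Summing over the finitely many $k$ with $|k-j|\le N_0$, the geometric weight and $\om(2^k)^{-1}$ are transferred from $2^k$ to $2^j$ using \eqref{est:omega}, and the sequence $c_j:=\sum_{|k-j|\le N_0}e_k\,\Sob{g}{\Hdot^{\bar{s}}_{\tilde{\om}_1}}^{-1}$ is a finite convolution of a normalized $\ell^2$ sequence, hence (up to renormalization) satisfies $\Sob{\{c_j\}}{\ell^2}\le1$. This delivers \eqref{est:prod:log:A1} and, in the endpoint case, \eqref{est:prod:log:A2}.

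The main obstacle, and the one point deserving care, is the transfer of the factor $\Gam(2^k)$ to $\Gam(2^j)$ across the finite band $|k-j|\le N_0$, since $\Gam$ is an arbitrary majorant and need not vary slowly. The clean fix is to never use $\Gam$ until the very last step: the quantity actually produced by the argument is the genuine left-hand side $\Xi(y):=\frac{\om(y)}{\tilde{\om}_1(y)}\big(\int\cdots\big)^{1/2}$ of \eqref{cond:Gam:A1}, which is built entirely from slowly-varying multipliers (via \eqref{est:omega}) together with a slowly-varying integral, so that $\Xi(2^k)\sim\Xi(2^j)$ for $|k-j|\le N_0$; only then does one invoke $\Xi\le C\Gam$ at the single index $j$. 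The secondary subtlety is the $s=d/2$ endpoint, where the divergence of the homogeneous low-frequency sum forces the passage to the inhomogeneous norm $H^{d/2}_{\om_1}$ and the weight $(1+2^{2i})^{d/4}$, matching the distinct integral recorded in \eqref{cond:Gam:A1}.
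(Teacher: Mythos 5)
Your argument is correct and follows essentially the same route as the paper: paraproduct spectral localization, H\"older's inequality to isolate $\Sob{S_{k-3}f}{L^\infty}$, and a Cauchy--Schwarz against the weight $\om_1$ that produces exactly the integral appearing in \eqref{cond:Gam:A1}, with the endpoint $s=d/2$ handled by passing to the inhomogeneous norm. The only difference is that you obtain the low-frequency bound by summing Bernstein estimates over dyadic blocks and then comparing the resulting discrete sum to the integral via \eqref{est:omega}, whereas the paper bounds $\Sob{S_{k-3}f}{L^\infty}$ by $\Sob{\chi_{k-3}\hat f}{L^1}$ and applies Cauchy--Schwarz directly on the Fourier side, which yields the integral without the discrete-to-continuous comparison; your closing observation that one should carry the slowly-varying left-hand side of \eqref{cond:Gam:A1} (rather than the arbitrary majorant $\Gam$) across the band $|k-j|\le N_0$ is precisely the right way to make the transfer to scale $2^j$ rigorous.
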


\begin{proof}
First, we apply H\"older's inequality and obtain
		    \begin{align}\label{est:low:high:a}
		      \Sob{\triangle_j({T}_{f}g)}{L^2}\le \sum_{\abs{k-j}\le 2}\Sob{S_{k-3}f}{L^{\infty}}\Sob{\triangle_{k}g}{L^2}\le C\sum_{\abs{k-j}\le 2}\Sob{\chi_{k-3}\hat{f}}{L^{1}}\Sob{\triangle_{k}g}{L^2} .
		    \end{align}
Let
    	\begin{align}\label{def:comm:A:cj}
		    c_{j}:=\frac{\sum_{\abs{k-j}\le 2}2^{\bar{s} k}\Sob{\tilde{\om}_1(D)\triangle_{k}g}{L^2}}{\Sob{g}{\Hdot^{\bar{s}}_{\tilde{\om}_1}}}.
		\end{align}
Observe that $\{c_j\}\in\ell^2(\ZZ)$ and $\Sob{\{c_j\}}{\ell^2}\leq1$. Now, by \eqref{eq:bernstein:om} and \cref{lem:Bernstein}, we may estimate $\Sob{\lpk g}{L^2}$ to obtain
    \begin{align}\label{est:lpkg:a}
        \Sob{\lpj(T_fg)}{L^2}\leq  Cc_{j}\tilde{\om}_1(2^j)^{-1}2^{-\bar{s} j}\Sob{\chi_{j-3}\hat{f}}{L^1}\Sob{g}{\Hdot^{\bar{s}}_{\tilde{\om}_1}(\RR^d)}.
    \end{align}
We are left to estimate $\Sob{\chi_{k-3}\hat{f}}{L^{1}}$. We will treat the cases ${s}<d/2$ and ${s}=d/2$ separately.

\subsubsection*{Case: ${s}<d/2$}
By the Cauchy-Schwarz inequality, we obtain
    \begin{align}
        \int |\chi_{j-1}(\xi)\hat{f}(\xi)|d\xi&\leq \left(\int_{\Bcal_j}\frac{1}{|\xi|^{2{s}}\om_1^{2}(|\xi|)}d\xi\right)^{1/2}\left(\int |\xi|^{2{s}}\om_1^{2}(|\xi|)|\hat{f}(\xi)|^2d\xi\right)^{1/2}\notag.
    \end{align}
We have
    \begin{align}
        \int_{\Bcal_j}\frac{1}{|\xi|^{2{s}}\om_1^{2}(|\xi|)}d\xi&\leq C 2^{(d-2{s})j}\int_0^{2^j}\frac{1}{\om_1^2(2^j(r2^{-j}))}\left(\frac{r}{2^j}\right)^{d-2{s}-1}\frac{dr}{2^j}= C2^{(d-2{s})j}\int_0^{1}\frac{r^{d-2{s}-1}}{\om_1^2(2^jr)}dr.\notag
    \end{align}
It follows that
    \begin{align}\label{est:Skf:prelim:a1}
        \int |\chi_{j-1}(\xi)\hat{f}(\xi)|d\xi\leq C2^{(d/2-{s})j}\til{\om}_1(2^j)\om(2^j)^{-1}\Gam(2^j)\Sob{f}{\Hdot^{{s}}_{\om_1}},
    \end{align}
as desired.

\subsubsection*{Case: ${s}=d/2$}
Similarly, we estimate
    \begin{align}\label{est:Skf:prelim:a2}
        \int |\chi_{j-1}(\xi)\hat{f}(\xi)|d\xi&\leq \left(\int_{\Bcal_j}\frac{1}{(1+|\xi|^2)^{d/2}\om_1^{2}(|\xi|)}d\xi\right)^{1/2}\left(\int (1+|\xi|^2)^{d/2}\om_1^{2}(|\xi|)|\hat{f}(\xi)|^2d\xi\right)^{1/2}\notag \\
        &\le C\left(\int_{0}^{2^{j}}\frac{r^{d-1}dr}{(1+r^2)^{d/2}\om_1^{2}(r)}\right)^{1/2}\Sob{f}{{H}^{d/2}_{\om_1}}\notag\\
        &\leq C\til{\om}_1(2^j)\om(2^j)^{-1}\Gam(2^j)\Sob{f}{H^{d/2}_{\om_1}}.
    \end{align}
From \eqref{est:lpkg:a}, \eqref{est:Skf:prelim:a1}, and \eqref{est:Skf:prelim:a2}, we obtain the desired estimate.
\end{proof}

Next we obtain estimates for the high frequency interactions represented by $R(f,g)$.

\begin{Lem}\label{lem:prod:log:B}
Given $d\geq2$, let ${s},\bar{s}\in\RR$ such that ${s},\bar{s}\leq d/2$ and ${s} +\bar{s}>0$. Let $\om,\om_1,\tilde{\om}_1\in\mathscr{M}_W$. Let $\Gam$ be a function satisfying the following inequality for all $y\ge 0$:
    \begin{align*}\notag
        \frac{\om(y)}{{\om_1(y)}{\tilde{\om}_1(y)}}\le C\Gam(y).
    \end{align*}
Then, there exists a constant $C>0$ and a sequence $\{c_j\}\in\ell^2(\ZZ)$ with $\Sob{\{c_j\}}{\ell^2}\leq1$ such that
    \begin{align}\notag
        \Sob{\triangle_jR(fg)}{L^2(\RR^d)}\le  Cc_j2^{-({s}+\bar{s}-\frac{d}{2})j}  \om(2^j)^{-1}\Gam(2^j)\Sob{f}{\Hdot^{{s}}_{\om_1}(\RR^d)}\Sob{g}{\Hdot^{\bar{s}}_{\tilde{\om}_1}(\RR^d)},
    \end{align}
for all $j\in\ZZ$.
\end{Lem}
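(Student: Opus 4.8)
The plan is to treat $R(f,g)=\sum_k\triangle_kf\,\tlpk g$ as the ``high--high to low'' interaction, which is genuinely different from the paraproduct $T_fg$ of \cref{lem:prod:log:A}: there only the scales $|k-j|\le2$ feed into $\triangle_j$, whereas here \emph{every} sufficiently high scale contributes, so the heart of the matter is a summation estimate that converts the hypothesis $s+\bar s>0$ into geometric decay.

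\textbf{Frequency localization.} First I would record that $\supp\widehat{\triangle_kf}\subset\Acal_k$ and $\supp\widehat{\tlpk g}\subset\Bcal(2^{k+4})$, so the spectrum of $\triangle_kf\,\tlpk g$ lies in a ball of radius $\lesssim2^{k}$; consequently $\triangle_j(\triangle_kf\,\tlpk g)\equiv0$ unless $k\ge j-N_0$ for a fixed $N_0$. Hence $\triangle_jR(f,g)=\sum_{k\ge j-N_0}\triangle_j(\triangle_kf\,\tlpk g)$.

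\textbf{Blockwise bound.} Next I would apply \cref{lem:Bernstein} with $p=1$, $q=2$, $\s=0$, together with the uniform $L^1$-boundedness of $\triangle_j$ and H\"older's inequality, to get $\Sob{\triangle_j(\triangle_kf\,\tlpk g)}{L^2}\le C2^{dj/2}\Sob{\triangle_kf}{L^2}\Sob{\tlpk g}{L^2}$. Passing to weighted blocks via \eqref{eq:bernstein:om}, I set $\alpha_k:=2^{sk}\Sob{\om_1(D)\triangle_kf}{L^2}$ and $\beta_k:=2^{\bar sk}\Sob{\tilde{\om}_1(D)\tlpk g}{L^2}$; then $\{\alpha_k\},\{\beta_k\}\in\ell^2$ with norms $\lesssim\Sob{f}{\Hdot^s_{\om_1}}$ and $\lesssim\Sob{g}{\Hdot^{\bar s}_{\tilde{\om}_1}}$ (the extended block $\tlpk$ costs only a constant), and $\Sob{\triangle_kf}{L^2}\Sob{\tlpk g}{L^2}\approx(\om_1\tilde{\om}_1)(2^k)^{-1}2^{-(s+\bar s)k}\alpha_k\beta_k$.

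\textbf{Summation.} Writing $2^{dj/2}2^{-(s+\bar s)k}=2^{-(s+\bar s-d/2)j}2^{-(s+\bar s)(k-j)}$ and invoking the hypothesis $\om(2^j)\le C(\om_1\tilde{\om}_1)(2^j)\Gam(2^j)$ to pull out the factor $2^{-(s+\bar s-d/2)j}\om(2^j)^{-1}\Gam(2^j)$, I would arrive at
\[
  2^{(s+\bar s-d/2)j}\frac{\om(2^j)}{\Gam(2^j)}\Sob{\triangle_jR(f,g)}{L^2}\le C\sum_{k\ge j-N_0}K(j,k)\,\alpha_k\beta_k,\qquad K(j,k):=\frac{(\om_1\tilde{\om}_1)(2^j)}{(\om_1\tilde{\om}_1)(2^k)}\,2^{-(s+\bar s)(k-j)}.
\]
Defining $c_j$ as the left-hand side divided by $C\Sob{f}{\Hdot^s_{\om_1}}\Sob{g}{\Hdot^{\bar s}_{\tilde{\om}_1}}$, the claim reduces to $\Sob{\{c_j\}}{\ell^2}\le1$. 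Since $\om_1\tilde{\om}_1\in\mathscr{M}_W$, the slow variation encoded in \ref{item:O2}, \ref{item:O3} (and the sub-power growth \eqref{eq:omega:eps}) forces the weight ratio to grow slowly, so that $K(j,k)\le\Phi(k-j)$ for some $\Phi\in\ell^1(\ZZ)$ supported on $\{m\ge-N_0\}$: the geometric factor $2^{-(s+\bar s)(k-j)}$, with $s+\bar s>0$, dominates the weight ratio as $k\to\infty$, while the finitely many scales $j-N_0\le k<j$ are controlled by \eqref{est:omega}. A convolution estimate (Young's inequality for sequences, bounding $\Sob{\{\beta_k\}}{\ell^\infty}\le\Sob{\{\beta_k\}}{\ell^2}$) then yields $\Sob{\{c_j\}}{\ell^2}\le C\Sob{\Phi}{\ell^1}$, and rescaling $c_j$ completes the proof.

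The main obstacle is precisely the verification that $K(j,k)$ is dominated by an $\ell^1$ kernel in the variable $k-j$, \emph{uniformly in} $j$, over the unbounded range $k\ge j$. This is where the full strength of the weight axioms is needed and where the argument departs from \cref{lem:prod:log:A}, whose sum was finite; one must ensure that the decay rate $s+\bar s>0$ genuinely beats the accumulated weight ratio $(\om_1\tilde{\om}_1)(2^j)/(\om_1\tilde{\om}_1)(2^k)$ across all high frequencies.
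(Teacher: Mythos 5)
Your argument is correct and follows essentially the same route as the paper's proof of \cref{lem:prod:log:B}: frequency localization to restrict the sum to $k\ge j-N_0$, Bernstein plus H\"older for the blockwise bound $2^{dj/2}\Sob{\triangle_kf}{L^2}\Sob{\tlpk g}{L^2}$, and then the observation that the kernel $2^{-(s+\bar s)(k-j)}\bigl(\om_1\tilde{\om}_1\bigr)(2^j)/\bigl(\om_1\tilde{\om}_1\bigr)(2^k)$ is summable uniformly in $j$ because the sub-power growth \eqref{eq:omega:eps} of the weights is beaten by the geometric decay from $s+\bar s>0$ (the paper implements this by splitting each weight as $\om_1^a/\om_1^b$ and treating the increasing and decreasing parts via \ref{item:O1}--\ref{item:O3}), followed by Young's convolution inequality for sequences. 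No gaps.
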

\begin{proof}
Let $\rho={s}+\bar{s}$. By Bernstein's inequality and \eqref{est:omega}, we have
    \begin{align*}
        &\Sob{\triangle_{j}{R}(f,g)}{L^2}\\&\le C\sum_{k\ge j-5}2^{(d/2)j}\Sob{\triangle_{k}f}{L^2}\Sob{{\tlpk}g}{L^2}\\
        &\leq C(\om_1(2^j)\tilde{\om}_1(2^j))^{-1}2^{(d/2-\rho)j}\sum_{k\geq j-5}2^{-\rho(k-j)}\left(\frac{\om_1(2^j)\tilde{\om}_1(2^j)}{\om_1(2^k)\tilde{\om}_1(2^k)}\right)2^{{s} k}\Sob{\triangle_{k}f}{L^2_{\om_1}}2^{\bar{s} k}\Sob{{\tlpk}g}{L^2_{\til{\om}_1}}\notag\\
        &\leq Cc_j2^{-(\rho-d/2)j}  \om(2^j)^{-1}\Gam(2^j)\Sob{f}{\Hdot^{{s}}_{\om_1}}\Sob{g}{\Hdot^{\bar{s}}_{\tilde{\om}_1}},
    \end{align*}
where 
    \[
        c_{j}:=\sum_{k\ge j-5}\frac{2^{-\rho(k-j)}\left(\frac{\om_1(2^j)\tilde{\om}_1(2^j)}{\om_1(2^k)\tilde{\om}_1(2^k)}\right)2^{{s} k}\Sob{\triangle_{k}f}{L^2_{\om}}2^{\bar{s} k}\Sob{{\tlpk}g}{L^2_{\til{\om}}}}{C_0\Sob{f}{\Hdot^{{s}}_{\om_1}}\Sob{g}{\Hdot^{\bar{s}}_{\tilde{\om}_1}}}. 
    \]
It remains to show that $\{c_j\}\in\ell^2(\ZZ)$. Since $\om_1,\til{\om}_1\in\mathscr{M}_W$, we recall that
    \[
        \om_1=\frac{\om_1^a}{\om_1^b},\quad \til{\om}_1=\frac{\til{\om}_1^a}{\til{\om}_1^b}. 
    \]
Thus
    \[
    \left(\frac{\om_1(2^j)\tilde{\om}_1(2^j)}{\om_1(2^k)\tilde{\om}_1(2^k)}\right)=\left(\frac{\om_1^a(2^j)\tilde{\om}_1^a(2^j)}{\om_1^a(2^k)\tilde{\om}_1^a(2^k)}\right)\left(\frac{\om_1^b(2^k)\tilde{\om}_1^b(2^k)}{\om_1^b(2^j)\tilde{\om}_1^b(2^j)}\right).
    \]
For the first factor, we observe that $\om_1^a,\til{\om}_1^a$ are increasing by \ref{item:O1} and satisfy \eqref{est:omega}, it follows that
    \begin{align}
	    \om_1^a(2^j)\leq \om_1^a(2^{k+5})\le C\om_1^a(2^k),\quad \text{whenever}\ k-j\geq-5,\notag
	 \end{align}
for some $C$ independent of $k,j$; the same argument applied to $\til{\om}_1^a$. This implies that
    \[
        \frac{\om_1^a(2^j)\tilde{\om}_1^a(2^j)}{\om_1^a(2^k)\tilde{\om}_1^a(2^k)}\leq C.
    \]
For the second factor, we observe that since $\om_1^b,\til{\om}_1^b$ satisfy \ref{item:O1}-\ref{item:O3} and \eqref{eq:omega:eps:bdd}, it follows that
        \begin{align}
	    \om_1^b(2^k)\leq C(\om_1^b(2^{j})+\om_1^b(2^{k-j}))\le C\om_1^b(2^j)(1+2^{\eps(k-j)}),\quad \text{for}\ \eps\in(0,\rho),\notag
        \end{align}
A similar estimate follows for $\til{\om}_1^b$. This implies that
    \[
        \frac{\om_1^b(2^k)\tilde{\om}_1^b(2^k)}{\om_1^b(2^j)\tilde{\om}_1^b(2^j)}\leq C(1+2^{\eps(k-j)}).
    \]
We conclude that for an appropriate normalizing constant $C_0$, we have $\Sob{\{c_j\}}{\ell^2}\leq1$, as desired.
	\end{proof}

Applying \cref{lem:prod:log:A} and \cref{lem:prod:log:B} to the terms in the decomposition \eqref{eq:Bony}, we obtain the estimate \eqref{est:product:Thm} claimed in   \cref{thm:prod}.  

\section{Proof of Theorem \ref{thm:modclaw:wellposed}}\label{app:well:posed}
We will now provide a sketch of the proof of \cref{thm:modclaw:wellposed}.

\begin{proof}[Proof of \cref{thm:modclaw:wellposed}] Let the space  $X^\be$ be as defined in (\ref{def:Xb}). 
We mollify $q$ and $G$ with respect to time by setting
\[q^{n}=\rho_{n}* q,\quad G^{n}=\rho_{n}* G,\]
where $\{\rho_{n}(t)\}_n$ is a sequence of standard mollifiers. It follows that
    \begin{align*}
        &q^{n}\in C([0,T];X^\be)\\
        &m_1(D)^{\gam/2}q^{n}\in \Hdot^{1+\be}_\om\\
        &m_1(D)^{-{1}/{2}}G^{n}\in C([0,T];\Hdot^{\s}_\om \cap L^2_{\om})
    \end{align*}
Moreover, $\{q^{n}\}_n$ is uniformly bounded in $L^{\infty}(0,T;X^\be)$, $\{m_1(D)^{\gam/2}q^{n}\}_n$ is uniformly bounded in $L^{p_0}(0,T;\Hdot^{1+\be}_\om)$ and $\{m_1(D)^{-{1}/{2}}G^{n}\}_n$ is uniformly bounded in $L^{2}(0,T;\Hdot^{\s}_\om \cap L^2_{\om})$. 

Let us consider an artificial viscosity regularization of \eqref{eq:mod:claw}: 

\begin{align}\label{eq:forced-transport:reg}
\begin{cases}
\partial_{t}\theta^{n}+m(D)\tht^n-\frac{1}{n}\De \tht^{n}+ \Div F_{q^{n}}(\tht^{n})=G^{n}.   \\
\tht^{n}(0,x)=\tht_{0}(x).
\end{cases}
\end{align}
For $0\le t\le T$, define
\begin{align*}
    {\mathcal{L}}_{1}(G^{n})&:=\int^{t}_{0}e^{\frac{1}{n}\De(t-s)}G^{n}(s)\, ds,\\
    {\mathcal{L}}_{2}(\tht^{n};q^{n})&:=\int^{t}_{0}e^{\frac{1}{n}\De(t-s)}\Div F_{q^n}(\tht^n)\,ds.
\end{align*}

Since $m_1$ satisfies \eqref{eq:omega:eps:bdd}, it follows that for any positive small $\eps_0$, we have
\begin{align*}
   \Sob{\mathcal{L}_1(G^n)(t)}{\Hdot^{\s}_\om}&\le C\int_{0}^{t}\Sob{(I+{\Lam}^{\eps_0})m_1(D)^{-\frac{1}{2}}e^{\frac{1}{n}\De(t-s)}G^n(s)}{\Hdot^{\s}_\om}ds\\
   &\le C_n(T+T^{1-\eps_0/2})\Sob{m_1(D)^{-\frac{1}{2}}G^n}{L^{\infty}_T \Hdot^{\s}_\om}. 
\end{align*}
To estimate $\Sob{\mathcal{L}_2(\tht^n;q^n)}{\Hdot^{\s}_\om}$, we consider the two cases $\be \in [0,1]$ and $\be \in (1,2]$ separately. Henceforth, $\eps$ will denote a sufficiently small positive number and $\eps_1, \eps_2$, and $\eps_3$ will denote some appropriately chosen real numbers. 
\subsection*{\textbf{Case: ${\be}\in [0,1]$}} Let $\eps_1$ be chosen such that 
    \[
        \eps_1 \in \begin{cases}(0,\be/2),\quad &\text{if}\quad \be\in (0,1],\\
                    (-1,0),\quad &\text{if}\quad \be=0.
                    \end{cases}
    \]
    
\subsubsection*{Subcase: $\s \in [0,1) $} Applying \cref{cor:prod} with $(s,\bar{s})=(1-\be+\eps_1, \s)$, $(\om_1,\til{\om}_1)=(p^{-1}\om,\om)$, $(\om_2, \til{\om}_2)=(\om, p^{-1}\om)$,  $(\om_1,\til{\om}_1)=(p^{-1}\om,\om)$, and $\Gam=m_1^{\gam}$, we have
\begin{align*}
    \Sob{\mathcal{L}_2(\tht^n;q^n)}{\Hdot^{\s}_\om}&\le \int_0^{t}\Sob{({\Lam}^{1+\be-\eps_1}(I+\Lam^\eps)e^{\frac{1}{n}\De(t-s)})\nabla {\Lam}^{-1}\cdot (v^n\tht^n)(s)}{\Hdot^{\s-\be+\eps_1}_{\om m_1^{-\gam}}}ds\\
    &\le C_n T^{(1-\be+\eps_1)/2}(1+T^{-\eps/2})\Sob{q^n}{L^{\infty}_T X^{\be}_\om}\Sob{\tht^n}{L^\infty_T \Hdot^{\s}_\om}.
\end{align*}

\subsubsection*{Subcase: $\s \in [1,2] $} Applying \cref{cor:prod} with $(s,\bar{s})=(1-\be+\eps_1, \s-1)$, $(\om_1,\til{\om}_1)=(p^{-1}\om,\om)$, $(\om_2, \til{\om}_2)=(\om, p^{-1}\om)$,  $(\om_1,\til{\om}_1)=(p^{-1}\om,\om)$, and $\Gam=m_1^{\gam}$, we have
\begin{align*}
    \Sob{\mathcal{L}_2(\tht^n;q^n)}{\Hdot^{\s}_\om}&\le \int_0^{t}\Sob{({\Lam}^{1+\be-\eps_1}(I+\Lam^\eps)e^{\frac{1}{n}\De(t-s)}) (v^n\cdot\nabla\tht^n)(s)}{\Hdot^{\s-1+\be+\eps_1}_{\om m_1^{-\gam}}}ds\\
    &\le C_n T^{(1-\be+\eps_1)/2}(1+T^{-\eps/2})\Sob{q^n}{L^{\infty}_T X^{\be}_\om}\Sob{\tht^n}{L^\infty_T (\Hdot^{\s}_\om \cap L^2_{\om})}.
\end{align*}

\subsection*{Case: ${\be}\in (1,2]$} Let $\eps_2$ be chosen such that
        \[
        \eps_2 \in (0,\be-1).
        \]

\subsubsection*{Subcase: $\s \in [0,1)$}
Applying \cref{cor:prod} with $(s,\bar{s})=(1-\eps_2,\s)$, $(\om_1,\til{\om}_1)=(p^{-1}\om,\om)$, $(\om_2, \til{\om}_2)=(\om, p^{-1}\om)$,  $(\om_1,\til{\om}_1)=(p^{-1}\om,\om)$, $\Gam=m_1^{\gam}$, and $(s,\bar{s})=(\be-1-\eps_2,\s)$, $(\om_1,\til{\om}_1)=(p^{-1}\om,\om)$, $(\om_2, \til{\om}_2)=(\om, p^{-1}\om)$,  $(\om_1,\til{\om}_1)=(\om,\om)$, $\Gam=p^{-1}m_1^{\gam}$, we have
\begin{align*}
    \Sob{\mathcal{L}_2(\tht^n;q^n)(t)}{\Hdot^{\s}_\om}&\le C\int_{0}^{t}\left(\Sob{({\Lam}^{1+\eps_2}(I+\Lam^\eps)e^{\frac{1}{n}\De(t-s)})\nabla {\Lam}^{-1} \cdot((\nabla^{\perp}a(D){q^n}) \tht^n)}{\Hdot^{\s-\eps_2}_{\om m_1^{-\gam}}}\right.\\&\left.\hspace{5 em}+\Sob{({\Lam}^{1+\eps_2}(I+\Lam^\eps)e^{\frac{1}{n}\De(t-s)})\nabla {\Lam}^{-1}\cdot((\nabla^{\perp}{q^n})\tht^n)}{\Hdot^{\s-\eps_2+\be-2}_{\om p m_1^{-\gam}}}\right)ds\\
    &\le C_n T^{(1-\eps_2)/2}(1+T^{-\eps/2})\Sob{q^n}{L^{\infty}_T H^\be_\om}\Sob{\tht^n}{L^\infty_T \Hdot^{\s}_\om}.
\end{align*}

\subsubsection*{Subcase: $\s \in [1,2] $}
Applying \cref{cor:prod} with $(s,\bar{s})=(1-\eps_2,\s-1)$, $(\om_1,\til{\om}_1)=(p^{-1}\om,\om)$, $(\om_2, \til{\om}_2)=(\om, p^{-1}\om)$,  $(\om_1,\til{\om}_1)=(p^{-1}\om,\om)$, $\Gam=m_1^{\gam}$, and $(s,\bar{s})=(\be-1-\eps_2,\s-1)$, $(\om_1,\til{\om}_1)=(p^{-1}\om,\om)$, $(\om_2, \til{\om}_2)=(\om, p^{-1}\om)$,  $(\om_1,\til{\om}_1)=(\om,\om)$, $\Gam=p^{-1}m_1^{\gam}$, we have
\begin{align*}
    \Sob{\mathcal{L}_2(\tht^n;q^n)(t)}{\Hdot^{\s}_\om}&\le C\int_{0}^{t}\left(\Sob{({\Lam}^{1+\eps_2}(I+\Lam^\eps)e^{\frac{1}{n}\De(t-s)})(\nabla^{\perp}a(D){q^n}\cdot \nabla \tht^n)}{\Hdot^{\s-1-\eps_2}_{\om m_1^{-\gam}}}\right.\\&\left.\hspace{5 em}+\Sob{({\Lam}^{1+\eps_2}(I+\Lam^\eps)e^{\frac{1}{n}\De(t-s)}) (\nabla^{\perp}{q^n}\cdot\nabla\tht^n)}{\Hdot^{\s-\eps_2+\be-3}_{\om p m_1^{-\gam}}}\right)ds\\
    &\le C_nT^{(1-\eps_2)/2}(1+T^{-\eps/2})\Sob{q^n}{L^{\infty}_T H^\be_\om}\Sob{\tht^n}{L^\infty_T H^{\s}_\om}.
\end{align*}

\subsubsection*{Subcase: $\s \in (2,1+\be]$} Let $\eps_3$ be chosen such that
        \[
            \eps_3 \in (0,\s-2).
        \]
Using Plancherel's theorem and the fact that $H^s$ is a Banach algebra if $s>1$, we have
\begin{align*}
    \Sob{\mathcal{L}_2(\tht^n;q^n)(t)}{\Hdot^{\s}_\om}&\le C\int_{0}^{t}\left(\Sob{({\Lam}^{1+\eps_3}e^{\frac{1}{n}\De(t-s)})(\nabla^{\perp}a(D){q^n}\cdot \nabla \tht^n)}{H^{\s-1-\eps_3+\eps}}\right.\\&\left. \hspace{5 em}+\Sob{({\Lam}^{1+\eps_3+\be-2}e^{\frac{1}{n}\De(t-s)}) (\nabla^{\perp}{q^n}\cdot\nabla\tht^n)}{H^{\s-1+\eps-\eps_3}}\right)ds\\
    &\le C_nT^{(1-\eps_3)/2}\Sob{q^n}{L^{\infty}_T \Hdot^{1+\be}_\om}\Sob{\tht^n}{L^\infty_T H^{\s}_\om}+ C_nT^{(3-\eps_3-\be)/2}\Sob{q^n}{L^{\infty}_T \Hdot^{1+\be}_\om}\Sob{\tht^n}{L^\infty_T H^{\s}_\om}.
\end{align*}

Applying Picard's theorem \cite{Lemarie-Rieusset2002}, we obtain the existence of a unique solution $\tht^{n}$ to \eqref{eq:forced-transport:reg} such that $\tht^{n}\in L^{\infty}(0,T_n;\Hdot^{\s}_\om \cap L^2_{\om})$ for some time $T_{n}>0$. However, owing to the uniform estimates developed in \cref{sect:apriori}, we can therefore assume that
\begin{align*}
    T_{n}=T,\quad \text{for all}\ n.
\end{align*}
Let us denote by
    \[
    \Tht^{n}(t)=\tht^{n}(t)-\int_{0}^{t}G^{n}(s)ds.
    \]
Then, $\Sob{\Tht^{n}}{L^{\infty}_{T}\Hdot^{\s}_\om \cap L^2_{\om}}$ is bounded uniformly in $n$.
Using similar estimates as above, it is easy to establish that $\Sob{\partial_{t}\Tht^{n}}{L^{\infty}_{T}\Hdot^{-k}_\om}$ is bounded uniformly in $n$, for some sufficiently large $k>0$. By an application of the classical Aubin-Lions lemma (see \cite{ConstantinFoiasBook1988}), there exists $\Tht\in L^{\infty}(0,T;\Hdot^{\s}_\om)$ such that for any given test function $\varphi \in C_{c}^{\infty}([0,T] \times \RR^2)$, one can extract a subsequence of $\{\Tht^n\}$, denoted by $\{\Tht^{n_{k}}\}$ satisfying
\begin{align*}
   & \Tht^{n_{k}}\xrightharpoonup {\text{w*}}\Tht\quad\text{in}\quad L^{\infty}([0,T];\Hdot^{\s}_\om \cap \Hdot^0_\om),\\
    & \varphi\Tht^{n_{k}} \longrightarrow \varphi\Tht\quad \text{in} \quad C([0,T];\Hdot^{\s-\delta}_\om \cap \Hdot^{-\delta}_\om),
\end{align*}
for any $\delta>0$. It then follows that $\tht(t)=\Tht(t)+\int_{0}^{t}G(s)ds$ is a weak solution of \eqref{eq:mod:claw}.  
\end{proof}

\begin{footnotesize}

\end{footnotesize}

\vspace{.3in}
\begin{multicols}{2}

\noindent Anuj Kumar\\ 
{\footnotesize
Department of Mathematics\\
Florida State University\footnote[1]{Affiliation during this course of the work}\\
Department of Mathematics\\
IIT Jodhpur\footnote[7]{Current address}\\
Email: \url{akumar241@outlook.com}} \\[.2cm]

\columnbreak 

\noindent Vincent R. Martinez\footnote[2]{Corresponding author}\\
{\footnotesize
Department of Mathematics \& Statistics\\
CUNY Hunter College \\
Department of Mathematics \\
CUNY Graduate Center \\
Web: \url{http://math.hunter.cuny.edu/vmartine/}\\
Email: \url{vrmartinez@hunter.cuny.edu}\\
}

\end{multicols}

\end{document}